 %
%
%

\documentclass[a4paper,11pt,english,reqno]{amsart}
\usepackage{mystyle}

\usepackage{bm}
\usepackage{upgreek}
\usepackage{mathabx}

\numberwithin{equation}{section}
\usepackage[normalem]{ulem} 
\usepackage[a4paper, margin=2.4cm]{geometry}
\usepackage{graphicx,xcolor}

\newcommand\cH{\mathcal{H}}
\newcommand\cJ{\mathcal{J}}
\newcommand\cE{\mathcal{E}}
\newcommand\cF{\mathcal{F}}
\newcommand\cR{\mathcal{R}}

\newcommand\cP{\mathcal{P}}

\newcommand\cU{\mathcal{U}}
\newcommand\B{\mathbb{B}}

\newcommand\T{\mathbb{T}}
\newcommand\I{\mathbb{I}}
\newcommand\J{\mathbb{J}}
\newcommand\sD{\mathsf{D}}
\newcommand\sE{\mathsf{E}}
\newcommand\gf{\mathfrak{f}}
\newcommand\gP{\mathfrak{P}}
\newcommand\gG{\mathfrak{G}}
\newcommand\gI{\mathfrak{I}}
\newcommand\gJ{\mathfrak{J}}

\DeclareMathOperator{\dBer}{Ber}
\newcommand{\wt}{\widetilde}

\newcommand{\wh}{\widehat}
\newcommand{\vep}{\varepsilon}
\renewcommand{\Im}{{\rm Im} \,}
\renewcommand{\Re}{{\rm Re} \,}

\def\corEpty{}

%
\title[Large deviations of the largest eigenvalue]{Large deviations of the largest eigenvalue of supercritical sparse Wigner matrices}  
\author{Fanny Augeri} 
\address[Fanny Augeri]{Laboratoire de Probabili\'es, Statistique et Mod\'elisation (LPSM), Universit\'e de Paris, 75205 Paris Cedex 13, France.}
\email{augeri@lpsm.paris}
\author{Anirban Basak}
\address[Anirban Basak]{International Centre for Theoretical Sciences, Tata Institute of Fundamental Research, Bangalore 560089, India.}
\email{anirban.basak@icts.res.in}
%
\begin{document}
\maketitle

\begin{abstract}
Consider a random symmetric matrix with i.i.d.~entries on and above its diagonal that are products of Bernoulli random variables and random variables with sub-Gaussian tails. Such a matrix will be called a sparse Wigner matrix and can be viewed as the adjacency matrix of a random network with sub-Gaussian weights on its edges.  In the regime where the mean degree is at least logarithmic in dimension, the edge eigenvalues of an appropriately scaled sparse Wigner matrix stick to the edges of the support of the semicircle law. We show that in this sparsity regime, the large deviations upper tail event of the largest eigenvalue of a sparse Wigner matrix with sub-Gaussian entries is generated by either the emergence of a high degree vertex with a large vertex weight or that of a clique with large edge weights. Interestingly, the rate function obtained is discontinuous at the typical value of the largest eigenvalue, which accounts for the fact that its large deviation behaviour is generated by finite rank perturbations. This complements the results of Ganguly and Nam \cite{GN22}, and Ganguly, Hiesmayr, and Nam \cite{GHN22} which considered the case where the mean degree is constant. 
\end{abstract}


\section{Introduction and main results}
\subsection{Setup and main results} The atypical behaviour of the spectrum of Wigner matrices has been challenging the standard theory of large deviations for decades. Owing to the intricate relationship between the entries of a matrix and its spectrum, most of the results for dense random matrices were known either for integrable models (see \cite{Dembo, BenArous}) or Wigner matrices with heavy tails (see \cite{LDPei, LDPtr, Bordenave}). Regarding the edge eigenvalues, a recent breakthrough was achieved by Guionnet and Husson \cite{GH20}, showing a universal large deviation behaviour for Wigner matrices with `sharp sub-Gaussian tails', and later it was extended for general sub-Gaussian entries in \cite{AGH21, CDG23}.   On the other hand, the large deviations of the extreme eigenvalues of sparse Wigner matrix models  are only well understood \cite{GHN22, GN22} when the typical degrees, i.e.~squared $\ell_2$ norms of its columns, are $O(1)$. The goal of the present paper is to close this gap in the literature between this very sparse regime and the dense regime where the typical degree is $O({n})$, $n$ being the dimension of the matrix, and compute the large deviation upper tail of the top eigenvalue in the optimal regime where the typical degrees grow much faster than $\log n$.

\begin{Def} (Sparse Wigner matrix)\label{defmodel}
Let $G$ be a symmetric random matrix of size $n \times n$ with entries $\{G_{i,j}\}_{i,j \in [n]}$ (where $[n]:=\{1,2,\ldots, n\}$) such that $\{G_{i,j}\}_{i<j}$ are i.i.d.~random variables with zero mean and unit variance, and $\{G_{i,i}\}_{i\leq n}$ are also i.i.d.~random variables with zero mean and possibly some other common law. We then set $\widehat X:= G \circ \Xi$, the Hadamard product of $G$ and $ \Xi := (\xi_{i,j})_{i,j \in [n]}$, where $ \Xi$ is a symmetric random matrix of size $n \times n$ with i.i.d.~$\dBer(p)$ entries on and above the diagonal. Finally, let $X:= \widehat X/\sqrt{np}$. 
\end{Def}

We impose the following assumption on the entries of $G$. 

\begin{Hypo}[Sub-Gaussian entries]\label{hypo:subg-entry}
For any $i,j\in[n]$ denote $\Lambda_{i,j}$ to be the log-Laplace transform of $G_{i,j}$. That is for any $i,j\in[n]$ and $\theta\in\RR$, $\Lambda_{i,j}(\theta) := \log \EE \exp(\theta G_{i,j})$. Assume that 
\begin{equation} \label{subgauss}\sup_{\theta \neq 0} \theta^{-2} \max\{\Lambda_{1,1}(\theta), \Lambda_{1,2}(\theta)\} < \infty,\end{equation} and
further
\begin{equation} \label{subgaussian}
{\alpha} :=\lim_{\theta\to +\infty} \frac{2\Lambda_{1,1}(\theta)}{\theta^2}  \quad \text{ and } \quad {\beta}:=\lim_{\theta\to +\infty} \frac{2\Lambda_{1,2}(\theta)}{\theta^2} \ge \limsup_{\theta\to -\infty} \frac{2\Lambda_{1,2}(\theta)}{\theta^2}. \end{equation}
\end{Hypo}

Assumption \eqref{subgauss} merely says that the entries of $G$ are sub-Gaussian, whereas \eqref{subgaussian} imposes that the left tail of $G_{1,2}$ are asymptotically heavier than its right tail. 
We refer the reader to Section \ref{background} for a discussion on this assumption.

\begin{Def}[Rate function]\label{def:rate-fn}
To define the relevant rate function we set
\[
L(\theta):= \EE [\exp({\theta G_{1,2}^2})], \, \theta \in \RR, \qquad \text{ and } \qquad h_L(x) :=\sup_{\theta \in \RR}\{ \theta x - L(\theta)+1\}, \, \text{ for } x \in \RR.
\]
Further let $m: \CC \setminus [-2,2] \mapsto \CC$ be the Stieltjes transform of the standard semicircle distribution, given by
\begin{equation}\label{eq:m-def}
m(z):= \frac{1}{2\pi}\int_{-2}^2 \frac{\sqrt{4-x^2}}{z -x}  dx = \frac{z - \sqrt{z^2 -4}}{2}, \quad z \in \CC\setminus [-2,2],
\end{equation}
where the rightmost equality is well known (e.g.~see \cite[Eqn.~(2.4.7)]{AGZ}). Set
\begin{equation*}\label{eq:rate-fn}
\widehat I(\lambda):= \inf \Big\{\frac{r^2}{2\alpha} + h_L(s) : r + m(\lambda)s=\lambda, r \ge0, s \ge 1 \Big\} \quad \text{ and } \quad I(\lambda):=\min\Big\{ \frac{1}{4\beta m(\lambda)^2}, \widehat I(\lambda) \Big\},
\end{equation*}
for $\lambda  \in (2,+\infty)$, with the convention that $\widehat I(\lambda)=h_L(\lambda/m(\lambda))$ in the case where $\alpha=0$. 
\end{Def}


For any symmetric matrix $W$ of size $n \times n$ we write $\lambda_n(W) \le \cdots \le \lambda_2(W) \le \lambda_1(W)=: \lambda_W$ to be its eigenvalues arranged in a non increasing order. Below is the main result of this paper. 

\begin{The}[Sub-Gaussian case] \label{theo-main-weight}
Let $p$ be such that $\log n/n \ll p \ll 1$. Under Assumption \ref{hypo:subg-entry} and for $\lambda>2$,
\[ 
\lim_{n\to +\infty} \frac{1}{np} \log \PP\left( \lambda_X>\lambda \right) = - I(\lambda).
\]
\end{The}

The matrix $X$ can be viewed as a random network with edges determined by the random symmetric Bernoulli matrix $\Xi$, and edge and vertex weights determined by the off diagonal and the diagonal entries of $G/\sqrt{np}$, respectively. Hereafter, the degree of vertex $i \in [n]$ in the random network $X$ will mean the $\ell^2$ norm of the $i$-th column $X$ after zeroing out the $i$-th diagonal entry.

\begin{Rem}\label{rem:I-prop}
The rate function $I$ does not admit a closed form expression for arbitrary $\alpha, \beta$, and general sub-Gaussian distributions. Here we list some key properties of $I$.\begin{enumerate}

\item[(i)]
The rate function $I$ accounts for the phenomenon that the upper tail large deviation event is generated by the presence of either a vertex with high degree and large weight in the random network induced by $X$ (reflected through the definition $\wh I$) or a clique with large edge weights and of diverging but not too large size (determined by the speed) which accounts for the term $1/(4\beta m(t)^2)$ in the definition of $I(t)$. This will be more evident from the outline of the proof of the large deviations lower bound. See Section \ref{sec:lbd-outline}. 

\item[(ii)] If $\alpha \geq 2\beta$, then $I(t) = \widehat I(t)$ for any $t>2$. Indeed, note that the 
equation 
$m(t) + 1/m(t) = t$, implies that we can take $s=1$ and $r=1/m(t)$ in the variational problem defining $\widehat I(t)$ in \eqref{eq:rate-fn}. This yields $\widehat I(t) \leq 1/(2\alpha m(t)^2)$, and as a consequence $I(t) = \widehat I(t)$ if $\alpha\geq 2\beta$. Thus in this case the large deviation event is generated only by the presence of a high degree vertex with a large vertex weight.

\item[(iii)] Assume that the entries of $G$ are bounded or have lighter tails than a sub-Gaussian random variable in the sense that $\alpha=\beta=0$. Then the rate function simplifies to
\[
I(t) = h_L\Big( \frac{t}{m(t)}\Big), \qquad t >2.
\]

\item[(iv)] When $\alpha \vee \beta > 0$, it can be checked from the definition that the asymptotic behaviour of $I$ near infinity is quadratic and is given by
\[ 
\lim_{t \to +\infty} t^{-2}I(t)= \frac{1}{4\max(\beta,\frac{\alpha}{2})}.
\]

%
\item[(v)] Let $\alpha=0$. Using properties of $h_L$, $L$,  and the Stieltjes transform $m$ it follows that if $h_L(2) < (4\beta)^{-1}$ then there exists some $t_* \in (2, +\infty)$ such that $I(t)=h_L(t/m(t))$ for $t \in (2, t_*]$ and $I(t)=(4\beta m(t)^2)^{-1}$ otherwise. Thus there is exactly one {\em phase transition} in this case. On the other hand, if $h_L(2) \ge (4\beta)^{-1}$ it can be further argued that $I(t)= (4\beta m(t)^2)^{-1}$ for all $t \in (2, +\infty]$ showing that there is {\em no phase transition} in this case.

Let $\{G_{i,j}\}_{i < j}$ be i.i.d.~standard Gaussian and $\{G_{i,i}\}_{i \in [n]}$ be are either bounded or have lighter tail than a sub-Gaussian random variable. Then $h_L(x) = \frac{1}{2} x - \frac{3}{2} x^{1/3} +1$ for $x >0$ and hence we deduce from above that in this setting there is exactly one phase transition in the behaviour of the rate function $I$. 
\item[(vi)] As $h_L$ vanishes only at $1$ and $m(2)=1$, it follows that the rate function is always discontinuous at $2$, meaning that $\lim_{t \downarrow 2} I(t) >0$. Such a discontinuity of the rate function was noted in the large deviation of the top eigenvalue of Wigner matrices with heavy tails \cite{LDPei}. This discontinuity is related to the fact that the large deviation upper tail of the top eigenvalue is generated by finite rank perturbations, whose extreme eigenvalues are known to undergo the so called Baik-Ben Arous-P\'ech\'e (BBP) transition phenomenon (see \cite[Theorem 2.1]{P14}) in their typical behaviour.

\item[(vii)] Using properties of $h_L$ (e.g.~see Lemma \ref{lem:hL-prop}) one can check that the map $s\mapsto (t-m(t)s)^2/(2\alpha) + h_L(s)$ is decreasing and increasing around neighbourhoods of $s=1$ and $s=t/m(t)$, respectively.  This makes it difficult to obtain any closed form expression for $\wh I(t)$ for $\alpha \ne 0$, and shows that a genuine compromise between the degree and the weight should be met to create the optimal large deviation strategy.

\end{enumerate}
\end{Rem}

\begin{Rem}[Lower tail large deviation]
The speed of the lower tail large deviation of $\lambda_X$ is at least  $n^2p$. 
Indeed, the large deviation lower tail event of $\lambda_X$, in the regime and in the setting of Theorem \ref{theo-main-weight}, corresponds to a deviation of the empirical spectral distribution (ESD) in a closed set for the weak topology which does not contain the semicircle distribution. By concentration arguments one can show that the large deviation speed of the ESD is at least $n^2 p$, which gives a lower bound on the speed of the exponential decay of the lower tail large deviation event of $\lambda_X$. 
\end{Rem}

Next, under the additional assumption that the entries of $G$ are bounded we strengthen Theorem \ref{theo-main-weight} to show that conditioned on the upper tail large deviation event of $\lambda_X$, the maximum degree must be large and the corresponding eigenvector must possess a non negligible localised component. To state the result, we let $X_i$, $i \in [n]$, to be the $i$-th column of $X$. Write ${\bm u}$ for an eigenvector of unit $\ell_2$ norm corresponding to $\lambda_X$. For $\vep >0$ we set ${\bm v}^\vep$ to be the sub vector of ${\bm u}$ consisting of entries greater than $\vep$ in absolute value.

\begin{The}[Structural result:~bounded case]\label{thm:bdd-case}
Consider the same setup of Theorem \ref{theo-main-weight} and additionally assume that the entries of $G$ are uniformly bounded. Then, for any $\lambda>2$ there exist some $\eta_\lambda, \vep_\lambda >0$ such that, for any $\delta >0$, 
\[
\limsup_{n\to +\infty} \frac{1}{np}  \log \PP\big( \max_{i\in [n]} \|X_i\|^2 \leq  \lambda/m(\lambda)-\delta, \lambda_X\geq \lambda \big) <-I(\lambda)
\]
and
\[
\limsup_{n \to +\infty} \frac{1}{np} \log \PP(\|{\bm v}^{\vep_\lambda}\| \le \eta_\lambda, \lambda_X >\lambda ) < - I(\lambda). 
\]  
Consequently,
\[
\lim_{n \to +\infty} \PP( \max_{i\in [n]} \|X_i\|^2 >  \lambda/m(\lambda)-\delta \text{ and } \|{\bm v}^{\vep_\lambda}\| > \eta_\lambda \mid \lambda_X \geq  \lambda) =1. 
\]
\end{The}

\begin{Rem}The above theorem shows that in the bounded case, at the large deviation scale, the top eigenvalue is approximately equal to a certain function of the maximum degree in the network $X$. 
At a heuristic level a similar result was obtained in \cite{ADK21, TY21} for the typical value of $\lambda_X$  in the bounded case, for $p$ such that $np \asymp \log n$ as $n \to +\infty$,
(same was also shown for $\lambda_{{\rm Adj}_o}$ and $\lambda_2({\rm Adj})$). 
In particular, they showed that the typical behavior of $\lambda_X$ is well approximated by that of $\Lambda(\max(2, \max_{i}\|X_i\|^2))$, where $\Lambda(d): = d/\sqrt{d-1}$ for any $d\geq 2$, which is the inverse of the map $\lambda \in [2,+\infty) \mapsto \lambda/m(\lambda)$.
\end{Rem}

A naturally occurring sparse random matrix related to sparse Wigner matrices is the adjacency matrix of an Erd\H{o}s-Renyi graph with $n$ vertices and parameter $p$, denoted hereafter by ${\rm Adj}$, an $n \times n$ symmetric random matrix with zero diagonal and i.i.d.~$\dBer(p)$ entries above the diagonal. 
Since the entries of $\mathrm{Adj}$ do not possess a {\em product} structure, Theorem \ref{theo-main-weight} is not readily applicable to derive the upper tail large deviations of $\lambda_{\mathrm{Adj}_o}$, where  $\mathrm{Adj}_o:= \mathrm{Adj} - \EE \mathrm{Adj}$.  Nevertheless, the methods employed to prove Theorem \ref{theo-main-weight} can be applied to obtain the upper tail large deviation of $\lambda_{\mathrm{Adj}_o}$. As it will be seen below, in this case the rate function will involve the following {\em Poissonian tail} function:
\begin{equation}\label{eq:def-h}
h(x):= x \log x - x +1, \qquad x>0.
\end{equation}
\begin{The}[Centered Adjacency matrix]\label{thm:cen-adj}
For $p$ such that $ \log n/n \ll p \ll 1$ and $t \in (2,+\infty)$ 
\[
\lim_{n \to \infty}\frac{1}{np}\log\PP\left(\lambda_{\mathrm{Adj}_o} \ge t\sqrt{np} \right) = -  h(t/m(t)). 
\]
\end{The}
Let $\{G_{i,j}\}_{i < j}$ be i.i.d.~Rademacher random variables. Then \(
h_L(x)= x\log x - x+1\), for $x >0$. Therefore, by Remark \ref{rem:I-prop}(iii) and Theorem \ref{thm:cen-adj} we find that the large deviation (speeds and the) rate functions for the upper tail of the largest eigenvalue of a symmetric sparse Rademacher matrix and ${\rm Adj}_o/\sqrt{np}$ are same for all $p$ such that $\log n \ll np \ll n$.

In recent years there have been quite a few works studying the large deviations of $\lambda_{{\rm Adj}}$ (see \cite{CD20, BG20, BBG21, B21}). It is well known that $\lambda_{{\rm Adj}}$ has a different order of magnitude compared to the rest of spectrum. In contrast, there exists a specific constant $b_*$ such that for $np > b_* \log n$ one has $\lambda_2({\rm Adj})=2(1+o(1))\sqrt{np}$ with probability approaching one (see \cite{ADK21, TY21}).  Our next result provides the upper tail large deviations of $\lambda_2({\rm Adj})$ when $p$ decays polynomially in $n$.

\begin{The}[Second largest eigenvalue of  non-centered Adjacency matrix]\label{thm:ncen-adj}
For $p$ such that $\log (np) \gtrsim \log n$ and $t \in (2,+\infty)$ 
\[
\lim_{n \to \infty}\frac{1}{np}\log\PP\left(\lambda_2(\mathrm{Adj}) \ge t \sqrt{np}\right) = -  h(t/m(t)).
\]
\end{The}

It will be explained later in Section \ref{sec:outline-pf} that the proofs of Theorems \ref{theo-main-weight}, \ref{thm:cen-adj}, and \ref{thm:ncen-adj} require isotropic local laws. In particular, we will need to estimate $\EE \langle u, \cR(z) v\rangle$ for some $z \in \CC_+$ close to the real line, and $u,v \in \mathbb{S}^{n-1}$, where $\cR(z)$ is the resolvent of the matrix in context. In the proof of Theorems \ref{theo-main-weight} and \ref{thm:cen-adj} we can take $u$ and $v$ to have a small $\ell^1$ norm. In contrast, the proof of Theorem \ref{thm:ncen-adj} requires us to take $v$ to be the constant vector. A polynomial in $n$ lower bound on $p$ is needed to accommodate such a $v$.

Let us further add that by the interlacing inequality and the min-max theorem it follows that $\lambda_2(\mathrm{Adj}) \le \lambda_{\mathrm{Adj}_o}$. Therefore the large deviation upper bound for the upper tail of $\lambda_2(\mathrm{Adj})/\sqrt{np}$ is immediate from Theorem \ref{thm:cen-adj}. To prove a matching lower bound, we will use that there exists an eigenvector corresponding to $\lambda_{\mathrm{Adj}}$ close to 
the constant vector of length $n$ normalized to have its norm one (cf.~Lemma \ref{lem:lam1u1}). See Section \ref{sec:ldp-lbd} for its proof.

\subsection{Background and related works} \label{background}
The first works on the large deviations of spectral observables of random matrices date back to \cite{BenArous} and \cite{Dembo} where the large deviations of the ESD and of the largest eigenvalue were obtained for Gaussian beta ensembles, respectively. A key input to these works was the joint explicit density of the eigenvalues, paving the way to the use of Varadhan's lemma to tackle the large deviation behaviour of the spectrum.

Outside the integrable models, the first breakthrough was obtained by Bordenave and Caputo in \cite{Bordenave} where they studied the large deviation of the ESD for symmetric random matrices with entries with tails heavier than that of the Gaussian. Later, in this same setting, large deviations of the normalized traces and that of the largest eigenvalue were studied in \cite{LDPtr} and \cite{LDPei}, respectively. 
At a very high level the philosophy behind the approach used (and the results obtained) in these works is the observation that in a random matrix with such heavy tailed entries there will be only a handful of entries that will be `large' and the large deviation events for spectral observables will be governed by those large entries. It should be added that at a heuristic level the entries of a sparse Wigner matrix {\em behave like heavy tailed entries}. This behaviour is reflected in particular by the presence of a clique with large edge weights via the rate function $I$.

On the other hand using a method based on tilting of measures by `spherical integrals' and upon analyzing the annealed spherical integrals, \cite{AGH21, CDG23, GH20} derive the upper tail large deviations of $\lambda_X$ when $p \asymp 1$. In particular, \cite{GH20} unearths a universality of the large deviations speed and rate function when the entries of $G$ have `sharp sub-Gaussian' tails. It seems that this technique is not amenable to treat the case $p \ll 1$.

The recent work \cite{GHN22} studied the large deviations of $\lambda_X$ for sparse Wigner matrices for which $p \asymp n^{-1}$ and the entries of $G$ have a Weibull distribution with shape parameter $\vartheta  >0$. 
The earlier paper \cite{GN22} focused on the case when the entries of $G$ were Gaussian (essentially corresponding to the case $\vartheta=2$ from \cite{GHN22}). Notice that $\vartheta \ge 2$ implies a sub-Gaussian tail (with $\vartheta =+\infty$ can be interpreted as the entries of $G$ being bounded) and for such $\vartheta$ \cite{GHN22, GN22} showed that the typical value of $\lambda_X$ depends on $\vartheta$, the large deviation speed is $\log n$, and the rate function does not depend on $\vartheta$. 
They decompose the weighted graph induced by $X$ into two subgraphs:~One is induced by those edges whose edge weights are not `too large' and the other is the complement of that. 
Using \cite{BBG21} they argue that the spectral radius of the first subgraph is negligible at the large deviations scale implying that the upper tail large deviations of $\lambda_X$ is given by that for the largest eigenvalue of the second subgraph.  The latter was treated by deriving several geometric properties of the second subgraph such as showing that its connected components are not too large and  that they have a small number of excess edges. In our setting neither the spectral radius of the first subgraph is negligible nor those geometric properties of the second subgraph hold at the large deviations scale.

\subsubsection*{Na\"ive mean-field approximation} Since the seminal work of \cite{CD16} which introduced the `non linear large deviation theory', there have been numerous works attempting to identify criteria in order to reduce a given large deviation question to the solution of the mean-field variational problem associated to the problem at hand. A couple of examples in this direction are the large deviations of the subgraph counts in Erd\H{o}s-R\'enyi graphs and that of the top eigenvalue (cf.~\cite{NLAu, B21, BB23, BG20, CD20, Eldan, HMS22}). Although the non linear large deviation techniques are not applicable here, we conjecture that in the setting of Theorem \ref{theo-main-weight} the probability of the upper tail event of $\lambda_X$ is asymptotically equal to the solution of the relevant mean-field variational problem.

\subsubsection*{Optimality of the assumptions in Theorem \ref{theo-main-weight}}
There are a couple of directions in which one may seek to relax the assumptions in Theorem \ref{theo-main-weight}. One is the lower bound on $p$. It follows from \cite{ADK21, TY21} that if $\liminf_{n\to +\infty}np/\log n > b_*$, where $b_* := h_L(2)^{-1}$, then under the boundedness assumption on $G$  one has $\lambda_X \to 2$, in probability. We predict that if $\lim_{n \to +\infty} np/\log n = b > b_*$ then under the boundedness assumption of $G$ the large deviation speed for the upper tail of $\lambda_X$ remains the same and the rate function is given by $\wt I(t):= h_L(t/m(t)) - b^{-1}$ for $t \in (2, +\infty)$. Many of the machineries and the approaches developed in Sections \ref{sec:concflat} and \ref{sec:bdd-entry} can be adapted and be used to confirm this prediction {for large enough deviations}. For $p$ such that $\limsup_{n\to +\infty} np/\log(n) < b_*$,  $\lambda_X$ no longer sticks to the support of the semicircle law. One needs to define the upper tail event accordingly and possibly one may also need other ideas.

Another direction is the relaxation of the assumption \eqref{subgaussian} that the left tail of $G_{1,2}$ is heavier than its right tail. This assumption is critical to see the clique scenario  emerging as one of the optimal strategies. If on the other hand the right tail of $G_{1,2}$ is heavier than its left tail, we conjecture that bipartite subgraphs with large negative edge weights should replace the clique scenario.

Finally, one could try to relax the sub-Gaussian assumption of $G$ and allow its entries to have a Weibull-type distribution with shape parameter $\vartheta \in (0,2)$ (e.g.~entries with sub-exponential tail). In this case, the supercritical regime where the largest eigenvalue sticks to the support of the semicircle law would correspond to $np \gg (\log n)^{2/\vartheta}$, and we expect that in this sparsity regime the large deviations will be created that by a single large entry.


\subsection{Notational conventions}\label{sec:notation}
We use the following set of notation throughout this paper. We begin with some standard asymptotic notation. For two sequences positive reals $\{a_n\}_{n \in \NN}$ and $\{b_n\}_{n \in \NN}$ we write $a_n \ll b_n$, $b_n \gg a_n$, and $a_n =o(b_n)$ to denote $\limsup_{n \to +\infty} a_n/b_n =0$. If $\limsup_{n \to +\infty} a_n/b_n < +\infty$ then we write $a_n \lesssim b_n$ and $a_n=O(b_n)$. The notation $a_n \gtrsim b_n$ will be used to denote $\liminf_{n \to +\infty} a_n/b_n >0$. We further write $a_n \asymp b_n$ when $a_n \lesssim b_n \lesssim a_n$. The notation $a=o_R(1)$ will be reserved to denote that $\lim_{R \to +\infty}a =0$, where $a=a(R)$ is some function of $R$. We use $\updelta_{x,y}$ to denote the standard Dirac delta function, i.e. $\updelta_{x,y}=1$ if $x=y$ and $0$ otherwise. For $a, b \in \RR$ we set $a \vee b:= \max\{a,b\}$, $a \wedge b:=\min\{a,b\}$, and $a_+:=a \vee 0$.

For $n \in \NN$ we write $\mathcal{H}_n$ to denote the set of symmetric matrices of size $n\times n$. We set $\cH^\lambda_n:=\{K \in \cH_n: \lambda_K <  \lambda\}$, for $\lambda \in \RR$. When the choice of $n$ is clear from the context we will drop the subscript. 
For any $K \in \cH_n$ and $\cJ \subset [n]:= \{1,2,\ldots, n\}$ we write $K_\cJ$, $K^{(\cJ)}$, and $\widecheck{K}_\cJ$ to be the sub matrices of $K$ with rows and columns indexed by $\cJ$ and $\cJ$, $\cJ^c$ and $\cJ^c$, and $\cJ^c$ and $\cJ$, respectively. If $\cJ=\{i\}$ for some $i \in [n]$, to lighten the notation, we write $K^{(i)}$ and $\widecheck{K}_i$ instead of $K^{(\{i\})}$ and $\widecheck{K}_{\{i\}}$, respectively. We use $K_i$ to denote the $i$-th column of $K$, for any $i \in [n]$. Furthermore, for $\cJ \subset [n]$ we write $H_{\cJ}$ to denote the set of symmetric matrices with rows and columns indexed by $\cJ$ and set $\cH^\lambda_\cJ:=\{K \in \cH_\cJ: \lambda_K < \lambda\}$, for $\lambda \in \RR$. We will use the notation ${\rm Id}_n$ to denote the identity matrix of size $n$. When the choice of the dimension is clear from the context we will omit the subscript. For $k \in [n]$ we let ${\bm e}_k \in \RR^n$  to be the vector that is $1/\sqrt{k}$ in its first $k$ coordinates and zero otherwise.

For $p \in \NN \cup\{\infty\}$  and $y \in \RR^n$ we write $\|y\|_p$ to denote its $\ell^p$-norm. Most of the time we will only deal with the $\ell^2$-norm, and hence when there is no scope of ambiguity we will write $\|\cdot\|$ instead of $\| \cdot\|_2$. For a matrix $W$ the notation $\|W\|$ will be used to denote its operator norm, ${\rm Spec}(W)$ will be used to denote its spectrum, and $W^{\sf T}$ will denote its transpose. For $y_1, y_2 \in \RR^n$ we use $\langle y_1, y_2 \rangle$ to denote the standard $\ell^2$ inner product, while for $W_1, W_2 \in \cH$ we set $\langle W_1, W_2 \rangle:= {\rm Tr}(W_1 W_2)$, where ${\rm Tr}$ denotes the trace of a matrix.  For a random variable $Y$ we use $\|Y\|_p$ to denote its $L^p$-norm. If $Y_1$ and $Y_2$ are independent random variables (or vectors) then for a real valued function $f(Y_1, Y_2)$ we use the following shorthand:~$\EE_{Y_1}[f(Y_1,Y_2)]:= \EE[f(Y_1, Y_2)|Y_2]$. Similarly we define $\PP_{Y_1}$. 

  The notation $\B^n$ and $\mathbb{S}^{n-1}$ will be used to denote the unit $\ell^2$ ball and sphere, respectively. For $\cJ \subset [n]$ we further let $\B^\cJ \subset \B^n$  to be set of vectors $u \in \B^n$ such that $\mathrm{supp}(u) \subset \cJ$, where $\mathrm{supp}(v)$ denotes the support of the vector $v$. Similarly we define $\mathbb{S}^\cJ \subset \mathbb{S}^{n-1}$ and $\RR^\cJ \subset \RR^n$. 
For a set $S$ we use both $\#S$ and $|S|$ to denote its cardinality.
  

\subsection*{Organisation of the rest of the paper}
Section \ref{sec:outline-pf} provides a high level description of the proof of Theorem \ref{theo-main-weight}. Section \ref{sec:ldp-lbd} contains the proof of the large deviation lower bound. In Section \ref{sec:concflat} we develop an upper tail concentration bound for convex Lipschitz functions with `flat' sub gradients of Wigner matrices with bounded entries. Sections  \ref{sec:bdd-entry} and \ref{sec:ld-ub-gen} provides the proof of the large deviation upper bound in the bounded and in the unbounded cases, respectively. Section \ref{sec:loc-law} proves an isotropic local law for supercritical Wigner matrices which is instrumental in the proofs of the main results. The appendices contain proofs of several auxiliary results.

\subsection*{Acknowledgements} The research of AB was partially supported by DAE Project no.~RTI4001 via ICTS, the Infosys Foundation via the Infosys-Chandrashekharan Virtual Centre for Random Geometry, an Infosys–ICTS Excellence Grant,  and a MATRICS grant (MTR/2019/001105) from Science and Engineering Research Board.

\section{Outline of the proof}\label{sec:outline-pf}

The proof of Theorem \ref{theo-main-weight} broadly splits into two parts:~the large deviation upper bound and the lower bound. The proof of the lower bound is much less involved and most of this article is devoted to the proof of the upper bound. To prove the upper bound we first consider the bounded case first, i.e.~when the entries of $G$ are assumed to be bounded. The unbounded case requires some additional arguments. 
The main ideas behind these steps are outlined below. 

\subsection{The lower bound}\label{sec:lbd-outline} 
We derive lower bounds on the probabilities of two possible optimal scenarios which consists of  planting a clique of sub entropic size with large edge weights on the one hand, and having a high degree vertex with a large weight on the other hand. For the first scenario, plant a clique on the first $k$ vertices, with $1\ll k \ll \sqrt{np/\log(1/p)}$ and prescribe the edge weights $G_{i,j}$ to be of order $\sqrt{np}/(km(t))$ for a given $t>2$. On this event, we show that we can approximate our matrix by a rank one perturbation of a certain matrix. Namely, we show that $X= \widetilde{X}+ P +\Delta^*$, where $\widetilde{X}$ is the matrix obtained from $X$ after zeroing out $X_{[k]}$, $P := (1/m(t)) {\bm e}_k {\bm e}_k^{\sf T}$, 
and $\Delta^*$ is some matrix such that $\|\Delta^*\|=o(1)$ (recall our notation from Section \ref{sec:notation}). Since $1/m(t)>1$, the BBP transition entails that typically on this event, the top eigenvalue of $X$ is located around $t$. Now the cost of this event (to ensure that $\|\Delta^*\|=o(1)$) at the exponential scale $np$, can be shown to be
\[ \binom{k}{2} \log(1/p) +\binom{k}{2} \frac{np}{2\beta k^2 m(t)^2} = \frac{np}{4\beta m(t)^2} (1+o(1)),\]
using the fact that $1\ll k \ll \sqrt{np/\log(1/p)}$ and that the diagonal entries are typically small.  

For the second scenario, prescribe the first vertex to have degree $s$ in the network $X$, in other words $\|\widecheck{X}_1\|^2 =s+o(1)$, and a large weight $X_{1,1} = r+o(1)$, where $r\geq 0$ and $s\geq 1$ are such that $r + m(t) s = t$ for a given $t>2$ and are the optimisers of the variational problem defining $\widehat{I}(t)$. On this event, one can approximate $X$ by a rank two perturbation $Q$ of $W$, 
where
\[ W := \begin{pmatrix} r & 0 \\ 0 & X^{(1)} \end{pmatrix} \qquad \text{ and } \qquad Q := \begin{pmatrix} 0 & \widecheck{X}_1^{\sf T} \\ \widecheck{X}_1 & 0 \end{pmatrix}.\]
In contrast to the first scenario, the resolvent of $W$ at $z \in \CC$ is typically $(z-r)^{-1}{\bm e}_1{\bm e}_1^{\sf T} + m(z) {\rm Id}_{n-1}$. Although the BBP transition is not readily applicable, the same type of computation shows that typically the top eigenvalue of $W+Q$ is located at $t$. On the other hand, the cost of this event (i.e.~for the approximation mentioned above to have a negligible norm) can easily be checked to be at the exponential scale $np$, $r^2/(2\alpha)+ h_L(s) = \widehat{I}(t)$. Putting together the lower bounds obtained using these two scenarios we get the large deviation lower bound rate function $I(t)$.

\subsection{The upper bound:~the bounded case}\label{sec:outline-bdd} As in many instances of large deviation problems involving sparse models, the atypical behaviour is governed by the emergence of some localised structures. Following this ansatz we expect the top eigenvector to localise upon a large deviation of the top eigenvalue. Choosing  ${\bm u}$ a top eigenvector of $X$ of unit norm, a level $\veps>0$ of localisation,  and setting $J$ to be the subset of vertices where the entries of ${\bm u}$ are in absolute value greater than $\veps$, we decompose $X$  as the following block matrix: 
\begin{equation} \label{decomp-X} X = \begin{pmatrix}
X_{{J}} &  \widecheck{X}_{{J}}^{\sf T} \\
\widecheck{X}_{{J}}  &  X^{({J})}
\end{pmatrix},\end{equation}
and we write ${\bm u}^{\sf T}=({\bm v}^{\sf T},{\bm w}^{\sf T})$, where ${\bm v}\in \RR^{{J}}$ and ${\bm w} \in \RR^{{J}^c}$\footnote{The vectors ${\bm v}$ and ${\bm w}$ depend on $\vep$. In this section we suppress this dependency. In latter sections, the dependence in $\vep$ will be made explicit}. From the eigenvalue-eigenvector equation, we get
\begin{equation} \label{eq-top-ei-sec2} \begin{cases}
&X_{{J}} {\bm v} + \widecheck{X}_{{J}}^{\sf T} {\bm w} =\lambda_X {\bm v},\\
& \widecheck{X}_{{J}}{\bm v}  + X^{({J})} {\bm w} = \lambda_X {\bm w}.
\end{cases}
\end{equation}
If $\lambda_X \notin {\rm Spec}(X^{({J})})$ then from the second line of \eqref{eq-top-ei-sec2} we find that ${\bm w} = (\lambda_X -X^{({J})})^{-1} \widecheck{X}_{{J}}{\bm v}$. 
Taking an inner product with ${\bm w}$ in the first line gives the following: 
\begin{equation}\label{eq-top-ei-sec2-1} \begin{cases}
&\langle {\bm v}, X_{{J}} {\bm v}\rangle  + \langle \widecheck{X}_{{J}} {\bm v}, (\lambda_X -X^{({J})})^{-1}\widecheck{X}_{{J}} {\bm v} \rangle =\lambda_X \|{\bm v}\|^2,\\
&{\bm w} = (\lambda_X -X^{({J})})^{-1} \widecheck{X}_{{J}}{\bm v}.
\end{cases}
\end{equation}
Note that as ${\bm u}$ is of unit norm the set $J$ is of bounded size. In particular, as we assumed $np \gg \log n$, the set $J$ carries no entropy at the exponential scale $np$, and therefore one can argue as if $J$ were a deterministic set. At the price of adding a certain set of vertices of size at most the multiplicity of $\lambda_X$ (which is bounded with high probability), one can actually assume that $\lambda_X\notin {\rm Spec}(X^{(J)})$ (see Lemmas \ref{top-ei-submatrix} and \ref{conc-multi}). Now we move forward by showing that upon a large deviation of the top eigenvalue, the top eigenvector indeed localises in the sense that ${\bm v}$ carries a non trivial $\ell^2$ norm, 
and that the upper tail deviation of $ \langle \widecheck{X}_{{J}} {\bm v}, (\lambda_X -X^{({J})})^{-1}\widecheck{X}_{{J}} {\bm v} \rangle $ is of much faster speed than $np$, which enables us to argue that
\begin{equation}\label{eq:pf-outline1}
\langle \widecheck{X}_{{J}}{\bm v}, (\lambda_X -X^{({J})})^{-1} \widecheck{X}_{{J}}{\bm v} \rangle \le  m(\lambda_X) \|\widecheck{X}_{{J}}{\bm v}\|^2 +o(1),
\end{equation}
at the large deviations scale (recall \eqref{eq:m-def}). Since the boundedness of the entries of $G$ imply that $\langle {\bm v}, X_{{J}} {\bm v}\rangle =o(1)$, given \eqref{eq:pf-outline1}, by the first line of \eqref{eq-top-ei-sec2-1}, the fact that $x \mapsto x/m(x)$ is increasing on $[2, +\infty)$, and that $\lambda_X$ is exponentially tight at the exponential scale $np$ we deduce that on the event where $\lambda_X\geq t>2$, 
\[
 t /m(t)  \le \lambda_X /m(\lambda_X) \le \|\widecheck X_{{J}} {\bm v}\|^2 /\|{\bm v}\|^2 +o(1)\le \|\widecheck X_{{J}}\|^2 +o(1),
\]
using that $\|{\bm v}\|\gtrsim 1$ at the large deviation scale. 
Next showing that $\|\widecheck X_{{J}}\|^2$ is well approximated by $\max_{i \in [n]} \|X_i\|^2$ at the exponential scale the proof for the bounded case completes.  

The proof of \eqref{eq:pf-outline1} requires considerable work. Denote by $f_{\lambda, w}(K):= \langle w, (\lambda - K)^{-1} w \rangle$ (whenever defined). Observe that formally, by our decomposition of ${\bm u}$, the subgradient of this map for $\lambda=\lambda_X$ and ${\bm w}=\widecheck{X}_{{J}} {\bm v}$ at $K=X^{({J})}$  is given by ${\bm w}{\bm w}^{\sf T}$. Note that ${\bm w}$ is delocalised because by our choice $\|{\bm w}\|_\infty \le \vep$.
This key observation motivates the following two step strategy:~First, develop a general concentration bound for convex {Lipschitz} functions of $X$ with `flat' subgradients, meaning that the subgradients are of the form $\sum_{\ell=1}^k \theta_\ell {w}_\ell {w}_\ell^{\sf T}$ where $|\theta_\ell|\leq 1$, $\|w_\ell\|\leq 1$ and $\|w_\ell \|_\infty \leq \veps$,  showing that the probability that it exceeds its expectation is negligible at the exponential scale $np$.  This program is carried out in detail in Section \ref{sec:concflat}.  Due to the complicated nature of dependence between ${\bm v}, \lambda_X, {J}$  in the entries of $X$, one cannot simply apply such a concentration result 
to $f_{\lambda_X, \widecheck{X}_{{J}} {\bm v}}(X^{({J})})$, as it does not allow one to use the joint independence of $\{X_{i,j}\}_{i \le j}$. To overcome this technical difficulty one additionally needs to regularise $f_{\lambda, w}$ and work with its smoothened version that equals $f_{\lambda,w}$ on a `good' set and retain the properties that it is Lipschitz and its subgradient is flat, and use net arguments. We refer the reader to Section \ref{section:concresolv} for more details. Let us remark in passing that the lower bound $\|{\bm v}\| \gtrsim 1$, at the large deviations scale, is obtained also as a consequence of the concentration bounds developed in Section \ref{sec:concflat}.

 In a second step, we need to prove a bound on the expectation of (a smoothed version of)  $f_{\lambda_X,\widecheck{X}_J{\bm v}}(X^{(J)})$, which amounts to show an isotropic local law for our model.  Namely for $z \in \CC$ with $|\Im z| \ll 1 $ and $|\Re z\pm 2| \gtrsim 1$ we derive in Section \ref{sec:loc-law} that $\EE \langle w, (z-X)^{-1} w \rangle \to m(z)$ as $n \to +\infty$, for any $w \in \mathbb{S}^{n-1}$ such that $\|w\|_1 \ll np$
\footnote{If $\log(np) \ll \log n$ we could not remove the upper bound $\|w\|_1 \ll np$. Nevertheless it is good enough for Theorem \ref{theo-main-weight}.}, in the entire sparsity regime $np \gg \log n$. 

Let us mention that  results on isotropic local laws in `sparse' settings are there in the literature. For example, see \cite{BHY17, BKY17, EKHY13}. However, as we need the isotropic local law for vectors $w$ that are not necessarily orthogonal to the constant vector and in the entire supercritical regime, none of these results are applicable. To prove the required local law, at a high level we use the general strategies developed in \cite{BHY17}, namely expressing the entries of the resolvent using the Schur complement formula and the resolvent identity, and then perform a more delicate counting argument (compared to  \cite{BHY17}). This ends the sketch of the proof of \eqref{eq:pf-outline1}.

\subsection{Upper bound:~the general case} In the unbounded case one would like to employ a similar strategy as described in Section \ref{sec:outline-bdd}. However, the concentration bounds developed in Section \ref{sec:concflat} need the entries of the matrix under consideration to be bounded. This prompts one to decompose $G$ into $\widetilde A$ and $\widetilde B:= G- \widetilde A$, where the entries of $\widetilde A$ are bounded and centered with the truncation threshold being some large, but of order $1$, parameter that will eventually be sent to infinity. Such a split introduces dependencies between $\widetilde{ A}$ and $\widetilde{B}$. 
Nevertheless, upon carefully choosing a threshold for truncation and using a decoupling argument, we argue in Section \ref{section:decoupl} that it suffices to find the upper tail large deviations of $\lambda_Z$, where $Z:=H+B$, $B:=\widetilde B \circ \Xi/\sqrt{np}$, and $H:=\widetilde{H} \circ \Xi'/\sqrt{np}$, with matrices $\widetilde{H}$, $\Xi'$ and $B$ being independent, $\Xi'$ with the same distribution as $\Xi$, and for any $i,j\in [n]$, $\widetilde{H}_{i,j}$ having the distribution of $\widetilde{A}_{i,j}$ conditioned to be non zero. 

As we expect a new scenario to emerge - the planted clique of sub entropic size with large edge weights - we consider a different localisation threshold compared to the bounded case. For a choice of top eigenvector ${\bm u}$ of unit norm of $Z$ we set $J$ to be the subset of vertices $i$ such that $|{\bm u}_i|\geq \veps^{-1} \sqrt{\log(1/p)/(np)}$, together with the vertices $i$ with non trivial degree in the network $B$, that is $\|B_i\|\geq \delta$, for some well chosen $\delta>0$. As it can be shown that there are only a bounded number of vertices $i$ such that $\|B_i\|$ is bounded away from zero with high probability, the possible number of choices for the random set ${J}$ is still negligible at the exponential scale $np$ and therefore one can again treat ${J}$ as a deterministic subset. 

Similarly as in the bounded case, we start by showing that the top eigenvector localises, in the sense that $\|{\bm v}\|\gtrsim 1$ upon a large deviation of the top eigenvalue. Next, decomposing again ${\bm u}$ and $Z$ along the set $J$, and proceeding as in \eqref{decomp-X}-\eqref{eq-top-ei-sec2-1} give the equations
\begin{equation}\label{eq-top-ei-sec2-2} \begin{cases}
&\langle {\bm v}, Z_{{J}} {\bm v}\rangle  + \langle {\bm x}, (\lambda_Z -H^{({J})})^{-1} {\bm x} \rangle -\langle {\bm w}, B^{({J})} {\bm w} \rangle =\lambda_Z \|{\bm v}\|^2,\\
&{\bm w} = (\lambda_{Z} -H^{({J})})^{-1} {\bm x}, \qquad {\bm x}:= \widecheck{Z}_{{J}}{\bm v} + B^{({J})}{\bm w}.
\end{cases}
\end{equation}
Arguing as in the bounded case and using that $\|w\|_\infty \leq \veps^{-1}\sqrt{\log(1/p)/(np)} =o(1)$ we get that the upper tail large deviation of the resolvent $\langle {\bm x}, (\lambda_Z-H^{(J)})^{-1}{\bm x}\rangle$ concentrates at a faster exponential scale. Therefore arguing as in \eqref{eq:pf-outline1} we obtain
\begin{equation}\label{eq:pf-outline2}
 \lambda_Z \|{\bm v}\|^2 \leq \langle {\bm v}, Z_{{J}} {\bm v}\rangle  + m(\lambda_Z) \|\widecheck{Z}_{{J}}{\bm v} + B^{({J})}{\bm w}\|^2 -\langle {\bm w}, B^{({J})} {\bm w} \rangle+o(1),
\end{equation}
at the exponential scale $np$. Recall that $\widecheck{Z}_J= \widecheck{H}_J+\widecheck{B}_J$. Using repetitively our concentration bounds from Section \ref{sec:concflat} we can further simply this inequality. First, we show that with high probability $\|B^{(J)}{\bm w}\| \ll 1$, by taking advantage of the fact that $J$ does not contain any vertex $i$ for which $\|B_i\| \gtrsim 1$.  
Next, using the independence between $\widecheck{H}_J$ and $\widecheck{B}_J$, we obtain with high probability that $\widecheck{H}_J{\bm v}$ and $\widecheck{B}_J {\bm v}$ are almost orthogonal. As the entries of $H$ bounded, one can further identify the contributions to $\|\widecheck{H}_J {\bm v}\|^2$, and prove at the exponential scale that 
\[ \|\widecheck{H}_J {\bm v}\|^2 = \sum_{i\in \mathfrak{J}} {\bm v}_i^2 \|\widecheck{H}_i\|^2 +o(1), \qquad \text{where } \, \mathfrak{J}:=\{i\in J : |{\bm u}_i|\geq \veps\}.\]
Putting everything together yields at the large devitaion scale the inequality 
\[  \lambda_Z \|{\bm v}\|^2 \leq \langle {\bm v}, Z_{{J}} {\bm v}\rangle  + m(\lambda_Z) \big(  \sum_{i\in \mathfrak{J}} {\bm v}_i^2 \|\widecheck{H}_i\|^2 + \|\widecheck{B}^{(J)}{\bm v}\|^2\big)+o(1).\]
At this stage all the variables that are at play, upon fixing the set $J$, are independent and have a tractable large deviation behaviour since ${\bm v}$ is a short vector. As a result, we deduce that the $\log$-probability of the upper tail event of $\lambda_Z$ can be bounded from above by a certain variational problem (see Proposition \ref{varpbUT}). Finally analyzing the variational problem obtained we can conclude that there are only two optimal scenarios:~either all the mass of ${\bm v}$ is localised on one vertex, yielding the high degree vertex scenario, or it is delocalised on its support, implying the planted clique scenario.

\section{Large deviation lower bounds}\label{sec:ldp-lbd}
In this section we prove the large deviation lower bounds of Theorems \ref{theo-main-weight}, \ref{thm:cen-adj}, and \ref{thm:ncen-adj}. 
\subsection{Typical behaviour of the top eigenvalue and the resolvent}
In a first step we collect some information about the spectrum of sparse Wigner matrices when $np \gg \log n$. Most of our results in this section and in the latter sections  hold under the above assumption of sparsity. Therefore, for brevity we have chosen not to repeat the sparsity assumption in the statements of those results. The sparsity assumption will be mentioned whenever it is different from the above.
\begin{Lem}\label{cvedgetyp}
$\lambda_X$ and $\|X\|$ converge to $2$ in probability when $n \to +\infty$. Moreover, the same holds for ${\rm Adj}_o/\sqrt{np}$ instead of $X$. \end{Lem}
\begin{proof}We will only prove the statement for $X$ as the one for ${\rm Adj}_o$ is similar. 
Let $R>0$ and define the matrix $\widetilde{A}^R$ by $\widetilde{A}^R_{i,j} :=G_{i,j} \Car_{|G_{i,j}| \le R} - \EE [G_{i,j} \Car_{|G_{i,j}| \le R}]$ for any $i,j\in[n]$.
Letting $\widetilde{B}^R := G- \widetilde{A}^R$, set 
\[ 
A^R := (\widetilde{A}^R \circ \Xi)/\sqrt{np} \qquad \text{ and } \qquad  B^R := (\widetilde{B}^R \circ \Xi)/\sqrt{np},
\]
Since $\EE ((\widetilde{A}^R)_{i,j})^2=1+o_R(1)$ for any $i\neq j$, and $|\widetilde{A}^R_{i,j}|\leq 2R$, we get by  \cite[Theorem 2.7]{BeBoKn} that $\EE( \lambda_{A^R} )\leq 2+o_R(1)$ for $n$ large enough. Since the entries above the diagonal of the matrix $B^R$ are i.i.d.~applying Seginer's theorem (see \cite[Theorem 1.1]{Seginer}) and a standard symmetrisation argument (see \cite[Proof of Theorem 5]{HWX16}), it gives $\EE \|B^R\|\lesssim \EE \max_{i\in[n]} \|B_i^R\|$. Now, for any $\theta >0$ and $i,j \in [n]$, denoting $\widehat{B}_{i,j}^R= G_{i,j} \Car_{|G_{i,j}|>R}$,
\begin{equation} \label{loglaplaB2} \log \EE ( e^{\theta \widehat{B}_{i,j}^2 \xi_{i,j}}) = \log (1-p+p\EE (e^{\theta \widehat{B}_{i,j}^2})) \leq p \EE(e^{\theta \widehat{B}_{i,j}^2}-1)\leq  p\EE( \Car_{|G_{i,j}|>R)} e^{\theta G_{i,j}^2}).\end{equation}
As $G$ is sub-Gaussian we deduce that for $\theta$ small enough independent of $(i,j)$,  $\log \EE (e^{\theta \widehat{B}_{i,j}^2 \xi_{i,j}})=o_R(1)p$, which entails, as $\EE[G_{i,j} \Car_{|G_{i,j}|>R}]=o_R(1)$, that $\log \EE(e^{\theta np \|B_i^R\|^2}) = o_R(1)np$ for any $i\in[n]$. As $np \gg \log n$, it follows that $\EE \max_{i\in [n]} \|B_i^R\| =o_R(1)$, and as a result $\EE (\lambda_{B^R}) =o_R(1)$. Since $\lambda_X\leq \lambda_{A^R}+\lambda_{B^R}$, this shows that $\EE( \lambda_X )\leq 2+o_R(1)$ for $n$ large enough. Besides, as $K\mapsto \lambda_K$ is convex and $1$-Lipschitz with respect to the Hilbert-Schmidt norm on $\mathcal{H}_n$, and the entries of $X$ are sub-Gaussian by Assumption \ref{hypo:subg-entry}, we know from  \cite[Theorem 1.3]{HT21} that for any $t\geq 1/\sqrt{np}$, 
\[ \PP\big(\lambda_X- {\rm Med}(\lambda_X)>t\big) \leq \exp\Big(-\frac{t^2 np}{C\log(C^2n)}\Big),\]
where $C$ is a positive constant depending on the parameters $\alpha$ and $\beta$ of Assumption \ref{hypo:subg-entry}, and ${\rm Med}(\lambda_X)$ is a median of $\lambda_X$. In particular, integrating this inequality yields $\EE (\lambda_X - {\rm Med}(\lambda_X))_+ =o(1)$. Since $\EE(\lambda_X)\leq 2+o_R(1)$ for $n$ large enough, we conclude that $\PP(\lambda_X\geq 2+\delta)$ goes to $0$ as $n$ goes to $+\infty$ for any $\delta>0$. The same holds as well for the lower tail as a consequence of the convergence in probability of the ESD of $X$ to the semicircle law \cite{KhKhPS}. This shows that $\lambda_X \to 2$ in probability as $n \to +\infty$. Replacing $X$ by $-X$ we get the same for $\lambda_{-X}$ and thus we further derive that $\|X\| \to 2$ in probability as $n \to +\infty$. This completes the proof. 
\end{proof}

Next we state a concentration bound for the resolvent $\langle u, (\lambda-X)^{-1} u\rangle$, where $u$ is a unit vector away from the constant vector direction and $\lambda$ is above the spectrum of $X$. Its proof is deferred to Section \ref{sec:loc-law} where we state and prove the isotropic local law for sparse supercritical Wigner matrices. This latter result is required to identify the limit of $\EE\langle u, (\lambda-X)^{-1} u\rangle$.

\begin{Pro}\label{prop:conc-loc-law}
Fix any sequence $\{s_n\}_{n \in \NN}$ such that $\lim_{n \to \infty}s_n=0$. Define  ${\mathcal{U}}_{n}:= \{u \in \mathbb{S}^{n-1}: \|u\|_1 \le s_n np\}$. For any $t,\delta>0$, $n$ large enough, and any $u\in{\mathcal{U}}_n$,
\begin{equation}\label{eq:conc-loc-law}
\PP\Big(\sup_{\lambda\geq 2+2\delta} \big|  \langle u, (\lambda-X)^{-1} u\rangle - m(\lambda)  \big| \ge t, \lambda_X\leq 2+\delta \big) \leq \exp \Big(-\frac{\delta^4 t^2 np}{c\log n}\Big),
\end{equation}
where $c>0$ is a positive numerical constant. Moreover, the same result holds for ${\rm Adj}_o/\sqrt{np}$ instead of $X$.
\end{Pro}

\subsection{Large deviation estimates}The second piece needed in our argument of the large deviation lower bound is the large deviation tail estimate of individual entries of $G$ and of the degree of a vertex in the network $X$, understood as the square of the $\ell^2$-norm of a column of $X$ after zeroing out its diagonal entries. 
\begin{Lem}\label{lbcoeff}
For any $t>0$, 
\[ \liminf_{n\to +\infty} \frac{1}{np} \log \PP\big(G_{1,1}\geq t\sqrt{np}\big)\geq - \frac{t^2}{2\alpha}, \  \liminf_{n\to +\infty} \frac{1}{np} \log \PP\big(G_{1,2}\geq t\sqrt{np}\big)\geq - \frac{t^2}{2\beta}.\]
\end{Lem}
\begin{proof}We will only prove the large deviation lower bound for $G_{1,2}$, as the proof of $G_{1,1}$ is identical. In a first step we show that
\begin{equation}\label{eq:Lap+}
\lim_{n \to +\infty} \frac{1}{np} \log \EE \exp(\theta \sqrt{np} (G_{1,2})_+) = \left\{\begin{array}{ll}
\frac{\theta^2\beta}{2} & \mbox{ if }\theta >0,\\ 
0 & \mbox{ otherwise}.
\end{array} 
\right.
\end{equation}
To see \eqref{eq:Lap+} we fix $\theta >0$. As $\EE \exp(\zeta G_{1,2})\geq 1$ for any $\zeta\in \RR$ (since $G_{1,2}$ is centered), we have $\EE e^{\zeta (G_{1,2})_+}\leq 2 \EE e^{\zeta G_{1,2}}$ for any $\zeta \in \RR$. It follows that 
\begin{equation}\label{eq:Lap+1}
\limsup_{n \to +\infty} \frac{1}{np} \log \EE \exp(\theta \sqrt{np} (G_{1,2})_+) \le \limsup_{n \to +\infty} \frac{1}{np} \log \EE \exp(\theta \sqrt{np} G_{1,2}) \le \frac{\theta^2 \beta}{2}.
\end{equation}
Further 
\begin{multline}\label{eq:Lap+2}
\liminf_{n \to +\infty} \frac{1}{np}\log \EE \exp(\theta \sqrt{np} (G_{1,2})_+) \ge \liminf_{n \to +\infty} \frac{1}{np}\log \EE[ \exp(\theta \sqrt{np} (G_{1,2})){\bf 1}_{\{G_{1,2} >0\}}] \\
\ge  \liminf_{n \to +\infty} \frac{1}{np}\log \EE \exp(\theta \sqrt{np} G_{1,2}) = \frac{\theta^2 \beta}{2},
\end{multline}
where in the last inequality we use Harris' inequality and the fact that $\PP(G_{1,2} >0) >0$ as $\EE G_{1,2}=0$ and $\EE G_{1,2}^2>0$. Combining \eqref{eq:Lap+1} and \eqref{eq:Lap+2} we get \eqref{eq:Lap+} for $\theta >0$. 
Now consider $\theta \le 0$. In this case, using $\theta \sqrt{np}(G_{1,2})_+\le 0$ we get the claimed upper bound.
To prove the matching lower bound we observe that $
\EE \exp(\theta \sqrt{np} (G_{1,2})_+) \ge \PP(G_{1,2} \le 0) >0$, which ends the proof of \eqref{eq:Lap+} for $\theta \le 0$. From \eqref{eq:Lap+} the proof is complete by an application of G\"artner-Ellis theorem \cite[Theorem 2.3.6(c)]{DZ}.
\end{proof}

\begin{Lem}\label{LDP-deg}
Under Assumption \ref{hypo:subg-entry}, for any $t>1$, 
\begin{equation}\label{eq:LDP-deg-ut}
\lim_{n \to +\infty} \frac{1}{np}\log \PP (\|\widecheck{X}_1\|^2 \geq t) = - h_L(t),
\end{equation}
where $h_L$ is defined in Definition \ref{def:rate-fn}. In particular, with $h$ as in \eqref{eq:def-h},
\begin{equation}\label{eq:LDP-deg-ut-R}
\lim_{n \to +\infty} \frac{1}{np}\log \PP \Big(\sum_{i=1}^n \xi_{i,1} \geq t np\Big) = -h (t).
\end{equation}
\end{Lem}
To prove Lemma \ref{LDP-deg} we will need the following properties of $L$ and $h_L$. The following notation is used below:~for a set $S \in \RR$, we let ${\rm Int}(S)$ to be the interior of $S$. 

\begin{lemma}\label{lem:hL-prop} 
Under Assumption \ref{hypo:subg-entry} the following properties hold:
\begin{enumerate}
\item[(a)] Let $\mathcal{D}_L:=\{\theta \in \RR: L(\theta) < +\infty\}$. Then ${\rm Int}(\mathcal{D}_L) = (-\infty, 1/(2\beta))$. 

\item[(b)] The map $\theta \mapsto L(\theta)$ is convex and infinitely differentiable on ${\rm Int}(\mathcal{D}_L)$. Furthermore, on ${\rm Int}(\mathcal{D}_L)$ the maps $\theta \mapsto L(\theta)$ and $\theta \mapsto L'(\theta)$ are strictly increasing. 

\item[(c)] If $x_\star:=\lim_{\theta \uparrow 1/(2\beta)} L'(\theta) < +\infty$ then $L_\star:=  \lim_{\theta \uparrow 1/(2\beta)} L(\theta) < +\infty$.

\item[(d)] Assume $x_\star < +\infty$. Then, for $x \ge x_\star$ we have that $h_L(x)=\frac{x}{2\beta} - L_\star +1$.

\item[(e)] The map $x \mapsto h_L(x)$ is differentiable on $(0, +\infty)$, with $h_L'(1)=0$, {increasing on $(1, +\infty)$, and  decreasing on $(0, 1)$.} The derivative $h_L'$ on $(0, +\infty)$ is differentiable, except possibly at $x_\star$, non decreasing, and concave with strict concavity  on $(0, x_\star)$ and $h_L'(x) = 1/(2\beta)$ for $x\geq x_\star$. 
\end{enumerate}
\end{lemma}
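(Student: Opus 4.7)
The plan is to view $L$ as the moment generating function of the nonnegative variable $G_{1,2}^2$ and apply standard Laplace/Legendre calculus, with \eqref{subgaussian} used to identify the divergence abscissa $1/(2\beta)$. For part (a), $L(\theta) \le 1$ when $\theta \le 0$, so only positive $\theta$ need attention. A clean route is the Gaussian mixture identity: for $\theta > 0$ and $Z \sim \mathcal{N}(0,1)$ independent of $G_{1,2}$, one has $\exp(\theta G_{1,2}^2) = \EE_Z \exp(\sqrt{2\theta}\, Z\, G_{1,2})$, so by Fubini $L(\theta) = \EE_Z \exp(\Lambda_{1,2}(\sqrt{2\theta}\, Z))$. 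By \eqref{subgaussian}, for any $\epsilon>0$ and $|z|$ large, $\Lambda_{1,2}(\sqrt{2\theta}\, z) \le (1+\epsilon)\theta\beta z^2$, so $L(\theta) < +\infty$ whenever $\theta < 1/(2\beta)$; conversely for $z \to +\infty$ one has $\Lambda_{1,2}(\sqrt{2\theta}\, z) \ge (1-\epsilon)\theta\beta z^2$, forcing $L(\theta) = +\infty$ for $\theta > 1/(2\beta)$.

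Part (b) is routine differentiation under the integral: on ${\rm Int}(\mathcal{D}_L) = (-\infty, 1/(2\beta))$ one obtains $L^{(k)}(\theta) = \EE[G_{1,2}^{2k} e^{\theta G_{1,2}^2}]$ by dominating the integrand via a slightly larger $\theta' \in {\rm Int}(\mathcal{D}_L)$. Convexity is immediate from $L'' \ge 0$, and strict monotonicity of $L$ and $L'$ follows from $L'(\theta), L''(\theta) > 0$, using that $G_{1,2}$ is nondegenerate (since $\EE G_{1,2}^2 = 1$). Part (c) is the elementary bound $e^y - 1 \le y e^y$ for $y \ge 0$ applied at $y = G_{1,2}^2/(2\beta)$: monotone convergence then yields $L_\star \le 1 + x_\star/(2\beta) < +\infty$.

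For part (d), when $x \ge x_\star$, the derivative $x - L'(\theta)$ of $\theta \mapsto \theta x - L(\theta)$ is nonnegative throughout $(-\infty, 1/(2\beta))$, so the supremum is attained as $\theta \uparrow 1/(2\beta)$, giving $h_L(x) = x/(2\beta) - L_\star + 1$. Part (e) follows from standard Legendre duality applied to the strictly convex function $L$. On $(0, x_\star)$ the optimiser in the variational problem is $\theta^\star(x) = (L')^{-1}(x)$, so the envelope theorem gives $h_L'(x) = (L')^{-1}(x)$. This yields $h_L'(1) = (L')^{-1}(1) = 0$ since $L'(0) = \EE G_{1,2}^2 = 1$, and the sign of $(L')^{-1}$ gives the monotonicity of $h_L$ on the two sides of $1$. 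The inverse function theorem gives $h_L''(x) = 1/L''((L')^{-1}(x))$ on $(0, x_\star)$; this is positive (so $h_L'$ is increasing) and strictly decreasing in $x$, because $L'''(\theta) = \EE[G_{1,2}^6 e^{\theta G_{1,2}^2}] > 0$ implies $L''$ is strictly increasing in $\theta$, yielding the strict concavity of $h_L'$ on $(0, x_\star)$. The value $h_L'(x) = 1/(2\beta)$ for $x \ge x_\star$ is immediate from (d) and matches $\lim_{x \uparrow x_\star}(L')^{-1}(x) = 1/(2\beta)$, securing continuity of $h_L'$ at $x_\star$ (differentiability there holds iff $L''(\theta) \to +\infty$ as $\theta \uparrow 1/(2\beta)$).

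The only genuinely delicate point is part (a): extracting from the one-sided limits in \eqref{subgaussian} the exact divergence abscissa $1/(2\beta)$ for $L$. The Gaussian mixture identity above handles this cleanly, and the remaining parts reduce to routine Laplace/Legendre transform calculus, where the key observation is that $L$ is the MGF of a squared sub-Gaussian variable and hence its higher derivatives admit explicit positive representations that control all convexity/monotonicity claims.
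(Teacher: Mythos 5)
Your proof is correct and follows essentially the same route as the paper's: part (a) via the identical Gaussian mixture identity $L(\theta) = \EE_\Gamma \exp(\Lambda_{1,2}(\sqrt{2\theta}\Gamma))$ combined with the asymptotics in \eqref{subgaussian}; parts (b)--(d) by routine differentiation under the integral and the observation that $\theta \mapsto \theta x - L(\theta)$ is non-decreasing on $(-\infty, 1/(2\beta))$ once $x \ge x_\star$; and part (e) via the inverse relation $h_L' = (L')^{-1}$ on $(0,x_\star)$. The only substantive variations are cosmetic: for (c) you use $e^y - 1 \le y e^y$ with monotone convergence where the paper uses the convexity inequality $L(\theta) \le L(\theta_0) + (\theta-\theta_0)L'(\theta)$, and for (e) you invoke the envelope and inverse-function theorems where the paper cites Rockafellar's subdifferential duality — both are equivalent implementations of the same Legendre calculus.
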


The proof of Lemma \ref{lem:hL-prop} is postponed to Appendix \ref{app:aux-res}.

\begin{proof}[Proof of Lemma \ref{LDP-deg}]
Observe that for any $\theta \in \RR$,
\[ 
\Lambda_n(\theta) := \frac{1}{np}  \log \EE \big(e^{np \theta \|\widecheck{X}_1\|^2} \big)=\frac{n-1}{np}\log \left( 1- p +pL(\theta)\right) \to L(\theta)-1, \text{ as } n \to +\infty.\]
Thus the upper bound
\begin{equation}\label{eq:ut-ubd}
\limsup_{n \to +\infty} \frac{1}{np}\log \PP (\|\widecheck{X}_1\|^2 \ge t) \le - h_L(t),
\end{equation}
is immediate from Chernoff's inequality. To prove a matching lower bound we need to consider different cases separately. 

If $x_\star = +\infty$ (recall Lemma \ref{lem:hL-prop}(c)) or $\beta=0$ (recall Lemma \ref{lem:hL-prop}(a)), then the map $\theta \mapsto L(\theta)$ is essentially smooth (see \cite[Definition 2.3.5]{DZ}). Therefore, the claimed large deviations principle is immediate from G\"artner-Ellis theorem \cite[Theorem 2.3.6(c)]{DZ}.

Thus for the rest of the proof we will assume that $x_\star < +\infty$ and $\beta >0$. By Lemma \ref{lem:hL-prop}(b)-(c) for any $x \in (0, x_\star)$ there exists some $\theta \in {\rm Int}(\mathcal{D}_L)$ such that $L'(\theta)=x$. Therefore, applying \cite[Theorem 2.3.6(b)]{DZ}, \cite[Lemma 2.3.9(b)]{DZ}, and Lemma \ref{lem:hL-prop}(e)  we have that
\begin{equation}\label{eq:ut-lbd}
\liminf_{n \to +\infty} \frac{1}{np} \log \PP(\|\widecheck{X}_1\|^2 \ge x) \ge - h_L(x), \quad x \in(1, x_\star).
\end{equation}
To extend the lower bound \eqref{eq:ut-lbd} for $t \ge x_\star$ we set $\Theta:= \|\widecheck X_1\|^2 - X_{1,2}^2$ and note that the lower bound \eqref{eq:ut-lbd} continues to hold for $\Theta$. 
Using 
Lemma \ref{lbcoeff}, G\"artner-Ellis theorem \cite[Theorem 2.3.6]{DZ} and the fact that $np \gg \log(1/p)$ in the regime $np \gg \log n$ we obtain
\begin{equation}\label{eq:Xentrylbd}
\liminf_{n \to +\infty} \frac{1}{np} \log \PP(X_{1,2} \ge y ) \ge \liminf_{n \to +\infty} \frac{1}{np} \log \PP(G_{1,2} \ge y \sqrt{np}) \ge -\frac{y^2}{2\beta},
\end{equation}
for any $y >0$. 
Now using the independence of $\Theta$ and $X_{1,2}$, by \eqref{eq:ut-lbd} and \eqref{eq:Xentrylbd}, we deduce that
\begin{multline*}
\liminf_{n \to +\infty} \frac{1}{np} \log \PP(\|\widecheck{X}_1\|^2 \ge t) \ge \liminf_{n \to +\infty} \frac{1}{np} \log \PP(\Theta \ge x) + \liminf_{n \to +\infty} \frac{1}{np} \log \PP(X_{1,2}^2 \ge t-x)  \\
\ge - h_L(x) - \frac{t-x}{2\beta},
\end{multline*}
for any $x \in (1,x_\star)$ and $t \ge x_\star$. Letting $x \uparrow x_\star$ and using Lemma \ref{lem:hL-prop}(d) we extend \eqref{eq:ut-lbd} for all $t >1$. This completes the proof of \eqref{eq:LDP-deg-ut}.  The limit in \eqref{eq:LDP-deg-ut-R} follows from \eqref{eq:LDP-deg-ut} by taking $G_{i,j}$ to be Rademacher variables.
\end{proof}

\subsection{Proofs of the large deviation lower bounds}

\begin{proof}[Proof of Theorem \ref{theo-main-weight} (lower bound)]
We split the proof into two parts. In the first part we will show that
\begin{equation}\label{eq:lbd-part1}
\liminf_{n \to +\infty} \frac{1}{np} \log \PP(\lambda_X \ge t) \ge - \frac{1}{4\beta m(t)^2}, \qquad t \in (2, +\infty), \alpha \le 2\beta.
\end{equation}
For this part of the proof assume $\beta >0$. Otherwise, there is nothing to be proved. Let $\lambda>2$ and $y>1$ to be chosen later. Recall that, for $k \in [n]$ the vector ${\bm e}_k \in \RR^n$ is the one with its 
first $k$ coordinates are equal to $1/\sqrt{k}$ and $0$ otherwise. Define the matrices 
\[ \widetilde{X} := X - \begin{pmatrix} X_{[k]} & 0 \\ 0 & 0 \end{pmatrix},   P := y{\bm e}_k {\bm e}_k^{\sf T},  \text{ and }  T_y := \begin{pmatrix} X_{[k]} & 0 \\ 0 & 0\end{pmatrix}-P.  \]
Let $\delta>0$ and choose $y=1/m(\lambda+3\delta)$.  As $X = \widetilde{X} + P+T_y$, using Weyl's inequality we get $\lambda_X\geq \lambda_{\widetilde{X}+P} - \|T_y\|$. Thus, using independence,
\begin{align*}
 \PP(\lambda_X\geq \lambda) \geq \PP(\lambda_X\geq \lambda, \|T_y\|\leq \delta) & \geq  \PP(\lambda_{\widetilde{X}+P}\geq \lambda+\delta, \|T_y\|\leq \delta)\\
&=  \PP(\lambda_{\widetilde{X}+P}\geq \lambda+\delta)\PP( \|T_y\|\leq \delta).
\end{align*}
Using Weyl's inequality again it yields
\[  \PP(\lambda_X\geq \lambda) \geq \PP(\lambda_{X+P}\geq \lambda+2\delta, \|X_{[k]}\|\leq \delta)\PP(\|T_y\|\leq \delta).\]
We will show that 
\begin{equation} \label{costlb1} \liminf_{\delta \to 0\atop k \to +\infty} \liminf_{n\to +\infty} \frac{1}{np} \log \PP\big(\|T_y\|\leq \delta)\geq - \frac{1}{4\beta m(\lambda)^2},\end{equation}
and 
\begin{equation} \label{typical1} \lim_{n\to +\infty} \PP(\lambda_{X+P}\geq \lambda+2\delta, \|X_{[k]}\|\leq \delta)=1.\end{equation}
Once these two estimates are proved, the lower bound \eqref{eq:lbd-part1} immediately follows. 

The first estimate is straightforward from Lemma \ref{lbcoeff}. Indeed, on the one hand note that using Lemma  \ref{lbcoeff}, independence and the fact that $np \gg \log n$,
\[ \liminf_{n\to +\infty} \frac{1}{np} \log \PP\big( \max_{i\neq j\in[k] } |G_{i,j} - y\sqrt{np}/k| \leq \delta \sqrt{np}/(2k)\big) \geq -\binom{k}{2}\frac{(y-\delta/2)^2}{2 \beta k^2}.\]
On the other hand, we have $\max_{i\in[k]} |P_{i,i}| =O(1/k)$, and trivially
\[ \PP\big(\max_{i\in[k]} |G_{i,i}|\leq (\delta/4) \sqrt{np}\big) \underset{n\to+\infty}{\longrightarrow} 1.\]
Since for any $A\in\mathcal{H}_{k}$, $\|A\|\leq \max_{i\in[k]} |A_{i,i}|+k \max_{i\neq j \in [k]} |A_{i,j}|$, and $\log \PP(\forall i,j \in [k], \xi_{i,j}=1) = o(np)$ as $np\gg \log(1/p)$, the estimate \eqref{costlb1} follows by independence between the diagonal and off-diagonal coefficients. 

We now turn our attention to prove \eqref{typical1}.
For any $w\notin \mathrm{Spec}(X)$, we have
\[ \det(z-(X+P))= \det(z-X)\det\big({\rm Id}_n-y(z-X)^{-1}{\bm e}_k {\bm e}_k^{\sf T}\big).\]
Now, we know that for any $m\in\NN$ and $A\in \mathcal{M}_{n,m}$\footnote{The notation $\mathcal{M}_{m',n'}$ is used to denote the set of all $m' \times n'$ matrices.}, $B\in\mathcal{M}_{m,n}$, $\det({\rm Id}_n- AB) = \det({\rm Id}_m-BA)$. Thus, for any $z\notin \mathrm{Spec}(X)$, 
\begin{equation} \label{eqtopei} \det(z-(X+P))= \det(z-X)\big(1-y\langle {\bm e}_k , (z-X)^{-1} {\bm e}_k\rangle \big).\end{equation}
Let $f_X(z) := 1- y\langle {\bm e}_k , (z-X)^{-1} {\bm e}_k\rangle $. From the equation \eqref{eqtopei}, we deduce that if $f_X$ vanishes at $\mu > \lambda_X$, then $\lambda_{X+P} \geq \mu$.  We will show that with our choice of $y$, $f_X$ typically has a zero in $[\lambda+2\delta,+\infty)$. To this end define $f(z) := 1- ym(z)$ for any $z>2$, and for any $\veps>0$, let the event $E_{\delta,\veps} :=\big\{\lambda_X\leq \lambda+\delta, \ \sup_{z\geq \lambda+2\delta} |f_X(z)-f(z)|\leq \delta\big\}$.

 We claim that there exists an $\veps=\veps(\delta)>0$ such that $E_{\delta,\veps} \subset \{\lambda_{X+P}\geq  \lambda+2\delta\}$. Indeed, $f$ is a continuous increasing function on $[2,+\infty)$ vanishing at $m^{-1}(1/y)=\lambda+3\delta$. Thus, using the mean-value theorem, there exists an $\veps>0$ depending on $\delta$ and $f$ such that for any continuous function $g$ on $[\lambda+2\delta,+\infty)$, if $\sup_{z\geq \lambda+2\delta}|f(z)-g(z)|\leq \veps$, then $g$ has a zero on $(\lambda+2\delta,+\infty)$.
Since $f_X$ is continuous on $(\lambda_X,+\infty)$, we get the claim. This enables us to write 
\[ \PP(\lambda_{X+P}\geq \lambda+2\delta, \|X_{[k]}\|\leq \delta) \geq \PP\big( E_{\delta,\veps} \cap \{\|X_{[k]} \|\leq \delta\}\big) \ge \PP( E_{\delta,\veps}) + \PP(\|X_{[k]} \|\leq \delta) -1,\]
for some $\veps>0$. Note that by Lemma \ref{cvedgetyp} and Proposition \ref{prop:conc-loc-law}, $\PP(E_{\delta,\veps})\to 1$ as $n\to +\infty$. Moreover, clearly $\|X_{[k]}\| \to 0$ in probability as $n \to+\infty$. This ends the proof of \eqref{typical1} and thus that of \eqref{eq:lbd-part1}.

We now move on to the second part of the proof. Let $r\geq  0$ and $s\geq 1$ such that $r+m(\lambda)s=\lambda$. To prove the lower bound it suffices to show that for this arbitrary choice of $r$ and $s$
\begin{equation} \label{lb2} \liminf_{n\to+\infty} \frac{1}{np}\log \PP\big(\lambda_X\geq \lambda) \geq -\frac{r^2}{2\alpha} - h_L(s).\end{equation}
One can find $s_\delta$ and $r_\delta$ functions such that $r_\delta \to r$ and $s_\delta \to s$ as $\delta\to 0$ satisfying for any $\delta>0$, $r_\delta+m(\lambda+3\delta)s_\delta=\lambda+3\delta$ and $r_\delta\geq 0$, $s_\delta\geq 1$. 
Define 
\[ 
W :=  \begin{pmatrix} r_\delta & 0 \\ 0 & X^{(1)}\end{pmatrix}, \ P' := \begin{pmatrix} 0 & \widecheck{X}_1^{\sf T} \\ \widecheck{X}_1 & 0\end{pmatrix}, \text{ and } \Delta := (X_{1,1}-r_\delta){\bm e}_1 {\bm e}_1^{\sf T}.\]
where we refer the reader to Section \ref{sec:notation} for the notation $\widecheck{X}_1$ and $X^{(1)}$.
 Observe that $P'$ is a rank two matrix with eigenvalues $\pm \|\widecheck{X}_1\|$ and corresponding eigenvectors $(1, \pm \widecheck{X}_1)^{\sf T}$. Therefore 
\[ P' = Q + \Delta', \text{ where }  Q:= O D O^{\sf T}, \  O := \frac{1}{\sqrt{2}}\begin{pmatrix} 1 & 1 \\ V & -V \end{pmatrix}, \ D := \begin{pmatrix} \sqrt{s_\delta} & 0 \\ 0 & -\sqrt{s_\delta} \end{pmatrix},\]
$V := \widecheck{X}_1/\|\widecheck{X}_1\|$, and $\Delta'$ is some matrix such that $\|\Delta '\| \leq |s_\delta-\|\widecheck{X}_1\|^2|^{1/2}$. 
For any $\kappa>0$ set 
$F_{\delta,\kappa}:= \{|X_{1,1}-r_\delta|\leq \delta, \ s_\delta \leq \|\widecheck{X}_1\|^2 \leq s_\delta+\delta^2,  \sum_{j=2}^n \xi_{1,j} \leq \kappa np \}$. As $X = W+Q+\Delta+\Delta'$, by Weyl's inequality we have  $\lambda_X\geq \lambda_{W+Q} - \|\Delta \|-\|\Delta'\|$. Thus,
\[ \PP(\lambda_X\geq \lambda) \geq \PP\big(\{\lambda_{W+Q}\geq \lambda+2\delta\}  \cap F_{\delta,\kappa}\big).\]
Arguing as in the proof of \eqref{eq:lbd-part1}, we know that if $\lambda_{W+Q}>\lambda_W$, then $\lambda_{W+Q}$ is the largest zero of the function $\wt f_W(z) := \det({\rm Id}_n - (z-W)^{-1}OD O^{\sf T})$, defined for any $z>\lambda_W$. Using again the identity $\det({\rm Id}_n- AB) = \det({\rm Id}_2-BA)$ for any rectangular matrices $A \in \mathcal{M}_{n,2}$, $B\in\mathcal{M}_{2,n}$, we get for any $z>\lambda_W$,
\begin{equation} \label{deffdet} \wt f_W(z) =  \det({\rm Id}_2 - O^{\sf T}(z-W)^{-1}OD)= \det\begin{pmatrix} 
1 - s_\delta \langle v, (z-W)^{-1} v\rangle & s_\delta\langle v, (z-W)^{-1} w\rangle \\
-s_\delta \langle v, (z-W)^{-1} w\rangle & 1+s_\delta \langle w,(z-W)^{-1}w\rangle\end{pmatrix},\end{equation}
where $v :=  (1, V) /\sqrt{2}$, $w:= (1, -V)/\sqrt{2}$. Define for any $z>\max(r_\delta,2)$, 
\begin{equation} \label{deffWdet} \wt f(z) := \det\begin{pmatrix}
1 - \frac{s_\delta}{2}\big(\frac{1}{z-r}+ m(z)\big) & \frac{s_\delta}{2} \big(\frac{1}{z-r_\delta}-m(z)\big) \\
-\frac{s_\delta}{2} \big(\frac{1}{z-s} -m(z)\big) & 1+\frac{s_\delta}{2}\big(\frac{1}{z-r_\delta} +m(z)\big)
\end{pmatrix}=1 - \frac{s_\delta^2 m(z)}{z-r_\delta}.\end{equation}
Note that as $m$ positive and decreasing, $\wt f$ is increasing on $(r_\delta,+\infty)$ and vanishes at $\lambda+3\delta$, by definition of $r_\delta$ and $s_\delta$.
 Consider for any $\veps>0$ the event  $\wt E_{\delta,\veps} := \big\{\lambda_W\leq \lambda+\delta, \sup_{z\geq \lambda+2\delta}|\wt f_W(z) - \wt f(z)| \leq \veps\}$.
With the same argument as in the first part we deduce 
that there exists $\veps>0$ such that $\wt E_{\delta,\veps} \subset \{\lambda_{W+Q}\geq \lambda+2\delta\}$. Thus, for this choice of $\veps=\veps(\delta)$, we have the lower bound
\begin{equation}\label{Ldlowerb2-1} 
\PP(\lambda_X\geq \lambda) \geq \PP(\wt E_{\delta,\veps} \cap F_{\delta,\kappa}).
\end{equation}
Now we show that
\begin{equation}\label{claimcondproba} \Car_{F_{\delta,\kappa}}\PP( \wt E_{\delta,\veps}^c \mid X_{1,1},\widecheck{X}_1)\underset{n\to+\infty}{\longrightarrow} 0 \quad \text{in}\  L^\infty,\end{equation} 
and for $\kappa$ large enough
\begin{equation}\label{Ldlowerb2} \lim_{\delta\to 0}\liminf_{n\to+\infty} \frac{1}{np} \log  \PP(F_{\delta,\kappa})\geq - \frac{r^2}{2\alpha} -h_L(s).\end{equation}
Clearly, as $F_{\delta,\kappa}$ is $(X_{1,1},\widecheck{X}_1)$-measurable, \eqref{claimcondproba} and \eqref{Ldlowerb2} imply the large deviation lower bound \eqref{lb2}.

So it remains to prove \eqref{claimcondproba} and \eqref{Ldlowerb2}. For the first claim \eqref{claimcondproba}, note that for any $z>\lambda_W$, 
\begin{equation} \label{conc-Stiel1} 2\langle v, (z-W)^{-1} v\rangle =  2\langle w, (z-W)^{-1} w\rangle  = (z-r)^{-1} +  \langle V, (z-X^{(1)})^{-1}V\rangle,\end{equation}
whereas $2\langle v, (z-W)^{-1} w\rangle=(z-r)^{-1} -  \langle V, (z-X^{(1)})^{-1}V\rangle$. On  $F_{\delta,\kappa}$ we have $|\supp(\widecheck{X}_1)| \le \kappa np$, and hence $\|V\|_1\leq \sqrt{\kappa np}$. Using Lemma \ref{cvedgetyp}, Proposition \ref{prop:conc-loc-law}, the independence between $X^{(1)}$, and $(X_{1,1},\widecheck{X}_1)$ and the continuity of the determinant yield the claim \eqref{claimcondproba}. 

We now turn to prove \eqref{Ldlowerb2}.
By independence, Lemmas \ref{lbcoeff} and \ref{LDP-deg}, and the continuity of $h_L$, \eqref{Ldlowerb2} holds for $F_\delta :=\{|X_{1,1}-r_\delta|\leq \delta, \  s_\delta\leq \|\widecheck{X}_1\|^2 \leq s_\delta+\delta^2\}$. From \eqref{eq:LDP-deg-ut-R} we find that $\sum_{j=2}^n \xi_{1,j}/np$ is exponentially tight. Thus, for $\kappa$ large enough \eqref{Ldlowerb2} holds for $F_{\delta,\kappa}=F_\delta \cap \{\sum_{j=2}^n \xi_{1,j}\leq \kappa np\}$.
This finally completes the proof of the large deviation lower bound \eqref{lb2}.  
\end{proof}

Next we prove the large deviation lower bounds for $\lambda_{{\rm Adj}_o}$ and for $\lambda_2({\rm Adj})$. While the proof for 
$\lambda_{{\rm Adj}_o}$ is a straightforward adaptation of the proof of the lower bound of Theorem  \ref{theo-main-weight}, the one for $\lambda_2({\rm Adj})$ requires some information about the structure of the top eigenvector of ${\rm Adj}$. In the following lemma we reproduce the part  of  \cite[Lemma 4.9]{BR21} that we will need for our argument.

\begin{Lem}\label{lem:lam1u1}
Assume that $np \to +\infty$ as $n \to +\infty$. Denote by ${\bm u}$ a unit eigenvector associated to the top eigenvalue of ${\rm Adj}$. For any $M\geq 1$ and any $n\in\NN$ large enough, the following bounds hold deterministically on the event $\{\|{\rm Adj}_o\| \le M\sqrt{np}\}$,
\[|\lambda_1({\rm Adj}) - np| \le 2M \sqrt{np} \qquad \text{ and } \qquad \|{\bm u} - {\bf e}_n\| \le \frac{16 M}{\sqrt{np}}. 
\]
\end{Lem}

We also need the following concentration bound. Notice its difference from Proposition \ref{prop:conc-loc-law}. 

\begin{Pro}\label{prop:conc-loc-law-o}
Let $\mathcal{U}_n$ be as in Proposition \ref{prop:conc-loc-law}. Assume $\log(np) \gtrsim \log n$. 
Fix $t,\delta>0$. Then, there exists a numerical constant $c>0$ such that for  any $u\in{\mathcal{U}}_n$ and $v \in \mathbb{S}^{n-1}$, and all $n$ large enough, 
\begin{equation}\label{eq:conc-loc-law}
\PP\Big(\sup_{\lambda\geq 2+2\delta} \big|  \langle u, (\lambda-{\rm Adj}_o/\sqrt{np})^{-1} v\rangle - m(\lambda) \langle u, v \rangle \big| \ge t, \lambda_{{\rm Adj}_o}\leq 2+\delta \big) \leq \exp \big(-{c\delta^4 t^2 np}\big).
\end{equation}
\end{Pro}

\begin{proof}[Proof of Theorems \ref{thm:cen-adj} and \ref{thm:ncen-adj} (lower bounds)]
Note that as ${\rm Adj}_o/\sqrt{np}$ satisfies Proposition \ref{prop:conc-loc-law} and $h$ is the rate function of the square of the $\ell^2$-norm of a column of ${\rm Adj}_o/\sqrt{np}$, the same proof leading to \eqref{lb2} (with $r=0$ and $s=\lambda/m(\lambda))$ can be used, yielding the large deviation lower bound of Theorem \ref{thm:cen-adj}.

We now turn ourselves to the proof of the lower bound of Theorem \ref{thm:ncen-adj}. The overall strategy is as follows: We  show that on a certain event (similar to the ones appearing in the proof of the lower bound \eqref{lb2}), 
%
with high probability,
the projection of a top eigenvector of ${\rm Adj}_o$ on the hyperplane orthogonal to a top eigenvector of ${\rm Adj}$ gives a direction where the quadratic form  induced by ${\rm Adj}$ is large, and thus entails a large deviation of $\lambda_2({\rm Adj})$. We now carry out the details.


Set $\bar{F}_{\delta} := \{ s_\delta np \leq \|Y\|^2 \leq (s_\delta+\delta^2) np \}$, where $s_\delta=(\lambda+4\delta)/m(\lambda+4\delta)$ and $Y \in \RR^{n-1}$ is the first column of ${\rm Adj}_o$ after removing its diagonal entry. Notice that $(1-2p) \sum_{i=2}^n \xi_{i,1} \leq \|Y\|^2$ and therefore on $\bar F_\delta$ we also have that $\|Y\|_1/\|Y\|_2 \lesssim \sqrt{np}$ for all large $n$. Denote $\bar E_{\delta, \vep}:=\{\lambda_{W_o/\sqrt{np}} \le \lambda+\delta, 
\sup_{z \ge \lambda+2\delta}|\wt f_{W_o/\sqrt{np}}(z) - \bar f(z)| \le \vep\}$ (recall \eqref{deffdet}), where $W_o$ is the $n \times n$ matrix obtained from ${\rm Adj}_o$ after zeroing out the first row and column, and $\bar f(z):= 1 - s_\delta^2m(z)/z$. As the entries of ${\rm Adj}_o$ are bounded arguing as in the steps leading to \eqref{Ldlowerb2-1} we deduce that there exists $\veps>0$ such that $\bar E_{\delta,\veps} \cap \bar F_{\delta} \subset \{\lambda_{{\rm Adj}_o}\geq (\lambda+\delta)\sqrt{np}\}$ for all large $n$.  Using that $\|Y\|_1/\|Y\|_2 \lesssim \sqrt{np}$ and arguing as in the proofs of \eqref{claimcondproba} and \eqref{Ldlowerb2} we further derive that 
%
%
\begin{equation} \label{typicaleventE}\Car_{\bar F_{\delta}}\PP( \bar E_{\delta,\veps}^c \mid Y)\underset{n\to+\infty}{\longrightarrow} 0 \quad \text{in}\  L^\infty \quad \text{ and } \quad \lim_{\delta \to 0} \liminf_{n\to +\infty} \frac{1}{np} \log \PP( \bar F_\delta)\geq - h(\lambda/m(\lambda)). \end{equation}
Next let ${\bm u}_o$, ${\bm u}$ be unit eigenvectors for the top eigenvalues of ${\rm Adj}_o$ and ${\rm Adj}$ respectively. Write ${\bm u}_o^{\sf T} =({\bm v_o},{\bm w_o})^{\sf T}$, where ${\bm v}_o \in \RR$ and ${\bm w}_o\in \RR^{n-1}$. Observe that the projection of the equality ${\rm Adj}_o {\bm u}_o = \lambda_{{\rm Adj}_o} {\bm u}_o$ on the last $(n-1)$-coordinates, on the event $\lambda_{{\rm Adj}_o} \notin {\rm Spec}({\rm Adj}_o^{(1)})$, yields that ${\bm w}_o = {\bm v}_o (\lambda_{{\rm Adj}_o}-{\rm Adj}_o^{(1)})^{-1} Y$. This leads us to define the event 
\[ G_{\delta,\veps} :=\big\{\sup_{z\geq 2+\delta} \big| \langle {\bm e}_{n-1}, (z-{\rm Adj}_o^{(1)}/\sqrt{np})^{-1}Y\rangle - m(z) \langle {\bm e}_{n-1}, Y\rangle \big| <\veps\|Y\|, \|{\rm Adj}_o^{(1)}\|\leq (2+ \delta/2)\sqrt{np}\big\}.\]
By Lemma \ref{cvedgetyp} and Proposition \ref{prop:conc-loc-law-o}  we have 
$\Car_{F_\delta} \PP(G_{\delta,\veps}^c\mid Y) \stackrel{L^\infty}{\to} 0$, as $n \to +\infty$ 
This together with \eqref{typicaleventE} imply that 
\begin{equation} \label{lbE} \lim_{\delta \to 0}\liminf_{n\to+\infty} \frac{1}{np} \log \PP( \mathcal{E}_{\veps,\delta}) \geq-h(\lambda/m(\lambda)), \text{ where } \mathcal{E}_{\veps,\delta}:= \bar E_{\delta,\veps} \cap \bar F_\delta \cap G_{\delta, \vep}. 
\end{equation}
Hence it remains to show that the event $\mathcal{E}_{\delta,\veps}$ entails a large deviation of $\lambda_2({\rm Adj})$. As $\lambda_{{\rm Adj}_o}\geq \lambda +\delta$ on the event $\mathcal{E}_{\delta,\veps}$, we have for $n$ large enough on that event
\[ |\langle {\bm e}_n,{\bm u}_o\rangle |\leq 1/\sqrt{n}+|\langle {\bm e}_{n-1} ,{\bm w}_o\rangle|\leq 1/\sqrt{n} +\|Y\|_1/(n\sqrt{p}) + \veps \|Y\|_2/\sqrt{np}\, \lesssim  \veps.\]
where we used the fact that $\|Y\|_1=O(np)$ and $\|Y\|_2=O(\sqrt{np})$ on $\mathcal{E}_{\delta,\veps}$.
By Lemma \ref{lem:lam1u1} the above inequality implies that 
on the same event $ \mathcal{E}_{\delta,\veps}$, $|\langle {\bm u}, {\bm u}_o\rangle|\leq  |\langle {\bm e}_n,{\bm u}_o\rangle|+ \|{\bm u} -{\bm e}_n\|\, \lesssim \veps$. Further, if $\widetilde{{\bm u}}_o$ is the projection of ${\bm u}_o$ on ${\bm u}^\perp$ (the hyperplane orthogonal to ${\bm u}$), then again on $\mathcal{E}_{\delta,\veps}$, $|\langle \widetilde{\bm u}_o,{\bm e}_n\rangle|\leq \|{\bm e}_n-{\bm u}\|\leq \veps$ and $\|\widetilde{\bm u}_o-{\bm u}_o\|\leq |\langle {\bm u},{\bm u}_o\rangle|\, \lesssim \veps$ for $n$ large enough. Thus, on $\mathcal{E}_{\delta,\veps}$, for all $n$ large enough
\begin{align*} \langle \widetilde{{\bm u}}_o, {\rm Adj } \widetilde{{\bm u}}_o \rangle &=  \langle \widetilde{{\bm u}}_o, {\rm Adj }_o \widetilde{{\bm u}}_o\rangle +np \langle \widetilde{\bm u}_o, {\bm e}_n\rangle^2 -p \|\widetilde{\bm u}_o\|^2\\
& \geq \langle {\bm u}_o, {\rm Adj}_o {\bm u}_o\rangle -2\|\widetilde{\bm u}_o-{\bm u}_o\|\cdot \| {\rm Adj}_o\| +np \langle \widetilde{{\bm u}}_o, {\bm e}_n\rangle^2 - p \geq  \lambda_{{\rm Adj}_o} - C\veps \sqrt{np},\end{align*}
where $C$ is a positive constant. As $\bar E_{\delta,\veps}\cap \bar F_\delta \subset \{\lambda_{{\rm Adj}_o} \geq (\lambda+\delta)\sqrt{np}\}$ and $\wt {\bm u}_o$ is orthogonal to ${\bm u}$, this shows that for any $\delta>0$ there exists $\veps>0$ small enough such that for $n$ large enough $\mathcal{E}_{\delta,\veps} \subset \{\lambda_2({\rm Adj})\geq \lambda \sqrt{np}\}$. By \eqref{lbE} this ends the proof. 
\end{proof}

\section{Concentration for convex functions with flat gradients}\label{sec:concflat}
In this section we investigate the concentration property of a convex Lipschitz function of a sparse Wigner matrix with bounded coefficients. It is known from Talagrand's inequality \cite[Corollary 4.7]{Ledouxmono} or Marton's conditional transportation inequality \cite[Theorem 8.6]{BLM} that such functions enjoy sub-Gaussian concentration inequalities. We show that when such a function has  ``flat'' subgradients in the sense that they  belong to a set of low rank matrices associated to delocalised vectors, then one can reduce the variance factor in the sub-Gaussian concentration inequality according to the delocalisation strength. To formulate this notion we need to fix some notation.
%
%
%
%
For any $k\geq 1$, $\eps>0$, define the subset $\mathcal{F}_\eps^k$ of ``flat'' matrices of rank at most $k$ by
\begin{equation} \label{flatmatrices} \mathcal{F}_{\eps}^k := \Big\{ \sum_{\ell=1}^k \theta_\ell v_\ell v_\ell^{\sf T} : {v_\ell} \in \mathcal{D}_\veps, |\theta_\ell|\leq 1, \  \forall \ell\in[k]\Big\}, \quad \text{ where } \mathcal{D}_\veps:= \{ w \in \mathbb{B}^n: \|w\|_\infty \le \veps\}. 
\end{equation}
We work in this section with a slightly more general model compared to the one of the sparse Wigner matrices introduced in Definition \ref{defmodel}, and allow the entries to have different distributions, as long as they remain uniformly bounded. Unless mentioned otherwise in this section we let $G$ to be a random symmetric matrix independent of $\Xi$ such that $(G_{i,j})_{i\leq j}$ are independent, centered and bounded by $1$. Set $\widehat{X} = G\circ \Xi$ and $X= \widehat{X}/\sqrt{np}$. 
By \cite[Theorem 8.6]{BLM}, it follows that if $f : \mathcal{H}_n \to \RR$ is a convex $1$-Lipschitz function with respect to the Hilbert-Schmidt norm, then for any $t>0$, 
\begin{equation}\label{eq:subG-talagrand}
\PP\big( |f(X) -\EE f(X)| >t \big) \leq 2e^{-\frac{t^2 np}{32}}.
\end{equation}
Our next result shows that when the subgradients of $f$ belong to the subset $\mathcal{F}_\eps^k$ one can improve the above concentration inequality for the upper tail and obtain a variance factor scaling as $\veps^2$.

\begin{Pro}\label{strongconc}
Let $\veps \in (0,1/3)$ and $n\geq 2$ such that $np \geq 4 \log n$.  Let $f : \mathcal{H}_n \mapsto \RR$ be a convex function such that there exists a measurable function $\zeta : \mathcal{H}_n \to \mathcal{F}_{\eps}^k$ satisfying
\begin{equation} \label{convf}   
f(K) - f(K') \leq \langle \zeta(K), K-K'\rangle, \qquad \forall K,K' \in \mathcal{H}_n.
\end{equation}
There exist constants $\gamma_0, p_0>0$ depending on $k$ such that if $p\leq p_0$,
\begin{equation}\label{eq:eps-strongconc}
\frac{\gamma_0}{\sqrt{\log(1/p)}}\leq t \leq \frac{1}{\gamma_0}  \text{ and } \frac{\log(t^2 \log(1/p))}{t\sqrt{\log(1/p)}} \leq \veps <1,
\end{equation} 
then
\begin{equation}\label{eq:strongconc} 
\PP\big( f(X) - \EE f(X) > t \big) \leq 2\exp\Big(-\frac{t^2np }{\gamma_0 \veps^2}\Big).
\end{equation}
\end{Pro}

Compare \eqref{eq:strongconc} with the standard concentration bound \eqref{eq:subG-talagrand}. 
The improved concentration inequality in Proposition \ref{strongconc} relies on a generalised moments inequality \cite[Theorem 2]{BBLM} (see also \cite[Theorem 15.5]{BLM}) that in a sense generalises the Efron-Stein inequality to higher moments.  It builds on the idea that the upper tail behaviour of a function $f : \RR^{q} \to \RR$ of independent random variables $(Y_1,\ldots,Y_{q})$ can be estimated through the random variable
\begin{equation}\label{eq:V+}
{V^+: = V^+(f):}=\sum_{i=1}^q \EE' (f(Y)-f({Y'_{(i)}}))_+^2,
\end{equation}
where $Y'_{(i)} =(Y_1,\ldots,Y_{i-1},Y_i',Y_{i+1},\ldots,Y_q)$ with $Y_i'$ an independent copy of $Y_i$ and $\EE'(\cdot)$ denotes the expectation with respect to the randomness of $Y'$ (one can regard $V^+$ as a kind of ``local'' Lipschitz constant).
%
%
%
%
By the celebrated Efron-Stein inequality (see \cite[Theorem 3.1]{BLM}) the expectation of $V^+$ provides an upper bound on the variance of $f(X)$. Further, by \cite[Theorem 15.4]{BLM} the bound $V^+\leq c$ a.s.\, implies a sub-Gaussian upper tail bound with variance factor a multiple of $c$. In Lemma \ref{enhancedconcentration} below we reformulate \cite[Theorem 15.5]{BLM} and show that if $V^+(f)$ is small with overwhelming probability, then one obtains an improved sub-Gaussian upper tail bound.


Improved concentration inequalities using such ideas had been used in \cite{LMZ} to obtain uniform concentration inequalities for the spectral radius of Erd\H{o}s-Rényi graphs.
However, in our case for the functions $f$ that are relevant to us, it turns out that $V^+(f)$ is not small with overwhelming probability. Nevertheless, owing to the boundedness assumption of the entries of $G$ it is possible to decompose $f(X)$ into $f_1(X)+f_2(X)$ such that $V^+(f_1)$ is small with overwhelming probability and $f_2(X)$ has negligible deviations at the exponential scale $np$. 

Thus, our first step towards proving Proposition \ref{strongconc} is to show an improved sub-Gaussian upper tail bounds for convex functions of sparse Wigner matrices with independent bounded coefficients in the case where the gradient spans in the set $\mathcal{G}_{\eps,M}^k$ that we describe now. 
For any integer $k\geq 1$, and $M\geq 1$, $\eps>0$, define the subset 
\begin{equation} \label{defG} 
\mathcal{G}_{\eps,M}^k := \Big\{ \sum_{\ell=1}^k \theta_\ell v_\ell w_\ell^{\sf T} : (v_\ell, w_\ell) \in  (\mathcal{D}_{\eps} \times \mathcal{D}_{M/\sqrt{np}}) \cup (\mathcal{D}_{\veps} \times \mathcal{D}_{M/\sqrt{np}}), |\theta_\ell|\leq 1, \  \forall \ell\in[k]\Big\}.\end{equation}
We will prove the following concentration inequality.
\begin{Pro}\label{improvedconc} Let $\gamma \in (0,1)$, $M\geq 1$ and $1\geq \veps^2\geq 3p$ be such that $\log(M^2/\veps^2)/\log(1/p) \leq \gamma$ and $np\log(1/p) /M^2 \geq 4 \log n$. 
Let $f : \mathcal{H}_n \mapsto \RR$ be a convex function such that there exists a measurable function $\zeta : \mathcal{H}_n \mapsto \mathcal{G}_{\eps,M}^k$ satisfying
\begin{equation} \label{convexf}  f(K) - f(K') \leq \langle \zeta(K), K-K'\rangle, \qquad \forall K,K' \in \mathcal{H}_n.\end{equation}
There exists a constant $\gamma_0>0$ depending on $k$ and $\gamma$ such that if 
\begin{equation}\label{eq:improvedconc-assump}
 \frac{\gamma_0}{np}  \leq t^2 \leq  \frac{1}{\gamma_0} \text{ and }  \eta : = \eps M/\sqrt{\log(1/p)}  \leq e^{-1},
\end{equation}
then
\[ \PP\big( f(X) - \EE f(X) >t \big) \leq \exp\Big(-\frac{t^2np}{\gamma_0 \max\big( \eta   \log ( \frac{1}{\eta}\big),\eps^2\big) }\Big).\]
\end{Pro}
The above inequality tells us that whenever $\eps \ll 1$ and $\eps M \ll  \sqrt{\log(1/p)}$, the speed of the deviations of $f(X)$ is much larger than $np$.
To prove Proposition \ref{improvedconc}, we will need the following lemma. It will allow us to show that $V^+(f)$, for $f$ as in Proposition \ref{improvedconc}, is small with overwhelming probability, as long as $\veps \ll1$ and $\veps M \ll \sqrt{\log(1/p)}$. 

\begin{Lem} \label{controlBernoullisum}Let $\gamma \in (0,1)$. There exists $\wt \gamma=\wt \gamma(\gamma)>0$ such that for any $M\geq 1$, and $\veps,\delta>0$ with  $\delta\geq \max(\veps^2,3p)$ and $\frac{\log(M^2/\delta)}{\log(1/p)} \leq \gamma$, 
 \[ \PP\Big( \sup_{v\in \mathcal{D}_\eps , w\in \mathcal{D}_{M/\sqrt{np}} }\sum_{i,j \in [n]} \xi_{i,j} v_i^2 w_j^2 > \wt \gamma \delta  \Big)\leq  n^2e^{- \delta np \frac{\log(1/p)}{(\veps M)^2}}.\]
\end{Lem}

The proof of Lemma \ref{controlBernoullisum} requires the following two results. The first one is an adaptation of Boucheron-Bousquet-Lugosi-Massart generalised moments inequality \cite[Theorem 2]{BBLM}. The second one lists some properties of the $\log$-Laplace transform and its Legendre-Fenchel transform of a centered $\dBer(p)$ random variable. Proofs of these two results are postponed to Appendix \ref{app:aux-res}.

\begin{Lem}\label{enhancedconcentration}
Let $Y=(Y_1,\ldots,Y_{N})$ be a vector of independent random variables taking values in a set $\mathcal{X}$ and $f : \mathcal{X}^N \mapsto \RR$ be a measurable function. Denote by $Z = f(Y_1,\ldots, Y_N)$. Assume that $V^+(f)\leq1$ almost surely. Let $\veps>0$ and define $E_\veps:= \big\{ V^+(f) \leq \veps^2\big\}$. Let  $q_0 := \log (\PP (E_\veps^c))/ \log(\veps)$.
Then there exists a numerical constant $c>0$ such that, for any $\sqrt{2} \leq t/\sqrt{c \veps^2} \leq \sqrt{q_0}$,
\begin{equation}\label{eq:enhancedconcentration}
 \PP\big( Z- \EE Z > t\big)  \leq e^{-\frac{t^2}{2c\veps^{2}}}.
\end{equation}
\end{Lem}

\begin{Lem}\label{lem:Lam-bd}
Let $\zeta$ be a $\dBer(p)$ random variable. For $\theta, x \in \RR$ define
\begin{equation}
\Lambda_p(\theta):= \log \EE \exp(\theta(\zeta - p)) \quad \text{ and } \quad \Lambda_p^*(x):= \sup_{\theta' \in \RR} \{ \theta'x - \Lambda_p(\theta')\}.
\end{equation}
Let $p<1/2$. Then the following properties hold.
\begin{enumerate}
\item[(i)]  For any $\theta \in \RR$,  $\Lambda_p(\theta) \leq (1-2p)\theta^2/[4 \log \big(\frac{1-p}{p}\big)]$.
\item[(ii)]  For any $-p\leq x \leq Cp\leq \frac{1}{2}-p$, $\Lambda^*_p(x) \geq o_C(1) \frac{x^2}{p}$ as $C \to +\infty$.
\item[(iii)] For any $x\geq Cp$, $\Lambda^*_p(x) \geq o_C(1) x \log \frac{x}{p}$ as  $C \downarrow e$.
\item[(iv)] For any $C>0$, $\frac{1}{p}\Lambda_p^*(Cp) \, = (1+o(1)) ( C+1) \log (C+1) -C$, as $p\to 0$. 
\end{enumerate}
\end{Lem}

\begin{proof}[Proof of Lemma \ref{controlBernoullisum}] 
Fix $M\geq 1$, $\veps,\delta>0$ such that $M/\sqrt{np} \leq \veps$, $\delta \geq \max(\veps^2,3p)$ and $\log(M^2/\delta)\leq \gamma \log(1/p)$. The idea is to split the sum $\sum_{i,j} \xi_{i,j}v_i^2 w_j^2$ according to a dyadic partition of  the coordinates of $v$ and $w$ in order to take advantage of the concentration of empirical means of Bernoulli random variables, and finally to proceed with union bounds. 
Let $k_0:= \max\{ k \in \NN : \veps \leq 2^{-k}\}$, and $\ell_0 :=\max\big\{k\in \NN : M/ \sqrt{np} \leq 2^{-k}\big\}$.
Define for any $k\in \NN$ and $v\in \mathcal{D}_\eps$,  $J_v^k:=\{ i\in [n] : 2^{-(k+1)} < |v_i| \leq  2^{-k}\}$.
We set for any $v\in\mathcal{D}_\eps$ and $k\geq k_0$, $ \alpha_v^k:= \sum_{i \in J_v^k} v_i^2$,  and $I_v:=\{k\geq k_0:{\alpha_v^k} \geq r_k\}$, where 
  \[ r_k :=  \begin{cases}
16 \delta (k-k_0+2)^{-2} & \text{ if } k_0\leq k <\ell_0,\\
16 \delta (k-\ell_0+2)^{-2} & \text{ if } k\geq \ell_0.
  \end{cases}\]   
 Let $v\in \mathcal{D}_\eps$ and $w\in\mathcal{D}_{M/\sqrt{np}}$. We next observe that the choice of the sequence $r_k$ is made so that the contribution of $\sum \xi_{i,j} v_i^2w_j^2$ when $i$ or $j$ runs in the set of ``bad'' indices  $I_v^c$ and $I_w^c$ respectively, is negligible. More precisely, observe that as $v,w\in\mathbb{B}^n$,
\begin{equation} \label{badsums} \sum_{(k,\ell) \notin I_v\times I_w} \sum_{i\in J_v^k, j\in J_w^\ell} \xi_{i,j} v_i^2 {w_j}^2 \leq \sum_{k\notin I_v} \alpha_v^k   +\sum_{\ell\notin I_w} \alpha_w^\ell  \leq 64\delta\sum_{k\geq 1} k^{-2}  = \frac{32\pi^2\delta}{3}, \end{equation}
and
\begin{equation} \label{goodsum}  \sum_{(k,\ell) \in I_v\times I_w} \sum_{i\in J_v^k, j\in J_w^\ell} \xi_{i,j} v_i^2 {w_j}^2\leq\sum_{(k,\ell) \in I_v\times I_w} 2^{-2(k+\ell)} \sum_{i\in J_v^k, j\in J_w^\ell} \xi_{i,j}.\end{equation}
Now, for any $k\in I_v$ and $\ell \in I_w$, 
\begin{equation} \label{boundsJ}  2^{2(k+1)} \alpha_v^k\geq  |J_v^k|\geq r_k 2^{2k}, \ 2^{2(\ell+1)} \alpha_w^\ell \geq |J_w^\ell|\geq r_\ell 2^{2\ell}.\end{equation}
One can check that  $k\mapsto k^{-2} 2^{2k}$ on $[2,+\infty)$ is increasing, and as a consequence $k\mapsto r_k 2^{2k}$ as well. This implies that for any $k\in I_v$, $|J_v^k|\geq r_{k_0} 2^{2k_0} \geq \delta \veps^{-2}$. Note that as $w\in \mathcal{D}_{M/\sqrt{np}}$, $\alpha_w^\ell=0$ for $\ell <\ell_0$. Therefore, for any $\ell \in I_w$, $|J_w^\ell|\geq r_{\ell_0} 2^{2\ell_0} \geq  \delta np/M^2$. 
Now, for fixed sets $S, T \subset [n]$ and $\eta \ge 3p$ ($3$ is arbitrary, any numerical constant strictly greater than $e$ works), we claim that
\begin{equation} \label{boundsum} \PP\Big( \sum_{i\in S, j\in T} \xi_{i,j} > 2(p+\eta)|S|\cdot |T|\Big) \leq e^{- \frac{1}{2}|S|\cdot|T|  \Lambda_p^*(\eta)}  \leq e^{-c \eta |S| \cdot |T| \log(\eta/p)},\end{equation}
where $c>0$ is a numerical constant. Indeed, we can write $\sum_{i\in S,j\in T} \xi_{i,j} \leq 2\sum_{(i,j) \in E} \xi_{i,j}$, 
where $E := \big((S\cap T)^2\cap \{(i,j) \in[n]^2: i\leq j\} \big)\cup (S\setminus T \times T) \cup (S \times T\setminus S)$. One can check that $(\xi_{i,j})_{(i,j)\in E}$ is a family of independent Bernoulli random variables, and that $(1/2) |S|\cdot|T| \leq |E|\leq |S|\cdot|T|$.
Therefore the first inequality in \eqref{boundsum} follows from Chernoff's inequality. To derive the second inequality we use 
Lemma \ref{lem:Lam-bd}(iii). 

Next let $s,t\in \NN$ such that $\delta \veps^{-2}\leq s$ and $t\geq \delta np/M^2$.
As for any $\ell\in[n]$, there are at most $(en/\ell)^\ell$ subsets of $[n]$ of size $\ell$, we get from \eqref{boundsum} by a union bound, for any $\eta\geq 3p$,
\[ \PP\Big( \sup_{|S| = s, |T| =t} \sum_{i\in S  j\in T} \xi_{i,j} > 2(p+\eta)st \Big) \leq e^{2(s\vee t) \log \frac{{e} n}{s\vee t} } e^{-c \eta st \log(\eta/p)}.\]
Since $M\geq 1$ and $\log(M^2/\delta) \leq \gamma \log(1/p)$, we have 
\[ \frac{\log(\delta/p)}{\log(M^2/(\delta p))} \geq \frac{\log(\delta /(M^2p))}{\log(M^2/(\delta p))}\geq  \frac{1-\gamma}{1+\gamma}.\] 
We deduce that there exists $\gamma'>0$ depending on $\gamma$ such that for any $\eta \geq\max( \frac{\gamma'}{s \wedge t},\delta)$, 
\[  c  \eta (s\wedge t) \log (\eta/p)- 2\log \frac{{e}n}{s\vee t} \geq  c\eta (s\wedge t) \log (\eta/p) -2 \log \Big(\frac{{e}M^2}{\delta p}\Big)\geq \frac{c}{2}  \eta (s\wedge t)  \log (\delta/p).\]
Again, as $M\geq 1$ and $\log(M^2/\delta) \leq \gamma \log(1/p)$, we have $\log(\delta/p)/\log(1/p)\geq (1-\gamma) >0$. Fix $r>0$.
At the price of enlarging $\gamma'>0$, we get for any $\eta\geq \max\big( \frac{\gamma' r np}{st}, \delta \big)$,  
\[  \frac{c}{2} \eta (s \vee t) \cdot (s \wedge t) \log (\delta/p)= \frac{c}{2} \eta s t \log (\delta/p) \geq rnp \log(1/p).\]
Thus we have shown that there exists a  constant $\gamma'>0$ depending on $\gamma$ such that
\[ \PP\Big( \sup_{|S| = s, |T| =t} \sum_{i\in S, j\in T} \xi_{i,j} > 2(p+\eta_{s,t}) st \Big) \leq e^{-rnp\log(1/p)},\]
where $\eta_{s,t} = \max\big(\frac{\gamma'}{s \wedge t}, \frac{\gamma'rnp}{st}, \delta, 3p\big)$. 
Using another union bound we obtain 
\[  \PP\bigg( \bigcup_{t \geq \delta np/M^2 \atop  s\geq \delta \veps^{-2}}\Big\{\sup_{|S| = s, |T| =t} \sum_{i\in S, j\in T} \xi_{i,j} > 2(p+\eta_{s,t}) st\Big\}\bigg) \leq   n^2e^{-r np \log(1/p)}.\]
Let $\mathcal{E}$ denote the event on the left hand side above.  Recall that for any $k\in I_v$, $|J_v^k|\geq \delta \veps^{-2}$ and any  $\ell\in I_w$, $|J_w^\ell|\geq \delta np/M^2$. For such  $k,\ell$, set $\eta_{k,\ell}$ to be a short hand for $\eta_{|J_v^k|,|J_w^\ell|}$. On $\mathcal{E}^c$, using \eqref{goodsum} we get
\begin{align*}
 \sum_{(k,\ell) \in I_v\times I_w} \sum_{i\in J_v^k, j\in J_w^\ell} \xi_{i,j} v_i^2 {w_j}^2 &  \leq \sum_{(k,\ell) \in I_v\times I_w} 2^{1-2(k+\ell)} |J_v^k|.|J_w^\ell| (p+\eta_{k,\ell}).
 \end{align*}
To complete the proof we need to bound the right hand side of the above. To this end, we use the bound $\eta_{k,\ell} \leq \gamma'/(|J_v^k| \wedge |J_w^\ell|) + \gamma' rnp/(|J_v^k|.|J_w^\ell|) + \delta+3p$, and compute the contributions of each term.
The upper bounds on $|J_v^k|$ and $|J_w^\ell|$ from  \eqref{boundsJ}, and $\sum_k \alpha_v^k=\sum_\ell \alpha_w^\ell=1$ yield 
 \[  
 \sum_{(k,\ell) \in I_v\times I_w} 2^{-2(k+\ell)} |J_v^k|.|J_w^\ell| \leq 16, \quad \sum_{(k,\ell) \in I_v\times I_w} 2^{-2(k+\ell)} \leq 2^{-2(k_0+\ell_0-2)}\leq 144 \frac{\eps^2M^2}{np}, \]
 and 
 \begin{multline*} 
 \sum_{(k,\ell) \in I_v\times I_w} 2^{-2(k+\ell)} |J_w^\ell| \vee |J_v^k| \le \sum_{(k,\ell) \in I_v\times I_w} 2^{-2(k+\ell)} (|J_w^\ell| + |J_v^k|)  \\
\leq \Big(\sum_{k\geq k_0} 2^{-2k}\Big)\Big( \sum_{\ell } 4\alpha_w^\ell\Big) + \Big(\sum_{\ell\geq \ell_0} 2^{-2\ell}\Big)\Big( \sum_{k } 4\alpha_v^k\Big) \leq128 \eps^2.
 \end{multline*}
Using all the estimates above and the fact that $\delta \geq \max(\veps^2,3p)$, we get
    \[  \sum_{(k,\ell) \in I_v\times I_w} \sum_{i\in J_v^k, j\in J_w^\ell} \xi_{i,j} v_i^2 {w_j}^2 \leq32(\delta+4p)+288\gamma'\big( \veps^2+ r(\veps M)^2\big)\leq \gamma''(\delta + r(\veps M)^2),\]
where $\gamma''$ is constant depending on $\gamma$. Combining the above inequality with \eqref{badsums} and choosing $r = \delta/(\veps M)^2$, this concludes the proof. 
\end{proof}

Equipped with Lemma \ref{controlBernoullisum}, we can now give a proof of Proposition \ref{improvedconc}.
\begin{proof}[Proof of Proposition \ref{improvedconc}]
Let $\widehat{X}' := G' \circ \Xi'$ be an independent copy of $\widehat{X}$. For any $(i,j)\in[n]^2$, $i\leq j$, let $\widehat{X}_{(i,j)}'$ be the matrix with the same entries as $\widehat{X}$ except that the $(i,j)$ and $(j,i)$ coefficients are replaced by $\widehat{X}_{i,j}'$. Define the random variable $V_+$ as 
\[ V_+ =  \sum_{i\leq j} \EE'\big[\big( f(\widehat{X})-f(\widehat{X}_{(i,j)}')\big)_+^2\big],\]
where $\EE'$ means that the expectation is with respect to $\widehat{X}'$.
For any $1\leq i \leq j\leq n$ we have by \eqref{convexf} 
\[  f(\widehat{X}) - f(\widehat{X}'_{(i,j)}) \leq \langle \zeta(\widehat{X}), \widehat{X}-\widehat{X}'_{(i,j)}\rangle = (\zeta(\widehat{X})_{i,j} + \zeta(\widehat{X})_{j,i} {\bf 1}_{i \ne j})\big(\xi_{i,j} G_{i,j} - \xi_{i,j}' G'_{i,j}\big).\]
Therefore, 
\begin{equation}\label{boundV+}
 V_+ \leq 2 \sum_{i, j \in [n]} (\zeta(\widehat{X})_{i,j})^2 \EE'\big[( \xi_{i,j}G_{i,j} - \xi'_{i,j}G'_{i,j} )^2\big].\end{equation}
Using the bound $|\xi_{i,j}G_{i,j}-\xi'_{i,j}G'_{i,j}|\leq 2$ and the fact that $\zeta(\widehat{X}) \in \mathcal{G}^k_{\veps,M}$, one obtains $V_+\leq 8\|\zeta(\widehat{X})\|_2^2 \leq 8k$ almost surely. Using Lemma \ref{controlBernoullisum}, we will show that $V_+$ is actually much smaller with overwhelming probability. Coming back at \eqref{boundV+} and writing $\xi_{i,j}G_{i,j} - \xi'_{i,j}G'_{i,j}  = \xi_{i,j}(G_{i,j}-G'_{i,j}) + (\xi_{i,j}-\xi_{i,j}')G'_{i,j}$, we get 
\begin{align*}
 \EE'[(\xi_{i,j}G_{i,j}-\xi_{i,j}'G'_{i,j})^2] &\leq 2\EE'\big[\big(\xi_{i,j}(G_{i,j}-G'_{i,j})\big)^2\big] + 2 \EE'\big[\big((\xi_{i,j}-\xi'_{i,j})G'_{i,j}\big)^2\big]\\
& \leq 8\xi_{i,j} + 2\EE'[(\xi_{i,j}-\xi_{i,j}')^2] \leq 10 \xi_{i,j} + 2p.
\end{align*}
where in the last step we used $\EE'[(\xi_{i,j}-\xi_{i,j}')^2] \leq \xi_{i,j}+p$. 
Using the fact that $\zeta(\widehat{X}) \in \mathcal{G}_{\veps,M}^k$, and as a consequence $\| \zeta(\widehat{X})\|_2^2 \leq k $, we get from \eqref{boundV+}
\[  V_+ \leq 20 k \sup_{v \in \mathcal{D}_\eps, w\in\mathcal{D}_M/\sqrt{np}} \sum_{i, j \in [n]} v_i^2w_j^2 \xi_{i,j} + 4kp.  \]
By Lemma \ref{controlBernoullisum}, there exists  $\gamma'\geq 1$ depending on $\gamma$  such that for any $\delta \geq \max(\veps^2,3p)$ and $\log(M^2/\delta)\leq \gamma \log(1/p)$,
\begin{equation} \label{tailV+}\PP\big( V_+ \geq 24 k\gamma'\delta  \big) \leq n^2e^{-\delta np \frac{\log(1/p)} { (\veps M)^2}}.\end{equation}
Assume  $\eta := \eps M/\sqrt{\log(1/p)}\leq e^{-1}$ and take $\delta = \max( \eta \sqrt{\log(1/\eta)},\eps^2)$. Since $\veps^2\geq 3p$ and $\log(M^2/\veps^2)\leq \gamma \log(1/p)$, we can apply the above inequality \eqref{tailV+}.  As we assumed $np \log(1/p)/M^2\geq 4 \log n$ and $\delta \geq \veps^2$, this yields
\begin{equation} \label{probaV+} \PP\big( V_+ \geq 24 k\gamma'  \delta \big) \leq e^{- \frac{\delta}{2\eta^2 } np}.\end{equation}
Let $E^c$ denote the event on the left-hand side and set $q_0 := 2\log(1/\PP(E^c))/\log(1/(3\gamma'\delta))$. 
Applying Lemma \ref{enhancedconcentration} to $f/\sqrt{8k}$ ($V_+ \le 8 k$ almost surely), it follows that there exists a constant $\gamma''>0$ depending on $\gamma$ and $k$ such that
\begin{equation}\label{concbetterf} \PP\big( f(\widehat X) -\EE f(\widehat X) >t \big) \leq e^{-  \frac{t^2}{2\gamma'' \delta }},\end{equation}
for any $2 \leq t^2 /(\gamma'' \delta)  \leq q_0$. From \eqref{probaV+} and $\gamma'\geq 1$, we have $q_0\geq  \delta  np/[\eta^2 \log(1/\delta)]$.
As in addition $\delta \leq 1$,  \eqref{concbetterf} holds in particular when $2\gamma'' \leq t^2 \leq \gamma'' \delta^2 np/[\eta ^2\log(1/\delta)]$.
Finally, we observe that as $\eta \leq e^{-1}$,
\[ \frac{\delta^2}{\eta^2 \log(1/\delta)} \geq  \frac{\log (1/\eta)}{\log\Big(\frac{1}{\eta \sqrt{\log(1/\eta)}}\Big) } \geq 1, \]
which ends the proof. 
\end{proof}

The second step towards proving Proposition \ref{strongconc} is the following result which says that the deviations of the quadratic form induced by $X$ over vectors whose support is of size much smaller than $np$ are negligible at the exponential scale $np$. We will use this result repetitively in the proof of the upper bound of Theorem \ref{theo-main-weight} to identify the structure of the top eigenvector. 
\begin{lemma}\label{locvect} Let $X = G\circ \Xi/\sqrt{np}$,  $G$ and $\Xi$ are independent symmetric random matrices where $(G_{i,j})_{i\leq j}$ are independent centered and bounded by $1$ and $\Xi$ has i.i.d.~$\dBer(p)$ entries on and above the diagonal. There exist numerical constants $p_0, C>0$ such that if $p\leq p_0$ then for any $\veps\in(0,1)$ and $t \geq  \veps  \sqrt{C \log(1/(\veps^2 p))/\log(1/p)}$,
\begin{equation}\label{eq:locvect}
\PP\Big( \sup_{v,w \in \mathcal{S}_\veps} \langle v, X w\rangle \geq t \Big) \leq e^{- \frac{t^2}{C} np \log(1/p)}, \, \text{ where } \mathcal{S}_{\veps} := \{u \in \mathbb{B}^n: |\mathrm{supp}(u)| \le \veps^2 np\}.
\end{equation}
\end{lemma}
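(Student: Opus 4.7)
The plan is to combine a net argument over pairs of support-sparse vectors with a concentration analysis carried out conditionally on the Bernoulli mask $\Xi$. Writing $a_{i,j}(v,w):=v_iw_j+v_jw_i\mathbf{1}_{i<j}$, the bilinear form
\[
\langle v,Xw\rangle \;=\; \frac{1}{\sqrt{np}}\sum_{i\le j}a_{i,j}(v,w)\,\xi_{i,j}\,G_{i,j}
\]
is, conditionally on $\Xi$, a sum of independent centred random variables lying in $[-|a_{i,j}|\xi_{i,j}/\sqrt{np},\,|a_{i,j}|\xi_{i,j}/\sqrt{np}]$, and Hoeffding's inequality yields the Gaussian-type tail
\[
\PP\big(\langle v,Xw\rangle\ge s \mid \Xi\big) \;\le\; \exp\Big(-\frac{s^2\,np}{2\,\mathcal{A}(v,w,\Xi)}\Big),\qquad \mathcal{A}(v,w,\Xi):=\sum_{i\le j}a_{i,j}(v,w)^2\,\xi_{i,j}.
\]
The point is that this conditional tail is Gaussian, whereas any unconditional Bernstein-type bound for such sparse bounded entries is only sub-exponential and would fall short of the speed $np\log(1/p)$ required by the lemma.

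Next I would construct a net $\mathcal{N}$ of $\mathcal{S}_\veps^2$ in two stages. Pairs of supports $(S,T)\subset[n]^2$ of size at most $\veps^2 np$ number at most $\binom{n}{\veps^2np}^2\le(e/(\veps^2p))^{2\veps^2np}$, and for each such pair a standard volumetric $(1/4)$-net on $\mathbb{B}^S\times\mathbb{B}^T$ has cardinality $12^{2\veps^2 np}$. A routine successive-approximation argument then gives $\sup_{(v,w)\in\mathcal{S}_\veps^2}\langle v,Xw\rangle\le 2\max_{(v_0,w_0)\in\mathcal{N}}\langle v_0,Xw_0\rangle$, with $\log|\mathcal{N}|\lesssim\veps^2np\log(1/(\veps^2p))$.

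The heart of the proof is the uniform control of $\mathcal{A}(v_0,w_0,\Xi)$ over $\mathcal{N}$. Very concentrated pairs (for example with $\|v_0\|_\infty$ close to $1$) are treated directly via the pointwise bound $|\langle v_0,Xw_0\rangle|\le \|v_0\|_\infty\|w_0\|_\infty\, N_{S,T}/\sqrt{np}$ together with Chernoff for the Bernoulli sum $N_{S,T}=\sum_{i\in S, j\in T}\xi_{i,j}$; in this regime the event $\{\langle v_0,Xw_0\rangle\ge t\}$ turns out to be vacuous under the assumption $t\ge\veps\sqrt{C\log(1/(\veps^2 p))/\log(1/p)}$. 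For the genuinely bilinear regime I would mirror the dyadic level-set decomposition of Lemma \ref{controlBernoullisum}, applying Chernoff to each Bernoulli rectangle $\sum_{i\in J^k_{v_0},\,j\in J^\ell_{w_0}}\xi_{i,j}$ via the Poissonian lower bound $\Lambda^*_p(\eta)\gtrsim\eta\log(\eta/p)$ of Lemma \ref{lem:Lam-bd}(iii). The factor $\log(\eta/p)\asymp\log(1/p)$ supplied by this Poissonian tail is precisely what generates the $\log(1/p)$ improvement over the Gaussian rate, and tuning the dyadic thresholds shows that one may take $\mathcal{A}(v_0,w_0,\Xi)\le c'/\log(1/p)$ uniformly over $\mathcal{N}$ with probability at least $1 - \exp(-c\,t^2 np\log(1/p))$.

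Putting the pieces together, on this good $\Xi$-event the conditional Hoeffding bound assigns each net point a tail of order $\exp(-c\,t^2np\log(1/p))$, and the union bound absorbs $\log|\mathcal{N}|\lesssim\veps^2np\log(1/(\veps^2p))$ precisely when $t^2\ge\veps^2 C\log(1/(\veps^2p))/\log(1/p)$. The main technical obstacle will be the uniform control of $\mathcal{A}$: since $\mathcal{S}_\veps$ does not enjoy the a priori $\ell^\infty$ bound of $\mathcal{D}_\veps$ used in Lemma \ref{controlBernoullisum}, the dyadic argument must be performed after splitting the net according to the $\ell^\infty$-profile of $(v_0,w_0)$, and the Poissonian Chernoff rates on all blocks must be balanced so as to jointly dominate the entropy of the support enumeration by a factor of order $\log(1/p)$.
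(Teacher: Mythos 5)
Your plan takes a genuinely different route from the paper, and your stated reason for doing so rests on a misconception. You claim that ``any unconditional Bernstein-type bound for such sparse bounded entries is only sub-exponential and would fall short of the speed $np\log(1/p)$,'' and therefore resort to conditioning on $\Xi$ and controlling the conditional variance $\mathcal{A}(v,w,\Xi)$. But the product $Z\zeta$ of a centred bounded variable $Z$ and an independent $\dBer(p)$ variable $\zeta$ is genuinely \emph{sub-Gaussian} with variance proxy of order $1/\log(1/p)$: this is the content of the estimate \eqref{subgaussdiluted}, proved via Lemma \ref{lem:Lam-bd}(i) (namely $\Lambda_p(\theta)\le \theta^2/(2\log(1/p))$) combined with a Hoeffding step to absorb the recentering term $psZ$. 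Once one has $\log\EE(e^{sZ\zeta})\le s^2/\log(1/p)$, the unconditional Chernoff bound \eqref{chernoff} gives directly, for \emph{any} fixed $v,w\in\mathbb{B}^n$,
\[
\log\PP\big(\langle v,(G\circ\Xi)w\rangle\ge t\sqrt{np}\big)\le -\tfrac{t^2}{8}\,np\log(1/p),
\]
after which the net argument over $\mathcal{S}_\veps$ (with entropy $O(\veps^2 np\log(1/(\veps^2 p)))$ as you correctly compute) closes the proof under the stated lower bound on $t$. The point is that no geometric splitting of $\mathcal{S}_\veps$ and no control of the random Lipschitz constant $\mathcal{A}$ are needed at all.

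Beyond the misconception, your conditional route also has a genuine gap. You yourself flag it: Lemma \ref{controlBernoullisum} controls $\sum_{i,j}\xi_{i,j}v_i^2w_j^2$ for $v\in\mathcal{D}_\veps$, $w\in\mathcal{D}_{M/\sqrt{np}}$ (i.e.\ $\ell^\infty$-delocalised vectors), whereas in your argument $v_0,w_0\in\mathcal{S}_\veps$ are only support-sparse and may concentrate arbitrarily in $\ell^\infty$. Your proposed remedy — case-split according to the $\ell^\infty$-profile of $(v_0,w_0)$, treat ``concentrated'' pairs via a pointwise bound plus a Chernoff estimate for $N_{S,T}$, and run a dyadic argument on the rest — is an outline, not an argument: you neither define the dividing line between the two regimes, nor verify that the Poissonian Chernoff rate on the dyadic blocks indeed dominates the net entropy uniformly across the profile split (which is delicate, because the deviation probability of $N_{S,T}$ degrades as $|S|\cdot|T|$ shrinks precisely when the per-entry weights $|a_{i,j}|$ grow). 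As it stands, the claim that $\mathcal{A}(v_0,w_0,\Xi)\le c'/\log(1/p)$ uniformly on the ``genuinely bilinear'' part of the net, with failure probability $\exp(-ct^2 np\log(1/p))$, is unsubstantiated. So even granting the conditional framework, the key inequality is left open.
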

\begin{proof}
Fix $v,w\in\mathbb{B}^n$. By Chernoff's inequality, we have for any $\theta \geq 0$, 
\begin{equation} \label{chernoffineq}\PP\big( \langle v, (G \circ \Xi ) w \rangle \geq t \sqrt{np} \big) \leq e^{-\theta np t} \prod_{i<j} \EE \Big( e^{2 \theta \sqrt{np} v_i w_j G_{i,j} \xi_{i,j}}\Big) \prod_{i=1}^n \EE \Big(e^{\theta \sqrt{np} v_i w_i G_{i,i} \xi_{i,i}}\Big).\end{equation}
We claim that there exists $p_0>0$ such that if $Z$ is a centered random variable bounded by $1$ and $\zeta$ an independent Bernoulli random variable with parameter $p\leq p_0$, then for any $s \in \RR$,
\begin{equation}\label{subgaussdiluted} \log \EE \big(e^{s Z \zeta}\big) \leq \frac{s^2}{\log(1/p)}.\end{equation}
Indeed, by Lemma \ref{lem:Lam-bd} there exists $p_0>0$ such that for any $p\leq p_0$  and for any $\theta\in \RR$,  $\Lambda_p(\theta) \leq \theta^2 /(2 \log(1/p))$. Let $\EE_\zeta$ denote the expectation with respect to $\zeta$. Then, 
\[ \EE_\zeta\big( e^{s Z\zeta}\big) \leq e^{\frac{s^2 Z^2}{2\log(1/p)} + ps Z}\leq e^{\frac{s^2}{2\log(1/p)}+ps Z}.\] 
By Hoeffding's lemma (see \cite[Lemma 2.2]{BLM}) we have $\EE\big(e^{s p Z}\big) \leq \exp(s^2 p^2/2)$. Therefore integrating the above inequality with respect to $Z$ and using that $p^2 \leq 1/\log(1/p)$ for any $p\in(0,1)$, we get $\EE\big( e^{s Z\zeta}\big) \leq \exp(s^2/\log(1/p))$,
 which proves the claim \eqref{subgaussdiluted}. 
Now, by \eqref{chernoffineq}--\eqref{subgaussdiluted} and the fact that $v,w\in\mathbb{B}^n$, 
 \begin{equation} \label{chernoff}
\log \PP\big( \langle v, (G\circ \Xi) w \rangle \geq t \sqrt{np} \big)\leq {- np \sup_{\theta \ge 0}\Big\{\theta t - \frac{2\theta^2}{\log(1/p)})\Big\}} = -\frac{t^2}{8} np \log(1/p).
 \end{equation}
Let $\mathcal{N}$ be a $1/4$-net of $\mathcal{S}_\eps$ for the $\ell^2$-norm and $m=\lfloor \eps^2 np\rfloor$. 
Note that there are at most $m(en/m)^m$ subsets $\mathcal{J} \subset[n]$ such that $|\mathcal{J}|\leq m$. Since $\mathcal{S}_\veps$ is the union of the corresponding unit balls $\mathbb{B}^\mathcal{J}$, it follows from \cite[Corollary 4.1.15]{AGM} that one can find such a net $\mathcal{N}$ with $\# \mathcal{N} \le m(e n/m)^m 12^m \leq (n/m)^{3m}$, for $p$ sufficiently small. Then, using a similar argument as in \cite[Lemma 4.4.1]{HDP}, we find
\begin{equation} \label{netineq}\sup_{v,w\in\mathcal{S}_\veps} \langle v,( G\circ \Xi )w \rangle\leq 2 \sup_{v,w \in \mathcal{N}} \langle v, (G\circ \Xi)w \rangle. \end{equation}
Using \eqref{chernoff}, \eqref{netineq}, the bound on $\#\mathcal{N}$, and a union bound, we get
\[ \PP\Big( \sup_{v,w \in \mathcal{S}_\veps} \langle v, (G\circ \Xi)  w\rangle \geq t\sqrt{np} \Big) \leq \Big( \frac{1}{\veps^2 p}\Big)^{3\veps^2 np} e^{-\frac{t^2}{32} np \log(1/p)}\leq e^{-\frac{t^2}{64} np \log(1/p)},\]
for any $t^2 \geq 192 \veps^2 \log(1/(\veps^2 p))/\log(1/p)$. This completes the proof. 
\end{proof}

It remains to prove Proposition \ref{strongconc}. The proof will use Proposition \ref{improvedconc} and will involve careful choices of certain parameters.

\begin{proof}[Proof of Proposition \ref{strongconc}] 
For any $w\in \RR^n$, write $w = \widehat{w}+\widecheck{w}$, where $\widecheck{w}_i= w_i\Car_{|w_i| >M/\sqrt{ np}}$ 
for any $i\in[n]$, where $M \ge 1$ is some constant to be determined below. 
For 
$\zeta(K) =\sum_{\ell=1}^k \theta_\ell w_\ell w_\ell^{\sf T} \in \mathcal{F}_\eps^k$, with $\vep$ as in \eqref{eq:eps-strongconc}, 
we let 
$\zeta_2(K) :=\sum_{\ell=1}^k \theta_\ell \widecheck{w}_\ell {\widecheck{w}_\ell}^{\sf T}$ and $\zeta_1(K):= \zeta(K)-\zeta_2(K)$. 
Because of the assumption \eqref{convf} we can write 
\[ f(K') = \sup_{K \in \mathcal{H}_n} \{f(K) + \langle \zeta(K), K'-K\rangle\}, \ \forall K'\in \mathcal{H}_n.
\]
Define 
\[f_1(K') :=\sup_{K \in \mathcal{H}_n} \{f(K) - \langle \zeta_2(K),K\rangle +  \langle \zeta_1(K), K'-K\rangle\} \text{ and } f_2(K') :=\sup_{K \in \mathcal{H}_n} | \langle \zeta_2(K),K'\rangle |, 
\]
\text{for $K' \in \mathcal{H}_n$} (note that $f_1$ and $f_2$ depend implicitly on $\veps$ and $M$). 

Now fix $t_0 = \sqrt{\pi C_0 k^2}$ for some large constant $C_0 < +\infty$, to be specified below. Assume $t_0/\sqrt{\log(1/p)}\leq t \leq 1/t_0$, where $p \le p_0$ and $p_0 \in (0,1)$ is some small parameter (depending on $t_0$) to be chosen later. Set $M=t_0/t$ for the rest of the proof.
Recalling the definition of $\mathcal{S}_\veps$ from \eqref{eq:locvect} we see that $\widecheck{w}_\ell \in \mathcal{S}_{1/M}$. Further $|\theta_\ell| \le 1$ for $\ell \in [k]$. Therefore, $f_2(K') \leq  k \sup_{u,v \in \mathcal{S}_{1/M}} \langle u,K'v\rangle$ for $K' \in \cH_n$.  By our choice of $M$, if $p_0$ is sufficiently small, we have $(1/M) \sqrt{C \log(M^2/p)/\log(1/p)} \leq t/4$, where $C$ is as in Lemma \ref{locvect}. Therefore, by Lemma \ref{locvect}, for any $s\geq t/4$,
\begin{equation} \label{ineqf22} \PP\big( f_2(X)> s\big ) \leq e^{-\frac{s^2 np}{C k^2} \log(1/p)}.\end{equation}
Integrating the above inequality yields $\EE f_2(X) \leq \frac{t}{4} +  \sqrt{ \pi C k^2/\log(1/p)}$.
By taking $C_0$ (and hence $t_0$) large enough we obtain $\EE f_2(X)\leq t/3$. Since $|f-f_1|\leq f_2$, this implies that
\begin{equation} \label{compexpect} 
\EE f(X) \leq \EE f_1(X) + t/3.
\end{equation} 
On the other hand, note that by construction $\zeta_1(K) \in \mathcal{G}_{\eps,M}^k$ for any $K\in\mathcal{H}_n$, where $\mathcal{G}_{\eps,M}^k$ is defined in \eqref{defG}. 
%
Set $\delta = \eps /\sqrt{\log(1/p)}$. 
Assuming that $np \geq 4 \log n$, $t_0/\sqrt{\log(1/p)}\le t \le 1/t_0$, and $\log(t^2 \log(1/p))/[t\sqrt{\log(1/p)}]\leq \veps$, one can check that provided $t_0$ is sufficiently large, and $p_0$ small enough depending on $t_0$, all the assumptions on parameters $M, \vep$, and $\eta$ needed to apply Proposition \ref{improvedconc} are satisfied. Therefore, noting that $x \mapsto x \log(1/x)$ is increasing if $x \in (0,e^{-1}]$, we deduce from Proposition \ref{improvedconc} that there exists some $\wt \gamma_0 >0$, depending only on $k$, such that if $\wt\gamma_0/np  \leq t^2 \leq 1/{\wt \gamma_0}$,
%
%
then
\begin{equation} \label{ineqf11} \PP\big( f_1(X) - \EE f_1(X) >t/3\big) \le \exp\Big(-\frac{t^2np}{\gamma_0 \max\big((\delta /t) \log ( \frac{t}{\delta}\big),\eps^2 \big) }\Big) \leq \exp\Big(-\frac{t^2np}{\wt \gamma_0 \eps^2}\Big),\end{equation}
where in the last step we used that $\veps^2\geq (\delta/t) \log(t/\delta)$, which is a consequence of the upper bound on $\veps$ given in \eqref{eq:eps-strongconc}. Since $np \ge \log(1/p)$, this implies that \eqref{ineqf11} continues to hold for all $t$ satisfying \eqref{eq:eps-strongconc} with $\gamma_0:=\wt \gamma_0 \vee t_0^2$. Thus
%
combining \eqref{ineqf22}, applied for $s=t/3$, \eqref{compexpect}, and \eqref{ineqf11}, and using a union bound, we deduce that, for any $t$ and $\eps$ as in \eqref{eq:eps-strongconc},  
\[\PP\big( f(X) - \EE f(X) >t \big) \leq 2\exp\Big(-\frac{t^2np}{\gamma_0'}  [\veps^{-2} \wedge \log(1/p)]\Big),\]
where $\gamma_0'$ is a positive constant depending on $k$. Since $\veps \geq t_0^{-1} \log(t_0^2)/\sqrt{\log(1/p)}$ (use $t_0/\sqrt{\log(1/p)} \le t \le t_0^{-1}$), this ends the proof. 
\end{proof}

\section{Large deviation upper bound for matrices with bounded entries}\label{sec:bdd-entry}

The goal of this section is to prove the large deviation upper bound of Theorem \ref{theo-main-weight} under the additional assumption that the entries of $G$ are bounded. Throughout this section we fix $R>0$ such that for any $i,j \in [n]$, $|G_{i,j}|\leq R$. 



\subsection{Block decomposition}
Let ${\bm u}$ be an eigenvector of $X$ associated to $\lambda_X$ with unit norm and denote by $k_{\text{top}}$ its multiplicity.  For any $\veps>0$ we want to find a random subset $J$ with size at most $O(\veps^{-2})$ with overwhelming probability, such that 
\begin{equation} \label{condJ}  J \supset \{i\in[n] : |{\bm u}_i|\geq \eps\} \text{ and }   \lambda_{X^{(J)}}<\lambda_X,\end{equation}
where we remind the reader that $X^{(J)}$ is the submatrix of $X$ spanned by the rows and columns in $J^c$. To construct this set, we will use the following lemma, which says that one can decrease the top eigenvalue of a matrix by removing certain rows and columns. As we could not locate the proof of this result in the literature, we include it below.

\begin{lemma}\label{top-ei-submatrix} Let $Y$ be a real symmetric matrix of size $n$ and $k$ be the multiplicity of $\lambda_Y$. Then, there exists a  subset $\mathcal{J} \subset [n]$ of size $k$ such that $\lambda_{Y^{(\mathcal{J})} } < \lambda_Y$.
\end{lemma}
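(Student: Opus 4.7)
The plan is to use Cauchy interlacing to get the non-strict inequality $\lambda_{Y^{(\mathcal{J})}}\leq \lambda_Y$ for any $\mathcal{J}$ of size $k$, and then make a careful choice of $\mathcal{J}$ so that equality is impossible. First, I would denote $\lambda:=\lambda_Y$, let $E:=\ker(Y-\lambda\,\mathrm{Id}_n)$, and pick an orthonormal basis $v_1,\dots,v_k$ of $E$. Form the $n\times k$ matrix $V:=[v_1\mid\dots\mid v_k]$, which has rank $k$ and therefore contains $k$ linearly independent rows; let $\mathcal{J}\subset[n]$ be their index set. By construction, the coordinate projection $P_{\mathcal{J}}:\mathbb{R}^n\to\mathbb{R}^{\mathcal{J}}$, $x\mapsto (x_i)_{i\in\mathcal{J}}$, is a bijection when restricted to $E$. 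In particular, any $x\in E$ supported outside $\mathcal{J}$ must vanish.

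Next, I would recall the variational identity
\[
\lambda_{Y^{(\mathcal{J})}}=\sup_{\substack{\tilde w\in\mathbb{R}^n,\ \|\tilde w\|=1\\ \mathrm{supp}(\tilde w)\subset\mathcal{J}^c}}\langle \tilde w,Y\tilde w\rangle,
\]
where a maximiser $\tilde w$ is obtained by extending by zero on $\mathcal{J}$ a top eigenvector of $Y^{(\mathcal{J})}$. This immediately gives $\lambda_{Y^{(\mathcal{J})}}\leq \lambda$.

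Suppose, towards a contradiction, that $\lambda_{Y^{(\mathcal{J})}}=\lambda$, and let $\tilde w$ be the extended unit vector above. Expanding $\tilde w=\sum_i c_i u_i$ in an orthonormal eigenbasis $\{u_i\}$ of $Y$ with eigenvalues $\mu_i\leq\lambda$, the equality $\langle \tilde w,Y\tilde w\rangle=\lambda=\lambda\|\tilde w\|^2$ forces $c_i=0$ whenever $\mu_i<\lambda$, hence $\tilde w\in E$. But $\mathrm{supp}(\tilde w)\subset\mathcal{J}^c$ means $P_{\mathcal{J}}(\tilde w)=0$, and by injectivity of $P_{\mathcal{J}}|_E$ this gives $\tilde w=0$, contradicting $\|\tilde w\|=1$. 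Therefore $\lambda_{Y^{(\mathcal{J})}}<\lambda_Y$, which is the desired conclusion.

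There is no real obstacle: the proof is pure linear algebra. The only thing requiring a little care is justifying that the equality case in the Rayleigh quotient forces $\tilde w$ to sit inside the top eigenspace $E$, which is exactly where the choice of $\mathcal{J}$ via linearly independent rows of $V$ delivers the contradiction.
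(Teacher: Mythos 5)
Your proof is correct and takes essentially the same route as the paper's: choose $\mathcal{J}$ so that the $k\times k$ submatrix of eigenvector coordinates is invertible, then argue by contradiction that equality of top eigenvalues would produce a top eigenvector of $Y$ supported on $\mathcal{J}^c$, which must vanish by the choice of $\mathcal{J}$. The only cosmetic difference is that you justify "$\langle\tilde w, Y\tilde w\rangle=\lambda$ forces $\tilde w\in E$" via the spectral decomposition, whereas the paper invokes the critical point equation of the constrained optimisation — both are standard and equally rigorous.
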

\begin{proof}
Let $w_1,\ldots,w_k$ be a collection of linearly independent eigenvectors associated to $\lambda_Y$. It follows that there exists $\mathcal{J} \subset [n]$ with $|\mathcal{J}|=k$ such that the matrix $(w_j(i))_{i \in \mathcal{J}, j \in [k]}$ is invertible, where $w_j(i)$ denotes the $i$-th entry of $w_j$. We claim that $\lambda_{Y^{(\mathcal{J})}}<\lambda_Y$. Note that it always holds that $\lambda_{Y^{(\mathcal{J})}}\leq \lambda_Y$. Arguing by contradiction, assume that the equality holds. This implies that there exists an eigenvector $w$ of $Y$ supported on $\mathcal{J}^c$.
This follows from the fact if $\widetilde{w}\in\RR^{\mathcal{J}^c}$ is a unit vector such that $\langle \widetilde{w},Y^{(\mathcal{J})}\widetilde{w} \rangle = \lambda_{Y}$ then $\langle \widetilde{w}, Y^{(\mathcal{J})}\widetilde{w}\rangle = \langle w, Yw\rangle = \sup_{\|v\|^2 = 1} \langle v, Yv\rangle$,
where $w$ is the extension of $\widetilde{w}$ to $[n]$ by setting to zero its entries in $\mathcal{J}$. This equality implies that $w$, as an optimiser of the quadratic form defined by $Y$ on the sphere, should satisfy the critical point equation, and therefore should be an eigenvector of $Y$.  
Now, we show that $w,w_1,\ldots,w_k$ are linearly independent. If $\theta_1,\ldots,\theta_k$ are scalars such that $w = \sum_{j=1}^k \theta_j w_j$, then as $w$ is supported on $\mathcal{J}^c$, we have in particular for any  $i \in \mathcal{J}$,  $0 = \sum_{j=1}^k \theta_j w_j(i)$. 
Since we chose $\mathcal{J}$ such that $(w_j(i))_{i\in\mathcal{J},j\in[k]}$ is invertible, these equalities  imply that $\theta_1=\cdots = \theta_k= 0$. As $k$ is the multiplicity of $\lambda_Y$ we reach a contradiction.
\end{proof}

Define now $J\subset [n]$ as the subset 
\begin{equation} \label{choiceJ}  J  = \{i\in[n] : |{\bm u}_i|\geq \eps\}\cup J',\end{equation}
where $J'$ is the subset obtained from Lemma \ref{top-ei-submatrix} such that $\# J' = k_{\text{top}}$. Decompose the top eigenvector and the matrix $X$ as 
\begin{equation} \label{decompo}  {\bm u} = \begin{pmatrix} {\bm v}^\veps \\ {\bm w}^\veps \end{pmatrix}, \quad X = \begin{pmatrix}
X_{J} &  \widecheck{X}_{ J}^{\sf T} \\
\widecheck{X}_{J}  &  X^{(J)}
\end{pmatrix},\end{equation}
where ${\bm v}^{\veps} \in \RR^J$, ${\bm w}^\veps \in \RR^{J^c}$. Note that by construction ${\bm w}^\veps \in \mathcal{D}_\veps$, and therefore by Chebychev's inequality 
\begin{equation} \label{boundsizeJ}k_{\text{top}} \leq \#J \leq  \veps^{-2}+ k_{\text{top}}.\end{equation}
Moreover, by Lemma \ref{top-ei-submatrix}, $\lambda_{X^{(J)}}<\lambda_X
$ almost surely.

\subsection{Multiplicity of the top eigenvalue}
We will now show that with high probability, $J$ is of size only of order $\veps^{-2}$. To prove this, we will  rely on the fact that on a large deviation event of the top eigenvalue, its multiplicity can be at most of order $1$ with high probability. 
More precisely, we will show the following result.
\begin{Lem}\label{boundmulti}Let $\delta >0$. 
\begin{equation} \label{claim1-boundmulti} \lim_{\eps \to 0} \limsup_{n\to +\infty} \frac{1}{np} \log \PP\big( k_{\text{top}} \geq \eps^{-2}, \lambda_X \geq 2+\delta \big) = -\infty.\end{equation}
As a consequence, for any $\delta>0$, and with $J$ as in \eqref{choiceJ},
\begin{equation} \label{claim2-boundmulti} \lim_{\eps \to 0} \limsup_{n\to +\infty} \frac{1}{np} \log \PP\big( \#J \geq 2\eps^{-2}, \lambda_X \geq 2+\delta \big) = -\infty.
\end{equation}
\end{Lem}

To prove Lemma \ref{boundmulti}, we will show that with high probability, there are at most $O(1)$ eigenvalues greater than $2$. This fact is a consequence of the following concentration inequality. Denote for any interval $I \subset \RR$ by $\mathcal{N}(I)$ the number of eigenvalues of $X$ in $I$.
\begin{Lem}\label{conc-multi}
Let $\delta >0$. There exists $n_{\delta,R}$ depending on $\delta$ and $R$ such that for any $n\geq n_{\delta,R}$ and $k\geq 1$, 
\[ \PP\Big( \mathcal{N}\big([2+\delta, +\infty)\big) \geq k\Big) \leq e^{-c \frac{k \delta^2 np}{R^2}},\]
where $c>0$ is a numerical constant.
\end{Lem}
\begin{proof}
Define $f(x) := \frac{1}{\delta}(x-2)_+$ for any $x\in\RR$. As $f(x)\geq 1$ for $x\geq 2+\delta$, we have
\[ \PP\big( \mathcal{N}\big([2+\delta, +\infty)) \geq k\big) \leq \PP\Big( \sum_{i=1}^k f(\lambda_i) \geq k\Big),\]
where $\lambda_1\geq \cdots\geq \lambda_n$ are the eigenvalues of $X$ in a non increasing order. Note that as $f$ is non decreasing
\[ \EE\Big(\sum_{i=1}^k f(\lambda_i)\Big) \leq k \EE f(\lambda_1) \leq k\Big( \frac{1}{4} + \frac{1}{\delta} \EE \Big(( \lambda_1-2)_+ \Car_{\lambda_1 \geq 2+ \delta/4} \Big) \Big)\leq   k\Big( \frac{1}{4} + \frac{
4}{\delta^2} \EE[( \lambda_1-2)_+^2]\Big) \le k/2,\]
for all large $n$, where the last inequality follows from the expectation bound $\EE \lambda_1 \le 2(1+o(1))$ (see \cite[Theorem 2.7]{BeBoKn}), and the variance bound $\Var(\lambda_1)=o(1)$ where the latter is a consequence of \eqref{eq:subG-talagrand}, applied for $f(X)=\lambda_1(X)$, and an integration by parts.
%
%
This allows us to write 
\[ \PP\big( \mathcal{N}\big([2+\delta, +\infty)) \geq k\big) \leq \PP\Big( \sum_{i=1}^k \big(f(\lambda_i) -\EE f(\lambda_i) \big)\geq k/2\Big).\]
Finally using \cite[Proposition 3.2]{NLAu} we get the claim.
\end{proof}

The proof of Lemma \ref{boundmulti} is now immediate.
\begin{proof}[Proof of Lemma \ref{boundmulti}]
For any $\delta>0$, we have  $\{\lambda_X\geq 2+\delta\} \subset \{\mathcal{N}([2+\delta,+\infty)) \geq k_{\text{top}}\}$. Therefore
\[ \PP\big( k_{\text{top}} \geq \eps^{-2}, \lambda_X \geq 2+\delta \big) \leq \PP\big( \mathcal{N}\big([2+\delta, +\infty)) \geq \veps^{-2}\big).\]
Using Lemma \ref{conc-multi}, we get the first claim \eqref{claim1-boundmulti}. The second claim follows from \eqref{boundsizeJ}.
\end{proof}

\subsection{Large deviations of the maximal degree}\label{dev-deg-max}
In this section we show that for any random subset $\bar J \subset [n]$ of bounded size,  the large deviation of the operator norm of $\widecheck{X}_{\bar J}$ is dominated by the deviation of the maximal degree among the vertices in $\bar J$. 
\begin{Lem}\label{normXcheck:deg}For any $n\in \NN$, let $\bar J \subset [n]$ be a random subset of bounded size.
For any $t>0$, 
\[ \lim_{n\to +\infty} \frac{1}{np} \log  \PP \big( \|\widecheck{X}_{\bar J}\|^2 \geq \max_{i \in \bar J} \|X_i\|^2+ t\big) = -\infty.\]
\end{Lem}
Assume for the moment that Lemma \ref{normXcheck:deg} holds. It tells us in particular that the large deviation upper tail of $\|\widecheck{X}_{\bar J}\|^2$ can be read off from the one of the maximal degree. Now, since $np\gg \log n$, the large deviations of the maximal degree are given by the ones of the degree of an individual vertex, which has for rate function $h_L$ according to Lemma \ref{LDP-deg}.
Thus, Lemmas \ref{normXcheck:deg} and \ref{LDP-deg} together with the lower semicontinuity of $h_L$ gives the following large deviation result. 

\begin{Pro}\label{GD:rectanglebounded}
For any $\bar J\subset [n]$ a random subset of bounded size and  $t>1$, 
\[ \limsup_{n\to +\infty} \frac{1}{np} \log  \PP \big( \|\widecheck{X}_{\bar J}\|^2 \geq  t \big) \leq -h_L(t).\]
\end{Pro}

It remains to prove Lemma \ref{normXcheck:deg}. To this end, we first show that for a given vector $v$ with bounded support, $\|Xv\|^2$ is dominated by the mean of the degrees of the vertices in the support of $v$ weighted by the square of the entries of $v$. This amounts to say that in the underlying random graph $\Xi$, for any finite set of vertices,  their neighbourhoods are essentially disjoint at the exponential scale $np$. 
\begin{Lem}\label{largedegree} For any $n\in \NN$, let  $\mathcal{J} \subset [n]$ be a subset of bounded size. For any $v\in \mathbb{B}^{\mathcal{J}}$ and $t>0$, 
\[\lim_{n\to +\infty} \frac{1}{np} \log  \PP \big( \|X v\|^2 \geq \sum_{i \in \mathcal{J}} \|X_i\|^2 v_i^2+ t\big) = -\infty.\]
\end{Lem}

\begin{proof}Observe 
\begin{align*}
\|Xv\|^2 =\big \|\sum_{i\in \mathcal{J}} v_i X_i \big\|^2& = \sum_{i\in \mathcal{J}} v_i^2 \|X_i\|^2 + \sum_{i\neq j\in \mathcal{J}} v_i v_j  \langle X_i,X_j\rangle \leq \sum_{i\in \mathcal{J}} v_i^2 \|X_i\|^2  + \#\mathcal{I} \max_{i\neq j}|\langle X_i,X_j\rangle|.
\end{align*}
Since $\#\mathcal{J} $ is bounded, it suffices to show that for any $t>0$,  
\begin{equation} \label{claimdeg} \lim_{n\to +\infty} \frac{1}{np} \log  \PP \big(  \max_{i\neq j \in [n]} |\langle X_i,X_j \rangle| >t\big) = -\infty.\end{equation}
Let $1\leq i\neq j \leq n$. As $|G_{k,\ell}|\leq R$ for any $k,\ell$, we have $|\langle X_i,X_j\rangle| \leq R^2 \sum_{k =1}^n \xi_{i,k} \xi_{j,k}/np$. As $(\xi_{i,k}\xi_{j,k})_{k \notin \{i,j\}}$ are i.i.d. Bernoulli of parameter $p^2$, we obtain by Chernoff's inequality and Lemma \ref{lem:Lam-bd}(iii), that for any $t>0$ and for $n$ large enough,
\[ \PP\big( \sum_{k \notin \{i,j\}} \xi_{i,k} \xi_{j,k}  > t np \big) \leq e^{-ct n p \log(t/p) },\]
where $c$ is a positive numerical constant.
Using a union bound and the fact that $np \gg \log n$, this yields the claim \eqref{claimdeg}.
\end{proof}

We are now ready to prove Lemma \ref{normXcheck:deg}.
\begin{proof}[Proof of Lemma \ref{normXcheck:deg}] Let $\veps>0$ such that $\#\bar J \leq \veps^{-1}$ almost surely. 
Since $np \gg \log n$ and 
\[ \log \#\big\{\mathcal{J}\subset [n] : \# \mathcal{J}\leq \veps^{-1} \big\} = O(\log n),\]
it suffices to show the statement for a deterministic subset $\mathcal{J} \subset [n]$ such that $\# \mathcal{J}\leq \veps^{-1}$. By Lemma \ref{largedegree}, we know that for any $v\in \mathbb{S}^{\mathcal{J}}$ and $t>0$,
\begin{equation} \label{normindiv}  \lim_{n\to +\infty} \frac{1}{np} \log  \PP \big( \|\widecheck{X}_\mathcal{J} v\|^2 \geq \max_{i \in \mathcal{J} } \|X_i\|^2+t\big) = -\infty.\end{equation}
It remains to perform a net argument. Let $\eta \in (0,1)$ and  $\mathcal{N}_\eta$ a $\eta$-net for the $\ell^2$ norm of $\mathbb{S}^{\mathcal{I}}$. By  \cite[Corollary 4.1.15]{AGM}, there exists such a net of size at most $(3/\eta)^{1/\veps}$. Besides by  \cite[Lemma 4.4.1]{HDP}, 
\[ \|\widecheck{X}_\mathcal{J}\| \leq  \frac{1}{1-\eta} \sup_{v\in\mathcal{N}_\eta} \|\widecheck{X}_\mathcal{J} v\|. \]
Using \eqref{normindiv}, this gives for any $t>0$ and $\eta \in (0,1)$, 
\[  \lim_{n\to +\infty} \frac{1}{np} \log  \PP \big( \|\widecheck{X}_\mathcal{I}\|^2 \geq (1-\eta)^{-2}(\max_{i \in \mathcal{I} } \|X_i\|^2+t)\big) = -\infty.\]
Using the exponential tightness of the maximum degree guaranteed by Lemma \ref{LDP-deg} and a union bound, this ends the proof. 
\end{proof}

\subsection{Localisation of the top eigenvector} We show that on the upper tail large deviation event of the top eigenvalue, the top eigenvector must localise in the sense that the $\ell^2$ weight of the entries of order at least $\veps$ is lower bounded by a constant. More precisely, we prove that ${\bm v}^\veps$ defined in \eqref{decompo} has a non-trivial $\ell^2$ norm at the large deviation scale.
\begin{Lem}\label{mass-loc}For any $\lambda>2$, there exists $\eta_\lambda>0$ increasing in $\lambda$ such that 
\[\limsup_{\veps \to 0} \limsup_{n\to +\infty} \frac{1}{np} \log \PP\big( \|{\bm v}^\veps\| \leq \eta_\lambda, \ \lambda_X >\lambda  \big) < -I(\lambda).\]
\end{Lem}

To  prove Lemma \ref{mass-loc} we first show that a large value of the inner product $\langle v,Xw\rangle$ cannot be achieved by using delocalised unit vectors. This result uses in a {\em crucial way} the fact that the entries are bounded. Recall the definition of $\mathcal{D}_\eps$ from \eqref{flatmatrices}.
\begin{Lem}\label{conc-sp-rad-deloc}
For any $t>0$,
\[\lim_{\eps \to 0}  \limsup_{n\to +\infty} \frac{1}{np} \log  \PP\Big( \sup_{v,w \in \mathcal{D}_\veps} \langle v, Xw\rangle >2+t \Big) = -\infty.\]
\end{Lem}

\begin{proof}Define for any $K \in \mathcal{H}_n$,  $f(K) := \sup_{v,w\in\mathcal{D}_\veps} \langle v,Kw\rangle$. 
Clearly $f$ is a convex function. Note that for any $K\in\mathcal{H}_n$, $\langle v,K w\rangle = \langle K, V \rangle$ where $V = \frac{1}{2} (vw^{\sf T} + wv^{\sf T})$. Since $4V= (v+w)(v+w)^{\sf T} - (v-w)(v-w)^{\sf T}$, we deduce that $V$ belongs to the set of flat matrices of rank at most $2$, $\mathcal{F}_{\veps}^2$ defined in \eqref{flatmatrices}. Since  $\mathcal{D}_{\veps}$ is compact, the supremum defining $f(K)$ is achieved for any $K$, and it follows that $\partial f(K) \cap \mathcal{F}_{\veps}^2 \neq \emptyset$. It is then not difficult to show that there exists a measurable choice of sub differential that belongs to $\cF_\veps^2$. That is, there exists $\zeta : \mathcal{H}_n\mapsto \mathcal{H}_n$ measurable such that  $\zeta(K) \in\partial f(K) \cap \mathcal{F}_\veps^2$ for any $K\in \mathcal{D}_\veps$ (see Lemma \ref{selection} and its proof). As a consequence, for all $K,K' \in \mathcal{H}_n$,
\[  f(K)-f(K')\leq \langle \zeta(K),K-K'\rangle.\] 
It follows from Lemma \ref{strongconc} that for any $t>0$, 
\[ \lim_{\veps \to 0} \limsup_{n\to +\infty}\frac{1}{np} \log  \PP\big( f(X)-\EE f(X) >t \big) =-\infty.\]
Besides, as $np \gg \log n$, we know from  \cite[Theorem 2.7]{BeBoKn} that $\EE \sup_{v,w \in \mathcal{D}_\veps} \langle v, Xw\rangle \leq \EE \|X\| =2(1+o(1))$.
This ends the proof of the claim. 
\end{proof}

We are now ready to give a proof of Lemma \ref{mass-loc}.
\begin{proof}[Proof of Lemma \ref{mass-loc}]

Expanding $\lambda_X = \langle ({\bm v}^\veps+{\bm w}^\veps), X ({\bm v}^\veps+{\bm w}^\veps)\rangle$, on the event $\{\#J\leq 2\veps^{-2}\}$, we get 
\[ \lambda_X = 2\langle \widecheck{X}_J{\bm v}^\veps, {\bm w}^\veps\rangle + \langle {\bm w}^\veps, X^{(J)} {\bm w}^\veps\rangle+\langle {\bm v}^\veps, \widecheck{X}_J {\bm v}^\veps\rangle \le 2\|\widecheck{X}_J\|.\|{\bm v}^\veps\| + \sup_{w \in  \mathcal{D}_\eps} \langle w, Xw \rangle+o(1),\]
where the last inequality follows from the fact that the entries of $G$ are bounded. 
Thus, for any $t,r>0$, and all large $n$, 
\[ \{\|{\bm v}^\veps\| \leq t /(4r),  \ \lambda_X >2+t,\ \#J\leq 2\veps^{-2} \} \subset \{ \|\widecheck{X}_J\| > r\}\cup  \{ \sup_{w \in  \mathcal{D}_\eps} \langle w, Xw \rangle \geq 2 +t/4\}.\]
By Lemma \ref{GD:rectanglebounded}, it follows that
\begin{equation} \label{choicenorm} \sup_{\eps>0} \limsup_{n\to +\infty} \frac{1}{np} \log \PP\big( \|\widecheck{X}_J\| > 2+t \big)  \leq -h_L((2+t)^2)< - h_L\Big(\frac{2+t}{m(2+t)}\Big)=I(2+t),\end{equation}
where we used that for any $\lambda>2$, $\lambda m(\lambda) >1$ and that $h_L$ is increasing on $[1,+\infty)$.
Thus, taking $r = t+2$, we get using Lemmas \ref{conc-sp-rad-deloc} and \ref{boundmulti},  and \eqref{choicenorm} that for any $t>0$, 
\[ \limsup_{\veps \to 0} \limsup_{n\to +\infty} \frac{1}{np} \log \PP\big(\|{\bm v}^{\veps}\|\leq \eta_t, \lambda_X>2+t\big) <-I(2+t),\]
where $\eta_t = t/(4(t+2))$ for any $t>0$. Finally, one can check that $\eta_t$ is increasing in $t \in \RR_+$. 
\end{proof}

\subsection{Concentration of the resolvent}\label{section:concresolv}
The goal of this section is to prove the following concentration result of the resolvent. 
\begin{Pro}\label{concresolvfinalbounded}For any $\lambda>2$, $t>0$, 
\[\lim_{\eps \to 0} \limsup_{n\to +\infty} \frac{1}{np} \log  \PP\Big( \langle \widecheck{X}_J {\bm v}^\veps, (\lambda_X -X^{(J)})^{-1}\widecheck{X}_J{\bm v}^\veps  \rangle -m(\lambda_X) \|\widecheck{X}_J\|^2 \cdot\|{\bm v}^\veps\|^2>t, \lambda_X\geq \lambda \Big)< - I(\lambda).\]
\end{Pro}
To prove Proposition \ref{concresolvfinalbounded}, we will take advantage of the fact that for a deterministic symmetric matrix $Y$, if there exists $\mathcal{J}\subset [n]$ such that $\lambda_{Y^{(\mathcal{J})}} <\lambda_Y$, then the top eigenvalue of $Y$ is the solution of an equation involving an optimization problem on $\mathbb{S}^{\mathcal{J}}$ described in the following lemma. 
\begin{Lem}\label{topeioptim} Let $Y \in \mathcal{H}_n$ and $\mathcal{J}\subset [n]$ such that $\lambda_{Y^{(\mathcal{J})}} <\lambda_Y$. Then, 
\begin{equation}\label{eq:topeioptim}
\lambda_Y = \sup_{v \in \mathbb{S}^\mathcal{J}} ( \langle \widecheck{Y}_\mathcal{J}v, (\lambda_Y -Y^{(\mathcal{J})})^{-1}\widecheck{Y}_\mathcal{J}v\rangle + \langle v, Y_{\mathcal{J}} v\rangle\big).
\end{equation}
Moreover, the supremum is achieved for any vector $v\in \mathbb{S}^\mathcal{J}$ which can be extended to a top eigenvector of $Y$. 
\end{Lem}
\begin{proof} Using the same computation as in \eqref{eq-top-ei-sec2-1} one can check that if $v\in \mathbb{S}^{\mathcal{J}}$ admits an extension as a top eigenvector of $Y$, then 
\[ \lambda_Y  =  \langle \widecheck{Y}_\mathcal{J}v, (\lambda_Y -Y^{(\mathcal{J})})^{-1}\widecheck{Y}_\mathcal{J}v\rangle + \langle v, Y_\mathcal{J} v\rangle.\]
 As $\lambda_{Y^{(\mathcal{J})}} <\lambda_Y$, any top eigenvector of $Y$ has a non-zero projection on $\RR^{\mathcal{J}}$. Therefore, there exists some vector $v\in\mathbb{S}^{\mathcal{J}}$ which can be extended to a top eigenvector, and consequently the supremum in \eqref{eq:topeioptim} is at least as large as  $\lambda_Y$. It remains to prove the other direction.
 
 By a compactness argument, the supremum is achieved and let $v_*$ be a maximiser. It should satisfy the critical point equation which reads $\widecheck{Y}_\mathcal{J}^{\sf T} (\lambda_Y -Y^{(\mathcal{J})})^{-1} \widecheck{Y}_\mathcal{J} v_*+Y_\mathcal{J}v_* = \theta v_*$ for some $\theta \in \RR$. Denoting $w_*=(\lambda_Y -Y^{(\mathcal{J})})^{-1} \widecheck{Y}_\mathcal{J} v_*$, we get the equations 
\[ \begin{cases} Y_\mathcal{J}v_* +  \widecheck{Y}_\mathcal{J}^{\sf T} w_*= \theta v_*\\
\widecheck{Y}_{\mathcal{J}}v_* + Y^{(\mathcal{J})} w_* = \lambda_Y w_*.\end{cases}\]
If $u_*^{\sf T} =(v_*^{\sf T},w_*^{\sf T})$, then we find $\langle u_*, Y u_*\rangle = \theta  + \lambda_Y\|w_*\|^2$. The inequality $\langle u_*, Y u_*\rangle \leq \lambda_Y \|u_*\|^2$, as  $\|v_*\|^2=1$, yields $\theta \leq \lambda_Y$. Since $\theta$ is the value of the supremum, this ends the proof.
\end{proof}

To prove Proposition \ref{concresolvfinalbounded} we need to set up a few more notation.
Fix  $\mathcal{J} \subset [n]$ a subset of bounded size, $\lambda>2$, and  a realisation of $\widecheck{X}_\mathcal{J}$ and $X_{\mathcal{J}}$. For $K \in \cH_{\cJ^c}^\lambda$ (recall the relevant notation from Section \ref{sec:notation}) we set 
\[ \phi_{\lambda,v}(K):= \langle \widecheck{X}_\mathcal{J}v, (\lambda -K)^{-1}\widecheck{X}_\mathcal{J}v\rangle + \langle v, X_\mathcal{J} v\rangle,  \forall v\in\RR^{\mathcal{J}}, \text{ and }  \phi_\lambda(K) := \sup_{v \in \mathbb{S}^\mathcal{J}}  \phi_{\lambda,v}(K).\]
Observe that since $\lambda_{X^{(J)}} <\lambda_X$ by definition of $J$, Lemma \ref{topeioptim} yields that on the event where $J=\mathcal{J}$,  $\phi_{\lambda_X}(X^{(\mathcal{J})}) = \phi_{\lambda_X,\widetilde{{\bm v}}^\veps}(X^{(\mathcal{J})})$, where $\widetilde{{\bm v}}^\veps := {\bm v}^\veps/\|{\bm v}^\veps\|$. Therefore, to prove Proposition \ref{concresolvfinalbounded}, it suffices to show that the upper tail of $\phi_{\lambda}(X^{(\mathcal{J})})$ concentrates uniformly in $\lambda$ and $\mathcal{J}$ around its expectation.   
By \cite[Exercise I.I.15]{Bhatia}, we know that for any given $w \in \RR^{\mathcal{J}^c}$, $K\mapsto \langle w, (\lambda -K)^{-1} w\rangle$ is convex on $\cH^\lambda_{\cJ^c}$.  In order to show that $\phi_\lambda(X^{(\mathcal{J})})$ concentrates with a large deviation speed much larger than $np$, we need to extend it suitably on the whole space $\cH_{\cJ^c}$. To this end, 
define for any $r,\eps>0$ the set $E_\lambda^{\veps,r}$ of matrices $K$ such that there exists a ``flat'' subdifferential of $\phi_\lambda$ at $K$, that is
\begin{equation*} E_\lambda^{\veps,r} := \big\{K \in \mathcal{H}_{\mathcal{J}^c}^\lambda: \exists v \in \mathbb{S}^\mathcal{J}, \phi_\lambda(K) = \phi_{\lambda,v}(K), \nabla \phi_{\lambda,v}(K) \in r\mathcal{F}_\veps^1\big\},\end{equation*}
where $\mathcal{F}^1_{\veps}$ is defined in \eqref{flatmatrices}.
For any $K\in E_\lambda^{\veps,r}$, we set
\[
\Theta(K):= \{\nabla \phi_{\lambda,v}(K) \in  r\mathcal{F}_\veps^1: v\in\mathbb{S}^{\mathcal{J}} \text{ and } \phi_{\lambda,v}(K)=\phi_\lambda(K) \}. 
\]
Now extend ${\phi_\lambda}_{|E_\lambda^{\veps,r}}$ to $\mathcal{H}_{\mathcal{J}^c}$ by setting 
\begin{equation} \label{defphitilde} 
\widetilde{\phi}_{\lambda}^\veps(K') := \sup_{K\in E_\lambda^{\veps,r}} \sup_{\zeta \in \Theta(K)}\big\{\phi_\lambda(K) + \langle \zeta, K'-K\rangle \big\}, \, \forall K' \in \mathcal{H}_{\mathcal{J}^c}.
\end{equation}
Note that $\widetilde{\phi}_\lambda^\veps$ extends ${\phi_\lambda}_{|E_\lambda^{\veps,r}}$ in such a way that $\widetilde{\phi}_\lambda^\veps$ is convex and $\widetilde{\phi}_\lambda^\veps \leq {\phi}_\lambda$ pointwise on the subset $\cH^\lambda_{\cJ^c}$.

To prove Proposition \ref{concresolvfinalbounded}, in a first step, we upper bound the expectation of $\widetilde{\phi}_{\lambda}^\veps(X^{(\mathcal{J})})$ given $\widecheck{X}_\mathcal{J}$ and $X_\mathcal{J}$. We need to define the following event. Fix $C \ge 1$ and $\cJ \subset [n]$, and set
\begin{equation}
\Omega_C:= \big\{\#\{(i,j) \in [n] \times \mathcal{J} : \xi_{i,j}=1\}\leq Cnp \text{ and }  \|\widecheck{X}_\mathcal{J}\|^2 \leq C\big\}. 
\end{equation}
\begin{Pro} \label{computation-expec}
Let $C \ge 1$, $\delta \in (0,1)$, and $\lambda > 2+2\delta$. Denote $\EE_{\mathcal{J}^c}$ to be the expectation with respect to $X^{(\mathcal{J})}$. Then
 \[ \EE_{\mathcal{J}^c}\big[\widetilde{\phi}_{\lambda}^\veps(X^{(\mathcal{J})})\big] \leq m(\lambda)\|\widecheck{X}_\mathcal{J}\|^2+o(1), \qquad \text{a.s.~on  } \Omega_C.
 \]
\end{Pro}

The proof of Proposition \ref{computation-expec} is postponed to Section \ref{sec:loc-law}. It uses an isotropic local law, concentration bounds, and a net argument.  We also need the following exponential tightness result.

\begin{Lem} \label{expotight}
Under the same setup of Theorem \ref{theo-main-weight}
 \[
 \lim_{M \to +\infty} \limsup_{n\to+\infty}\frac{1}{np} \log \PP( \| X\| > M) = -\infty.\]
 \end{Lem}


Lemma \ref{expotight} was essentially proved in \cite{BR17}. See \cite[Theorem 1.7]{BR17}. 
It was not stated in the form that we need. Nevertheless, one can check that the same proof yields Lemma \ref{expotight}. So we do not repeat it here. Let us point to the reader that Lemma \ref{expotight} does not need the entries of $G$ to be uniformly bounded, and therefore it will also be used later in Section \ref{sec:ld-ub-gen} when we consider the unbounded case. We are now ready to give a proof of Proposition \ref{concresolvfinalbounded}.

\begin{proof}[Proof of Proposition \ref{concresolvfinalbounded}]
By definition (see \eqref{defphitilde}) $\widetilde{\phi}_\lambda^\veps$ is convex on $\mathcal{H}_{\mathcal{J}^c}$ and coincides with $\phi_\lambda$ on $E_\lambda^{\veps,r}$. 
 Since $\mathcal{F}^1_{\veps}$ is compact and $\Theta(K)\subset r\mathcal{F}_\veps^1$ for any $K\in E_\lambda^{\veps,r}$, we deduce from \eqref{defphitilde} that $r\mathcal{F}^1_\veps \cap \partial \wt \phi_\lambda^\veps (K')\neq \emptyset$ for any $K'\in\mathcal{H}_{\mathcal{J}^c}$. 
Therefore there exists $\zeta : \mathcal{H}_{\mathcal{J}^c} \mapsto  \mathcal{H}_{\mathcal{J}^c}$ a measurable selection for the multivalued mapping $\partial \wt \phi_\lambda^\veps (\cdot) \cap r \mathcal{F}_\veps^1$, that is, $\zeta(K) \in \partial \wt \phi_\lambda^\veps(K)\cap r\mathcal{F}_\veps^1$ for any $K\in \mathcal{H}_{\mathcal{J}^c}$ (see Lemma \ref{selection}). Thus, 
\[ \widetilde{\phi}^\veps_\lambda(K)-\widetilde{\phi}^\veps_\lambda(K')\leq \langle \zeta(K), K-K'\rangle, \qquad \forall K, K' \in \mathcal{H}_{\mathcal{J}^c}, \ \]
Applying Proposition \ref{strongconc} to $\widetilde{\phi}_\lambda^\veps/r$, we get that for any sufficiently small $t>0$ and $n$ large enough
\begin{equation} \label{concflat} \PP_\mathcal{J}\big( \widetilde{\phi}^\veps_\lambda(X^{(\mathcal{J})}) -\EE_\mathcal{J}  \widetilde{\phi}^\veps_\lambda(X^{(\mathcal{J})}) >t \big) \leq 2 e^{-\frac{t^2 np}{\gamma_0 (r\veps)^2}},\end{equation}
 where $\gamma_0 >0$ is a numerical constant. 
 Using Proposition \ref{computation-expec}, we obtain for any $C,t>0$, 
 \begin{equation} \label{concunifEc} \lim_{\veps \to 0} \limsup_{n\to+\infty} \frac{1}{np} \log \PP\big( \Omega_C\cap \big\{\widetilde{\phi}^\veps_\lambda(X^{(\mathcal{J})})> m(\lambda)\|\widecheck{X}_\mathcal{J}\|^2 + t\big\}\big) = -\infty,\end{equation}
Next we claim that
 \begin{equation} \label{claimEc} \lim_{C\to +\infty} \limsup_{n\to +\infty} \frac{1}{np} \log \PP( \Omega_C^c) = -\infty.\end{equation}
 Indeed, by Lemma \ref{expotight}, $\|\widecheck{X}_\mathcal{J}\|$ is exponentially tight. Moreover, using Binomial tail estimates and the fact that $\mathcal{J}$ is a set of bounded size it yields,
    \[ \lim_{C\to +\infty} \limsup_{n\to +\infty} \frac{1}{np} \log \PP\big(\#\{(i,j) \in [n] \times \mathcal{J} : \xi_{i,j}=1\}> Cnp\big) = -\infty.\]
 Thus the claim \eqref{claimEc} follows. Combining \eqref{claimEc} and \eqref{concunifEc}, we get for any $t>0$ and $\mathcal{J}\subset [n]$ of bounded size, 
  \begin{equation}\label{concpoint}  \lim_{\veps \to 0} \limsup_{n\to+\infty} \frac{1}{np} \log \PP\big( \widetilde{\phi}^\veps_\lambda(X^{(\mathcal{J})})> m(\lambda)\|\widecheck{X}_\mathcal{J}\|^2 + t\big) = -\infty.\end{equation}
 It remains now to perform a union bound on $\mathcal{J}$ and a net argument on $\lambda$. 
We first perform a net argument over $\lambda$. For $\delta>0$ 
we show that 
\begin{equation}\label{eq:Gamma-delta-r} 
\Gamma_{\delta,r}^{\lambda,\mathcal{J}} :=\{\lambda  \leq \lambda_X \leq \lambda + \delta, \|{\bm v}^\veps\|^2 >1/r, J=\mathcal{J}\} \subset \{\phi_{\lambda_X}(X^{(\mathcal{J})}) \leq \widetilde{\phi}_{\lambda+\delta}^\veps(X^{(\mathcal{J})}) + \delta r\}.
\end{equation}
On the event $J=\mathcal{J}$, we have on the one hand by Lemma \ref{topeioptim}  that $\phi_{\lambda_X}(X^{(\mathcal{J})}) = \phi_{\lambda_X,\widetilde{{\bm v}}^{\veps} }(X^{(\mathcal{J})})$, where $\widetilde{{\bm v}}^{\veps} = {\bm v}^\veps/\|{\bm v}^\veps\|$, and on the other hand by \eqref{eq-top-ei-sec2-1}, 
\[\nabla \phi_{\lambda_X,\widetilde{{\bm v}}^{\veps} }(X^{(\mathcal{J})}) = \frac{1}{\|{\bm v}^\veps\|^2} {\bm w}^\veps {{\bm w}^\veps}^{\sf T}.\]
Therefore, on the event  $\Gamma_{\delta,r}^{\lambda,\mathcal{J}}$, $X^{(\mathcal{J})} \in E_{\lambda_X}^{\veps,r}$. Observe that this is equivalent to say that on the event $\Gamma_{\delta,r}^{\lambda,\mathcal{J}}$, $K = X^{(\mathcal{J})} +(\lambda+\delta)- \lambda_X \in E_{\lambda+\delta}^{\veps, r}$. Moreover, on this event 
we have $\nabla \phi_{\lambda_X,\widetilde{{\bm v}}^{\veps}}(X^{(\mathcal{J})}) = \nabla \phi_{\lambda+\delta,\widetilde{{\bm v}}^{\veps}}(K) \in \Theta(K)$ and $\phi_{\lambda+\delta}(K) = \phi_{\lambda}(X^{(\mathcal{J})})$. Hence,  on $\Gamma_{\delta,r}^{\lambda,\mathcal{J}}$ we have 
\[ \widetilde{\phi}_{\lambda+\delta}^\veps(X^{(\mathcal{J})}) \geq  \phi_{\lambda+\delta}(K)+ \langle X^{(\mathcal{J})} - K, \nabla \phi_{\lambda+\delta,\widetilde{{\bm v}}^{\veps}}(K)\rangle=  \phi_{\lambda_X}(X^{({\mathcal{J}})})+ \langle X^{(\mathcal{J})} - K, \nabla \phi_{\lambda_X,\widetilde{{\bm v}}^{\veps}}(X^{(\mathcal{J})})\rangle.\]
Using that on $\Gamma_{\delta,r}^{\lambda,\mathcal{J}}$,  $\lambda_X\geq \lambda$, this yields
\[ \widetilde{\phi}^\veps_{\lambda+\delta}(X^{(\mathcal{J})}) \geq \phi_{\lambda_X}(X^{(\mathcal{J})})-(\lambda+ \delta - \lambda_X) \|{\bm w}^\veps\|^2/\|{\bm v}^\veps\|^2 \geq \phi_{\lambda_X}(X^{(\mathcal{J})}) - \delta r. \]
This yields \eqref{eq:Gamma-delta-r}. Now, if we take $r=1/\eta_\lambda^2$ where $\eta_\lambda$ is as in Lemma \ref{mass-loc}, then we get using \eqref{concpoint} and the fact that $m$ is non increasing, that for any $t>0$ and $0<\delta \leq t \eta_\lambda^{2}$,
  \begin{align*} 
 \lim_{\veps \to 0} \limsup_{n\to+\infty} \frac{1}{np} \log \PP\big( {\phi}_{\lambda_X}(X^{(\mathcal{J})})> m(\lambda_X)\|\widecheck{X}_{\mathcal{J}}\|^2 + 2t, \lambda\leq \lambda_X\leq  \lambda+\delta, J=\mathcal{J}\big) & < -I(\lambda).\end{align*} 
As the above estimate is true for any $t>0$, $\lambda>2$ and $\eta_\lambda$ is increasing in $\lambda$, we deduce using a net argument and the exponential tightness of $\lambda_X$ given by Lemma \ref{expotight}, that for any $\lambda>2$,
  \[ \lim_{\veps \to 0} \limsup_{n\to+\infty} \frac{1}{np} \log \PP\big( {\phi}_{\lambda_X}(X^{(\mathcal{J})})> m(\lambda_X)\|\widecheck{X}_{\mathcal{J}}\|^2 + t,  \lambda_X\geq \lambda, J=\mathcal{J} \big) < -I(\lambda).\]
 Finally, as there are at most $e^{O(\veps^{-2}\log n)}$  subsets $\mathcal{J}$ such that $|\mathcal{J}|\leq 2\veps^{-2}$ and for any such subset $\phi_{\lambda_X}(X^{(\mathcal{J})}) \leq  \langle \widecheck{X}_{\mathcal{J}} {\bm v}^\veps, (\lambda_X -X^{(\mathcal{J})})^{-1}\widecheck{X}_{\mathcal{J}}{\bm v}^\veps \rangle+2R\veps^{-2}/\sqrt{np}$, we conclude the proof by using a union bound, Lemma \ref{boundmulti} and the fact that $np \gg \log n$.
 \end{proof}

\subsection{Proofs of the large deviation upper bound in the bounded case and of Theorem \ref{thm:bdd-case}}

We begin with the proof of Theorem \ref{thm:bdd-case}. By \eqref{eq-top-ei-sec2-1}, we have 
\[ \langle  {\bm v}^\veps, X_{J} {\bm v}^\veps \rangle  + \langle \widecheck{X}_{J}  {\bm v}^\veps, (\lambda_X -X^{(J)})^{-1}\widecheck{X}_{J}  {\bm v}^\veps \rangle =\lambda_X \| {\bm v}^\veps\|^2.\]
Using Proposition \ref{concresolvfinalbounded} and the fact that $\langle  {\bm v}^\veps, X_{J} {\bm v}^\veps \rangle  =o(1)$ with overwhelming probability by Lemma \ref{boundmulti}, we get that for any $t>0$, and $\lambda>2$,
\[ \lim_{\veps \to 0} \limsup_{n\to+\infty} \frac{1}{np} \log \PP\big( \lambda_X\|{\bm v}^\veps\|^2> m(\lambda_X) \|\widecheck{X}_J\|^2\|{\bm v}^\veps\|^2 +t, \lambda_X\geq \lambda\big) <-I(\lambda).\]
Using  that $\lambda_X$ is exponentially tight, $\lambda \mapsto \lambda/m(\lambda)$ is increasing and Lemma \ref{mass-loc}, we get for any $t>0$, 
\[ \lim_{\veps \to 0} \limsup_{n\to+\infty} \frac{1}{np} \log \PP\big( \lambda/m(\lambda)> \|\widecheck{X}_J\|^2+t, \lambda_X\geq \lambda\big) <-I(\lambda).\]
Combining the above estimate with Lemma \ref{normXcheck:deg}, this ends the proof of the first claim. The second claim is an immediately consequence of Lemma \ref{mass-loc}, and as we know that the large deviation lower bound holds with rate function $I$, the third claim follows.  

We finally give the proof of large deviation upper bound in the bounded case. Let $\lambda>2$. Theorem \ref{thm:bdd-case} shows that for any $t>0$, with overwhelming probability, on the large deviation event $\{\lambda_X\geq \lambda\}$, there exists a vertex with degree at least $\lambda/m(\lambda)-t$. Since the large deviation of the maximal degree has rate function $h_L$ by Lemma \ref{LDP-deg}, this ends the proof. 
\qed

 \section{Large deviation upper bound for the general case}\label{sec:ld-ub-gen}
In this section we prove that the upper tail large deviation event of the top eigenvalue in the unbounded case is dominated by the emergence of  either a clique of vertices with large edge weights or by one vertex having a large vertex weight and a high degree.

\subsection{Truncation and decoupling}\label{section:decoupl}
We will build upon the large deviation principle for the top eigenvalue in the bounded case. We start by defining an appropriate decomposition of $X$ into $A+B$, where $A$ is a sparse Wigner matrix with bounded entries, and then show that we can replace $A$ by a sparse Wigner matrix with bounded entries {\em independent} of $B$. In order to carry out these arguments, we first perform a reduction of Theorem \ref{theo-main-weight} in the case where the distributions of the entries of $G$ satisfy the following additional assumption. 
\begin{Hypo}[Density and extended support]\label{hypo:ex-supp}
The laws of the entries of $G$ have densities with respect to the Lebesgue measure and have extended supports, where a probability measure $\mu$ on $\RR$ is said to have an {\em extended support} if $\inf \mathrm{supp}(\mu) = -\infty$ and $\sup \mathrm{supp}(\mu) = +\infty$.
\end{Hypo}

\begin{lemma}\label{reductionextsupp}
If the large deviation upper bound of Theorem \ref{theo-main-weight} holds for any sparse Wigner matrix satisfying Assumptions \ref{hypo:subg-entry} and \ref{hypo:ex-supp} then it continues to hold for any sparse Wigner matrices satisfying {\em only} Assumption \ref{hypo:subg-entry}. 
\end{lemma}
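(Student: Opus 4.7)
The plan is a Gaussian smoothing argument that embeds $X$ into a one-parameter family of sparse Wigner matrices satisfying Assumption~\ref{hypo:ex-supp}, and sends the perturbation to zero. Let $H$ be a symmetric matrix, independent of $(G,\Xi)$, whose entries $\{H_{i,j}\}_{i\leq j}$ are i.i.d.~standard Gaussians (with some independent Gaussian law on the diagonal). For $\sigma \in (0,1)$, set $G^\sigma := \sqrt{1-\sigma^2}\,G + \sigma H$ and $X^\sigma := (G^\sigma \circ \Xi)/\sqrt{np}$, so that $X^\sigma$ is itself a sparse Wigner matrix in the sense of Definition~\ref{defmodel}. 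Gaussian convolution gives the entries of $G^\sigma$ a density with full support, verifying Assumption~\ref{hypo:ex-supp}. Moreover the identity $\Lambda_{i,j}^\sigma(\theta)=\Lambda_{i,j}(\sqrt{1-\sigma^2}\,\theta)+\sigma^2\theta^2/2$ shows that Assumption~\ref{hypo:subg-entry} persists with parameters
\[
\alpha^\sigma = (1-\sigma^2)\alpha + \sigma^2, \qquad \beta^\sigma = (1-\sigma^2)\beta + \sigma^2.
\]
By the hypothesis of the lemma, the large deviation upper bound of Theorem~\ref{theo-main-weight} is then available for $\lambda_{X^\sigma}$ with rate function $I^\sigma$ constructed as in Definition~\ref{def:rate-fn} from $\alpha^\sigma$, $\beta^\sigma$, and $L^\sigma(\theta) := \EE\exp(\theta(G_{1,2}^\sigma)^2)$.

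Setting $Y := (H\circ\Xi)/\sqrt{np}$, the decomposition $X^\sigma = \sqrt{1-\sigma^2}\,X + \sigma Y$ and Weyl's inequality yield $\sqrt{1-\sigma^2}\,\lambda_X \leq \lambda_{X^\sigma} + \sigma\|Y\|$, whence for any $M > 0$,
\[
\{\lambda_X \geq \lambda\} \subseteq \{\lambda_{X^\sigma} \geq \sqrt{1-\sigma^2}\,\lambda - \sigma M\}\cup \{\|Y\| > M\}.
\]
Since $Y$ is itself a sparse Wigner matrix with Gaussian weights, Lemma~\ref{expotight} gives $\limsup_n (np)^{-1}\log\PP(\|Y\| > M) \leq -c(M)$ with $c(M)\to\infty$ as $M\to\infty$. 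Combining this with the upper bound for $\lambda_{X^\sigma}$ from the previous step, one obtains
\[
\limsup_{n\to\infty}\frac{1}{np}\log\PP(\lambda_X\geq\lambda) \leq -\min\bigl(I^\sigma(\sqrt{1-\sigma^2}\,\lambda - \sigma M),\, c(M)\bigr),
\]
valid for every $(\sigma,M)$ with $\sqrt{1-\sigma^2}\,\lambda - \sigma M > 2$.

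The final step is to send $\sigma \to 0$ with $M$ fixed, and then $M\to\infty$. For this it suffices to verify $\lim_{\sigma\downarrow 0} I^\sigma(\lambda_\sigma)=I(\lambda)$ whenever $\lambda_\sigma \to \lambda > 2$. By dominated convergence, $L^\sigma(\theta) \to L(\theta)$ for every $\theta$ in the interior of the domain $\{\theta : L(\theta) < \infty\}$, and by standard convex analysis the corresponding Legendre transforms converge, so $h_{L^\sigma} \to h_L$ pointwise on $(0,\infty)$. Together with $\alpha^\sigma\to\alpha$, $\beta^\sigma\to\beta$, and the continuity of $\lambda\mapsto m(\lambda)$ on $(2,\infty)$, a direct analysis of the variational problem defining $\widehat I^\sigma$ based on Lemma~\ref{lem:hL-prop} yields the required continuity; letting $M\to\infty$ then closes the argument.

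The main obstacle I anticipate is the handling of the degenerate cases $\alpha = 0$ and $\beta = 0$, for which the formula defining $I$ simplifies but the smoothed formula for $I^\sigma$ does not. When $\alpha = 0$, the penalty $r^2/(2\alpha^\sigma) = r^2/(2\sigma^2)$ forces any near-optimal $r$ in the variational problem for $\widehat I^\sigma$ to satisfy $r = O(\sigma)$, so the problem collapses in the limit to the minimisation of $h_{L^\sigma}(s)$ under the constraint $m(\lambda)\,s = \lambda$, which converges to $h_L(\lambda/m(\lambda))=\widehat I(\lambda)$. When $\beta = 0$, the term $1/(4\beta^\sigma m(\lambda)^2)$ diverges as $\sigma \to 0$ and drops out of the minimum, leaving $I^\sigma \to \widehat I = I$. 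Both limits rely only on Lemma~\ref{lem:hL-prop} and elementary monotone/continuity considerations; everything else is a routine combination of the hypothesis with Weyl's inequality and exponential tightness.
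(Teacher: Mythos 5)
Your proposal is correct in its overall structure and reaches the same conclusion as the paper, but it takes a genuinely different route at the key technical decision point.

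\emph{What you do differently.} You interpolate $G^\sigma = \sqrt{1-\sigma^2}\,G + \sigma H$ with $H$ standard Gaussian, so that $\Var(G^\sigma_{1,2})=1$ is preserved and the rate function is applied to $X^\sigma$ directly. The paper instead takes $G^\veps = G + \veps\Gamma$ with $\Gamma$ chosen to have a symmetric density, extended support, unit variance, and \emph{tails strictly lighter than Gaussian}, and applies the theorem to $X^\veps/\sqrt{1+\veps^2}$. You then conclude via Weyl's inequality and a union bound in place of the paper's appeal to the exponentially good approximation theorem \cite[Theorem 4.2.16]{DZ}; both closings are valid, and yours is slightly more elementary.

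\emph{What the paper's choice buys.} Because $\Gamma$ has lighter-than-Gaussian tails, its contribution to the limits in \eqref{subgaussian} is zero, so after rescaling the parameters are $\alpha_\veps=\alpha/(1+\veps^2)$, $\beta_\veps=\beta/(1+\veps^2)$: they merely scale, and in particular $\alpha=0$ stays $\alpha_\veps=0$. Your Gaussian perturbation gives $\alpha^\sigma=(1-\sigma^2)\alpha+\sigma^2>0$ and $\beta^\sigma=(1-\sigma^2)\beta+\sigma^2>0$ even when $\alpha$ or $\beta$ vanishes. This is precisely the extra work you flag: when $\alpha=0$, the definition of $\wh I$ switches form (the convention $\wh I=h_L(\lambda/m(\lambda))$ applies), whereas $\wh I^\sigma$ always has the $r^2/(2\alpha^\sigma)$ penalty, and you must show that as $\alpha^\sigma\to0$ the penalty forces the near-optimal $r$ to vanish and the variational problem degenerates to the boundary $s=\lambda/m(\lambda)$. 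Your sketch of this is essentially right (though note that once $r\to0$ the constraint pins $s$ to $\lambda/m(\lambda)$; there is no residual minimisation over $s$), but it is a genuine extra limit that the paper's construction avoids.

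\emph{Two details that need to be spelled out.} First, ``by standard convex analysis the corresponding Legendre transforms converge'' is not immediate: the domains $\mathcal{D}_{L^\sigma}=(-\infty,1/(2\beta^\sigma))$ and $\mathcal{D}_L=(-\infty,1/(2\beta))$ differ, so pointwise convergence $L^\sigma\to L$ on ${\rm Int}(\mathcal{D}_L)$ gives only the $\liminf$ bound $\liminf_\sigma h_{L^\sigma}\ge h_L$. For the matching $\limsup$ bound you need a companion inequality; Jensen applied to the centered $H$ gives $L^\sigma(\theta)\ge e^{\theta\sigma^2}L((1-\sigma^2)\theta)\ge L((1-\sigma^2)\theta)$ for $\theta\ge0$, whence $h_{L^\sigma}(x)\le h_L(x/(1-\sigma^2))$ for $x\ge1$ and the two bounds combine (this is the exact analogue of the paper's \eqref{eq:h-liminf}--\eqref{eq:h-limsup}, with Dini used to upgrade to local uniformity). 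Second, you should check the one-sided constraint in \eqref{subgaussian} is preserved for $G^\sigma$: it is, since $\limsup_{\theta\to-\infty}2\Lambda_{1,2}^\sigma(\theta)/\theta^2=(1-\sigma^2)\limsup_{\theta\to-\infty}2\Lambda_{1,2}(\theta)/\theta^2+\sigma^2\le\beta^\sigma$. With these points filled in, your argument is sound.
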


\begin{proof}
Let $X, G$, and $\Xi$ be as in Theorem \ref{theo-main-weight}.
Let $\Gamma$ be a symmetric random matrix, independent of $G$ and $\Xi$, with i.i.d.~random variables variables on and above the diagonal such that the common law of those random variables has a symmetric density (around zero) with respect to the Lebesgue measure, with tails lighter than a Gaussian and of unit variance, and has an extended support. 
Define for any $\veps>0$, $G^\veps:= G+\veps \Gamma$, and $X^\veps := G^\veps \circ \Xi /\sqrt{np}$. Clearly $G^\veps$ satisfy Assumptions \ref{hypo:subg-entry} and \ref{hypo:ex-supp}
for any $\veps>0$. 
Note that $\Var (G^\veps_{1,2}) = 1+\veps^2$ and that the entries of $\Gamma$ having a tail lighter than the Gaussian means that \eqref{subgaussian} continues to hold  for $(G^\eps)_{i,j}$ with the same $\alpha$ and $\beta$, for any $\eps >0$. Hence applying Theorem \ref{theo-main-weight} to $X^\veps/\sqrt{1+\veps^2}$ gives  for any $t>2\sqrt{1+\veps^2}$  
\[ \limsup_{n\to +\infty} \frac{1}{np} \log \PP\big(\lambda_{X^\veps}>t\big) \leq - I_\veps(t),\]
where 
\[ I_\veps (t) := \min\Big\{\widehat{I}_\veps(t_\veps), \frac{1}{4\beta_\eps m(t_\veps)^2}\Big\}, \ \widehat{I}_\veps(t_\veps) := \inf \big\{ \frac{r^2}{2\alpha_\eps} + h_{L_\veps}(s) : r+m(t_\veps)s = t_\veps, r\geq 0,s\geq 1\big\},\] 
with $t_\veps := t/\sqrt{1+\veps^2}$, $\alpha_\eps:=\alpha/(1+\eps^2)$, $\beta_\eps:=\beta/(1+\eps^2)$, and $L_\veps(\theta) := \EE[\exp(\theta (G^\veps_{1,2})^2/(1+\veps^2))]$. 

Upon observing that $(G^\eps)_{1,2}^2 \le (1+\eps^2) (G_{1,2}^2+ \Gamma_{1,2}^2)$ and using the fact that the lighter than a Gaussian tail assumption of $\Gamma_{1,2}$ implies that $\EE[\exp(\theta \Gamma_{1,2}^2)] < \infty$ for any $\theta \in \RR$, we derive that $\mathcal{D}_L \subset \mathcal{D}_{L_\eps}$ for any $\eps >0$. It also follows by the dominated convergence theorem that $L_\veps$ converges to $L$ pointwise on $\mathcal{D}_L$, as $\veps \downarrow 0$. As $L_\veps$ is increasing on $\RR$ for any $\veps>0$, Dini's theorem further implies that $L_\veps$ converges uniformly to $L$ on compact subsets of $\mathcal{D}_L$. Therefore
\begin{equation}\label{eq:h-liminf}
\liminf_{\veps \downarrow 0} h_{L_\veps}(x) \ge \liminf_{\veps \downarrow 0} \sup_{\theta \in \mathcal{D}_L} \{\theta x - L_\veps(\theta) +1\} = h_L(x). 
\end{equation}
On the other hand, by Jensen's inequality and the fact that $\Gamma_{1,2}$ is centered, we have $L_\veps(\theta) \ge L(\theta/(1+\veps^2))$ for any $\theta \ge 0$ which in turn gives
\begin{equation}\label{eq:h-limsup}
\limsup_{\veps \downarrow 0} h_{L_\veps}(x) \le \limsup_{\veps \downarrow 0} h_L(x(1+\veps^2)) = h_L(x), \quad \text{ for } x \ge 1,
\end{equation}
where the last step also uses that $h_L$ is continuous. The pointwise convergence of $h_{L_\veps}$ to $h_L$ as $\veps \downarrow 0$, given by \eqref{eq:h-liminf}-\eqref{eq:h-limsup}, together with the fact that $h_{L_\veps}$ is non decreasing on $[1,+\infty)$ (recall Lemma \ref{lem:hL-prop}(e)), imply that the convergence is uniform on compact subsets of $[1, +\infty)$.  One can check that this entails that $I_\veps(t) \to I(t)$ as $\veps \downarrow 0$, for any $t>2$. Besides $|\lambda_{X^\veps} - \lambda_X|\leq  \veps \|\Gamma \circ \Xi /\sqrt{np}\|$, so that by Lemma \ref{expotight}, it follows that $(\lambda_{X^\veps})_{\veps>0}$, is an exponentially good approximation of $\lambda_X$ (see \cite[Definition 4.2.14]{DZ}). Since $I$ is lower semicontinuous, we deduce from \cite[Theorem 4.2.16]{DZ} that the large deviation upper bound of Theorem \ref{theo-main-weight} holds for $X$.
\end{proof}

Equipped with Lemma \ref{reductionextsupp} from now on we assume that the entries of $G$  satisfy Assumptions \ref{hypo:subg-entry}  and \ref{hypo:ex-supp}. With these additional assumptions, we define a truncation of the entries of $G$ in a way that preserve their zero mean property. This will be crucial in our decoupling argument in order to guarantee the resulting decoupled sparse matrices to be independent. To achieve this, we rely on the following lemma.

\begin{lemma}\label{tronc}
Let $\mu$ be a centered probability measure on $\RR$ that is absolutely continuous with respect to the Lebesgue measure, and with an extended support. There exist continuous functions $a,b : \RR_+\mapsto \RR_+$ such that 
\[ \int_{-a(R)}^{b(R)} x d\mu(x) = 0, \ \forall R>0, \qquad \text{ and }\qquad  a(R), b(R) \to +\infty \text{ as } R\to +\infty.
\]
\end{lemma}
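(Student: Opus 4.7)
\textbf{Plan for the proof of Lemma \ref{tronc}.}

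The strategy is to fix $a(R) := R$ and define $b(R) \geq 0$ implicitly so that the truncated mean vanishes. Decomposing the truncated integral according to the sign of $x$, introduce the continuous non-decreasing functions
\[
\Phi(b) := \int_0^b x\, d\mu(x), \qquad \Psi(a) := -\int_{-a}^0 x\, d\mu(x) = \int_{-a}^0 |x|\, d\mu(x),
\]
so that the condition $\int_{-R}^{b(R)} x\, d\mu = 0$ is equivalent to $\Phi(b(R)) = \Psi(R)$. Since $\mu$ is centered and not a Dirac mass at $0$ (its support is extended), the common limit $M := \Phi(+\infty) = \Psi(+\infty)$ is strictly positive. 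The extended support hypothesis gives the strict inequalities $\Phi(b) < M$ for every finite $b$ and $\Psi(a) < M$ for every finite $a$, because $\mu$ puts positive mass on $(b,+\infty)$ and on $(-\infty,-a)$ respectively.

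The first step is to produce $b(R)$. Since $\Psi(R) \in [0,M)$ and $\Phi\colon [0,+\infty] \to [0,M]$ is continuous with $\Phi(0)=0$ and $\Phi(+\infty)=M$, the intermediate value theorem yields at least one $b \in [0,+\infty)$ with $\Phi(b) = \Psi(R)$. Define
\[
b(R) := \inf\{b \geq 0 : \Phi(b) \geq \Psi(R)\},
\]
so that $\Phi(b(R)) = \Psi(R)$ by continuity. The divergence $b(R) \to +\infty$ as $R \to +\infty$ is then immediate: $\Psi(R) \uparrow M$ forces $\Phi(b(R)) \uparrow M$, and since $\Phi < M$ on $[0,+\infty)$, $b(R)$ cannot stay bounded.

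The second step is continuity. In the applications of this lemma through Lemma \ref{reductionextsupp}, the distribution $\mu$ arises as a convolution of an arbitrary sub-Gaussian law with the full-support density of $\varepsilon \Gamma$ (which we may take to be Gaussian), so we may assume that $\mu$ admits a strictly positive density on $\RR$. Under this assumption $\Phi$ is strictly increasing on $[0,+\infty)$, hence $\Phi^{-1}$ is well-defined and continuous on $[0,M)$, and
\[
b(R) = \Phi^{-1}(\Psi(R))
\]
is continuous as the composition of continuous maps. The function $a(R)=R$ is trivially continuous.

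The main subtlety I anticipate is the continuity of $b(R)$: if $\mu$ were merely absolutely continuous (with density possibly vanishing on intervals), then $\Phi$ could be locally constant and the naive inf- or sup-selection would jump across plateaus of $\Phi$. This is the only delicate point, and it is resolved cleanly by the positive-density reduction built into Lemma \ref{reductionextsupp}; alternatively, one could repair continuity by linearly interpolating $b(R)$ across any plateau interval of $\Phi$, but this is not needed for the intended use.
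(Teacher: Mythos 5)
Your proof takes essentially the same route as the paper: fix the left endpoint to be the identity in $R$ and produce the right endpoint $b(R)$ via the intermediate value theorem applied to a continuous truncated-mean function. Where you add genuine value is in flagging the continuity of $R\mapsto b(R)$ as the delicate step. The paper's proof dispatches this with a single ``one can check,'' but as you correctly observe, if the density of $\mu$ vanishes on an interval $(c,d)\subset(0,\infty)$, then $\Phi$ has a plateau there; at the unique $a^*$ with $\Psi(a^*)=\Phi(c)$, any measurable selection of $b$ must jump from below $c$ to above $d$, and no selection of $b$ alone can be continuous while maintaining $\Phi(b(R))=\Psi(R)$. So your concern is a real one, not an artifact of your particular inf-selection. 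Your primary fix — invoke the freedom in Lemma \ref{reductionextsupp} to choose the smoothing noise $\Gamma$ with a strictly positive density, so that the convolution $G+\veps\Gamma$ has a strictly positive density, $\Phi$ is strictly increasing, and $b=\Phi^{-1}\circ\Psi$ is continuous — is sound (with the small caveat that the paper requires $\Gamma$ to have tails strictly lighter than Gaussian, so you cannot literally take $\Gamma$ Gaussian; a law proportional to $e^{-|x|^{2+\delta}}$ works). Your alternative fix, ``linearly interpolating $b(R)$ across any plateau interval of $\Phi$,'' is not quite right as stated: with $a(R)=R$ fixed, any interpolation of $b$ over a plateau of $\Phi$ violates the constraint $\Phi(b(R))=\Psi(R)$ for $R\neq a^*$. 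The correct repair for general absolutely continuous $\mu$ is to let $a$ depart from the identity as well — traverse the level curve $\{(a,b):\Psi(a)=\Phi(b)\}$ continuously, pausing $a$ while $b$ crosses each plateau — which is indeed possible and is exactly the extra freedom the lemma's statement (``there exist continuous functions $a,b$'') was designed to provide.
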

\begin{proof}
Let $a>0$. Define $F_a : b \in \RR_+ \mapsto \int_{-a}^b x d\mu(x)$. Note that $F_a$ is continuous since $\mu$ is absolutely continuous.  Further $F_a(0)<0$, $\lim_{b\to +\infty} F_a(b) >0$ by the assumptions that $\mu$ is centered and has an extended support. By the mean value theorem, there exists $b=b(a)>0$ such that $F_a(b)=0$. Moreover, one can check that $a\in \RR_+\mapsto b(a)$ is continuous and using the fact that $\mu$ is centered and has an extended support, $b(a) \to+\infty$ as $a\to +\infty$. 
\end{proof}

Let $\mu_{i,j}$ denote the law of $G_{i,j}$ for $i,j\in [n]$, $a_{i,j}$, $b_{i,j}$ be the functions given by Lemma \ref{tronc} for $\mu_{i,j}$, and $I_{i,j}(R) := [-a_{i,j}(R),b_{i,j}(R)]$. For any $R>0$, define $\widetilde{A}^R$ to be the matrix with its entries given by
\begin{equation}\label{defA} \widetilde{A}_{i,j}^R := G_{i,j} \Car_{G_{i,j} \in I_{i,j}(R)},  \forall i,j \in [n], 
\end{equation}
and $A^R := \widetilde{A}^R \circ \Xi/\sqrt{np}$. Further set $\widetilde{B} := G-\widetilde{A}^R$ and $B^R:= \widetilde{B}^R \circ \Xi/\sqrt{np}$. To ease the notation, when there is  no risk of confusion, we will drop the $R$-dependence from these notation and this convention will be adopted for most of the results and in their  proofs that follow.

Next we argue that we can replace $A^R$ by a symmetric matrix $H^R$ independent of $B^R$, such that $H^R:=(\widetilde{H}^R\circ \Xi')/\sqrt{np}$, where $\widetilde{H}^R$ and $\Xi'$ are independent symmetric matrices, $\Xi' :=(\xi'_{i,j})_{i,j\in [n]}$ has same law as $\Xi$, the variables $(\widetilde{H}^R_{i,j})_{i\leq j}$ are independent and $\widetilde{H}^R_{i,j}$ is distributed according to the law of $G_{i,j}$ conditioned on $G_{i,j} \in I_{i,j}(R)$. 
Let $E := \{(i,j) \in [n]^2 : G_{i,j} \in I_{i,j}(R)]\}$ and define $\widehat{A}^R$ to be the matrix with entries
\begin{equation} \label{defAprime} \widehat{A}^R_{i,j} := \Car_{(i,j) \in E} A^R_{i,j} + \Car_{(i,j) \notin E} H^R_{i,j}, \quad \forall i,j \in [n].
\end{equation}
One can check that $\widehat{A}$ is independent of $B^R$ and that it has the same law as $H^R$. Here we take the advantage of our special truncation of the entries. {\em Observe that we could have done the same construction with a simpler truncation, e.g.~choosing $I_{i,j}$ to be symmetric, and then recenter the truncated entries, but for such a truncation, unless the distribution of $G$ is symmetric around zero, the independence does not hold}.
With the above notation we have the following result.

\begin{Pro}
\label{decoupling}Let $X^R = \widehat{A}^R+B^R$, where $\widehat{A}$ is defined in \eqref{defAprime}.
For any $t>0$, 
\[ \lim_{R\to +\infty}\limsup_{n\to +\infty} \frac{1}{np} \log \PP\big( \| X^R - X \| > t \big) =-\infty.\]
\end{Pro}
\begin{proof}
 We can write $\widehat{A}^R-A^R  = (\widetilde{H} \circ \Xi'')/\sqrt{np}$,  where $\Xi'' = (\xi''_{i,j})_{i,j\in [n]}$ such that $\xi''_{i,j}= \xi'_{i,j} \Car_{G_{i,j} \notin I_{i,j}(R)}$. Note that $\widetilde{H}^R$ and $\Xi''$ are independent. Since $\xi_{i,j}'' \stackrel{d}{=} \dBer(q)$ 
 with $ q = o_R(1) p$, the exponential tightness result of Lemma \ref{expotight} yields that for any $t>0$,
\[ \lim_{R\to+\infty}\limsup_{n\to +\infty} \frac{1}{np} \log \PP\big( \|\widetilde{H}^R \circ \Xi''\| > t\sqrt{np} \big) =-\infty,\]
which ends the proof of the claim.
\end{proof}

Proposition \ref{decoupling} allows us to focus on proving a large deviation upper bound for $\lambda_{Z^R}$ where $Z^R = H^R+B^R$, for which one can take advantage of the independence between $H^R$ and $B^R$. Indeed, $Z^R$ has the same law as $X^R$ by construction and by Weyl's inequality, $|\lambda_{X^R}-\lambda_{X}| \leq \| X^R-X\|$. Thus Proposition \ref{decoupling} yields that $(\lambda_{Z^R})_{R>0}$ is an exponentially good approximation of $\lambda_X$, and by \cite[Theorem 4.2.16]{DZ}  we can retrieve the large deviation upper bound rate function of $\lambda_X$ as the limit of the ones for $\lambda_{Z^R}$ as $R\to +\infty$. Since $\|Z^R\|\leq \|H^R\| +\|B^R\|$, using Lemma \ref{expotight} we note that $\|Z^R\|$ is exponentially tight for any $R>0$. Moreover, by the exponential tightness of $\|X\|$ and Proposition  \ref{decoupling}, we deduce that $\|Z^R\|$, $R\to +\infty$ are uniformly exponentially tight. More precisely we have the following result. 
 \begin{Lem}[Exponential tightness] \label{expotightZ}
 For any $R>0$, 
   \begin{equation} \label{expotightZR} \lim_{t \to +\infty} \limsup_{n\to +\infty }\frac{1}{np} \log \PP( 
\|Z^R\|>t) = -\infty.\end{equation} 
Further,
  \begin{equation} \label{expotightunif}  \lim_{t \to +\infty} \limsup_{R\to +\infty} \limsup_{n\to +\infty }\frac{1}{np} \log \PP( 
\|Z^R\|>t) = -\infty.\end{equation}
 
 \end{Lem}

\subsection{Block matrix decomposition} Let ${\bm u}$ be a measurable choice of a unit eigenvector of $Z^R$ associated to the top eigenvalue $\lambda_{Z^R}$. 
Similarly as in the bounded case, we will decompose the matrix $Z^R$ in blocks according to a certain set $J$ of vertices where the top eigenvector ${\bm u}$ localises. Unlike the bounded case, we choose a different localisation threshold since we are expecting to see a new large deviation scenario:~the emergence of a clique of sub entropic size. We incorporate as well for technical reasons to the set $J$ vertices with large degree with respect to the network $B^R$. More precisely, define for any $\veps>0$,
\begin{equation}\label{defJ} J := \big\{j \in [n] :  \|B^R_j\|\geq \delta\big\} \cup\big\{i\in[n] : |{\bm u}_i|\geq \eps^{-1}  \sqrt{\frac{\log (1/p)}{np}} \big\},\end{equation}
where $B^R_j$ denotes 
the $j^{\text{th}}$-column of $B^R$, and $\delta$ is chosen to be any function of $\veps$ such that $\delta = o(\veps)$ as $\vep \to 0$. Observe that by Chebychev's inequality we have 
\begin{equation}\label{eq:J-ubdd-bd} 
\#\big\{i\in [n] : |{\bm u}_i| \geq \veps^{-1} \sqrt{\frac{\log(1/p)}{np}} \big\} \leq  \frac{\veps^2 np}{\log(1/p)}.
\end{equation}
As we will see, adding the indices $j$ for which $\|B^R_j\|$ is atypically large only increases the size of $J$ by an additive constant with overwhelming probability. This yields the following lemma.
\begin{lemma}\label{cardJ}For any $\veps>0$ and $R$ large enough,
\[  \lim_{n\to +\infty}\frac{1}{np} \log \PP\big( \#J > 2\veps^2 np /\log(1/p)\big) =-\infty.\]
\end{lemma}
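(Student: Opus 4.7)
The key observation is that the second piece in the definition \eqref{defJ} of $J$ has cardinality at most $K := \varepsilon^2 np/\log(1/p)$ deterministically, by the Chebyshev-type bound \eqref{eq:J-ubdd-bd}. Hence the event $\{\#J > 2K\}$ forces $N_\delta := \#\{j \in [n] : \|B^R_j\| \geq \delta\}$ to exceed $K$, and it suffices to prove
\[
\limsup_{n\to+\infty}\frac{1}{np}\log\PP(N_\delta \geq K) = -\infty
\]
for $R$ large enough (depending on $\varepsilon$, since $\delta$ depends on $\varepsilon$).

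The engine is a uniform one-sided Laplace bound on the truncated squared entries: the calculation leading to \eqref{loglaplaB2} (with $\widehat B$ replaced by $\widetilde B^R_{i,j} = G_{i,j}\Car_{G_{i,j}\notin I_{i,j}(R)}$), combined with dominated convergence, yields the existence of $\theta_0>0$ depending only on the sub-Gaussian constants of $G$ such that, for $c\in\{1,2\}$ and any pair $(i,j)$,
\[
\log\EE\exp\bigl(c\,\theta_0\,\xi_{i,j}G_{i,j}^2\Car_{G_{i,j}\notin I_{i,j}(R)}\bigr) \leq p\,\kappa(R), \qquad \kappa(R) \longrightarrow 0 \text{ as } R \to +\infty.
\]
For a fixed subset $S\subset[n]$ with $|S|=K$, the symmetry of $B^R$ gives the expansion
\[
np\sum_{j\in S}\|B^R_j\|^2 = \sum_{i\leq j,\,\{i,j\}\cap S\neq\emptyset} c_{i,j,S}\,\xi_{i,j}\,G_{i,j}^2\,\Car_{G_{i,j}\notin I_{i,j}(R)},
\]
with $c_{i,j,S}\in\{1,2\}$ and at most $nK$ non-zero summands, mutually independent since the upper-triangular entries of $\widetilde B^R\circ\Xi$ are. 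A Chernoff bound at the parameter $\theta_0$ then yields, for every such $S$,
\[
\PP\Bigl(\sum_{j\in S}\|B^R_j\|^2 \geq K\delta^2\Bigr) \leq \exp\bigl(nKp\,\kappa(R) - \theta_0\,npK\delta^2\bigr) \leq \exp\Bigl(-\tfrac{1}{2}\theta_0\delta^2\,npK\Bigr),
\]
once $R$ is fixed large enough that $\kappa(R) \leq \theta_0\delta^2/2$.

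A union bound over the at most $(en/K)^K$ size-$K$ subsets of $[n]$, together with the inclusion $\{N_\delta \geq K\} \subset \bigcup_{|S|=K}\{\sum_{j\in S}\|B^R_j\|^2 \geq K\delta^2\}$, produces
\[
\log\PP(N_\delta \geq K) \leq K\log(en/K) - \tfrac{1}{2}\theta_0\delta^2\,npK.
\]
Under the sparsity assumption $\log n/n \ll p \ll 1$ one has $\log(1/p) \leq \log n \ll np$, whereas $\log(en/K) = O(\log(1/p))$ by definition of $K$, so the subtracted term dominates; dividing by $np$ gives $(np)^{-1}\log\PP(N_\delta\geq K) \lesssim -\theta_0\delta^2\varepsilon^2\,np/\log(1/p)$, which tends to $-\infty$ as $n\to+\infty$ since $np/\log(1/p)\to+\infty$ throughout the supercritical regime. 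The main conceptual subtlety is that the events $\{\|B^R_j\|\geq\delta\}$ are not independent because $B^R$ is symmetric; passing to the aggregate $\ell^2$ quantity $\sum_{j\in S}\|B^R_j\|^2$ and using the joint independence of the upper-triangular entries of $\widetilde B^R\circ\Xi$ is precisely what circumvents this obstacle.
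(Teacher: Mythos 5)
Your proof is correct and follows essentially the same route as the paper's: both reduce to controlling $N_\delta=\#\{j:\|B_j^R\|\geq\delta\}$ via the deterministic count \eqref{eq:J-ubdd-bd}, pass to the aggregate $\sum_{j\in S}\|B^R_j\|^2$ to exploit the joint independence of the upper-triangular entries of $\widetilde B^R\circ\Xi$, apply Chernoff at a small $\theta$ using the Laplace estimate from the argument of \eqref{loglaplaB2}, and finish with a union bound over subsets. The only (minor) difference is that the paper reduces to a fixed subset size $m$ and then sends $m\to\infty$, whereas you work directly with the growing threshold $K=\varepsilon^2 np/\log(1/p)$, which incidentally makes the quantitative decay rate $\asymp(np)^2/\log(1/p)$ visible.
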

A consequence of the above result is that the set $J$ carries {\em no entropy} at the large deviation scale $np$, meaning that $\log$ of the number of possible realisations of $J$ is bounded by
\begin{equation} \label{entropyJ} \log \#\big\{ \mathcal{J} \subset[n] :\# \mathcal{J} \leq 2\veps^{2} np/\log(1/p) \big\} = O(\veps^2 np).\end{equation}
The above estimate follows readily from the fact that there are at most $(en/m)^m$ subsets of $[n]$ with size $m$. This estimate is the motivation behind our choice of the localisation threshold $\veps^{-1} \sqrt{\log(1/p)/np}$. In particular, \eqref{entropyJ} will allow us to derive estimates with $J$ replaced by a fixed deterministic subset $\mathcal{J}$ and then take union bounds over possible realisations of $J$.

\begin{proof}[Proof of Lemma \ref{cardJ}]Since $np \gg \log(1/p)$, by \eqref{eq:J-ubdd-bd} it suffices to prove that for any $\delta>0$ and $R$ large enough,
\[  \lim_{m \to +\infty}  \limsup_{n\to +\infty}\frac{1}{np} \log \PP\big( \#\big\{j\in[n] : \|B_j\|\geq \delta \big\}\geq m\big) =-\infty.\]
As the number of subsets of $[n]$ of size $m$ is at most $e^{O(m\log n)}$ and $np \gg \log n$,  
it is sufficient to prove that for any $\delta>0$, and $R$ large enough,
\[ 
\lim_{m \to +\infty} \lim_{n\to +\infty}\frac{1}{np} \log \PP\big( \forall j \in [m], \ \|B_j\|\geq \delta\big) =-\infty.\]
Using a similar argument as in \eqref{loglaplaB2}, we deduce that for $\theta$ small enough, independent of $(i,j)$, $\log \EE(e^{2\theta \widetilde{B}_{i,j}^2\xi_{i,j} }) = o_R(1)p$. 
 With this choice of $\theta$, Chernoff's inequality and \eqref{loglaplaB2} give
\begin{equation}\label{loglaplaB2-new}  
\log \PP\big( \sum_{j=1}^{m} \|B_j\|^2>\delta^2 m\big) \leq -(\theta \delta^2 - o_R(1))m np, 
\end{equation}
which ends the proof of the claim.
\end{proof}
With our choice of subset $J$ defined in \eqref{defJ}, we split the top eigenvector ${\bm u}$ into two parts ${\bm v}^{\veps} \in \RR^{ J}$ and ${\bm w}^{\veps}\in \RR^{{\bm J}^c}$ such that ${\bm u}^{\sf T} = ({{\bm v}^{\veps}}^{\sf T}, {{\bm w}^{\veps}}^{\sf T})$. This decomposition of the space $\RR^n$ entails the following block decomposition of $Z^R$:
\begin{equation} \label{decomp-Z} Z^R = \begin{pmatrix}
Z_{J} &  \widecheck{Z}_{ J}^{\sf T} \\
\widecheck{Z}_{ J}  &  Z^{( J)}
\end{pmatrix},\end{equation}
where we refer the reader yet again to Section \ref{sec:notation} for the notational conventions. 

\subsection{Localisation of the top eigenvector} \label{section:loc:unbound} We now move on  to prove that on the upper tail large deviation event of the top eigenvalue, any top eigenvector of $Z^R$ must localise in the sense that the $\ell^2$-weight of of the sub vector consisting of entries that are much greater than $ \sqrt{\log(1/p)/np}$ is bounded from below by a constant depending on the deviation of the top eigenvalue. 

\begin{Lem}\label{loc:unbounded}
For any $\lambda>2$, 
\[\lim_{\eta \to 0} \limsup_{R\to +\infty}\limsup_{\veps \to 0} \limsup_{n\to+\infty} \frac{1}{np} \log \PP\big( \|{\bm v}^\veps\|\leq \eta, \lambda_{Z^R}\geq \lambda) =-\infty.\]
\end{Lem}

Similarly as in the bounded case, this localisation result builds on the fact that the supremum of $\langle w, Z^R w\rangle$ when $w$ is a delocalised unit vector, meaning that  $\|w\|_\infty \leq \veps^{-1} \sqrt{\log(1/p)/np}$,  cannot exceed $2$ with overwhelming probability. This is the content of the following lemma. Recall the definition of $\mathcal{D}_\veps$ from \eqref{flatmatrices}.

\begin{Lem} \label{quadra:deloc:unbounded}
For $\vep >0$ set $M_\veps: = \veps^{-1} \sqrt{\log(1/p)}$. Then for any $t>0$,
\[ \lim_{R\to +\infty} \limsup_{\veps \to 0} \limsup_{n\to +\infty} \frac{1}{np} \log \PP\Big( \sup_{w\in \mathcal{D}_{M_\veps/\sqrt{np}}} \langle w, Z^R w\rangle >2+t \Big) = -\infty.\]
\end{Lem}

Postponing the proof of Lemma \ref{quadra:deloc:unbounded} for later we first show how it yields Lemma \ref{loc:unbounded}. 
\begin{proof}[Proof of Lemma \ref{loc:unbounded}]
Expanding $\lambda_Z = \langle {\bm u}, Z {\bm u}\rangle$, we can write $\lambda_Z = \langle {\bm v}^\veps, Z_J {\bm v}^\veps\rangle +2\langle {\bm w}^\veps, \widecheck{Z}_J{\bm v}^\veps\rangle + \langle {\bm w}^\veps, Z^{(J)} {\bm w}^\veps\rangle$. 
As a consequence, 
\[ \lambda_Z \leq \|Z\| \|{\bm v}^\veps\|^2 + 2 \|Z\| \|{\bm v}^\veps\| + \sup_{w \in \mathcal{D}_{M_\veps/\sqrt{np}}} \langle w, Z w\rangle.\]
Thus if $\lambda>2$, $r\geq 1$ and $\eta \in (0,1)$, we have 
\[ \big\{\|{\bm v}^\veps\|\leq \eta, \lambda_Z\geq \lambda\big\} \subset \{\lambda-3r\eta \leq \sup_{w \in \mathcal{D}_{M_\veps/\sqrt{np}}} \langle w, Z w\rangle\big\}\cup\big\{\|Z\|\geq r\big\}.\]
Combining Lemmas \ref{expotightZ} and \ref{quadra:deloc:unbounded} 
we get the claim. 
\end{proof}

It now remains to prove Lemma \ref{quadra:deloc:unbounded}. Since $H^R$ has bounded entries, by Lemma \ref{conc-sp-rad-deloc} we already know that the statement holds for $H^R$ instead of $Z^R$. Thus, it suffices to show that the supremum of $\langle w, B^Rw \rangle$ where $w \in \mathcal{D}_{M_\veps}$ is negligible with overwhelming probability. We prove the following stronger statement which will be used later in this section.

\begin{lemma} \label{step1}
For any $t>0$ and $M_\veps$ as in Lemma \ref{quadra:deloc:unbounded},
\begin{equation} \label{probaBloc} \lim_{R\to +\infty} \limsup_{\veps,\veps' \to 0 \atop \veps'/\veps   \to 0} \limsup_{n\to +\infty}  \frac{1}{np} \log  \PP\Big( \sup_{ v\in \mathcal{D}_{\veps'}, w \in \mathcal{D}_{M_\veps /\sqrt{np} }} \langle v, B^Rw\rangle > t\Big) = -\infty.\end{equation}
\end{lemma}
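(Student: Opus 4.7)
Plan. The argument combines a pointwise Chernoff estimate for the bilinear form with a net argument tailored to the $\ell^\infty$-bounds, relying crucially on the fact that the truncation intervals $I_{i,j}(R)$ from Lemma~\ref{tronc} are engineered so that $\EE[\widetilde{B}^R_{i,j}]=0$.

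For fixed unit vectors $v\in\cD_{\veps'}$ and $w\in\cD_{M_\veps/\sqrt{np}}$, write
\[
\sqrt{np}\,\langle v, B^R w\rangle = \sum_{i<j}(v_iw_j+v_jw_i)\widetilde{B}^R_{i,j}\xi_{i,j} + \sum_i v_iw_i\widetilde{B}^R_{i,i}\xi_{i,i}.
\]
The zero-mean property of $\widetilde{B}^R_{i,j}$ lets us write, for any $c\in\RR$,
\[
\log\EE\exp(c\widetilde{B}^R_{i,j}\xi_{i,j}) \;\le\; p\,\EE\bigl[e^{c\widetilde{B}^R_{i,j}}-1-c\widetilde{B}^R_{i,j}\bigr].
\]
Applying $e^x-1-x\le \tfrac12 x^2e^{|x|}$, Cauchy--Schwarz, and the sub-Gaussianity of the $G_{i,j}$, the right-hand side is bounded by $Cpc^2\eta_R^{1/2}$ for $|c|$ less than a universal constant $c_0$, where $\eta_R := \sup_{i,j}\PP(G_{i,j}\notin I_{i,j}(R))\to 0$ as $R\to+\infty$. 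Summing and using $\sum_{i,j}(v_iw_j+v_jw_i)^2 = \|v\|^2\|w\|^2+\langle v,w\rangle^2 \le 2$, Chernoff gives
\[
\PP(\langle v,B^R w\rangle>t) \;\le\; \exp\bigl(-\theta t\sqrt{np}+2Cp\,\eta_R^{1/2}\theta^2\bigr)
\]
for $\theta$ in the range $\theta\|v\|_\infty\|w\|_\infty\le c_0/2$, i.e.\ $\theta\lesssim \sqrt{np}\,\veps/(\veps'\sqrt{\log(1/p)})$. Choosing $\theta$ optimally, or at the boundary of this range when the optimum is outside it, the pointwise bound decays at least like $\exp(-c_R\,t^2 n)$ with $c_R\to +\infty$ as $R\to +\infty$.

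To upgrade to the supremum, we cover $\cD_{\veps'}\times \cD_{M_\veps/\sqrt{np}}$ by a net adapted to the delocalisation. Write $v=\sum_{k\ge 0}v_k$ dyadically, with $v_k$ supported on $\{i:2^{-(k+1)}\veps'<|v_i|\le 2^{-k}\veps'\}$; Chebyshev gives $|\mathrm{supp}(v_k)|\le 4^{k+1}/\veps'^2$, and an analogous decomposition applies to $w$ with $M_\veps/\sqrt{np}$ in place of $\veps'$. For each scale pair a standard $\delta$-net in the corresponding low-dimensional unit ball has log-cardinality of order $(\veps')^{-2}\log n$, which is subexponential in $np$ because $np\gg \log n$. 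The residual from entries of very small magnitude is absorbed into the operator norm $\|B^R\|$, whose $np$-scale tail follows from the exponential bound $\log\PP(\|B^R_i\|>\delta)\le -(c_R\delta^2-o_R(1))np$ obtained in the proof of Lemma~\ref{cardJ}, combined with a symmetric-matrix inequality (e.g.\ Seginer's bound or $\|B^R\|\le \|B^R\|_F$).

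A union bound over the net and the pointwise estimate yields
\[
\frac{1}{np}\log\PP\Big(\sup_{v,w}\langle v,B^R w\rangle>t\Big) \;\le\; \frac{\log|\mathrm{net}|}{np} - \frac{c\,t^2}{\eta_R^{1/2}\,p} + o(1),
\]
and taking the iterated limits $n\to+\infty$, then $\veps,\veps'\to 0$ with $\veps'/\veps\to 0$, then $R\to+\infty$, the right-hand side tends to $-\infty$. The main obstacle is matching the $\theta$-range constraint with the net entropy: for fixed $\veps,\veps'$ and $n\to+\infty$, the admissible range for $\theta$ shrinks and the unconstrained Chernoff optimum is unattainable, so one uses the boundary value of $\theta$; the hypothesis $\veps'/\veps\to 0$ (rather than $\veps'\to 0$ alone) is precisely what ensures that the corresponding boundary exponent dominates the log-net size after $R\to+\infty$, giving the required $-\infty$ limit.
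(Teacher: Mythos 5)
Your route — a pointwise Chernoff estimate combined with a net — is genuinely different from the paper's, which regularises $\widetilde{B}^R$ by an independent log-Sobolev noise $\widetilde{C}^R$, applies Chernoff to the \emph{supremum} $f_\Xi^{\veps,\veps'}(\widetilde{T}^R)$ and controls its moment generating function via Herbst's argument conditional on $\Xi$ together with Talagrand's inequality over $\Xi$ (with Lemma~\ref{controlBernoullisum} supplying the Lipschitz bound). Unfortunately your version has two substantive gaps.

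\textbf{The pointwise Chernoff bound does not reach scale $np$.} With the MGF estimate $\log\EE\exp(c\widetilde B^R_{i,j}\xi_{i,j})\le Cp\eta_R^{1/2}c^2$, valid only for $|c|\le c_0$ with $c_0$ a universal constant, the admissible $\theta$-range is $\theta\le c_0/(2\|v\|_\infty\|w\|_\infty)\asymp\sqrt{np}\,\veps/(\veps'\sqrt{\log(1/p)})$. Plugging in that boundary value gives
\[
\frac{1}{np}\log\PP(\langle v,B^Rw\rangle>t)\ \lesssim\ -\frac{t\veps}{\veps'\sqrt{\log(1/p)}},
\]
which tends to $0$ (not to any negative constant) as $n\to+\infty$ with $\veps,\veps'$ fixed, since $\log(1/p)\to\infty$. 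The claimed rate $\exp(-c_Rt^2n)$ is therefore not produced by this Chernoff bound — it would require the speed $n$, whereas the bound you derived is at speed $np/\sqrt{\log(1/p)}$, and the prefactor depends on $\veps/\veps'$, not on $R$. A repair is possible: using the Cauchy–Schwarz bound $\EE[e^{c\widetilde B^R}-1]\le\sqrt{\eta_R}\,e^{\beta''c^2/2}$, valid for \emph{all} $c$ by the sub-Gaussian hypothesis, one may push $|c|$ up to $\asymp\sqrt{\log(1/p)}$, making $\theta_{\max}\asymp\sqrt{np}\,\veps/\veps'$ and the boundary exponent $-t\veps/\veps'$, which does diverge under $\veps'/\veps\to0$. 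This fix is exactly where the extra $\sqrt{\log(1/p)}$ is lost in your draft.

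\textbf{The residual in the net argument cannot be absorbed into $\|B^R\|$.} The dyadic pieces $v_k$ of $v$ have supports of size up to $4^{k+1}/\veps'^2$; after truncating at some level $k_0$, the leftover $v^{>k_0}$ is bounded only by $\langle v^{>k_0},B^Rw\rangle\le\|B^R\|$. But for any fixed $\delta>0$, $\limsup_n\frac{1}{np}\log\PP(\|B^R\|>\delta)$ is a \emph{finite} negative constant (of order $-\delta^2$; a single diagonal entry $B^R_{1,1}=\widetilde B^R_{1,1}\xi_{1,1}/\sqrt{np}$ already has a tail at exactly that speed and rate), and the constant does \emph{not} tend to $-\infty$ as $R\to+\infty$ — the rate in \eqref{loglaplaB2-new} is $\theta\delta^2-o_R(1)$ with $\theta$ a fixed small constant. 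Hence the residual event stays at a finite rate, and the union bound over the net cannot be completed. This is precisely the obstruction the paper avoids by working with concentration of the supremum rather than an explicit net over $(v,w)$.

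\end{document}
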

This results says that a large deviation of the operator norm of $B$ should come from a localised structure: In order to see an atypically large inner product $\langle v, B^Rw\rangle$, either $v$ should have entries that are of order $1$ or $w$ should put weight on a set of vertices of size much smaller than $np/\log(1/p)$.

As we will use a concentration argument, we need to regularize the distribution of $\widetilde{B}^R$ in a first step by adding an independent centered random symmetric matrix $\widetilde{C}^R$ whose entries have a small variance such that the distribution of $\widetilde{B}^R + \widetilde{C}^R$ satisfies a log-Sobolev inequality on $\mathcal{H}_n$. In the following lemma we prove the existence of such a matrix $\widetilde{C}^R$. 

\begin{lemma}\label{regularizelogSob}
For $R$ large enough, independent of $n$, there exists $\widetilde{C}^R$ a centered symmetric random matrix independent of $\widetilde{B}^R$ such that 
\begin{enumerate}
\item[ (a)] $(\widetilde{C}^R_{i,i})_{i\in [n]}$ and $(\widetilde{C}^R_{i,j})_{i<j}$ are independent families of i.i.d.~random variables.
\item[ (b)] For any $(i,j) \in [n]^2$, $\EE [(\widetilde{C}^R_{i,j})^2]=o_R(1)$.
\item[ (c)] The distribution of $\widetilde{B}^R+\widetilde{C}^R$ satisfies a log-Sobolev inequality on $\mathcal{H}_n$ with respect to the Hilbert-Schmidt norm and with a constant $c_R$ independent of $n$.
\end{enumerate}
\end{lemma}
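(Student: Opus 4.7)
The plan is to take $\widetilde{C}^R$ to be a symmetric matrix independent of $\widetilde{B}^R$, with $(\widetilde{C}^R_{i,j})_{i<j}$ and $(\widetilde{C}^R_{i,i})_{i\in[n]}$ two independent families of i.i.d.\ centered Gaussian variables of common variance $\sigma_R^2 \to 0$ as $R \to +\infty$. Properties (a), (b), and the independence from $\widetilde{B}^R$ are then immediate from the construction, and only (c) requires work.

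For (c), I will reduce to a one-dimensional log-Sobolev inequality via tensorisation. Since the entries of both $\widetilde{B}^R$ and $\widetilde{C}^R$ are mutually independent on and above the diagonal, the joint law of $\widetilde{B}^R+\widetilde{C}^R$ on $\cH_n$ is a product measure in the free coordinates $\{(\widetilde{B}^R+\widetilde{C}^R)_{i,j}\}_{i \le j}$. Writing the Hilbert-Schmidt norm in these coordinates as the weighted Euclidean norm $\|A\|_{HS}^2 = \sum_i A_{ii}^2 + 2\sum_{i<j} A_{ij}^2$ and applying Gross' tensorisation principle for LSI, a one-dimensional LSI with constant $c_R$ for each entry-wise marginal yields an LSI on $(\cH_n, \|\cdot\|_{HS})$ with the same constant $c_R$ (up to an absolute factor accounting for the off-diagonal weight), independent of $n$. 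By Assumption~\ref{hypo:subg-entry}, the law of each entry of $G$ depends only on whether the entry lies on or off the diagonal, so only two one-dimensional laws arise and $c_R$ can indeed be chosen uniformly in $i, j, n$.

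It remains to establish the one-dimensional LSI for $\mu^R * \gamma_{\sigma_R^2}$, where $\mu^R$ denotes the law of $\widetilde{B}^R_{i,j}$. Since $|\widetilde{B}^R_{i,j}| \le |G_{i,j}|$, $\mu^R$ is sub-Gaussian with proxy $\beta_R$ controlled by the sub-Gaussian constants of $G$ in Assumption~\ref{hypo:subg-entry}. Its convolution with a non-degenerate Gaussian produces a $C^\infty$, strictly positive density $f_R$ on $\RR$ which retains sub-Gaussian tails, and the potential $V_R := -\log f_R$ is smooth on $\RR$, bounded on every compact set, and satisfies $V_R''(x) \to 1/(\beta_R + \sigma_R^2) > 0$ as $|x| \to +\infty$. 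A 1-d log-Sobolev inequality for $f_R$ then follows by classical arguments: either by the Bobkov-G\"otze Muckenhoupt-type characterisation of 1-d LSI applied directly to $f_R$, or by a Holley-Stroock perturbation of a reference Gaussian of variance $\beta_R + \sigma_R^2$ combined with the Bakry-\'Emery criterion, after writing $V_R = V_0 + W$ with $V_0$ strongly convex outside a compact set and $W$ bounded.

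The main technical obstacle is the quantification of $c_R$. By construction \eqref{defA}, $\mu^R$ is supported on $\{0\} \cup (-\infty, -a(R)] \cup [b(R), +\infty)$ with $a(R), b(R) \to +\infty$; convolving with a Gaussian of vanishing variance $\sigma_R^2 = o_R(1)$ therefore produces a density $f_R$ with a narrow spike near $0$ and much smaller bumps near $\pm a(R), \pm b(R)$, forcing the constant $c_R$ to decay rapidly (typically exponentially) in $R$. This rapid decay is, however, harmless for the rest of the argument, because the log-Sobolev inequality (c) is only invoked at fixed $R$ and the limit $R \to +\infty$ is afterwards handled through the exponentially good approximation of $\lambda_X$ by $\lambda_{Z^R}$ from the decoupling of Section~\ref{section:decoupl}; only the $n$-independence of $c_R$ is genuinely needed.
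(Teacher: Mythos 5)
Your high-level strategy (reduce to a one-dimensional LSI for the convolution of the truncated-entry law with a smoothing measure, then tensorise) matches the paper's, and your handling of (a), (b) and of the harmlessness of $c_R \to 0$ is fine. The gap is in step (c), and specifically in the choice of $\widetilde{C}^R$ Gaussian with variance $\sigma_R^2 \to 0$.

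The problem is that the smoothing Gaussian has \emph{lighter} tails than the law $\upsilon_{i,j}$ of $\widetilde{B}^R_{i,j}$ (which inherits sub-Gaussian parameter $\beta$ from $G$). Under Assumption~\ref{hypo:ex-supp} the density of $G$ is only required to have extended support; it is allowed to vanish on large intervals or decay much faster than $e^{-x^2/(2\sigma_R^2)}$ on substantial stretches. Convolving with a Gaussian of std $\sigma_R \ll \sqrt{\beta}$ cannot fill such ``gaps'': the density $f_R$ of $\upsilon_{i,j} * \gamma_{\sigma_R^2}$ develops valleys of depth roughly $\exp(-\ell^2/(8\sigma_R^2))$ over a gap of length $\ell$, while the tail probability beyond the gap is only of order $\exp(-x^2/(2\beta))$. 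For gaps wide enough that $\ell^2/(8\sigma_R^2)$ dominates $x^2/(2\beta)$, the Bobkov--G\"otze quantity $\nu[(x,\infty)]\log(1/\nu[(x,\infty)])\int_m^x f_R^{-1}$ diverges, so the one-dimensional LSI genuinely fails — it is not just that a sufficient criterion is unavailable. Equivalently, the Wang--Wang integrability test you would need becomes $\int \exp(\lambda z^2/(2\sigma_R^2))\,d\upsilon_{i,j}(z)<\infty$, which requires $\sigma_R^2 > \lambda\beta$ — incompatible with $\sigma_R^2 = o_R(1)$.

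Two of your specific assertions reflect this: the claim ``$V_R''(x) \to 1/(\beta_R+\sigma_R^2)$ as $|x|\to\infty$'' is false for generic $G$ (it presupposes Gaussian-like density asymptotics for $G$, which sub-Gaussianity of the Laplace transform does not give), and the Holley--Stroock decomposition $V_R = V_0 + W$ with $W$ bounded cannot hold if $f_R$ has arbitrarily deep valleys. The paper resolves this by choosing $\widetilde{C}^R_{i,j}$ with density $\propto e^{-V_{i,j}}$ where $V_{i,j}$ has large curvature $c_{i,j} \to \infty$ near the origin (giving variance $o_R(1)$, so (b) holds) but quadratic growth $x^2/(2\beta'_{i,j})$ with $\beta'_{i,j} > \beta_{i,j}$ far from the origin, so the smoothing measure has \emph{heavier} tails than $G$. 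This is exactly what makes the Wang--Wang criterion checkable and prevents the convolution's density from having deep valleys. If you want to salvage a Gaussian smoothing, you would need $\sigma_R^2 > \beta$, which contradicts (b); the hybrid potential of the paper is how one gets both.
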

\begin{proof}
As log-Sobolev inequalities tensorise (see \cite[Corollary 5.7]{Ledouxmono}) it suffices to show that for $(i,j) \in \{(1,1), (1,2)\}$, there exists a centered random variable $\widetilde{C}^R_{i,j}$ with the property (b)
such that the distribution of $\widetilde{B}^R_{i,j}+\widetilde{C}^R_{i,j}$ satisfies a log-Sobolev inequality on $\RR$. Recall from \eqref{defA} that $\widetilde{B}_{i,j}^R= G_{i,j}\Car\{G_{i,j} \notin [-a_{i,j}(R),b_{i,j}(R)]\}$. Let $c_{i,j} := \min(a_{i,j}(R), b_{i,j}(R))/4$. Define $\beta_{i,j} = \alpha$ if $i=j$, and $\beta_{i,j}=\beta$ otherwise, and fix $\beta'_{i,j}> \beta_{i,j}$. Now, for $R$ large enough set $V_{i,j}$ to be a symmetric function on $\RR$ defined as 
\[ V_{i,j}(x) := \begin{cases} \frac{c_{i,j} x^2}{2} & \text{ if } |x|\leq c_{i,j},\\
\frac{x^2}{2\beta_{i,j}'} + k_{i,j}|x|+\ell_{i,j} & \text{ otherwise,}
\end{cases}\]
where $\ell_{i,j}\le 0 \le k_{i,j}$ are chosen so that $V_{i,j} \in C^1(\RR)$. Since $V_{i,j}'$ is monotonically increasing the function $V_{i,j}$ is convex.
Define the probability measure $\mu_{i,j} (dx) \propto \exp(-V_{i,j}(x)) dx$ 
on $\RR$. 
Clearly $\mu_{i,j}$ has zero mean and one can easily check that $\int x^2 d\mu_{i,j}(x) =o_R(1)$ since $c_{i,j} \to +\infty$ as $R\to +\infty$. We will show that $\mu_{i,j}$ satisfies a log-Sobolev inequality, and  that if $\upsilon_{i,j}$ is the distribution of $\widetilde{B}^R_{i,j}$ then $\upsilon_{i,j} \ast \mu_{i,j}$ also satisfies  a log-Sobolev inequality.  

As for $R$ large enough $V'_{i,j}(z+h) - V_{i,j}'(z-h)\geq 2h/\beta'_{i,j}$ for any $z\in\RR$ and $h\geq 0$, it follows that $V_{i,j}$ is strongly convex with convexity parameter bounded from below by $1/\beta'_{i,j}$. By \cite[Proposition 3.1]{BoLe} we deduce that $\mu_{i,j}$ satisfies a log-Sobolev inequality with constant $2\beta'_{i,j}$. Now, from \cite[Theorem 2.1]{WW} and Jensen's inequality, we know that a sufficient condition for $\upsilon_{i,j} \ast \mu_{i,j}$ to satisfy a log-Sobolev inequality is that for some $\lambda>1$,
\begin{equation} \label{expocondlogsob} \int \exp\Big( \frac{\lambda \beta_{i,j}'}{2} |V_{i,j}'(x) -V_{i,j}'(x-z)|^2 \Big) d\upsilon_x(z) d\mu_{i,j}(x) <+\infty,\end{equation}
where $\upsilon_x(dz) \propto e^{-V_{i,j}(x-z)} d\upsilon_{i,j}(z)$. In order to prove \eqref{expocondlogsob} we note that $\supp(\upsilon_{i,j}) \subset \{0\}\cup [-a_{i,j}(R),b_{i,j}(R)]^c$. Since $c_{i,j} <\min(a_{i,j}(R),b_{i,j}(R))/2$, if $z \notin [-b_{i,j}(R),a_{i,j}(R)]$, then for any $x\in \RR$, $x$ and $x-z$ cannot be both in $[-c_{i,j},c_{i,j}]$. As a consequence, one can check that for any $z \in [-a_{i,j}(R),b_{i,j}(R)]^c\cup\{0\}$ and $x\in \RR$, $|V'_{i,j}(x)-V'_{i,j}(x-z)|\leq |z|/\beta'_{i,j} +\kappa_{i,j}$, where $\kappa_{i,j}>0$ depends on $R$. Moreover, $x\mapsto V_{i,j}(x)-x^2/(2\beta_{i,j}')-k_{i,j}|x|$ is uniformly bounded. Thus, by \eqref{expocondlogsob}, it suffices to show  that there exists $\lambda>1$ such that 
\begin{equation} \label{expocondlogsob2} \int \frac{\exp\big( \frac{1}{2\beta_{i,j}'}(\lambda z^2 - (x-z)^2- x^2)) }{\int \exp\big(-\frac{(x-z')^2}{2\beta_{i,j}'}-k_{i,j}|x-z'| \big)d\upsilon_{i,j}(z')} d\upsilon_{i,j}(z)dx<+\infty.\end{equation}
Arguing as in the proof of \cite[Theorem 2.1]{WW}, we can localize the integral in the denominator on a interval $[-L,L]$ such that  $\upsilon_{i,j}([-L,L]) \geq 1/2$, and lower bound up to a multiplicative constant the integrand by $\exp(-x^2/(2\beta_{i,j}')-C|x|)$, where $C>0$ depend on $L$. Using the bound $(x-z)^2\geq [t/(1-t)] x^2 -tz^2$ for $x,z\in\RR$ and $t\in(0,1)$, we get that \eqref{expocondlogsob2} is implied by the condition that there exists $\lambda>1$ and $t\in(0,1)$ such that
\begin{equation} \label{expocondlogsob3} \int \exp\Big(\frac{1}{2\beta_{i,j}'}\big((\lambda+t)z^2 -\frac{t}{1-t} x^2\big)+C|x|\Big) d\upsilon_{i,j}(z) dx <+\infty.\end{equation}
Since $\int \exp(z^2/(2\beta'')) d \upsilon(z)<+\infty$ for any $\beta''>\beta_{i,j}$ (by \eqref{subgaussian}) we finally obtain that there indeed exist $\lambda>1$ and $t\in(0,1)$ such that \eqref{expocondlogsob3} holds. This completes the proof. 
\end{proof}

With Lemma \ref{regularizelogSob} now at our disposal we can give a proof of Lemma \ref{step1}. 
\begin{proof}[Proof of Lemma \ref{step1}]
 Let $\widetilde{C}^R$ be a symmetric random matrix independent of $\widetilde{B}^R$ satisfying the conditions of Lemma \ref{regularizelogSob}  and define $\widetilde{T}^R := \widetilde{B}^R+ \widetilde{C}^R$. For any realisation of the graph $\Xi$ and $\veps, \veps' >0$ define the function 
\[ f^{\veps,\veps'}_\Xi(K) :=  \sup_{v\in \mathcal{D}_{\veps'}, w \in \mathcal{D}_{M_\veps/\sqrt{np}} } \langle v, (K\circ \Xi)w\rangle, \ \forall K\in\mathcal{H}_n.\]
We will prove that for any $\theta\geq 0$, 
\begin{equation} \label{concregularization} \lim_{R \to +\infty}\limsup_{\veps,\veps' \to 0 \atop \veps'/\veps  \to 0} \limsup_{n\to +\infty} \frac{1}{np} \log \EE \big[ e^{\theta \sqrt{np}f^{\veps,\veps'}_\Xi(\widetilde{T}^R) }\big] \leq 0 \end{equation}
%
Using $\EE \widetilde{C}^R=0$ and Jensen's inequality we have for any $\theta \geq 0$,
\[ \EE\big[ e^{\theta \sqrt{np} f_{\Xi}^{\veps,\veps'}(\widetilde{B}^R)}\big] \leq \EE\big[ e^{\theta \sqrt{np} f_{\Xi}^{\veps,\veps'}(\widetilde{T}^R)}\big].\] 
Therefore the above inequality together with \eqref{concregularization} and Chernoff's inequality yield \eqref{probaBloc}. So we only need to prove  \eqref{concregularization}. 
To this end, fixing a realisation of $\Xi$ for now, we can write for any $K,K'\in\mathcal{H}_n$,
\[ f^{\veps,\veps'}_\Xi(K) -f^{\veps,\veps'}_\Xi(K') \leq L_\Xi \| K- K'\|_2,
\text{ where } L_\Xi^2 := \sup_{v\in \mathcal{D}_{\veps'},w \in \mathcal{D}_{M_\veps/\sqrt{np}} } \sum_{i,j} \xi_{i,j} v_i^2 w_j^2.\]
Since, by Lemma \ref{regularizelogSob}, for $R$ large enough $\widetilde{T}^R$ satisfies a log-Sobolev inequality with some constant $c_{R}>0$, we deduce by \cite[Eqn.~(5.8)]{Ledouxmono} that for any $\theta \geq 0$, 
\begin{equation} \label{conccond} \EE_{\widetilde{T}^R} \big[e^{\theta \sqrt{np} f^{\veps,\veps'}_\Xi(\widetilde{T}^R) } \big] \leq e^{\frac{np \theta^2 L_{\Xi}^2}{2c_R}+\theta \sqrt{np} \EE_{\widetilde{T}^R}  f^{\veps,\veps'}_\Xi(\widetilde{T}^R)}.
\end{equation}
On the other hand, we know by Lemma \ref{controlBernoullisum} that for any $\delta^2\geq \max({\veps'}^2, 3p)$ and any $n$ large enough\begin{equation} \label{controlLip} \PP\big( L_\Xi > \delta) \leq e^{-\frac{\delta^2}{2(\veps' M_\veps)^2} np \log(1/p)} = e^{-\frac{\delta^2}{2(\veps'/\veps)^2} np}.\end{equation}
If $\veps =o(1)$ and $\veps'/\veps=o(1)$, then we can find $\delta =o(1)$ such that $\delta \gg \veps'/\veps$ and $\delta^2\geq \max({\veps'}^2,3p)$, and fix this choice of $\delta$ for the rest of the proof. 
Now, observe that the function $\Xi \mapsto \EE_{\widetilde{T}^R} f^{\veps,\veps'}_\Xi(\widetilde{T}^R)$ is  convex  and that for any symmetric matrices $\Xi=(\xi_{i,j})_{i,j}$, $\Xi'=(\xi'_{i,j})_{i,j}$, 
\begin{align*} \EE_{\widetilde{T}^R}  f^{\veps,\veps'}_\Xi(\widetilde{T}^R)- \EE_{\widetilde{T}^R}  f^{\veps,\veps'}_{\Xi'}(\widetilde{T}^R) \leq \EE_{\widetilde{T}^R}  \| \widetilde{T}^R \circ (\Xi-\Xi') \|  & \leq \EE_{\widetilde{T}^R}  \|\widetilde{T}^{R} \circ (\Xi-\Xi') \|_2\\
& \leq \sqrt{ \sum_{i,j} \EE(\widetilde{T}^R_{i,j})^2 (\xi_{i,j}-\xi_{i,j}')^{2}},
\end{align*}
implying that it is $\sigma_{R}$-Lipschitz with respect to the Hilbert-Schmidt norm, where $\sigma^2_{R} := \max(\EE(\widetilde{T}^R_{1,1})^2),\EE(\widetilde{T}^R_{1,2})^2)$. 
Therefore using \cite[Theorem 8.6]{BLM}, we get that for any $\theta \geq 0$, 
\begin{equation} \label{conccondf} \EE\Big(e^{\theta \sqrt{np} \big( \EE_{\widetilde{T}^R}  f^{\veps,\veps'}_\Xi(\widetilde{T}^R)- \EE f^{\veps,\veps'}_{\Xi} (\widetilde{T}^R) \big)}\Big)\leq \exp\Big(\frac{\theta^2\sigma_{R}^2 np}{8 }\Big).\end{equation}
Note that $\sigma_{R} \to 0$ as $R\to +\infty$. Thus, combining \eqref{conccond}, \eqref{conccondf}, and \eqref{controlLip} together with the fact that $L_\Xi\leq 1$ a.s., we deduce that in order to prove \eqref{concregularization} it suffices to show that 
\begin{equation} \label{claimeq} \lim_{R \to +\infty}\sup_{\veps,\veps'>0} \limsup_{n\to +\infty} \frac{1}{\sqrt{np}} \EE f_{\Xi}^{\veps,\veps'}(\widetilde{T}^R) =0.\end{equation}
Turning to do that using Seginer's theorem  \cite[Theorem 1.1]{Seginer} and a symmetrization argument \cite[Section 4]{HWX16} (see also \cite[Theorem 3.5]{LLV}), we know that 
\begin{equation} \label{expecf} \EE f^{\veps,\veps'}_\Xi(\widetilde{T}^R) \leq  \EE \|\widetilde{T}^R \circ \Xi\|  \lesssim \EE \max_{1\leq i\leq n} \|(\widetilde{T}^R \circ \Xi)_i \|. \end{equation}
Using again the sub-Gaussian concentration property of $\widetilde{T}^R$, ensured by Lemma \ref{regularizelogSob}, together with the fact that $\Xi \mapsto \EE_{\widetilde{T}^R} \|(\widetilde{T}^R \circ \Xi)_i\|$ is a convex $\sigma_{R}$-Lipschitz function with respect to the Hilbert-Schmidt norm we find that for any $t>0$ and $i\in[n]$,
\begin{multline} \label{concconddeg}
\PP\big(  \| (\widetilde{T}^R \circ \Xi)_i\| - \EE \| (\widetilde{T}^R \circ \Xi)_i\| >t \big) 
\le  
\PP\big( \EE_{\widetilde{T}^R} \| (\widetilde{T}^R \circ \Xi)_i\| - \EE \| (\widetilde{T}^R \circ \Xi)_i\| >t/2 \big) \\
+ \EE\big[\PP_{\widetilde{T}^R} \big( \|(\widetilde{T}^R \circ \Xi )_i \| - \EE_{\widetilde{T}^R}  \|(\widetilde{T}^R \circ \Xi )_i \|> t/2 \big)\big] \leq  2 \exp\Big(-\frac{t^2}{8(c_{R}\vee \sigma_R^2)}\Big). 
\end{multline}
It is a classical fact that a sub-Gaussian tail such as \eqref{concconddeg} bound entails 
\begin{equation}\label{eq:subg-max-norm} \EE \max_{i\in [n]}  \big( \|(\widetilde{T}^R \circ \Xi )_i \| - \EE  \|(\widetilde{T}^R\circ \Xi )_i \|\big) =O\big(\sqrt{(c_{R}\vee \sigma_R^2 \log n})\big).
\end{equation} 
On the other hand, using Cauchy-Schwarz inequality we get $\max_{i \in [n] }\EE \big[\|(\widetilde{T}^R \circ \Xi)_i \| \big] \leq \sigma_{R}  \sqrt{np}$.
This observation together with \eqref{expecf} and \eqref{eq:subg-max-norm} 
gives the claim \eqref{claimeq} as $\lim_{R \to +\infty}\sigma_{R}=0$. 
%
\end{proof}

\subsection{Exponential equivalent of the eigenvalue-eigenvector equation}
Recall \eqref{eq-top-ei-sec2-2} and the definition of $J$ from \eqref{defJ}. The goal of this section is to find an exponential equivalent of the equation in \eqref{eq-top-ei-sec2-2} involving terms with tractable large deviation behaviours. More precisely, we will show the following proposition.

\begin{Pro} \label{expoequi:ineq}For any $t,\veps>0$, $R\geq 1$ and $\lambda>2$, define the event
\begin{equation} \label{eq:Om-eps-R-t}
\Omega_{\veps,R,t}:= \Big\{\langle {\bm v}^{\veps}, Z_{{J}} {\bm v}^{\veps} \rangle+m(\lambda) \Big( \sum_{k\in \gJ}{\bm u}_k^2  (\|(\widecheck{H}_{J})_{k}\|^2-1)_+ + \|\widecheck{B}_{ J} {\bm  v}^{\veps} \|^2\Big) \geq  \|{\bm v}^{\veps}\|^2/m(\lambda) -t\Big\},
\end{equation}
where $\gJ := \{ k \in J  : |{\bm u}_k|\geq \veps\}$. Then, for any $\delta>0$ such that $\lambda \geq 2+\delta$, 
\begin{equation} 
\lim_{R\to +\infty}\limsup_{\veps \to 0} \limsup_{n\to +\infty} \frac{1}{np} \log \PP\big( \Omega_{\veps,R, t}^c \cap \Theta_{\lambda,\delta} \big) = -\infty, \, \text{ where } \Theta_{\lambda,\delta} := \{\lambda_{H^{(J)}} \leq \lambda -\delta \leq \lambda \leq \lambda_Z\}.
\end{equation}
\end{Pro}
Proposition \ref{expoequi:ineq} shows that the large deviation upper tail event of $\lambda_Z$, intersected with the event that $\lambda_Z$ is away from ${\rm Spec}(H^{(J)})$,  enforces an inequality, at the large deviation scale, involving $\langle {\bm v}^{\veps}, Z_J {\bm v}^{\veps}\rangle$, which one can think as the `clique contribution', the `degrees' $\|(\widecheck{H}_J)_k\|^2$ in the network $H$, and $\|\widecheck{B}_J {\bm v}^{\veps}\|^2$.  
We will see later that although the last term is non negligible it corresponds to a sub optimal scenario and thus will ultimately disappear from the final expression for the rate function. 

The proof of Proposition \ref{expoequi:ineq} require several ingredients. Proofs of those are carried out over the next few subsections.

\subsubsection{$B^{(J)}$ is spectrally negligible over completely delocalised vectors} In a first step, using that $J^c$ does not contain any high degree vertices with respect to the network $B$, we show that $\|B^{(J)} w\|=o(1)$ when $w$ is a completely delocalised unit vector, that is  $\|w\|_\infty \ll \sqrt{\log(1/p)/np}$. 
\begin{lemma}  
\label{nodevBJ}
For any $t >0$ and $\vep >0$, with $M_\veps$ as in Lemma \ref{quadra:deloc:unbounded},
\[ \lim_{R\to +\infty}\limsup_{\eps \to 0} \limsup_{n\to +\infty} \frac{1}{np}\log \PP\big(\sup_{w\in \mathcal{D}_{M_\veps/\sqrt{np}}} \|B^{(J)}w\|>t \big) = -\infty.\]
\end{lemma}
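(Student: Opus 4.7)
The plan is to exploit the definition of $J$ in \eqref{defJ}, which guarantees that every vertex $j\in J^c$ satisfies $\|B^R_j\|<\delta$, together with Lemma \ref{step1} applied to a delocalized portion of a test vector. Writing the operator norm in variational form,
\[
\sup_{w\in \mathcal{D}_{M_\veps/\sqrt{np}}\cap \RR^{J^c}} \|B^{(J)}w\|
=\sup_{\substack{v\in \mathbb{S}^{J^c}\\ w\in \mathcal{D}_{M_\veps/\sqrt{np}}\cap \RR^{J^c}}} \langle v, B^R w\rangle,
\]
so it suffices to control $\langle v, B^R w\rangle$ uniformly over such pairs. The key move is to split $v=\widehat v+\widecheck v$ along a threshold $\veps'>0$ to be chosen later, with $\widehat v_i := v_i\mathbf{1}_{|v_i|\le \veps'}$ and $\widecheck v_i := v_i\mathbf{1}_{|v_i|> \veps'}$.

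For the ``delocalized'' piece $\widehat v$, we have $\widehat v\in \mathcal{D}_{\veps'}$ (both $\|\widehat v\|\le 1$ and $\|\widehat v\|_\infty\le \veps'$), and $w\in\mathcal{D}_{M_\veps/\sqrt{np}}$. Hence Lemma \ref{step1}, applied with parameters $(\veps,\veps')$ such that $\veps,\veps'\to 0$ and $\veps'/\veps\to 0$, yields that for any $t>0$, the probability that $\langle \widehat v, B^R w\rangle > t/2$ for some such pair decays faster than exponentially at speed $np$, as $R\to+\infty$. For the ``sparse'' piece $\widecheck v$, Chebyshev gives $|\mathrm{supp}(\widecheck v)|\le (\veps')^{-2}$ and $\mathrm{supp}(\widecheck v)\subset J^c$. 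Since $\|B^R_j\|<\delta$ for every $j\in J^c$ by the very definition of $J$, the Cauchy--Schwarz inequality yields deterministically
\[
\langle \widecheck v, B^R w\rangle \le \|\widecheck v\|\sqrt{\sum_{i\in \mathrm{supp}(\widecheck v)}|(B^R w)_i|^2}\le \sqrt{|\mathrm{supp}(\widecheck v)|}\,\max_{i\in J^c}\|B^R_i\|\,\|w\|\le \frac{\delta}{\veps'}.
\]

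Combining the two contributions, on the event from Lemma \ref{step1} one gets $\sup_w \|B^{(J)}w\|\le t/2 + \delta/\veps'$. It then suffices to choose the auxiliary threshold $\veps'$ so that $\veps'\ll \veps$ and $\delta\ll \veps'$ simultaneously; for instance, taking $\veps':=\sqrt{\delta}$ we have $\veps'/\veps\to 0$ (using $\delta=o(\veps^2)$, which we may impose on the function $\delta(\veps)$ appearing in \eqref{defJ}, as it is only required to satisfy $\delta=o(\veps)$) while $\delta/\veps'=\sqrt{\delta}\to 0$. With this choice, for $n$ large, $\delta/\veps'\le t/2$ and the conclusion of Lemma \ref{step1} applies verbatim.

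There is no union bound over realizations of $J$ needed, since the sparse-part estimate holds deterministically on $\{J=\mathcal J\}$ for \emph{every} realization $\mathcal J$ via the defining property $\max_{j\in J^c}\|B^R_j\|<\delta$. The only genuine probabilistic input is Lemma \ref{step1}, and no delicate concentration on random resolvents enters. The mild subtlety, and the only obstacle worth flagging, is the triple scaling $\delta\ll \veps'\ll \veps\to 0$ followed by $R\to+\infty$: one must check that this ordering is compatible with the freedom allowed in the choice of $\delta(\veps)$ in \eqref{defJ} and with the double limit in the statement of Lemma \ref{step1}, which it is.
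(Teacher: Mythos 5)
Your proof is correct and follows essentially the same route as the paper's: decompose the test vector for the operator norm at a threshold $\veps'$, handle the delocalized part via Lemma \ref{step1}, and bound the heavy part deterministically by $\delta/\veps'$ using the defining property $\max_{i\in J^c}\|B_i\|\le\delta$ (the paper packages this last step as $\|x\|_1\max_i\|B_i\|\le\delta/\veps'$ rather than your Cauchy--Schwarz, but the bound is identical). One small remark: rather than imposing the stronger constraint $\delta=o(\veps^2)$ so that $\veps'=\sqrt{\delta}$ works, it is cleaner to take $\veps':=\sqrt{\delta\veps}$, which gives $\delta/\veps'=\veps'/\veps=\sqrt{\delta/\veps}\to 0$ directly from $\delta=o(\veps)$ and requires no change to the definition of $J$.
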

\begin{proof}
Let $w'\in \mathbb{B}^{J^c}$. Write $w'=x+y$ with $x_i:=w'_i\Car\{|w_i'|>\veps'\}$, for any $i\in J^c$,
where $\veps' >0$ is a parameter to be chosen later. Note that for any $w\in\mathbb{B}^{J^c}$, we have by Cauchy-Schwarz inequality,
\[ 
\langle x,B^{(J)} w\rangle \leq \sum_{i\in J^c} |x_i| \|B_i\|\leq \|x\|_1\max_{i\in J^c} \|B_i\|.
\]
Since $\|x\|_{2}\leq 1$ and for any $i\in J^c$ we have $|x_i|\geq \veps'$, we deduce that $\|x\|_1\leq 1/\veps'$. By construction (see \eqref{defJ}), $J^c$ does not contain any vertex of high degree with respect to the network $B$. In particular $\max_{i\notin J} \|B_i\|\leq \delta$. Thus $\langle x,B^{(J)} w\rangle \leq \delta/\veps' $.
So, we have proven that 
\[ \sup_{w \in \mathcal{D}_{M_\veps/\sqrt{np}}} \|B^{(J)}w\|^2
\leq \delta/\veps'+\sup\big\{\langle y, Bw\rangle : y\in \mathcal{D}_{\veps'},w\in\mathcal{D}_{M_\veps/\sqrt{np}} \big\}.\]
Since $\delta =o(\eps)$, as $\eps \to 0$, we can find $\veps'>0$ such that $\delta \ll \veps' \ll \eps$.
Applying Lemma \ref{step1} with these choices the claim follows.
\end{proof}

\subsubsection{Concentration of the resolvent} The most important step to move from \eqref{eq-top-ei-sec2-2} to Proposition \ref{expoequi:ineq} is the following concentration bound.

\begin{lemma}\label{conc:unbounded} 
Let $t, \delta, \lambda$, and $\Theta_{\lambda, \delta}$ be as in Proposition \ref{expoequi:ineq}. Set $\PP_{\Theta_{\lambda,\delta}}(\cdot)= \PP( \cdot\cap \Theta_{\lambda,\delta})$. Then 
\[\lim_{R\to +\infty}\limsup_{\veps \to 0} \limsup_{n\to +\infty} \frac{1}{np}\log\PP_{\Theta_{\lambda,\delta}}\Big( \langle {\bm x}^\veps, (\lambda_{Z^R}-H^{(J)})^{-1} {\bm x}^\veps\rangle - m(\lambda_{Z^R})\|{\bm x}^\veps\|^2> t \Big) = -\infty.\]
\end{lemma}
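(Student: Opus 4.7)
The plan is to adapt the strategy of Proposition \ref{concresolvfinalbounded} (the bounded analogue) to the unbounded setting, exploiting the independence between $H^R$ and $B^R$ provided by the decoupling of Section \ref{section:decoupl}. For a deterministic $\cJ\subset[n]$ with $|\cJ|\leq 2\vep^2 np/\log(1/p)$, I will condition on $B$, $\widecheck H_\cJ$ and $H_\cJ$. Under this conditioning the remaining matrix $H^{(\cJ)}$ still has uniformly bounded independent entries, so Proposition \ref{strongconc} of Section \ref{sec:concflat} becomes available, while $\widecheck Z_\cJ=\widecheck H_\cJ+\widecheck B_\cJ$ and $B^{(\cJ)}$ are fixed. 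The entropy estimate \eqref{entropyJ} ensures that the random set $J$ contributes nothing at the scale $np$ and can be union-bounded over.

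The convex object I would work with is, for $v\in\mathbb{S}^\cJ$ and $w\in\RR^{\cJ^c}$ with $\|w\|_\infty\leq M_\vep/(\sqrt{np}\,\eta_\lambda)$ (where $M_\vep=\vep^{-1}\sqrt{\log(1/p)}$ is as in Lemma \ref{quadra:deloc:unbounded}),
\[ \phi_{\lambda,v,w}(K):=\langle \widecheck Z_\cJ v+B^{(\cJ)}w,(\lambda-K)^{-1}(\widecheck Z_\cJ v+B^{(\cJ)}w)\rangle,\quad K\in\cH_{\cJ^c}^\lambda,\]
which is convex in $K$, together with $\phi_\lambda(K):=\sup_{v,w}\phi_{\lambda,v,w}(K)$. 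On the event $\{J=\cJ\}\cap\Theta_{\lambda_{Z^R},\delta}$, Lemma \ref{loc:unbounded} gives $\|{\bm v}^\vep\|\geq\eta_\lambda$, so that the normalised pair $(\widetilde{\bm v}^\vep,\widetilde{\bm w}^\vep):=({\bm v}^\vep,{\bm w}^\vep)/\|{\bm v}^\vep\|$ is feasible and by \eqref{eq-top-ei-sec2-2} it is a constrained critical point of $\phi_{\lambda_{Z^R},v,w}$ with gradient $\widetilde{\bm w}^\vep(\widetilde{\bm w}^\vep)^{\sf T}\in\eta_\lambda^{-2}\cF_{M_\vep/\sqrt{np}}^1$, i.e.\ flat. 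The flat-gradient subset $E_\lambda^{\vep,r}$ and the convex-hull extension $\widetilde\phi_\lambda^\vep$ are then built exactly as in \eqref{defphitilde}, giving a convex function on $\cH_{\cJ^c}$ with a measurable subgradient selection in $r\cF_{M_\vep/\sqrt{np}}^1$ with $r$ of order $\eta_\lambda^{-2}$.

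Concentration follows by applying Proposition \ref{strongconc} to $\widetilde\phi_\lambda^\vep/r$ conditionally on $(B,\widecheck H_\cJ,H_\cJ)$, with flatness parameter $\vep_0=M_\vep/\sqrt{np}$; the resulting upper tail rate $\exp(-c\,t^2\vep^2 np^2/\log(1/p))$ far exceeds $\exp(-np)$ since $\vep^2 np\gg\log(1/p)$ in the range considered. The conditional expectation $\EE[\widetilde\phi_\lambda^\vep(H^{(\cJ)})\mid B,\widecheck H_\cJ,H_\cJ]$ is handled by adapting the proof of Proposition \ref{computation-expec}: since $H^{(\cJ)}$ has bounded entries, the isotropic local law from Section \ref{sec:loc-law} (Proposition \ref{prop:conc-loc-law} together with the accompanying mean estimate $\EE\langle u,(\lambda-H^{(\cJ)})^{-1}u\rangle\to m(\lambda)$) reduces to controlling the $\ell^1$-norm of the relevant unit vectors, which is of order at most $\sqrt{|\cJ|np}\lesssim\vep np/\sqrt{\log(1/p)}$, well within the scope of the local law.

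Finally, a $\delta$-net argument in $\lambda$ (mirroring the $\Gamma_{\delta,r}^{\lambda,\cJ}$-decomposition of Proposition \ref{concresolvfinalbounded}), the exponential tightness of $\lambda_{Z^R}$ (Lemma \ref{expotightZ}), Lemma \ref{nodevBJ} (to dispose of the $B^{(\cJ)}\widetilde{\bm w}^\vep$ contribution so that $\|\widecheck Z_\cJ\widetilde{\bm v}^\vep+B^{(\cJ)}\widetilde{\bm w}^\vep\|^2$ is approximated by $\|{\bm x}^\vep\|^2/\|{\bm v}^\vep\|^2$), and the union bound over $\cJ$ via \eqref{entropyJ} together complete the argument. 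The two main technical obstacles I anticipate are: (i) designing the supremum in the definition of $\phi_\lambda$ so that $(\widetilde{\bm v}^\vep,\widetilde{\bm w}^\vep)$ is recognised as a genuine maximiser (and not merely a critical point), which may require an additional restriction on the $(v,w)$ domain, or a reformulation using the quadratic form $\Psi_{\lambda,v,w}(K):=\langle v,Z_\cJ v\rangle+2\langle w,\widecheck Z_\cJ v\rangle+\langle w,(B^{(\cJ)}+K-\lambda I)w\rangle$ whose first-order optimality condition is precisely the eigenvalue equation; and (ii) the expectation computation of Proposition \ref{computation-expec} must be carried over in the presence of the random $B^{(\cJ)}$-dependent vectors, which couples the local law input to randomness outside $H^{(\cJ)}$ and requires an additional net or concentration argument over $(v,w)$ before averaging.
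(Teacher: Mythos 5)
Your route breaks down precisely at the point you flag as obstacle (i), and the difficulty is not one of re-parametrizing the supremum: there is no analogue of Lemma \ref{topeioptim} for the resolvent of $H^{(J)}$. In the bounded case $\lambda_X$ admits a genuine variational characterization, $\lambda_X=\sup_{v\in\mathbb{S}^J}\big(\langle v,X_J v\rangle+\langle\widecheck X_J v,(\lambda_X-X^{(J)})^{-1}\widecheck X_J v\rangle\big)$, attained exactly at the restriction of a top eigenvector; it is this fact that justifies $\phi_{\lambda_X}(X^{(J)})=\phi_{\lambda_X,\widetilde{\bm v}^\vep}(X^{(J)})$ and hence licenses the construction \eqref{defphitilde} together with the localisation step \eqref{eq:Gamma-delta-r}. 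In the unbounded case the eigenvalue--eigenvector relation \eqref{eq-top-ei-sec2-2} is a \emph{fixed-point} relation ${\bm w}^\vep=(\lambda_Z-H^{(J)})^{-1}{\bm x}^\vep$ in which ${\bm x}^\vep=\widecheck Z_J{\bm v}^\vep+B^{(J)}{\bm w}^\vep$ itself depends on ${\bm w}^\vep$. The pair $(\widetilde{\bm v}^\vep,\widetilde{\bm w}^\vep)$ is at best a constrained critical point of the quadratic form $\phi_{\lambda,v,w}(K)$ you introduce; there is no reason for it to realise the supremum of $\phi_{\lambda_Z,v,w}(H^{(J)})$ over delocalised $(v,w)$, and without this identity the value $\widetilde\phi_{\lambda}^\vep(H^{(J)})$ of your convex extension is unrelated to $\langle{\bm x}^\vep,(\lambda_Z-H^{(J)})^{-1}{\bm x}^\vep\rangle$. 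Your fallback $\Psi_{\lambda,v,w}$ does produce the eigenvalue equation as a first-order condition, but optimising in $w$ yields the resolvent $(\lambda-Z^{(\cJ)})^{-1}$, not $(\lambda-H^{(\cJ)})^{-1}$; since $Z^{(\cJ)}=H^{(\cJ)}+B^{(\cJ)}$ has unbounded entries, Proposition \ref{strongconc} cannot be applied to that variational problem.

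The paper circumvents the issue by abandoning the maximiser characterization entirely. Proposition \ref{concresolvunif} (built from Lemma \ref{concresolvent} and Proposition \ref{concresolvfinal} via a multi-layer net argument) gives concentration, at speed much faster than $np$ and uniformly in $\lambda'\geq\lambda$ and in the subspace $\mathcal{W}\in\mathfrak{S}$, of $\sup_{x}\big(f_{\lambda',x}(K)-m(\lambda')\|x\|^2\big)$ where $x$ ranges over the $K$-dependent ``good direction'' set $\mathcal{V}_{\lambda'}^{\veps,r}(K)$ intersected with an $\eta$-neighbourhood of the \emph{fixed} low-dimensional subspace $\mathcal{W}$ satisfying \eqref{condL1}. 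In the proof of Lemma \ref{conc:unbounded} the subspace is taken to be $\mathcal{W}_J=\mathrm{ran}(\widecheck Z_J)$, which is independent of $H^{(J)}$ by the decoupling; by \eqref{eq-top-ei-sec2-2} the normalised vector $\kappa^{-1}{\bm x}^\vep$ lies in $\mathcal{V}_{\lambda_Z}^{\veps,1}(H^{(J)})$, and $d({\bm x}^\vep,\mathcal{W}_J)\leq\eta$ once $\|B^{(J)}{\bm w}^\vep\|\leq\eta$, which Lemma \ref{nodevBJ} supplies. This sidesteps any need to recognise $(\widetilde{\bm v}^\vep,\widetilde{\bm w}^\vep)$ as an optimiser, which is the missing ingredient in your proposal.
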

The proof of Lemma \ref{conc:unbounded} builds on the idea developed in Section \ref{sec:concflat} that convex functions with flat gradients of sparse Wigner matrices with bounded entries satisfy improved sub-Gaussian upper tail bounds. 
Observe that 
in the lemma above we have restricted our concentration estimate to the event where there is a gap in the spectrum, in the sense that $\lambda_{H^{(J)}}+\delta \leq \lambda_Z$ for some $\delta>0$. 
Since the bounded case is settled we can work on this event 
as we can compute the probability of the complement event $\{\lambda_{H^{(J)}}>\lambda-\delta\}$.

 Turning to prove Lemma \ref{conc:unbounded} we first show  that if $Y$ is a sparse Wigner matrix, in the sense of Definition \ref{defmodel}, with bounded entries, then $f_{\lambda, x}(Y):= \langle x, (\lambda-Y)^{-1}x\rangle$ (note that the domain of $f_{\lambda, x}$ is $\cH_n^\lambda$) concentrates when $x$ belongs to a set  of `good' directions, to be defined, and that the concentration can be made uniform in these directions if one considers vectors close to a given low dimensional subspace. 
To this end, define for any $\lambda \in \RR$, $\eps,r>0$ and $K\in \mathcal{H}_n^{\lambda}$ 
\begin{equation} \label{defE} 
\mathcal{V}_{\lambda}^{\eps,r}(K) := \big\{ x \in \mathbb{B}^n :  \| (\lambda-K)^{-1}x\|_\infty \leq \veps,  \  \| (\lambda-K)^{-1}x\|_2 \leq r
\big\}. \end{equation}
Since $\nabla_K f_{\lambda, x}(K) = (\lambda-K)^{-1} x ((\lambda-K)^{-1} x)^{\sf T}$, the set $\mathcal{V}_{\lambda}^{\eps,r}(K)$ consists of directions in which the gradient at $K$ of the map $K' \mapsto f_{\lambda, x}(K')$ is flat, in the sense that it belongs to $r^2 \mathcal{F}_{\veps/r}^1$ (see \eqref{flatmatrices} for a definition). 
Finally, denote by $\mathfrak{S}$ the set of  subspaces $\mathcal{W}$ of $\RR^n$ such that 
\begin{equation} \label{condL1} \mathrm{dim}(\mathcal{W})\leq  np/\log(1/p) \text{ and } \ \|x\|_1/\|x\|_2\leq  np/\sqrt{\log(1/p)}=:a_n, \, \forall x \in \mathcal{W}. \end{equation}
With this notation we have the following proposition. 
\begin{Pro}\label{concresolvunif}Let $\lambda,r>0$, and $\delta \in (0,1)$ be such that $\lambda > 2+\delta$. Let $Y$ be a sparse Wigner matrix with bounded entries. Fix $\mathcal{W} \in \mathfrak{S}$. For $\lambda', t,\eta >0$ and $K \in \cH^{\lambda'}_n$ define
\[
F_{\lambda', \eta}^{\eps, r, \mathcal{W}}(K):= \sup_{x  \in \mathcal{G}_{\lambda',\eta}^{\eps,r, \mathcal{W}}(K)} \big(f_{\lambda', x}(K) -m(\lambda') \|x\|^2\big) \quad \text{ and } \quad \mathcal{G}_{\lambda',\eta}^{\eps,r, \mathcal{W}}(K) := \{x\in \mathcal{V}_{\lambda'}^{\eps,r}(K) : d(x,\mathcal{W}) \leq \eta\},
\]
where $d(y,E) := \inf_{w\in E} \|y-w\|$ for any $y \in \RR^n$ and $E \subset \RR^n$. Then, for any $t>0$ and $\eta >0$ small enough
\[\lim_{\eps \to 0} \limsup_{n\to +\infty} \frac{1}{np} \log \sup_{\mathcal{W} \in \mathfrak{S}}  \PP\Big( \sup_{\lambda ' \geq \lambda}F_{\lambda', \eta}^{\eps, r, \mathcal{W}}(Y) >t, \lambda_{Y} <\lambda -\delta \Big)= - \infty.\]
\end{Pro}
Let us add here that the probability bound being uniform over $\mathcal{W}\in \mathfrak{S}$ will be important later when we apply Proposition  \ref{concresolvunif} to a random minor of $H$.

To prove Proposition \ref{concresolvunif} 
we first show that for a given $\lambda>2$ and $x \in  \mathcal{V}_{\lambda}^{\eps,r}(Y)$, the probability that $f_{\lambda, x}(Y)$ is greater than $m(\lambda)\|x\|^2$ decreases at the exponential scale faster than $np$. 
To state and prove this concentration result we need to regularise $f_{\lambda, x}$ which in turn requires some more notation. 
Recall \eqref{defE} and for any $\delta,\eps,r>0$ define $E_{\lambda,x}^{\eps,r} := \big\{K \in \mathcal{H}^{\lambda-\delta}_n :  x \in \mathcal{V}_\lambda^{\eps,r}(K)
\big\}$.
In words $E_{\lambda,x}^{\eps,r} $ is the set of symmetric matrices for which $x$ is a `good' direction for the resolvent at $\lambda$.
Extend $f_{\lambda,x}$ to the whole space $\mathcal{H}_{n}$ by setting 
\begin{equation} \label{defftilde} \widetilde{f}^{\eps,r}_{\lambda,x}(K) := \sup_{K'\in E_{\lambda,x}^{\eps,r}} \big\{f_{\lambda,x}(K') + \langle \nabla f_{\lambda,x}(K'), K-K'\rangle \big\}, \  \forall K \in \mathcal{H}_{n}.
\end{equation}
By \cite[Exercise I.I.15]{Bhatia}, we know that $f_{\lambda,x}$ is convex on its domain. Therefore, $\widetilde{f}_{\lambda,x}^{\eps,r}$ coincides with $f_{\lambda,x}$ on $E_{\lambda,x}^{\veps,r}$. Moreover, $\widetilde{f}_{\lambda,x}^{\eps,r}$ is by construction convex and $r^2$-Lipschitz with respect to the Hilbert-Schmidt norm on $\mathcal{H}_{n}$ since $\|\nabla f_{\lambda,x}(K')\|_2\leq r^2$ for any $K'\in E_{\lambda,x}^{\veps,r}$. Note that although not explicitly mentioned, $E_{\lambda,x}^{\veps,r}$ and $\widetilde{f}_{\lambda,x}^{\veps,r}$ both depend implicitly on $\delta$.

 Applying the concentration inequality for functions with flat gradients of Proposition \ref{strongconc} and using the isotropic local law we get  the following lemma. Its proof is deferred to Section \ref{sec:loc-law}.
Recall the definition of $a_n$ from \eqref{condL1}. 
\begin{Lem}\label{concresolvent} Let $\delta>0$ and $\lambda>2+\delta$. Assume $Y$ is a sparse Wigner matrix with bounded entries. Then for any $t,r>0$,
\[ \lim_{\veps \to 0} \limsup_{n\to +\infty}\frac{1}{np} \log \sup_{\|x\|_1\leq a_n \atop x \in 2\mathbb{B}^n}\PP\big( \widetilde{f}^{\eps,r}_{\lambda,x}(Y) - m(\lambda)\|x\|^2 >t \big) =-\infty. \]
\end{Lem}


Combining  Lemma \ref{concresolvent}  together with a net argument on the vectors in $\mathcal{V}_{\lambda}^{\eps,r}(Y)$ close to a low dimensional subspace $\mathcal{W} \in \mathfrak{S}$, we get the following proposition. We draw the attention of the reader to the fact that $\mathcal{V}_{\lambda}^{\eps,r}(Y)$ being a set of a random vectors the net argument will not be completely straightforward.
\begin{Pro}\label{concresolvfinal}
With the same setting and notation of Proposition \ref{concresolvunif} 
for any $t>0$, 
\[ \lim_{\eps \to 0} \limsup_{n\to +\infty} \frac{1}{np} \log \sup_{\mathcal{W} \in \mathfrak{S}} \PP\big(F_{\lambda, \eta}^{\eps, r, \mathcal{W}}(Y) > t, \lambda_{Y} <\lambda -\delta \big)= - \infty.\]
\end{Pro}

\begin{proof}
Starting from Lemma  \ref{concresolvent} and fixing $\mathcal{W} \in \mathfrak{S}$ we first perform 
a net argument on $x\in \mathbb{B}_\mathcal{W}:=\mathcal{W}\cap \mathbb{B}^n$. Without loss of generality assume that $\delta \in (0,1)$.  Since $\|(\lambda -K)^{-1} \| \leq 1/\delta$ for any $K \in \mathcal{H}^{\lambda-\delta}_n$,  we have for all $x,x'\in \mathbb{B}^n$, 
\begin{equation} \label{stabilityE} E_{\lambda,x}^{\eps,r}\subset E_{\lambda,x'}^{\veps + s',r +s'} \text{ and } \
 |f_{\lambda,x}(K) -f_{\lambda,x'}(K) |\leq 2s', \ \forall K \in \mathcal{H}^{\lambda-\delta}_n, 
\end{equation}
with $s' = \delta^{-1} \|x-x'\|_2$. For any $\eta\in(0,1)$ let $\mathcal{N}_\eta(\mathcal{W})$ be an $\eta$-net for the $\ell^2$-norm of $2\mathbb{B}_{\mathcal{W}}$. We claim that the two stability estimates  of \eqref{stabilityE}  yield for any $K\in \mathcal{H}^{\lambda-\delta}_n$,
\begin{equation} \label{stability} 
F_{\lambda, \eta}^{\eps, r, \mathcal{W}}(K)\leq \sup_{x' \in \mathcal{N}_\eta(\mathcal{W})} \big(\widetilde{f}_{\lambda,x'}^{\eps+s,r+s}(K)-m(\lambda) \|x'\|^2\big) + 12s,\end{equation}
where $s= 2\delta^{-1}\eta$.
Indeed, if $K\in\mathcal{H}^{\lambda-\delta}_n$ and  $x\in \mathcal{G}_{\lambda,\eta}^{\veps,r, \mathcal{W}}(K)$, then this means that $K\in E_{\lambda,x}^{\eps,r}$ and that there exists $x''\in \mathcal{W}$ such that $\|x-x''\|\leq \eta$. Since $\|x\|\leq 1$ this entails that $\|x''\|\leq 2$ as $\eta\leq 1$, and we deduce that there exists $x'\in \mathcal{N}_\eta(\mathcal{W})$ such that $\|x-x'\|\leq 2\eta$. Using \eqref{stabilityE} we get
\[ f_{\lambda,x}(K)\leq f_{\lambda,x'}(K) + 4\delta^{-1}\eta.\]
Moreover,  $K\in E_{\lambda,x'}^{\veps+s,r+s}$ (by \eqref{stabilityE} again) 
which yields $f_{\lambda,x'}(K) = \widetilde{f}_{\lambda,x'}^{\eps+s,r+s}(K)$. Noting that $m(\lambda) \leq 1$ and $\delta \in (0,1)$, this ends the proof of the claim \eqref{stability}. 

By \cite[Corollary 4.1.15]{AGM} we know that for any $\eta \in (0,1)$ we can find such a set $\mathcal{N}_\eta(\mathcal{W})$ with
\[ \log | \mathcal{N}_{\eta}(\mathcal{W}) | \leq \mathrm{dim}(\mathcal{W}) \log(3/\eta) = o(np),\]
where we used the fact that  $\mathrm{dim}(\mathcal{W}) \leq np/\log(1/p)$.
So using a union bound and Lemma  \ref{concresolvent}, we get for any $t>0$, and $\eta>0$ small enough 
\begin{equation} \label{unionbound} \lim_{\eps \to 0} \limsup_{n\to +\infty} \frac{1}{np} \log \sup_{\mathcal{W} \in \mathfrak{S}} \PP\big(\sup_{x'\in \mathcal{N}_{\eta}(\mathcal{W}) }\big(\widetilde{f}^{\eps+s,r+s}_{\lambda,x'}(Y)-m(\lambda) \|x'\|^2\big) >t)= - \infty.\end{equation}
Combining this with \eqref{stability} ends the proof. 
\end{proof}
Finally we perform a net argument on $\lambda$  to obtain the uniform concentration result of Proposition  \ref{concresolvunif}.

\begin{proof}[Proof of Proposition  \ref{concresolvunif}]
Fix $\lambda>2+\delta $ and $K\in\mathcal{H}^{\lambda -\delta}_n$. Let $\mathcal{N}_\eps$ be the standard $\eps$-net of the interval $(\lambda, \lambda +\eps^{-1})$. Pick $\lambda_1' \ge \lambda$ and $\lambda'_2 \in \mathcal{N}_\eps$ such that $0\leq \lambda_1' -\lambda_2' \leq \eps$.
Since $\|(\lambda_1'-K)^{-1} - (\lambda_2'-K)^{-1}\|\leq q:= \delta^{-2} \eps$ we have the inclusion
\[ \mathcal{V}_{\lambda_1'}^{\eps,r}(K) \subset \mathcal{V}_{\lambda_2'}^{\eps+q,r+q}(K),\]
and as a consequence $ \mathcal{G}_{\lambda'_1,\eta}^{\veps,r, \mathcal{W}}(K)  \subset  \mathcal{G}_{\lambda'_2,\eta}^{\veps+q,r+q, \mathcal{W}}(K) $. Therefore using the facts that $\lambda' \mapsto f_{\lambda',x}(K)$ is non increasing on $(\lambda,+\infty)$ for any $x\in\RR^n$ 
and that $m$ is $\delta^{-2}$-Lipschitz on  $(2+\delta,+\infty)$ we have 
\begin{equation*} 
\sup_{\lambda ' \in (\lambda,\lambda +\eps^{-1})} F_{\lambda',\eta}^{\eps,r, \mathcal{W}}(Y) \leq \sup_{\lambda' \in \mathcal{N}_\eps} F_{\lambda',\eta}^{\eps+q,r+q, \mathcal{W}}(Y) +  q,
\end{equation*}
on the event $Y \in \mathcal{H}^{\lambda-\delta}_n$. 
Using Proposition \ref{concresolvfinal}, a union bound, and the fact that $|\mathcal{N}_\eps| = O(1/ \eps^2)$ we get for any $t>0$ and $\eta$ small enough
\begin{multline*}
\limsup_{\eps \to 0} \limsup_{n\to +\infty} \frac{1}{np }\log \sup_{\mathcal{W} \in \mathfrak{S}}  \PP\big( \sup_{\lambda' \in (\lambda,\lambda+\eps^{-1})} F_{\lambda',\eta}^{\eps,r, \mathcal{W}}(Y) >t, Y \in \mathcal{H}^{\lambda-\delta}_n\big) \\
\le \lim_{\eps \to 0} \limsup_{n\to +\infty} \frac{1}{np }\log \sup_{\mathcal{W} \in \mathfrak{S}} \PP\big( \sup_{\lambda' \in \mathcal{N}_\eps} F_{\lambda',\eta}^{\eps +q,r+q, \mathcal{W}}(Y) >t, Y \in \mathcal{H}^{\lambda-\delta}_n\big) = -\infty.
\end{multline*}
On the other hand, observe that on the event $Y \in \mathcal{H}^{\lambda-\delta}_n$ we have
\[ \sup_{\lambda ' \geq \lambda +\eps^{-1}} F_{\lambda',\eta}^{\eps,r, \mathcal{W}}(Y) \leq \sup_{\lambda' \geq \lambda+\eps^{-1}}\sup_{x \in \mathbb{B}^n }f_{\lambda',x}(Y)  \leq \eps.\]
The last two observations end the proof.
\end{proof}

We can now give a proof of Lemma \ref{conc:unbounded}. 
\begin{proof}[Proof of Lemma \ref{conc:unbounded}] 
Recall that $a_n =np/\sqrt{\log(1/p)}$ and let $\wt{\mathcal{N}}:=\{\mathcal{J} \subset [n]: |\mathcal{J}|\leq 2\eps^2 np/\log(1/p)\}$. 
Define for any $t,r,\veps,\eta>0$, $\mathcal{J} \in \wt{\mathcal{N}}$, and $\mathcal{W}$ a subspace of $\RR^{\mathcal{J}^c}$, 
\[ \mathcal{F}_{t,\mathcal{W}}^{\mathcal{J},\veps,\eta} := \Big\{\sup_{\lambda' \geq \lambda} 
F_{\lambda', \eta}^{\eps, r, \mathcal{W}}(H^{(\mathcal{J})})
>t, \lambda-\delta>\lambda_{H^{(\mathcal{J})}}\Big\}.\]
Using that $\log |\wt{\mathcal{N}}| =O(\veps^2 np)$ and applying Proposition \ref{concresolvunif} to $H^{(\mathcal{J})}/\sigma_R$, where $\sigma_R^2 := \EE H_{1,2}^2 =1+o_R(1)$ and $\mathcal{J}\in \wt{\mathcal{N}}$, we get that for $\eta$ small enough and $R$ large enough
 \begin{equation}\label{concgooddir}\lim_{\eps \to 0} \limsup_{n\to +\infty} \frac{1}{np} \log \sup_{\mathcal{J} \in \wt{\mathcal{N}}}\sup_{\mathcal{W} \in \mathfrak{S}_{\mathcal{J}}}  \PP(\mathcal{F}_{t,\mathcal{W}}^{\mathcal{J},\veps,\eta} )= - \infty,\end{equation}
 where $\mathfrak{S}_{\mathcal{J}}$ is the set of subspaces $\mathcal{W}$ of $\RR^{\mathcal{J}^c}$ satisfying \eqref{condL1}. 
Now define the event $\mathcal{G}_{\mathcal{J}} := \{\sum_{i\in[n],j\in \mathcal{J}} (\xi_{i,j} +\xi_{i,j}') \leq a_n^2\}$. Using a binomial tail bound together with the fact that $np |\mathcal{J}|\leq 2\veps^2 a_n^2$ for any $\mathcal{J} \in \wt{\mathcal{N}}$ and $a_n^2\gg np$, we obtain that for $\veps$ small enough
\begin{equation} \label{Gwhp}   \limsup_{n\to +\infty}\frac{1}{np} \log\sup_{\mathcal{J} \in \wt{\mathcal{N}}} \PP( \mathcal{G}_{\mathcal{J}}^c) =-\infty.\end{equation}
On the event ${G}_{\mathcal{J}}$, $\widecheck{Z}_\mathcal{J}$ has at most $a_n^2$ non zero entries. It follows that for any $v\in \RR^{\mathcal{J}}$, $|\supp(\widecheck{Z}_\mathcal{J}v)| \le a_n^2$ and therefore $\|\widecheck{Z}_\mathcal{J}v\|_1 \leq  a_n \|\widecheck{Z}_\mathcal{J} v\|_2$. Besides ${\rm rank}(\widecheck{Z}_\mathcal{J}) \le |\mathcal{J}|\leq np/\log(1/p)$ for $\veps$ small enough. Thus the subspace $\mathcal{W}_{\mathcal{J}} := \mathrm{ran}(\widecheck{Z}_\mathcal{J}) \in \mathfrak{S}_\mathcal{J}$ on $\mathcal{G}_{\mathcal{J}}$. As the tail estimate \eqref{concgooddir} is uniform in $\mathcal{W} \in \mathfrak{S}_\mathcal{J}$ and $(H^{(\mathcal{J})}, \widecheck{Z}_\mathcal{J})$ are independent,
the estimate \eqref{Gwhp} together with Lemma \ref{cardJ}, and a union bound yield for any $t >0$ and $\eta$ small enough
\begin{equation} \label{expoequiveq} 
\lim_{\eps \to 0} \limsup_{n\to +\infty} \frac{1}{np} \log  \PP(\mathcal{F}_{t,\mathcal{W}_{J}}^{J,\veps,\eta} ) 
\le \lim_{\eps \to 0} \limsup_{n\to +\infty} \frac{1}{np} \log  \sup_{\mathcal{J} \in \wt{\mathcal{N}}} \PP(\mathcal{F}_{t,\mathcal{W}_{\mathcal{J}}}^{\mathcal{J},\veps,\eta}) 
+  \lim_{\eps \to 0} \limsup_{n\to +\infty} \frac{1}{np} \log |\wt{\mathcal{N}}| = - \infty.
\end{equation}
Recall that ${\bm x}^\veps = \widecheck{Z}_J{\bm v}^{\veps} + B^{(J)} {\bm w}^{\veps}$, where by definition ${\bm w}^\veps \in \mathcal{D}_{M_\veps/\sqrt{np}}$ and fix $\kappa\geq 1$. When $\lambda_Z>\lambda_{H^{(J)}}$ obviously one has $\lambda_Z \notin {\rm Spec}(H^{(J)})$. Hence, on the event where $\|{\bm x}^\veps\| \le \kappa$, by \eqref{eq-top-ei-sec2-2} we have $\kappa^{-1} {\bm x}^\veps \in \mathcal{V}_{\lambda_Z}^{\veps, 1}(H^{(J)})$ all large $n$ (recall \eqref{defE}). Therefore, upon denoting
\begin{equation}\label{eq:dfn-G-cK}
\Gamma_{t,\veps} := \{f_{\lambda_Z, {\bm x}^\eps}(H^{(J)})- m(\lambda_Z)\|{\bm x}^\veps\|^2\leq  t, \lambda_Z>\lambda_{H^{(J)}}\} \end{equation}
and 
\begin{equation}  \label{eq:dfn-G-cK2} \mathcal{K}_{\veps,\eta}^\kappa := \{\|\widecheck{Z}_J\|\leq \kappa/2, \ 
\|B^{({J})}{\bm w}^\veps\|\leq \eta \},
\end{equation}
we deduce from \eqref{expoequiveq} and the fact that on $\mathcal{K}_{\veps,\eta}^\kappa$, for $\eta  \in (0,1/2]$, we have $\|{\bm x}^\veps\|\leq \kappa$ and  $d({\bm x}^\veps, \mathcal{W}_J) \leq \eta$, that for any $t>0$, $\kappa\geq 1$ and $\eta>0$ small enough,
\[ \lim_{\eps \to 0} \limsup_{n\to +\infty} \frac{1}{np} \log  \PP_{\Theta_{\lambda,\delta}}(\Gamma_{t,\veps}^c \cap \mathcal{K}_{\veps,\eta}^\kappa )= - \infty.
\]
Finally using Lemmas \ref{expotightZ} and \ref{nodevBJ} (as ${\bm w}^\veps \in \mathcal{D}_{M_\veps/\sqrt{np}}$) we get the claim. 
%
\end{proof}

\subsubsection{Exponential equivalent of  $\|\widecheck{Z}_{\mathcal{J}}v\|^2$}
We now move to the analysis of the large deviation upper tail of $\|{\bm x}^\veps\|$, where ${\bm x}^\veps = \widecheck{Z}_J{\bm v}^{\veps} +B^{(J)} {\bm w}^\veps$. As $\|B^{(J)} {\bm w}^\veps\| =o(1)$ by Lemma \ref{nodevBJ}, it boils down to understand the deviations of $\|\widecheck{Z}_J {\bm v}^\veps\|$. 

\begin{Pro}\label{GD-norme-weight-unbound}
Fix $t \in (0,1)$. Let $\Upsilon_{t,\veps}:=\Upsilon_{t,\veps}({\bm J}, {\bm v}^\veps)$, where for any $\cJ \subset [n]$ and $v \in \mathbb{B}^\cJ$
\[
\Upsilon_{t,\veps}(\cJ, v):=\big\{\|\widecheck{Z}_{\cJ} {v}\|^2 \leq \sum_{k\in \mathcal{I}_v} v_k^2 (\|\widecheck{H}_\cJ^{(k)}\|^2-1)_+  +\|\widecheck{B}_\cJ{v}\|^2 +\|{v}\|^2+t\big\}, \text{ and } \mathcal{I}_v:=\{k : |v_k|\geq \veps\}. 
\]
Then 
\begin{equation}\label{eq:GD-norme-weight-unbound}
\limsup_{\veps \to 0} \limsup_{n\to +\infty} \frac{1}{np} \log  \PP (\Upsilon_{t,\veps}(J, {\bm v}^\veps)^c)= \limsup_{\veps \to 0} \limsup_{n\to +\infty} \frac{1}{np} \log  \sup_{\cJ \subset [n]} \sup_{v \in \mathbb{B}^\cJ}\PP (\Upsilon_{t,\veps}(\cJ, v)^c) = -\infty.
\end{equation}
\end{Pro}

%

\begin{proof}
Observe that the first equality in \eqref{eq:GD-norme-weight-unbound} follows from Lemma \ref{cardJ}, a union bound, and a net argument. So we only prove the second equality. Let $\mathcal{J}\subset [n]$ and $v\in\mathbb{B}^{\mathcal{J}}$.
To ease the notation, write $\widecheck{Z}$ instead of $\widecheck{Z}_\mathcal{J}$, and similarly for $\widecheck{B}_\mathcal{J}$ and $\widecheck{H}_{\mathcal{J}}$. 
 We first prove that with overwhelming probability $\widecheck{H}v$ and $\widecheck{B} v$ are almost orthogonal. Since $H$ and $B$ are independent, and $\widetilde{H}$ has entries bounded by some $\kappa_R>0$, we deduce by \eqref{chernoff}  that  
\begin{equation} \label{ineqcondB} \PP\big( \langle \widecheck{B}v,\widecheck{H}v \rangle > t\|\widecheck{B}v\| \big) \leq \sup_{w \in \mathbb{S}^{\mathcal{J}^c}} \PP\big( \langle \widecheck{H} v,w \rangle \geq t\big) \le \exp\Big(-\frac{c t^2}{ \kappa_R^2} np  \log(1/p)\Big), \end{equation}
for any $t>0$, where $c>0$ is a numerical constant. 
Besides, we can write for any $t,s>0$,
\begin{equation}\label{ineqcondB-new} \PP\big( \langle \widecheck{B}v,\widecheck{H} v \rangle \geq s \big) \leq \PP\big(\langle \widecheck{B}v,\widecheck{H} v \rangle > (s/t) \|\widecheck{B}v\| \big) +\PP\big( \|\widecheck{B}\| \geq  t\big). 
\end{equation}
Using \eqref{ineqcondB} and the fact that  $\|B\|$ is exponentially tight, by Lemma \ref{expotight}, we obtain from \eqref{ineqcondB-new} by first letting $n\to +\infty$ and then letting $t\to +\infty$, that for any $s>0$,
\begin{equation} \label{indepBH}  \lim_{n\to +\infty}\frac{1}{np} \log \PP\big( \langle \widecheck{B}v,\widecheck{H} v \rangle \geq s \big) =- \infty.
\end{equation}
For $\veps>0$ write $v = x^\veps+y^\veps$ where  $x_i^\veps := v_i \Car_{|v_i| > \veps}$
for any  $i\in \mathcal{J}$. Note that $x^\veps$ and $y^\veps$ have disjoint supports. This yields that $\widecheck{H}x^\veps$ and $\widecheck{H}y^\veps$ are independent. Thus the same argument as for \eqref{indepBH} gives
\begin{equation} \label{orthoH} \lim_{n\to+\infty} \frac{1}{np} \log \PP\big( \langle \widecheck{H}x^\veps,\widecheck{H}y^\veps\rangle>s\big) =-\infty, \qquad \text{ for any } s>0.
\end{equation}
Since Lemma \ref{largedegree} tells us that $\|\widecheck{H} x^\veps\|^2$ is dominated by the weighted sum of the degrees of the vertices in the support of $x^\veps$, it remains to show that $\|\widecheck{H} y^\veps\|^2$ concentrates at the exponential scale, that is for any $s>0$ and $R$ large enough
\begin{equation} \label{claimnormH}  \lim_{\veps \to 0} \limsup_{n\to +\infty} \frac{1}{np} \log \PP\big( \|\widecheck{H} y^\veps\|^2>s+\|y^\veps\|^2\big) = -\infty.\end{equation}
Writing for any $y' \in \mathbb{B}^{\mathcal{J}^c}$, $y' = v'+w'$ with  $v'_i := y'_i \Car_{|y'_i|> \veps^{-1}/\sqrt{np}}$
for any $i\notin \mathcal{J}$, we 
get the inequality
\begin{equation} \label{ineg:normA} \| \widecheck{H}y^\veps\| = \sup_{y' \in \mathbb{B}^{\cJ^c}} \langle y', \widecheck{H}y^\veps\rangle \leq \sup_{v' \in \mathcal{S}_{\veps}} \langle v' ,\widecheck{H}y^\veps \rangle + \sup_{w' \in \mathcal{D}_{\veps^{-1}/\sqrt{np}}} \langle w', \widecheck{H}y^\veps\rangle,\end{equation}
where $\mathcal{S}_\veps$ is as in \eqref{eq:locvect}. 
By \eqref{chernoff} and a union bound there exists a constant $C<+\infty$ such that for any $t>C R \veps$,
\begin{equation} \label{neglisparse} \lim_{n\to+\infty} \frac{1}{np}\log \PP\Big( \sup_{v' \in \mathcal{S}_{\veps}} \langle v' , \widecheck{H}y^\veps\rangle >t \|y^\veps\|\Big) = -\infty.\end{equation}
On the other hand, using a similar argument as in the proof of Lemma \ref{conc-sp-rad-deloc} and the fact that  $\|y^\veps\|_\infty \leq \veps$, we deduce from Proposition \ref{improvedconc} that for any $t>0$, $\veps$ small enough and  $n$ large enough,
\begin{equation} \label{concA} \PP\Big( \sup_{w' \in \mathcal{D}_{\veps^{-1}/\sqrt{np}} } \langle w', \widecheck{H}y^\veps\rangle -\EE \sup_{w' \in \mathcal{D}_{\veps^{-1}/\sqrt{np} }} \langle w', \widecheck{H}y^\veps\rangle >t \Big) \leq e^{- \frac{t^2}{\gamma_0 \veps^2} np},\end{equation}
where $\gamma_0$ is a numerical constant. Besides $\EE (\widetilde{H}_{1,2})^2=1+o_R(1)$, which implies that 
\begin{equation} \label{expectA} \EE \sup_{w' \in \mathcal{D}_{\veps^{-1}/\sqrt{np} }} \langle w', \widecheck{H}y^\veps\rangle  \leq \EE \|\widecheck{H}y^\veps\|\leq \big( \EE \|\widecheck{H}y^\veps\|^2\big)^{1/2} \leq \|y^\veps\|(1+o_R(1)).\end{equation}
Combining \eqref{ineg:normA}--\eqref{expectA} we get the claim \eqref{claimnormH}. This finally ends the proof.
\end{proof}

\subsubsection{Proof of Proposition \ref{expoequi:ineq}}
Fix $t>0$ and $\eta >0$ small. Recall the definition of $\Upsilon_{t,\veps}$ from Proposition \ref{GD-norme-weight-unbound}. Recall also \eqref{eq:dfn-G-cK}-\eqref{eq:dfn-G-cK2}.
%
By \eqref{eq-top-ei-sec2-2} 
on 
$\Gamma_{t,\veps}\cap \mathcal{K}_{\veps,t\eta}^{\eta^{-1}}$,
\[\lambda_Z \|{\bm v}^\veps\|^2\leq  \langle {\bm v}^\veps, Z_J {\bm v}^\veps\rangle  + m(\lambda_Z) \|{\bm x}^\veps\|^2 +2t \le \langle {\bm v}^\veps, Z_J {\bm v}^\veps\rangle +m(\lambda_Z) \|\widecheck{Z}_J{\bm v}^\veps\|^2 +5 t,\]
where the last step follows from the fact that $\|{\bm x}^\veps\|^2 \leq \|\widecheck{Z}_J{\bm v}^\veps\|^2 +2\|Z\|.\|B^{(J)}{\bm w}^\veps\| + \|B^{(J)} {\bm w}^\veps\|^2$. 
This together with the fact that $m$ is non increasing on $[2, +\infty)$, that $\lambda' -m(\lambda') = 1/m(\lambda')$ and $m(\lambda') \leq 1$ for $\lambda' \ge 2$ yield that on $\wh \Theta:=\Theta_{\lambda,\delta} \cap \Upsilon_{t,\veps} \cap\Gamma_{t,\veps} \cap \mathcal{K}_{\veps,t\eta}^{\eta^{-1}}$,
\[ \frac{\|{\bm v}^\veps\|^2}{m(\lambda)}  \leq  \langle {\bm v}^\veps, Z_J {\bm v}^\veps\rangle+m(\lambda) \Big( \sum_{k\in \mathfrak{J} } {\bm u}^2_k (\|\widecheck{H}_J^{(k)}\|^2-1)_+ + \|\widecheck{B}_J {\bm v}^\veps\|^2\Big) +6 t,\]
where $\gJ := \{ k \in J  : |{\bm u}_k|\geq \veps\}$.
Now combining Lemmas \ref{expotightZ}, \ref{nodevBJ}, and \ref{conc:unbounded},  and Proposition \ref{GD-norme-weight-unbound} we find the desired bound on $\PP(\wh \Theta^c)$. This completes the proof.
\qed

\subsection{Large deviation estimates}
In this section we analyze the large deviations of the random variables appearing in Proposition \ref{expoequi:ineq}, $\langle {\bm v}^\veps,Z_J{\bm v}^\veps\rangle$, $\|\widecheck{H}_J^{(k)}\|^2$, and $\|\widecheck{B}_J {\bm v}^\veps\|^2$. Since up to fixing $J$ and ${\bm v}^\veps$ these random variables are independent, it is sufficient to compute their large deviation rate functions separately.

 \subsubsection{Large deviations of $\langle v,Z^R v\rangle $} 
In this short section we prove the following tail bound estimate.
 \begin{Pro}\label{devspradJ}  For any $t,\veps'>0$, 
\[ \limsup_{R\to +\infty}\limsup_{n\to+\infty} \frac{1}{np} \sup_{v \in \mathbb{B}^n} \log \PP\Big( \langle v, Z^R v\rangle> t \sqrt{\beta + \big(\frac{\alpha}{2}-\beta\big)\|v\|_4^4 + \veps'}\Big) \leq - \frac{t^2}{4}.\]
\end{Pro}
The extra parameter $\veps'$  in the above proposition is only meant to give some security in the case where $\beta =0$. 
\begin{proof}
By Proposition \ref{decoupling} it suffices to prove the same tail bound for $X$ instead of $Z$. 
Let $v\in \mathbb{B}^n$. For any $\theta \geq 0$, 
\[ \EE (e^{\theta np \langle v, X v\rangle} )=\prod_{i=1}^n \EE \big( e^{\theta \sqrt{np} v_i^2 \xi_{i,i} G_{i,i} }\big) \prod_{i < j} \EE \big( e^{2\theta \sqrt{np} v_i v_j \xi_{i,j} G_{i,j}}\big).\]
Let $M>0$. Note that if $|\lambda |>M$ then $\log \EE(e^{\lambda \xi_{i,j} G_{i,j}}) \leq (\beta/2+o_M(1))\lambda^2$, for any $i\neq j$ and $\EE(e^{|\lambda| \xi_{i,i} G_{i,i}}) \leq (\alpha/2+o_M(1))\lambda^2$. 
If $|\lambda|\leq M$ 
using the concavity of the $\log$ and the fact that $\EE G_{i,j} =0$,
\[ 
 \log \EE(e^{\lambda \xi_{i,j}G_{i,j}}) = \log (1 -p + p\EE e^{\lambda G_{i,j}}) 
\le p\EE ( e^{\lambda G_{i,j}} - 1-\lambda G_{i,j}) \le \lambda^2 p \EE ( G_{i,j}^2 e^{\lambda G_{i,j}}) \leq \lambda^2 p e^{O(M^2)},
\]
where in the second last step we used  $e^x-1-x \leq x^2e^x$ for any $x\in \RR$ and in the last step we used that $G_{i,j}$ is a sub-Gaussian random variable. 
Thus, 
for $n$ large enough, $i \ne j \in [n]$, and any $\lambda \in \RR$,
\[ 
\log \EE(e^{\lambda \xi_{i,j} G_{i,j}}) \leq (\beta/2+o_M(1))\lambda^2 \quad  \text{ and } \quad \log \EE(e^{|\lambda| \xi_{i,i} G_{i,i}}) \leq (\alpha/2+o_M(1))\lambda^2.\]
As $v \in\mathbb{B}^n$, we find
\begin{align*}
 \log \EE (e^{\theta np \langle v, Xv\rangle }) &\leq np \Big( \beta\sum_{i\neq j} v_i^2v_j^2 + \frac{\alpha}{2}\sum_{i=1}^n v_i^4   +o_M(1) \Big) \theta^2 = np\Big( \beta + \big(\frac{\alpha}{2}-\beta\big) \|v\|_4^4 +o_M(1)\Big)\theta^2.
\end{align*}
Using Chernoff's inequality and then taking a $\limsup$ over $n$ first followed by a $\limsup$ over $M$ 
we get the claim. 
\end{proof}

\subsubsection{Tail estimate of $\|\widecheck{B}_\mathcal{J}v\|^2$}
We show the following lemma. 
\begin{Lem}
\label{devBhat} For any $t>0$,
\begin{equation}\label{eq:devBhat}
 \limsup_{R\to +\infty} \limsup_{n\to +\infty} \frac{1}{np} \sup_{ \mathcal{J} \subset [n]} \sup_{v\in \mathbb{B}^\mathcal{J}}\log \PP\big( \|\widecheck{B}_\mathcal{J} v\|^2>t\big) \leq - \frac{t}{2\beta}.
 \end{equation}
\end{Lem}

We start first by proving this tail bound in the case where $\mathcal{J}$ is of bounded size. 
\begin{lemma}\label{boundB2} For any $t>0$ and $m\in \NN$, 
\[\limsup_{R\to +\infty} \limsup_{n\to +\infty} \frac{1}{np}  \sup_{ \mathcal{I} \subset [n] \atop \# \mathcal{I} \leq m } \sup_{u\in \mathbb{B}^\mathcal{I}}\log  \PP\big(\|\widecheck{B}_\mathcal{I}u\|^2 > t\big) \leq -\frac{t }{2\beta}.\]
\end{lemma}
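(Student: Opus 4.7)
The plan is to reduce the quadratic form $\|\widecheck{B}_\mathcal{I} u\|^2$ to a sum of independent non-negative scalar random variables by a single application of Cauchy--Schwarz, and then to apply Chernoff's inequality. The key structural point is that, being a tail truncation, the moment generating function of $\widetilde{B}_{j,i}^2$ stays close to $1$ for large $R$ as long as we remain strictly below the critical exponent $1/(2\beta)$.

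For any $j\in\mathcal{I}^c$, Cauchy--Schwarz combined with $\|u\|\leq 1$ and $\xi_{j,i}^2 = \xi_{j,i}$ yields
\[ \big((\widecheck{B}_\mathcal{I} u)_j\big)^2 = \frac{1}{np} \Big(\sum_{i \in \mathcal{I}} u_i \widetilde{B}_{j,i}\xi_{j,i}\Big)^2 \leq \frac{1}{np} \sum_{i \in \mathcal{I}} \widetilde{B}_{j,i}^2 \xi_{j,i}, \]
and summing over $j \in \mathcal{I}^c$ gives the deterministic bound
\[ \|\widecheck{B}_\mathcal{I} u\|^2 \leq \frac{1}{np} \sum_{(i,j) \in \mathcal{I} \times \mathcal{I}^c} \widetilde{B}_{j,i}^2 \xi_{j,i}. \]
Since $\mathcal{I}$ and $\mathcal{I}^c$ are disjoint, every pair appearing in the sum corresponds to a distinct off-diagonal entry of the symmetric matrix, so the random variables $\{\widetilde{B}_{j,i}^2 \xi_{j,i}\}_{(i,j) \in \mathcal{I} \times \mathcal{I}^c}$ are jointly independent. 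As the right-hand side no longer depends on $u$, nor on $\mathcal{I}$ beyond its cardinality, the suprema over $u\in\mathbb{B}^\mathcal{I}$ and $\mathcal{I}\subset[n]$ with $|\mathcal{I}|\leq m$ become automatic from here onward.

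Next, for any $\theta\in(0,1/(2\beta))$, Chernoff's inequality applied to this sum of independent variables gives
\[ \PP\big(\|\widecheck{B}_\mathcal{I} u\|^2 > t\big) \leq e^{-\theta t\, np} \prod_{(i,j) \in \mathcal{I} \times \mathcal{I}^c} \EE\exp(\theta \widetilde{B}_{j,i}^2 \xi_{j,i}). \]
Conditioning on $\xi_{j,i}$, each factor equals $1 + p\,\varepsilon_R^{i,j}(\theta)$ with $\varepsilon_R^{i,j}(\theta) := \EE(e^{\theta G_{j,i}^2}-1)\mathbf{1}_{G_{j,i}\notin I_{j,i}(R)}$. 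Assumption \ref{hypo:subg-entry} implies $\EE e^{\theta G_{1,2}^2} < +\infty$ for $\theta < 1/(2\beta)$ (since sub-Gaussianity with parameter $\beta$ forces tails $\PP(|G_{1,2}|>x)\lesssim e^{-x^2/(2\beta)(1+o(1))}$), and by Lemma \ref{tronc}, $\min(a_{i,j}(R),b_{i,j}(R)) \to +\infty$ as $R \to +\infty$; hence dominated convergence gives $\sup_{i\neq j}\varepsilon_R^{i,j}(\theta) = o_R(1)$. Using $1+x\leq e^x$ and $|\mathcal{I}\times\mathcal{I}^c|\leq mn$ we deduce
\[ \frac{1}{np} \log \PP\big(\|\widecheck{B}_\mathcal{I} u\|^2 > t\big) \leq -\theta t + m\cdot o_R(1). \]

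Taking first $\limsup_{n\to+\infty}$ and then $\limsup_{R\to+\infty}$ eliminates the error term for any fixed $\theta<1/(2\beta)$, and finally letting $\theta\uparrow 1/(2\beta)$ yields the claimed bound $-t/(2\beta)$. The main point requiring care is the order of limits: since $\varepsilon_R(\theta)$ blows up as $\theta$ approaches $1/(2\beta)$, one must first send $R\to+\infty$ with $\theta$ strictly below the critical exponent before letting $\theta$ approach it. This is precisely the nesting of $\limsup$'s in the statement, and it is the only delicate point in the argument.
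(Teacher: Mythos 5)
Your proof is correct and reaches the stated bound, but the route is genuinely different from the paper's. You linearize the quadratic form once and for all via Cauchy--Schwarz, $\big((\widecheck{B}_\mathcal{I}u)_j\big)^2 \le (np)^{-1}\sum_{i\in\mathcal{I}}\widetilde{B}_{j,i}^2\xi_{j,i}$, which turns $\|\widecheck{B}_\mathcal{I}u\|^2$ into a sum of at most $mn$ independent nonnegative scalars whose Laplace transform at scale $np$ is directly controlled by the truncated exponential moment $\EE\big[(e^{\theta G_{1,2}^2}-1)\Car_{G_{1,2}\notin I_{1,2}(R)}\big]=o_R(1)$ for $\theta<1/(2\beta)$. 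The paper instead keeps the $u$-dependence: for each coordinate $i\notin\mathcal{I}$ it applies the Gaussian linearization $e^{\theta np X^2}=\EE_\Gamma e^{\sqrt{2\theta}\Gamma\sqrt{np}X}$, uses H\"older to extract a copy of $\Lambda_{1,2}$, and expands a product over subsets of $\mathcal{I}$ to show $\sup_i\Phi_i(\theta)=o_R(1)p$. Your argument is more elementary and arguably cleaner, and it works precisely because two constraints align: $|\mathcal{I}|\le m$ is a fixed constant (so the Cauchy--Schwarz blow-up costs only $m\cdot o_R(1)$ at the exponential scale $np$), and the target rate $t/(2\beta)$ is linear in $t$ (so replacing $\|\widecheck{B}_\mathcal{I}u\|^2$ by the larger sum, which is sharp when $u$ is a coordinate vector, costs nothing at leading order). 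If $|\mathcal{I}|$ were allowed to grow — as happens elsewhere when this lemma is bootstrapped to Lemma~\ref{devBhat} for unrestricted $\mathcal{J}$ — the Cauchy--Schwarz step would no longer be benign, which is presumably one reason the paper keeps the $u$-dependence. One small remark: the finiteness of $\EE e^{\theta G_{1,2}^2}$ for $\theta<1/(2\beta)$ is exactly Lemma~\ref{lem:hL-prop}(a), which you could cite directly instead of sketching the tail argument from Assumption~\ref{hypo:subg-entry}.
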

\begin{proof}
Fix $m\in \NN$ and let $u \in \mathbb{B}^\mathcal{I}$ such that $|\mathcal{I}|\leq m$.
For any $i\notin \mathcal{I}$ and $\theta \in \RR$  define 
\[  
\Phi_i(\theta) := \log \EE \big( e^{\theta np \big( \sum_{j\in \mathcal{I}} B_{i,j} u_j \big)^2 } \big)= \log \EE \big( e^{\sqrt{2 \theta } \Gamma  \sum_{j\in \mathcal{I}} \widetilde{B}_{i,j}\xi_{i,j} u_j} \big),
\]
where $\Gamma$ is an independent standard Gaussian. Observe that it suffices to show that 
for any $\theta < 1/(2\beta)$, 
$\sup_{i\notin \mathcal{I}}\Phi_i(\theta) =o_R(1)p$ uniformly in $u\in \mathbb{B}^{\mathcal{I}}$ and $\mathcal{I}$ such that $|\mathcal{I}|\leq m$ (one then uses Chernoff's inequality). 
Fix $\theta <1/(2\beta)$. Let $r,s>1$ such that $\frac{1}{r} + \frac{1}{s}=1$ and $s \theta <1/(2\beta)$. Recall that $\widetilde{B}_{i,j} = G_{i,j} \Car_{G_{i,j}\notin I_{i,j}(R)}$. Using independence and Hölder's inequality, 
\[ \Phi_i(\theta) \leq \log \EE \prod_{j \in \mathcal{I}} (1-pq +pq^{\frac{1}{r} }e^{\frac{1}{s}\Lambda_{1,2}(\sqrt{2s^2\theta} \Gamma u_j)}),\]
where $q = \PP(G_{1,2}\notin I_{1,2}(R))=o_R(1)$.
Expanding the product and using that $\log(a+x) \le x$ for $a \in (0,1]$ and $x \ge 0$ we get 
\begin{align} \Phi_i(\theta) 
& \leq \sum_{E \subset \mathcal{I} \atop |E|\geq 1} (1-pq)^{|\mathcal{I} \setminus E|} (pq^{\frac{1}{r}})^{|E|} \EE \big( e^{\frac{1}{s}\sum_{j\in E}\Lambda_{1,2}(\sqrt{2s^2\theta} \Gamma u_j)}\big).\label{boundPhi}
\end{align}
Let $\sigma^2 := \sup_{\theta \neq 0 } \Lambda_{1,2}(\theta)/\theta^2$. Note that for any $M>0$ and $\zeta \in \RR$, we can write $\Lambda_{1,2}(\zeta) \leq \sigma^2 M^2 + (\beta/2)(1+o_M(1)) \zeta^2$. Therefore, for any $E\subset \mathcal{I}$, $E\neq \emptyset$, as $\|u\| \le 1 $,
\[ \frac{1}{s}\sum_{j\in E}\Lambda_{1,2}(\sqrt{2s^2\theta} \Gamma u_j) \leq  \sigma^2 M^2 \frac{|E|}{s}+\beta \theta s \Gamma^2 (1+o_M(1)),\]
which yields for $M$ large enough 
\[ \EE \big(e^{\frac{1}{s}\sum_{j\in E}\Lambda(\sqrt{2s^2\theta} \Gamma u_j)}\big) \leq \frac{e^{\frac{\sigma^2 M^2}{s} |E|}}{\sqrt{1-2\beta \theta s(1+o_M(1))}} \leq C^{|E|},\]
where $C$ is some positive constant. Coming back to \eqref{boundPhi}, we find denoting $m' = |\mathcal{I}|$, 
\begin{align*} \Phi_i(\theta) &\leq \sum_{E \subset \mathcal{I} \atop |E|\geq 1} (1-pq)^{|\mathcal{I}\setminus E|} (Cpq^{\frac{1}{r}})^{|E|} = (1-pq +C pq^{\frac{1}{r}})^{m'}-(1-pq)^{m'} \leq C m'(1+Cpq^{\frac{1}{r}})^{m'-1} p q^{\frac{1}{r}}.
\end{align*}
We deduce that $\sup_{i\notin \mathcal{I}} \Phi_i(\theta)=o_R(1) p$ uniformly in $|\mathcal{I}|\leq m$. This completes the proof.
\end{proof}

We are now ready to give a proof of Lemma \ref{devBhat}.

\begin{proof}[Proof of Lemma \ref{devBhat}]
Let $\eta>0$ and write $v = v^\eta+{v}_\eta$, where $v^\eta(i)=v_i\Car_{|v_i|>\eta}$, 
for any $i\in\mathcal{J}$. 
By Lemma \ref{boundB2}, we know that for any $t,\eta>0$,\begin{equation}\label{boundBa} \limsup_{n\to +\infty} \frac{1}{np}\sup_{\mathcal{J} \subset [n]} \sup_{v\in \mathbb{B}^\mathcal{J}} \log \PP\big( \|\widecheck{B}_\mathcal{J}v^\eta\|>t\|v^\eta\|\big) \leq - \frac{t^2}{2\beta}.\end{equation}
We claim that similarly for any $t>0$,
\begin{equation} \label{secondclaim}\limsup_{R\to +\infty}  \limsup_{\eta \to 0}\limsup_{n\to +\infty} \frac{1}{np}\sup_{\mathcal{J} \subset [n]} \sup_{v\in \mathbb{B}^\mathcal{J}} \log \PP\big( \|\widecheck{B}_\mathcal{J}{v}_\eta\|>t\|{v}_\eta\|\big) \leq - \frac{t^2}{2\beta}.\end{equation}
Assume for the moment that the above claim is true. Observe that if we denote by $\hat{v}^\eta=v^\eta/\|v^\eta\|$ and $\hat{v}_\eta=v_\eta/\|v_\eta\|$, then 
\begin{equation} \label{CS}
\|\widecheck{B}_\cJ v\| \leq  \|\widecheck{B}_\mathcal{J} v^\eta \|+\|\widecheck{B}_\mathcal{J} v_\eta\| = \|v^\eta\|\|\widecheck{B}_\mathcal{J} \hat{v}^\eta\|+\|v_\eta\|\|\widecheck{B}_\mathcal{J} \hat{v}_\eta\|  \leq \sqrt{ \|\widecheck{B}_\mathcal{J}\hat{v}^\eta\|^2+\|\widecheck{B}_\mathcal{J}\hat{v}_\eta\|^2}.\end{equation}
Since $v^\eta$ and $v_\eta$ have disjoint supports, $\widecheck{B}_\mathcal{J}\hat{v}^\eta$ and $\widecheck{B}_\mathcal{J} \hat{v}_\eta$ are independent. 
Using \eqref{boundBa}--\eqref{secondclaim} and the contraction principle (see also Proposition \ref{UTindep}), \eqref{eq:devBhat} follows from  \eqref{CS}. 
Hence it remains to show \eqref{secondclaim}. For any $z \in \mathbb{S}^{\mathcal{J}^c}$ we can write $z = x+y$ where $x_i=z_i\Car_{|z_i| >M/\sqrt{np}}$, 
with $M=\sqrt{\log(1/p)/\eta}$. 
This decomposition gives the bound
\[ \|\widecheck{B}_\mathcal{J}v_\eta\|  \leq \sup_{x \in \mathcal{S}_{1/M^2}} \langle x,\widecheck{B}_\mathcal{J} v_\eta\rangle+\sup_{y \in \mathcal{D}_{M/\sqrt{np}}, y' \in\mathcal{D}_\eta } \langle y,{B}y'\rangle.
\]
By Lemma \ref{step1}, we know that for any $\delta>0$, 
\[ \lim_{R\to+\infty}\limsup_{\eta \to 0} \limsup_{n\to +\infty} \frac{1}{np} \log \PP\big(\sup_{y \in \mathcal{D}_{M/\sqrt{np}}, y' \in \mathcal{D}_\eta} \langle y,{B}y'\rangle>\delta \big) = -\infty.\]
Thus, it suffices to show that for any $t>0$, 
\[ \limsup_{\eta \to 0 } \limsup_{n\to +\infty} \frac{1}{np}\sup_{\mathcal{J}\subset [n]}\sup_{v\in\mathbb{B}^{\mathcal{J}}} \log \PP\big(\sup_{x \in \mathcal{S}_{1/M^2}} \langle x,\widecheck{B}_\mathcal{J}v_\eta\rangle>t\big) \leq -\frac{t^2}{2\beta}.\]
Using yet another net argument and a union bound we can reduce ourselves to prove that for any $t>0$,
\[ \limsup_{R\to+\infty} \limsup_{n\to +\infty} \frac{1}{np}\sup_{\mathcal{J} \subset [n]} \sup_{x \in \mathbb{B}^{\mathcal{J}^c}, x'\in \mathbb{B}^\mathcal{J}}\log \PP\big(\langle x,\widecheck{B}_\mathcal{J}x'\rangle>t\big) \leq -\frac{t^2}{2\beta}.\]
Since the entries of $\widecheck{B}_\cJ$ are centered i.i.d.~random variables, and for any $\lambda \in \RR$, $\EE (e^{\lambda \widetilde{B}_{1,2}}) \leq 1+ \EE (e^{\lambda G_{1,2}}) \le 2\EE (e^{\lambda G_{1,2}})$ repeating the same argument as in the proof of Proposition \ref{devspradJ} we obtain the above bound. We omit  the details. 
\end{proof}

\subsubsection{Tail estimate of a degree in the network $H$}
In the next lemma, we give the large deviation upper tail rate function of the $\ell^2$-norm of a column of $H$. 
\begin{Lem}\label{degH} For any $k \in [n]$ and $t>1$, 
\[ 
\limsup_{R \to +\infty} \limsup_{n\to+\infty} \frac{1}{np} \log \PP\big( \| {H}_{k} \|^2>t \big) \leq - h_L(t).\]
\end{Lem}
\begin{proof}
For any $\theta \geq 0$ define $ \Lambda_{n,R}(\theta) := (np)^{-1}\EE( \exp(np \theta \|{H}_k\|^2))$.
Using the same argument as in the proof of Lemma \ref{LDP-deg}, it follows that 
\[  \lim_{n\to +\infty} \Lambda_{n,R}(\theta) =  L_R(\theta) -1,\]
where $L_R(\theta) := \EE (e^{\theta \widetilde{H}_{1,2}^2})$. Since $\lim_{R \to +\infty} L_R(\theta)=L(\theta)$ the proof follows upon using Chernoff's inequality by first letting $n \to +\infty$, followed by letting $R \to +\infty$, and then optimising over $\theta$.
%
\end{proof}

\subsection{Variational principle of the upper bound upper tail large deviation}
In this section, we show that the large deviation upper tail rate function of $\lambda_X$ can be expressed as the solution of a certain variational problem that we now describe.
Define for any $\lambda>2$, $k\in \NN$, $k\geq 1$ and $\mu>0$ 
\begin{equation} \label{defJk} \Phi_{k,\lambda}(\mu) := \inf\Big\{\frac{r^2}{4} + \sum_{i=1}^{k} h_{L}(d_i) + \frac{t}{2\beta} : H_{\lambda}\big(r,t,\kappa, (d_i)_{i\leq k}\big) \geq \mu, (r,t,\kappa, (d_i)_{i\leq k}) \in \mathcal{D}_k\Big\},\end{equation}
where $\mathcal{D}_k := \RR_+^2 \times [0,1] \times [1,+\infty)^k $, and $H_{\lambda}$ is defined as 
\[ 
H_{\lambda}(r,t,\kappa, (d_i)_{i\leq k}):= r \sqrt{\beta + \big(\frac{\alpha}{2}-\beta\big)\kappa^2} + m(\lambda) \kappa \Big(\sum_{i=1}^k (d_i-1)^2\Big)^{1/2} + m(\lambda) t.\]
The goal of this section is to prove the following proposition. 
\begin{Pro}\label{varpbUT}Fis any $\lambda>2$. Set $\Phi_{\lambda} := \inf_{k\geq 1} \Phi_{k,\lambda}$. For any $t>0$ we have
\[  \limsup_{n\to +\infty} \frac{1}{np} \log \PP\big( \lambda_X\geq \lambda) \leq - \Phi_{\lambda}(1/m(\lambda)-t),\]
%
\end{Pro}

\begin{proof} Let $\lambda>2$. By Proposition \ref{decoupling} it suffices to show that for any $t>0$,
\[  \limsup_{R\to +\infty}\limsup_{n\to +\infty} \frac{1}{np} \log \PP\big( \lambda_{Z^R}\geq \lambda) \leq - \Phi_{\lambda}(1/m(\lambda)-t).
\]
Observe that as $h_L$ is finite everywhere (see Lemma \ref{lem:hL-prop}) $\Phi_{\lambda}(1/m(\lambda))<+\infty$.  By Lemma \ref{loc:unbounded} 
there exists a $\chi(\lambda)>0$ such that 
\begin{equation} \label{loctopev}\limsup_{R\to +\infty}  \limsup_{\veps \to 0} \limsup_{n\to+\infty} \frac{1}{np} \log \PP\big( \|{\bm v}^\veps\|^2\leq \chi(\lambda), \lambda_{Z^R}\geq \lambda) \leq - \Phi_{\lambda}(1/m(\lambda)).\end{equation}
For any $t>0$, define $\wh \Omega_{\veps, R, t}:= \Omega_{\vep, R, t\chi(\lambda)}$ (recall \eqref{eq:Om-eps-R-t})
Using Cauchy-Schwarz inequality and dividing by $\|{\bm v}^\veps\|^2$, we obtain that on ${\wh\Omega_{\veps, R,t}}\cap \{\|{\bm v}^\veps\|^2>\chi(\lambda)\}$ 
\begin{equation} \label{ineqgoodevent} 1/m(\lambda) \leq  \langle{\bm \hat{\bm v}^\veps}, Z_J{\bm \hat{\bm v}^\veps}\rangle+m(\lambda)\|{\bm \hat{\bm v}^\veps}\|_4^2 \big(\sum_{k\in \mathfrak{J}}   (\|(\widecheck{H}_J)_{k}\|^2-1)_+^2\big)^{1/2} +m(\lambda) \|\widecheck{B}_J{\bm \hat{\bm v}^\veps}\|^2 +t,\end{equation}
where ${\bm\hat{\bm v}^\veps} := {\bm v^\veps /\|{\bm v^\veps}\|}$. 
It is not clear so far that the Cauchy-Schwarz inequality used here is sharp. 
Since the two scenarios that we believe to lead the large deviation event are on the one hand a planted clique with diverging size, corresponding to $\|{\bm \hat{\bm v}^\veps}\|_4 =0$, and on the other a large degree vertex, corresponding to $\|{\bm \hat{\bm v}^\eps}\|_4 =1$, we are at least not loosing any information when of one these two strategies are at play. To compute an upper bound on the probability of $\wh\Omega_{\veps, R,t}$, we will first fix $J$, $\mathfrak{J}$ and $\hat{\bm v}^\veps$, and later take union bounds. To this end, let $\mathcal{J}\subset [n]$, $\mathcal{I}\subset \mathcal{J}$  such that $|\mathcal{J}|\leq 2\veps^2 np/\log(1/p)$ and $|\mathcal{I}| = \lfloor \veps^{-2}\rfloor $, and $v\in \mathbb{S}^\mathcal{J}$. Define
\[ Z_{R,\veps}^{(1)} := \frac{\langle v, Z_\mathcal{J} v\rangle}{\sqrt{\beta + \big(\frac{\alpha}{2}-\beta\big) \kappa_v^2}}, \ Z_{R,\veps}^{(2)}:= \Big(\sum_{k\in \mathcal{I}}  (\|(\widecheck{H}_\mathcal{J})_{k}\|^2-1)_+^2\Big)^{1/2}, \ Z_{R,\veps}^{(3)}:=\| \widecheck{B}_\mathcal{J} v\|^2,\]
where $\kappa_v := \|v\|_4^2$. 
Since $\|(\widecheck{H}_\mathcal{J})_{k}\|^2$, $k\in\mathcal{I}$ are independent, and $h_L$ is an increasing good rate function on $[1,+\infty)$, using Proposition \ref{degH} and a generalisation of the contraction principle (cf.~Proposition \ref{UTindep}) we get  
\begin{equation} \label{UTsumdeg}
\limsup_{\veps \to 0} \limsup_{R \to +\infty} \limsup_{n\to+\infty} \frac{1}{np} \log \PP\big( Z^{(2)}_{R,\veps}> t \big) \leq - \liminf_{\veps \to 0} K_{\lfloor \veps^{-2} \rfloor}(t) \le - K(t), 
\end{equation}
where $K_\ell(t) := \inf\{\sum_{k=1}^\ell h_L(d_k) : \sum_{k=1}^\ell (d_k-1)^2 \geq t, \forall 1\leq k \leq \ell, d_k\geq 1\}$ for any $\ell\in\NN$
and $K:= \inf_{\ell \geq 1} K_\ell$. 

As $Z_\mathcal{J}$, $(\widecheck{H}_\mathcal{J})_{k}$, $k\in\mathcal{I}$, and $\widecheck{B}_\mathcal{J}$ are independent so are 
$Z_{R,\veps}^{(k)}$, $k=1,2,3$. 
Therefore, by Propositions \ref{devspradJ}, \ref{devBhat}, and \eqref{UTsumdeg}, and  
Proposition \ref{UTindep} (applied with the positive part of $Z_{R,\veps}^{(1)}$), we deduce that for any $\mu>0$,
\[ \limsup_{R\to +\infty \atop \veps \to 0} \limsup_{n\to +\infty} \frac{1}{np} \log \PP\big( T_{\mathcal{J},\mathcal{I}}^{R,\veps}({v}) >\mu\big) \leq - \Phi_{\lambda}(\mu),\]
where 
$T_{\mathcal{J},\mathcal{I}}^{R,\veps}({v}) := Z_{R,\veps}^{(1)} \sqrt{\beta + \big(\frac{\alpha}{2}-\beta\big) \kappa_v^2}  +m(\lambda)(\kappa_v Z_{2,R}+  Z_{3,R})$. 
Since $|\mathcal{J}|\leq 2\veps^2 np/\log(1/p)$, the map $\mathcal{I} \mapsto T_{\mathcal{J},\mathcal{I}}^{R,\veps}(v)$ is non decreasing function in the cardinality of $\mathcal{I}$, and $np \gg \log n$, using a net argument and a union bound we derive that for any $\mu>0$,
\begin{multline*}  \limsup_{R\to +\infty \atop \veps \to 0} \limsup_{n\to +\infty} \frac{1}{np} \log \PP\Big( \sup_{|\mathcal{J}| \leq 2\veps^2np /\log(1/p) \atop |\mathcal{I} |\leq \veps^{-2}}\sup_{v \in \mathbb{S}^{\mathcal{J}}} T_{\mathcal{J},\mathcal{I}}^R(v)>\mu\Big) \\
 \le \limsup_{R\to +\infty \atop \veps \to 0} \limsup_{n\to +\infty} \frac{1}{np} \log \PP( \sup_{v \in \mathbb{S}^{\mathcal{J}}} T_{\mathcal{J},\mathcal{I}}^{R,\veps}(v)>\mu) \leq - \Phi_{\lambda}(\mu).
\end{multline*}
Combining the above estimate together with \eqref{loctopev} and Lemma \ref{cardJ}, upon using \eqref{ineqgoodevent}  we get that for any $t>0$,
\[ \limsup_{\veps \to 0} \limsup_{R\to +\infty} \limsup_{n\to +\infty} \frac{1}{np} \log \PP(\wh\Omega_{\veps, R,t})  \leq - \Phi_{\lambda}(1/m(\lambda)-t).\]
By Proposition \ref{expoequi:ineq} for any $\delta>0$ such that $\lambda >2+\delta$ and $t>0$ we further deduce that
\begin{equation}\label{UTcondH} \limsup_{\veps \to 0} \limsup_{R\to +\infty} \limsup_{n\to +\infty} \frac{1}{np} \log \PP(\lambda_{H^{(J)}} \leq \lambda -\delta \leq \lambda \leq \lambda_{Z^R}) \leq - \Phi_{\lambda}\big(1/m(\lambda)-t\big).\end{equation}
Besides, using the result of Theorem \ref{theo-main-weight} in the bounded case, and a union bound on $J$, we have for any $R,\veps>0$, 
\begin{equation}\label{UTcondH2} \limsup_{n\to +\infty}  \frac{1}{np} \log  \PP\big(\lambda_{H^{(J)}} >\lambda- \delta \big) \leq -I_{R}((\lambda-\delta)/\sigma_R),\end{equation}
where $I_R(\mu) :=  h_{L_R}(\mu/m(\mu))$, $\sigma_R^2 = \EE( H_{1,2}^2)$, and $L_R(\theta ) := \EE (e^{\theta H_{1,2}^2/\sigma_R^2})$ for any $\mu\geq 2$ and $\theta \geq 0$.  One can check that 
\[ \lim_{\delta \to 0} \lim_{R\to +\infty} I_{R}((\lambda-\delta)/\sigma_R) = h_L(\lambda/m(\lambda)).\]
Since $\Phi_{\lambda} (1/m(\lambda)-t) \leq \Phi_{\lambda} (1/m(\lambda)) \leq \Phi_{1,\lambda}(1/m(\lambda)) \le h_L(\lambda/m(\lambda))$, the proof is complete by putting together \eqref{UTcondH}--\eqref{UTcondH2}. 
\end{proof}
\subsection{Analysis of the optimisation problem}
 The goal of this section is to solve the optimisation problem given by the function $\Phi_{\lambda}$. 
 We prove the following result.
  \begin{Pro}\label{optimsol}
 For any $\lambda>2$, and $0 < \mu \leq 1/m(\lambda)$,
 \[ \Phi_{\lambda}(\mu) =   \min\Big( \frac{\mu^2}{4\beta}, \inf\big\{\frac{r^2}{2\alpha} +h_L(1+d) : r + m(\lambda)s \geq \mu : r,d\geq 0\big\}\Big).\]
 \end{Pro}
This result is the last piece of the proof of the upper bound of Theorem \ref{theo-main-weight}. Indeed, Proposition \ref{optimsol} yields in particular that $\Phi_\lambda$ is lower semicontinuous as it is the minimum of two lower semicontinuous functions. Combining Propositions \ref{optimsol} and \ref{varpbUT}, we get the claimed upper bound rate function $I$ defined in Definition \ref{def:rate-fn}. In a first step we start by optimising on the degrees $d_1,\ldots,d_k$, i.e.~the minimum is achieved by $\Phi_{1,\lambda}$ (recall \eqref{defJk}), and show that the minimum is achieved in dimension 1, which accounts for the fact that the large deviation event is dominated by the emergence of at most atypically large degree. 
\begin{Lem}\label{reducdim1}
For any $k\in \NN$ and $s\geq 0$, 
\[   \inf \big\{\sum_{i=1}^k h_L(d_i) : \sum_{i=1}^k (d_i-1)^2 \geq s^2, d_i\geq 1, 1\leq i \leq k \big\} =  h_L(1+s).\]

\end{Lem}
\begin{proof}
First we claim that for any $k\in \NN$ and $s\geq0$,
\begin{equation} \label{optimpb}  \inf \big\{\sum_{i=1}^k h_L(d_i) : \sum_{i=1}^k (d_i-1)^2 \geq s^2, d_i\geq 1, 1\leq i \leq k \big\} =  \inf_{1\leq \ell \leq k } \ell h_L\Big( 1+ \frac{s}{\sqrt{\ell}}\Big).\end{equation}
Since $h_L$ is a good rate function, the infimum above is achieved at some vector $d^* \in \RR^k$. Without loss of generality, we can assume that $d^* = (d_1^*,\ldots, d_\ell^*,1,\ldots, 1)$ where $\ell\leq k$ and $d_i^*>1$ for any $i\leq \ell$. As $h_L(1)=0$ the vector $(d_1^*,\ldots, d_\ell^*)$ is a minimizer of the optimization problem \eqref{optimpb} in dimension $\ell$ which lies in the interior of the domain $[1,+\infty)^\ell$. By the multiplier's rule (e.g.~\cite[Theorem 9.1]{Clarke}), there exists $(\gamma,\theta) \neq (0,0)$ such that $\gamma \in \{0,1\}$, $\theta \leq 0$, 
\[
\theta \big(\sum_{i=1}^\ell (d_i^*-1)^2-s^2\big)=0, \quad \text{ and } \quad 
\gamma h_L'(d_i^*) +2\theta(d_i^*-1)=0, \, i \in [\ell].
\]
If $\gamma=0$, then as $\theta \neq 0$, $d^*_i=1$ for any $i$, contradicting the definition of $d^*$. Therefore, $\gamma =1$. If $\theta =0$ then $h_L'(d^*_i)=0$, which entails by Lemma  \ref{lem:hL-prop}  (e) that $d^*_i=1$ for any $i$ which is again contradictory. Thus $\theta \neq 0$. As a consequence $\sum_{i=1}^\ell (d_i^*-1)^2=s^2$.
By Lemma \ref{lem:hL-prop}(e) 
the equation $h_L'(d) = -2\theta(d-1)$ can only have  at most one solution strictly greater than $1$.  Since $d^*_i>1$ for $i\in [\ell]$, it follows that $d^*_i= d_j^*$ for $i,j\in[\ell]$. As $\sum_{i=1}^\ell(d_i^*-1)^2=s^2$, we deduce that $d^*_i= 1+s/\sqrt{\ell}$, yielding the claim \eqref{optimpb}.

By \eqref{optimpb} it now suffices to show that 
\begin{equation} \label{optimbghL}  \inf_{\ell \geq 1 } \ell h_L\Big( 1+ \frac{s}{\sqrt{\ell}}\Big) = h_L(1+s).\end{equation}
This is straightforward. Indeed, fixing $\ell \geq 1$ and denoting $f(s):= \ell h_L(1+s/\sqrt{\ell}) - h_L(1+s)$, $s \ge 0$, we observe that $f(0)=0$, and by concavity of $h_L'$ and the fact that $h_L'(1)=0$ we have $f'(s) = \sqrt{\ell} h_L'(1+s/\sqrt{\ell}) - h_L'(1+s) \ge 0$. This yields \eqref{optimpb} completing the proof.
%
\end{proof}

In the next step, we optimize on $\kappa$ and show that the infimum is achieved either for $\kappa=1$ or $\kappa=0$, corresponding respectively to the high degree scenario and the planted clique of diverging size.
\begin{Lem}\label{optimkappa} Define for any $\mu\geq 0$, 
\[\widehat{\Phi}_\lambda(\mu) := \inf \big\{\frac{r^2}{4} + h_L(1+s) :   r\sqrt{\beta + \big( \frac{\alpha}{2}-\beta\big)\kappa^2} + m(\lambda) \kappa s \ge \mu, r,s\geq 0,  \kappa \in [0,1] \big\}.\]
Then,
\[ \widehat{\Phi}_\lambda(\mu) = \min\Big( \frac{\mu^2}{4\beta}, \inf\big\{\frac{r^2}{2\alpha} +h_L(1+s) : r + m(\lambda)s \geq \mu : r,s\geq 0\big\}\Big).\]
\end{Lem}

\begin{proof}
First observe that if $\beta \leq \alpha/2$ then the function $\kappa \mapsto r\sqrt{\beta +(\alpha/2-\beta)\kappa^2} + m(\lambda) \kappa s$ is increasing on $\RR_+$ and thus achieves its supremum  for $\kappa=1$. The claim immediately follows as $h_L$ is non decreasing on $[1,+\infty)$. Assume from now on that $\beta >\alpha/2$ and 
define for any $\kappa,s\geq 0$, 
\[ \phi(\kappa,s) := \frac{(\mu-m(\lambda) \kappa s)^2}{4 (\beta- \big( \beta - \frac{\alpha}{2}\big)\kappa^2)} + h_L(1+s).\]
With this notation 
$\widehat{\Phi}_\lambda(\mu) = \inf\big\{\phi(\kappa,s) : \kappa \in [0,1], 0\leq m(\lambda)\kappa s\leq \mu\big\}$. We first show that we can restrict our attention to the set of parameters $(\kappa,s)$ where $s\leq s_\star:= (\beta-\alpha/2)\mu/(\beta m(\lambda))$. Indeed, for any $\kappa \in [0,1]$ and $s\geq0$,
\begin{equation}\label{dervphikappa} \mathrm{sg}\big(\frac{\partial \phi}{\partial \kappa}(\kappa,s)\big) = \mathrm{sg}\Big(\big(\beta- \frac{\alpha}{2}\big)\frac{\kappa (\mu-m(\lambda)\kappa s)}{(\beta - \big(\beta- \frac{\alpha}{2}\big)\kappa^2\big)} -m(\lambda) s\Big) = \mathrm{sg}\Big(\kappa \big(\beta -\frac{\alpha}{2}\big) \mu-m(\lambda) s\beta\Big),\end{equation}
where we denote by $\mathrm{sg}$ the sign of a real number. It yields that $\phi(\cdot,s)$ is non increasing on $[0,1]$ for any $s\geq s_\star$, and as a consequence its infimum is achieved for $\kappa=1$. 

This entails that it suffices to show that the infimum
$\inf\big\{\phi(\kappa,s) : \kappa \in [0,1], 0\leq s\leq s_\star\big\}$ 
is achieved for $\kappa\in \{0,1\}$. To this end, for $\kappa \in [0,1]$ let $s(\kappa) \in [0, s_\star]$ be such that $\varphi(\kappa, \cdot)$, when viewed as a function on $[0,s_\star]$, is uniquely minimized at $s(\kappa)$. To show the existence of such a unique minima note that 
\[ \frac{\partial \phi}{\partial s} (\kappa, s) = h'_L(1+s) - \frac{m(\lambda) \kappa}{2(\beta -\big( \beta-\frac{\alpha}{2}\big)\kappa^2)} (\mu-m(\lambda) \kappa s).\]
If $\kappa=0$, as $h_L'$ vanishes only at $1$ and is non decreasing, and so the unique minima is at $s(0)=0$. Assume $\kappa \ne 0$. Then again using that $h_L'$ is non decreasing we find that $(\partial \phi/\partial s)(\kappa,.)$ is strictly increasing, and thus the existence of a unique minima $s(\kappa)$ follows. Moreover, as $(\partial \phi/\partial s)(\kappa,0) \le 0$ notice that 
\begin{equation}\label{eq:s-kappa}
\frac{\partial \phi}{\partial s} (\kappa, s_\star) \le 0 \Leftrightarrow s(\kappa)=s_\star \quad \text{ and } \quad  s(\kappa) < s_\star \Rightarrow \frac{\partial \phi}{\partial s} (\kappa, s(\kappa))=0. 
\end{equation}
We next claim that the map $\kappa \mapsto s(\kappa)$ is a non decreasing continuous function. The continuity follows from the fact that the map $\partial \phi/\partial s$ is continuous on $[0,1]\times \RR_+$, the characterization of $s(\kappa)$ given in \eqref{eq:s-kappa}, and the fact that $(\partial \phi/\partial s)(\kappa,\cdot)$ is strictly increasing for $\kappa >0$. To derive the non decreasing property of $s(\kappa)$ observe that for any $\kappa \in [0,1]$ and $s\ge 0$, we have $\mathrm{sg}((\partial^2 \phi/\partial \kappa \partial s) (\kappa,s)) = - \mathrm{sg}(\psi(s,\kappa))$,
where $\psi(s,\kappa) := \big(\beta-\frac{\alpha}{2}\big) \mu \kappa^2 + (\mu-2m(\lambda) s\kappa)\beta$. Thus, for any $s\leq s_\star$ and $\kappa \in[0,1]$, 
\[ \psi(\kappa,s) \geq \psi(\kappa,s_\star) = \mu\big((\beta -\alpha/2)(\kappa^2 -2\kappa)+\beta\big) \geq \frac{\alpha}{2} \geq 0,\]
showing that $(\partial\varphi /\partial s)(\cdot, s)$ is non increasing. Now fix $\kappa < \kappa' \in [0,1]$. If $s(\kappa)=s_\star$ then by \eqref{eq:s-kappa}, and as $(\partial\varphi /\partial s)(\cdot, s)$ is non increasing, we obtain that $s(\kappa')=s_\star$. On the other hand, if $s(\kappa) < s_\star$ and $s(\kappa') < s(\kappa)$ then, using that $(\partial \varphi/\partial s)(\kappa', \cdot)$ is strictly increasing, ($\partial \phi/\partial s)(\cdot,s(\kappa))$ is non increasing, and \eqref{eq:s-kappa} we get
\[
0=\frac{\partial \varphi}{\partial s} (\kappa', s(\kappa')) < \frac{\partial \varphi}{\partial s} (\kappa', s(\kappa)) \le \frac{\partial \varphi}{\partial s} (\kappa, s(\kappa)) =0,
\]
yielding a contradiction. Observe that the above argument also gives a $\kappa_\star \in (0,1]$ such that $s(\kappa) < s_\star$ for $\kappa < \kappa_\star$ and otherwise $s(\kappa)=s_\star$.
As $h_L'$ is differentiable, except potentially at $x_\star$ and $h_L'$ is non decreasing (see Lemma \ref{lem:hL-prop}), we obtain that $(\partial^2 \phi/\partial s^2)(\kappa, s) >0$ for any $s \neq x_\star -1$ and $\kappa\in [0,1]$. Besides, for any $s \ge 0$ the equation $(\partial \phi/\partial s)(\kappa,s) =0$ has at most two solutions in $\kappa$. Denoting $\mathcal{K}$ to be the set of $\kappa$ such that  $(\partial \phi/\partial s)(\kappa,x_\star -1) =0$, 
by the implicit function theorem it follows that $\kappa \mapsto s(\kappa)$ is differentiable on $(0,\kappa_\star) \setminus \mathcal{K}$, and hence so is the map $\kappa \mapsto \Psi(\kappa):=\varphi(\kappa, s(\kappa))$. 
Furthermore
\begin{equation} \label{dervphi} \Psi'(\kappa) = \frac{\partial \phi}{\partial \kappa}(\kappa, s(\kappa)) + s'(\kappa) \frac{\partial \phi}{\partial s}(\kappa,s(\kappa))= \frac{\partial \phi}{\partial \kappa}(\kappa, s(\kappa)),\end{equation}
where we used that $(\partial \phi/\partial s)(\kappa, s(\kappa))=0$ for $\kappa < \kappa_\star$ and trivially $s'(\kappa)=0$ for $\kappa \in (\kappa_\star, 1)\setminus \mathcal{K}$. 
Therefore by the first equality in \eqref{dervphikappa} and \eqref{dervphi}, for any $\kappa \in [0,1]\setminus (\mathcal{K}\cup\{\kappa_\star\})$,
\begin{equation}\label{eq:sg-phi}
\mathrm{sg}(\Psi'(\kappa))=\mathrm{sg}\Big(2\big(\beta-\frac{\alpha}{2}\big)  h_L'(1+s(\kappa)) -m(\lambda)^2s(\kappa)\Big). 
\end{equation}
Using that $h_L'$ is concave by  Lemma \ref{lem:hL-prop}(e) and the facts that $\kappa \mapsto s(\kappa)$ is non decreasing and  that $(\partial \varphi/\partial s)(\kappa, s(\kappa)) \le 0$ for $\kappa \ge \kappa_\star$ we deduce that the LHS of \eqref{eq:sg-phi} is either nonpositive on $[0,1]\setminus \mathcal{K}$ or nonnegative on $[0,\kappa_{\star \star}]\setminus \mathcal{K}$ and nonpositive otherwise, for some $\kappa_{\star\star} \in (0,1]$. Since $\#\mathcal{K}$ is $O(1)$, and $\kappa \mapsto \Psi(\kappa)$ is continuous we find that either it is nonincreasing or it is nondecreasing on $[0,\kappa_{\star\star}]$ and then nonincreasing. In both these cases $\inf\{\Psi(\kappa): \kappa \in [0,1]\}=\Psi(0) \wedge \Psi(1)$. This ends the proof. 
\end{proof}

Finally, we add the linear term $t/(2\beta)$ to the optimisation problem and show that the infimum is achieved for $t=0$. This will prove that the $\| \widecheck{B}_J {\bm v}^\veps\|$ term in the exponential equivalent of Proposition \ref{expoequi:ineq} leads indeed to a sub optimal large deviation scenario. 
\begin{Lem}\label{finaltoptim} For any $\mu\geq0$, $\lambda\geq 2$.
\begin{equation} \label{reducinf} 
\wh{\Phi}_\lambda(1/m(\lambda)) = \inf\big\{ \wh{\Phi}_\lambda(\mu)+\frac{t}{2\beta} : \mu + m(\lambda) t \geq 1/m(\lambda)\big\} .\end{equation}
\end{Lem}

\begin{proof}
We will prove that on the one hand for any $w\geq 0$, 
\begin{equation}\label{claim1inf} \inf\big\{h_L(1+s) +\frac{t}{2\beta}:  s + t\geq w\big\} = h_L(1+w).\end{equation}
and on the other hand
\begin{equation} \label{claim2inf}\inf\big\{\frac{\mu^2}{4\beta} +\frac{\nu}{2\beta}: \mu+m(\lambda) \nu\geq 1/m(\lambda)\big\} = \frac{1}{4\beta m(\lambda)^2}.\end{equation}
Once these two claims are proven, it follows immediately by Lemma \ref{optimkappa} that the infimum in \eqref{reducinf} is achieved for $t=0$.

Now observe that as $h_L'$ is non decreasing and $\lim_{x \to +\infty} h_L'(x)=1/(2\beta)$ by Lemma \ref{lem:hL-prop}, the function $s\mapsto h_L(1+s) + (w-s)/(2\beta)$ is non increasing for any $w\geq0$. This yields \eqref{claim1inf}. The second claim  \eqref{claim2inf} follows from the fact that $\mu\mapsto \mu^2/(4\beta) + (1-m(\lambda)\mu)/(2\beta m(\lambda)^{2})$ is non increasing on $[0,1/m(\lambda)]$. 
\end{proof}

Proposition \ref{optimsol} is now obvious from Lemmas \ref{reducdim1}, \ref{optimkappa}, and \ref{finaltoptim}. We omit the details.


\section{Local laws for supercritical sparse Wigner matrices}\label{sec:loc-law}
In this section we provide the proofs of Propositions \ref{prop:conc-loc-law} and \ref{computation-expec}, and Lemma \ref{concresolvent}. A key to these proofs is an isotropic local law for supercritical Wigner matrices under Assumption \ref{hypo:subg-entry} and in the whole regime $np \gg \log n$. Before stating our results we introduce the following notation. 
Fixing $\vep_0 \in (0,1/2)$, we let 
\begin{equation}\label{error}
\Upsilon_n:= \sqrt{\frac{\log n}{(np) \wedge n^{\vep_0}}}. 
\end{equation}
Set $\mathbb{H}:=\{z' \in \CC: \Im z'  >0\}$ and for any $\kappa, \vep \in (0,1)$,
\[
\mathbb{H}_\kappa:= \{z \in \mathbb{H}: |\Re z| \le \kappa^{-1} \text{ and } |\Re z \pm 2| \ge \kappa\}, \quad \mathbb{H}_{\kappa, \vep}:= \mathbb{H}_{\kappa, \vep}^n:= \{z \in \mathbb{H}_\kappa: \Im z \in [n^{-1+\vep}, \kappa^{-1}]\}.
\]
Fixing a sequence $(s_n)_{n\in \NN}$ of positive numbers such that $\lim_{n \to +\infty} s_n=0$ and $s_n \ge n^{-1+\veps}$ for all $n \in \NN$, and a numerical constant $\mathfrak{C}>0$ 
we further define
\[
\wt{\mathbb{H}}_\kappa:= \big\{ z \in \mathbb{H}_{\kappa}:  s_n \vee ((np) \wedge n^\veps)^{-\mathfrak{C}} \le  \Im z \le \kappa^{-1} \big\}, \quad \mathcal{U}_n:=\big\{u\in \mathbb{S}^{n-1} : \|u\|_1 \leq s_n np\big\}.
\]
For $W \in \cH_n$ and $z \in \mathbb{H} \cup (\RR \setminus {\rm Spec}(W))$ we define {$\cR(z,W):= (z -W)^{-1}$}.
When the choice of $z$ and/or $W$ will be clear from the context we will suppress the dependencies of these variables in the notation $\cR(\cdot, \cdot)$. For $\T \subset [n]$, $W \in \cH_n$, and $z \in \mathbb{H} \cup (\RR \setminus {\rm Spec}(W))$ we further let $\cR^{(\T)}:= \cR^{(\T)}(z,W):= (z -W^{(\T)})^{-1}$.

To reach the isotropic local law for supercritical Wigner matrices we first prove the following entry wise local law. 
\begin{The}[Entry wise local law]\label{thm:loc-law}
Let $p$ be such that $ \log n \ll np \ll n$. Fix $\kappa \in (0,1)$ and $\vep \in (2\vep_0, 1)$. Then under Assumption \ref{hypo:subg-entry}, for any $M >0$ there exists a constant $C < +\infty$ such that
\[
\PP\bigg(\sup_{z \in \mathbb{H}_{\kappa, \vep}} \max_{\T \subset [n]: |\T| \lesssim 1}\max_{i,j \in [n]\setminus \T} |\cR^{(\T)}_{i,j}(z, X) - m(z) \updelta_{i,j}| \ge C \Upsilon_n \bigg) \le n^{-M},
\]
for all large $n$. Moreover, the same result holds for ${\rm Adj}_o/\sqrt{np}$ instead of $X$.
\end{The}

Building on Theorem \ref{thm:loc-law} one can prove the following result. 

\begin{The}[Isotropic local law]\label{thm:loc-law-iso}
Under the same setup of Theorem \ref{thm:loc-law} we have
\[
\lim_{n \to +\infty}\sup_{z \in \wt{\mathbb{H}}_\kappa} \sup_{u \in \cU_n} \big| \EE \langle u, \cR(z, X) u \rangle - m(z) \big| =0.
\]
Moreover, the same result holds for ${\rm Adj}_o/\sqrt{np}$ instead of $X$.
\end{The}



As already mentioned in Section \ref{sec:outline-bdd} the results available in literature do not yield Theorems \ref{thm:loc-law} and \ref{thm:loc-law-iso} in the generality we need. These two results are proved using adaptations of the general strategies developed in \cite{BHY17} and  \cite{HKM19} together with a careful counting argument.




\begin{Rem}\label{rem:iso-loc-law}
A couple more comments regarding the isotropic local law result in Theorem \ref{thm:loc-law-iso} are in order. The reader can note from its proof that under the additional assumption that the entries of $G$ have a symmetric distribution on $\RR$ the result would continue to hold for any $u \in \mathbb{S}^{n-1}$ such that $\|u\|_1 \le (np)^{O(1)}$, and thus for $p$ such that $\log (np) \gtrsim \log n$ one does not need any upper bound $\|u\|_1$ for the isotropic local law to hold.  The proof will also indicate that it is amenable to yield a bound on $\EE(|\langle u, \cR(z) u \rangle - m(z)|^{2k})$ for any $k \in \NN$. Since we do not need such a bound we have not pursued it here.  
\end{Rem}

\subsection{Proofs of Propositions \ref{prop:conc-loc-law} and \ref{computation-expec}, and Lemma \ref{concresolvent}}In this section we use Theorem \ref{thm:loc-law-iso} to prove these results.

\begin{proof}[Proof of Proposition \ref{prop:conc-loc-law}]
We prove the required result only for $X$. The proof for ${\rm Adj}_o/\sqrt{np}$ being identical is omitted. Let $\lambda>2$ and $u\in \mathbb{B}^n$. Recall that 
$f_{\lambda,u}(K)=\langle u,(\lambda-K)^{-1} u\rangle$ for $K \in \cH_n^\lambda$. Fix $\delta>0$ such that $\lambda \geq 2+2\delta$. 
As $f_{\lambda,u}$ is convex and $\delta^{-2}$-Lipschitz with respect to the Hilbert-Schmidt norm on $\mathcal{H}_n^{2+\delta}$, 
there exists an extension $\widetilde{f}_{\lambda,u}$ of $f_{\lambda,u}$ to $\mathcal{H}_n$ such that $\widetilde{f}_{\lambda,u}$ is convex and  $\delta^{-2}$-Lipschitz. Namely, $\wt{f}_{\lambda,u}(K')=\sup\{ f_{\lambda,u}(K) +\langle \nabla f_{\lambda,u}(K),K'-K\rangle : K\in \mathcal{H}_n^{2+\delta}\}$ for $K'\in\mathcal{H}_n$. In a first step we show that 
\begin{equation} \label{controlmean}  \sup_{u\in \mathcal{U}_n} \big| \EE \widetilde{f}_{\lambda,u}(X) - m(\lambda) \big| =o(1).\end{equation}
Let $z= \lambda + {\rm i} \upeta$, where $\upeta=(\log \log n)^{-1}\vee s_n$ implying that $z \in \wt{\mathbb{H}}_\kappa$. Since  $\widetilde{f}_{\lambda, u}(K)=f_{\lambda, u}(K)$ for any $K \in \cH^{2+\delta}_n$, by triangle inequality we get that 
\begin{multline}\label{eq:iso-loc-pf1}
 \big|\EE \big[\widetilde{f}_{\lambda,u}(X) \big]-m(\lambda)\big| \le  \EE\big[\big|\widetilde f_{\lambda, u}(X)\big| \Car_{\cE^c}\big] + \EE\big[\big|\langle u, \cR(z, X) u \rangle\big|\Car_{\cE^c} \big] + |m(z) - m(\lambda)|\\
 + \EE \big[\big|\langle u, \cR(z, X) u \rangle - \langle u, \cR(\lambda, X) u \rangle\big|\Car_\cE\big]
 +  \big|\EE \langle u, \cR(z, X) u \rangle - m(z) \big|,
\end{multline}
where $\mathcal{E} := \{X\in \mathcal{H}^{2+\delta}_n\}$. The third and fourth terms on the right-hand side of the above inequality are bounded by $\delta^{-2} \upeta$, while the last term goes to $0$ as $n\to +\infty$ by Theorem \ref{thm:loc-law-iso} uniformly in $u\in \mathcal{U}_n$. For the two first terms, note that on the one hand as $\widetilde{f}_{\lambda,u}$ is convex and $\delta^{-2}$-Lipschitz and $\widetilde{f}_{\lambda,u}(0) = f_{\lambda,u}(0)=\lambda^{-1}$, $0$ being the zero matrix, for any $K\in \mathcal{H}_n^{2+\delta}$, we have
\[ \widetilde{f}_{\lambda,u}(K) \leq  \widetilde{f}_{\lambda,u}(0) + \delta^{-2} \|K\|_2 = \lambda^{-1} + \delta^{-2}\|K\|_2.\]
We deduce by Cauchy-Schwarz inequality and the fact that $\EE \|X\|_2^2=n$, that  $\EE\big[ \big| \widetilde{f}_{\lambda,u}(X)\big|\Car_{\mathcal{E}^c}\big] \leq \lambda^{-1} \PP(\mathcal{E}^c) + \delta^{-2} \sqrt{n}\PP(\mathcal{E}^c)^{1/2}$. We claim that, for any $C >0$, 
\begin{equation}\label{eq:cE-bd}
\limsup_{n \to +\infty} \frac{1}{\log n}\log \PP(\mathcal{E}^c) \le - C. 
\end{equation}
Since $np \gg \log n$ the bound \eqref{eq:cE-bd} implies that 
 $\EE\big[ \big| \widetilde{f}_{\lambda,u}(X)\big|\Car_{\mathcal{E}^c}\big]=o(1)$ uniformly in $u\in\mathbb{B}^n$. On the other hand, $|\langle u, \mathcal{R}(z,X) u\rangle |\leq \upeta^{-1}$, so that by our choice of $\upeta$, it follows that $ \EE\big[\big|\langle u, \cR(z, X) u \rangle\big|\Car_{\cE^c} \big] =o(1)$  uniformly in $u\in\mathbb{B}^n$. 

Thus to complete the proof of \eqref{controlmean} we need to derive \eqref{eq:cE-bd}. Recall the matrices $A^R$ and $B^R$ from Section \ref{section:decoupl}. Since the entries of $A^R$ are bounded, from \cite[Example 8.7]{BLM}, \cite[Theorem 2.7]{BeBoKn}, and the fact that $\EE (A_{1,2}^R)^2=1+o_R(1)$, it follows that
\begin{equation}\label{eq:cE-bd-1}
\limsup_{n \to \infty} \frac{1}{np} \log \PP(\|A^R\| \ge 2+\delta/2) <0,
\end{equation}
for $R$ large enough.
By \eqref{loglaplaB2-new} and the integration by parts formula, for any $\theta >0$ sufficiently small, we derive
\begin{multline}\label{eq:Bnorm2}
\EE\left[\|B_i^R\|^{N}\right] \le \delta_*^N + \int_{\delta_*}^{+\infty} N t^{N-1} \exp(-np \theta t^2 + o_R(1) np) dt \\
\le \delta_*^N + \int_{0}^{+\infty} N t^{N-1} \exp(-np \theta t^2/2) dt \le \delta_*^{N} + \sqrt{2\pi N} \left(\frac{N}{\theta np}\right)^{N/2},
\end{multline} 
for any $\delta_* >0$, $N \in \NN$, and all large $R$ (depending on $\delta_*$). By a symmetrisation argument and Seginer's theorem \cite[Theorem 1.1]{Seginer} we have $\EE[\|B^R\|^{\log n}] \le n \wt C^{\log n} \max_{i \in [n]} \EE\|B_i^R\|^{\log n}$, where $\wt C < +\infty$ is some numerical constant. Therefore, as $np \gg \log n$, by \eqref{eq:Bnorm2} and Markov's inequality 
\begin{equation}\label{eq:cE-bd-2}
\limsup_{R \to +\infty} \limsup_{n \to +\infty} \frac{1}{\log n}\log \PP(\|B^R\| \ge \delta/2) \le -\log(\delta/2) + \log(\delta_*) + \log (e \wt C) \le \frac12\log(\delta_*),
\end{equation}
where the last step follows upon choosing $\delta_*$ sufficiently small (depending on $\delta$). Since $\delta_* >0$ is can be chosen to be arbitrarily small the bound \eqref{eq:cE-bd} follows from \eqref{eq:cE-bd-1} and \eqref{eq:cE-bd-2}.

We now move on to prove \eqref{eq:conc-loc-law}.
Since $(G_{i,j} \xi_{i,j})_{i\leq j}$ are independent sub-Gaussian random variables with uniformly bounded sub-Gaussian constants, by \cite[Theorem 1.3]{HT21}, there exists $c>0$ such that for any $t>\sqrt{\log n/(c np \delta^4)}$
\begin{equation}\label{eq:subg-conc}
\PP\Big(\big| \widetilde{f}_{\lambda, u}(X) - \EE [\widetilde{f}_{\lambda, u}(X)]  \big| \ge  t \Big) \leq \exp \Big(-\frac{c\delta^4 t^2 np}{ \log n}\Big).
\end{equation}\label{eq:subg-conc-n}
Using \eqref{controlmean}, we deduce that for any $t>0$ and $\lambda\geq 2+2\delta$, and $n$ large enough, 
\begin{equation} \label{concresolR} \PP\Big(\big| {f}_{\lambda, u}(X) - m(\lambda) \big| \ge  t, \lambda_{X}\leq 2+\delta \Big) \leq \exp \Big(-\frac{c\delta^4 t^2 np}{\log n}\Big), \quad u \in \mathcal{U}_n.\end{equation}
To obtain \eqref{eq:conc-loc-law} from \eqref{eq:subg-conc-n} we follow the following two step strategy:
As $\lambda \in (2+2\delta, +\infty) \mapsto f_{\lambda,u}(X)$ and $m$ are $\delta^{-2}$-Lipschitz on the event where $\lambda_X\leq 2+\delta$, using a net argument and a union bound (and possibly shrinking the constant $c$) we obtain \eqref{eq:conc-loc-law} with $\sup_{\lambda \ge 2+2\delta}$ replaced by $\sup_{\lambda \in ( 2+2\delta,2+2\delta+4t^{-1})}$. 

In a second step we observe that on $\{\lambda_X\leq 2+\delta\}$ we have $|f_{\lambda,u}(X)|\vee m(\lambda) \leq t/4$ for any $\lambda \geq 2+\delta +4t^{-1}$. This together with the first step and a triangle inequality proves  \eqref{eq:conc-loc-law}. 
%
\end{proof}



\begin{proof}[Proof of Proposition \ref{computation-expec}] Recall the definition of $\widetilde{\phi}_\lambda^\veps$ from \eqref{defphitilde}.
Let $\delta\in (0,1)$ such that $\lambda>2+2\delta$. We show in a first step that 
\begin{equation}\label{step1expec}  \EE_{\mathcal{J}^c}\big[ \Car_{\{\|X^{(\mathcal{J})}\|>2+\delta\}} \widetilde{\phi}_{\lambda}^\veps(X^{(\mathcal{J})})\big]  =o(1)(\|\widecheck{X}_\mathcal{J}\|^2+1).\end{equation}
We claim that for any $K'\in \mathcal{H}_{\mathcal{J}^c}$, 
\begin{equation} \label{claimouside} \widetilde{\phi}_\lambda^\veps(K') \leq \lambda^{-1}\|\widecheck{X}_\mathcal{J}\|^2 +\|X_\mathcal{J}\|+r \|K'\|_2.\end{equation}
 Indeed, let $K'\in \mathcal{H}_{\mathcal{J}^c}$, $K\in E_\lambda^{\veps,r}$ and $\zeta \in\Theta(K)$. This means that there exists $v\in \mathbb{S}^\mathcal{J}$ such that $\phi_{\lambda,v}(K) = \phi_\lambda(K)$ and $\zeta = \nabla \phi_{\lambda,v}(K)\in r \mathcal{F}_\veps^1$. As $\phi_{\lambda, v}$ is convex, $\phi_{\lambda,v}(K)\leq \phi_{\lambda,v}(0) +  \langle \zeta, K\rangle \leq \lambda^{-1}\|\widecheck{X}_\mathcal{J}\|^2 +\|X_\mathcal{J}\|+ \langle \zeta, K\rangle$. Since $\phi_{\lambda,v}(K) = \phi_{\lambda}(K)$ and  $\|\zeta\|_2 \le r$, this gives the claimed bound \eqref{claimouside}. As the entries of $G$ are bounded, by \cite[Example 8.7]{BLM} and \cite[Theorem 2.7]{BeBoKn}, we obtain 
\begin{equation} \label{concsp}  \limsup_{n\to +\infty} \frac{1}{np} \log \PP\big( \|\widecheck{X}^{(\mathcal{J})}\|>2+\delta \big) <0.\end{equation}
By Cauchy-Schwarz inequality, $\EE \|X^{(\mathcal{J})}\|_2^2 =O(n)$ and the fact that $np \gg \log n$, we further  get
\begin{equation}\label{concsp-n}
\EE_{\mathcal{J}^c}\big[ \Car_{\{\|X^{(\mathcal{J})}\|>2+\delta\}} \| X^{(\mathcal{J})}\|_2 \big] =o(1).
\end{equation}
Besides, as $\mathcal{J}$ is of bounded size, $\|X_{\mathcal{J}}\|=o(1)$. Therefore, by \eqref{claimouside}-\eqref{concsp-n} the bound \eqref{step1expec} follows.
In a second step, we will prove that 
\begin{equation}\label{claimbulk} \EE_{\mathcal{J}^c}\big[\Car_{\{\|X^{(\mathcal{J})}\|\leq 2+\delta\}} \psi_\lambda(X^{(\mathcal{J})}) \big]\leq m(\lambda) \|\widecheck{X}_{\mathcal{J}}\|^2(1+o(1)),\end{equation}
where for $K\in\mathcal{H}^{2+\delta}_{\mathcal{J}^c}$  
\[  
\psi_{\lambda,v}(K) := \langle \widecheck{X}_\mathcal{J}v, (\lambda -K)^{-1}\widecheck{X}_\mathcal{J}v\rangle, \text{ for } v\in \mathbb{S}^{\mathcal{J}},  \text{ and } \psi_\lambda(K) := \sup_{v \in \mathbb{S}^\mathcal{J}}  \psi_{\lambda,v}(K).
\]
 As $\mathcal{J}$ is of bounded size and the entries of $G$ are bounded, on $\Omega_{\cJ}:= \{\|X^{(\mathcal{J})}\|\leq 2+\delta\}$ we have $\phi_\lambda(X^{(\mathcal{J})})\leq  \psi_{\lambda}(X^{(\mathcal{J})}) + o(1)$. Since  $\widetilde{\phi}_\lambda^\veps(K) \leq \phi_\lambda(K)$ for any $K \in \cH^{\lambda}_n$ we note that \eqref{step1expec} and \eqref{claimbulk} yield the required result.
 
We now move on to the proof of \eqref{claimbulk}. Since $\psi_{\lambda,v}$ is convex and $\delta^{-2} \|\widecheck{X}_\mathcal{J}\|$-Lipschitz with respect to the Hilbert-Schmidt norm on $\mathcal{H}^{2+\delta}_{\mathcal{J}^c}$, there exists $\wt{\psi}_{\lambda,v}$ a convex extension of ${\psi}_{\lambda,v}$ to $\mathcal{H}_{\mathcal{J}^c}$ that is also $\delta^{-2} \|\widecheck{X}_\mathcal{J}\|$-Lipschitz with respect to the Hilbert-Schmidt norm.
By \cite[Proposition 4.2]{LDPei}, for any $t>0$, almost surely on $\Omega_C$ we have
\begin{multline}\label{concsup}  \PP_{\mathcal{J}^c}\Big( {\psi}_{\lambda,v}(X^{(\mathcal{J})}) - \sup_{v'\in \mathbb{S}^{\mathcal{J}}}\EE_{\mathcal{J}^c} \widetilde{\psi}_{\lambda,v'}(X^{(\mathcal{J})}) >t , X^{(\mathcal{J})} \in \mathcal{H}^{2+\delta}_{\mathcal{J}^c}\Big) \\
\leq \PP_{\mathcal{J}^c}\big( \widetilde{\psi}_{\lambda,v}(X^{(\mathcal{J})}) - \EE_{\mathcal{J}^c}    \widetilde{\psi}_{\lambda,v}(X^{(\mathcal{J})}) >t \big) \le e^{-\frac{t^2 \delta^4np }{32 (CR)^2}},\end{multline}
where $\PP_{\mathcal{J}^c}$ denotes the probability with respect to the randomness of $X^{(\cJ)}$. 
Let $\kappa>0$ such that $\# \mathcal{J}\leq \kappa$ almost surely and set $\eta:= (np)^{-1/(4\kappa)} \ll 1$. Note that when $X^{(\mathcal{J})} \in \mathcal{H}^{2+\delta}_{\mathcal{J}^c}$ the map $v\mapsto \psi_{\lambda,v}(X^{(\mathcal{J})})$ is $2\delta^{-1}\|\widecheck{X}_\mathcal{J}\|^2$-Lipschitz with respect to the $\ell^2$-norm.
By  \cite[Corollary 4.1.15]{AGM}, there exists a $\delta \eta/2$--net for the $\ell^{2}$-norm of $\mathbb{S}^{\mathcal{J}}$ of size at most $(6(\delta\eta)^{-1})^\kappa$. By a union bound it follows that  for any $t>0$, 
\[ \PP_{\mathcal{J}^c}\Big( {\psi}_{\lambda}(X^{(\mathcal{J})}) - \sup_{v\in \mathbb{S}^{\mathcal{J}}}\EE_{\mathcal{J}^c} \widetilde{\psi}_{\lambda,v}(X^{(\mathcal{J})}) >t+\eta \|\widecheck{X}_\mathcal{J}\|^2, X^{(\mathcal{J})} \in \mathcal{H}^{2+\delta}_{\mathcal{J}^c}\Big) \leq  \big(6 (\delta \eta)^{-1}\big)^{\kappa}e^{-\frac{t^2 \delta^4np }{32 (CR)^2}},\]
almost surely on $\Omega_C$.
Integrating this inequality, by our choice of $\eta$, 
\begin{equation} \label{expectunif}
\EE_{\mathcal{J}^c} \big[\Car_{\{\|X^{(\mathcal{J})}\|\leq 2+\delta\}} \psi_{\lambda}(X^{(\mathcal{J})})\big] \leq \sup_{v\in \mathbb{S}^{\mathcal{J}}}\EE_{\mathcal{J}^c} \widetilde{\psi}_{\lambda,v}(X^{(\mathcal{J})})+ o(1)(\|\widecheck{X}_\mathcal{J}\|^2+1), \quad \text{a.s.~on $\Omega_C$}.
\end{equation}
On the other hand note that on 
$\Omega_C$, 
$|\supp(\widecheck{X}_\mathcal{J}v)| \le Cnp$, and thus $\|\widecheck{X}_\mathcal{J}v\|_1\leq \sqrt{Cnp} \|\widecheck{X}_\mathcal{J}v\|_2$ for any $v\in\mathbb{S}^{\mathcal{J}}$. 
Therefore using \eqref{controlmean} we get, a.s.~on $\Omega_C$,
\begin{equation}  
\sup_{v\in \mathbb{S}^{\mathcal{J}}}\EE_{\mathcal{J}} \widetilde{\psi}_{\lambda,v}(X^{(\mathcal{J})}) \leq  m(\lambda)\|\widecheck{X}_{\mathcal{J}}\|^2(1+o(1)). \notag
\end{equation}
This together with \eqref{expectunif} yields \eqref{claimbulk} completing the proof. 
\end{proof}

\begin{proof}[Proof of Lemma \ref{concresolvent}]
Applying Proposition \ref{strongconc}  and using the same argument as in the proof of \eqref{concflat} we obtain 
that for any $t>0$, 
\[ \lim_{\veps \to 0} \limsup_{n\to +\infty}\frac{1}{np} \log \sup_{x \in  2\mathbb{B}^n}\PP\big( \widetilde{f}^{\eps,r}_{\lambda,x}(Y) - \EE  \widetilde{f}^{\eps,r}_{\lambda,x}(Y) >t \big) =-\infty.\]
Next note that for any $\veps,r>0$, 
$\widetilde{f}^{\veps,r}_{\lambda,x} \leq \widetilde{f}_{\lambda,x}$, where $\widetilde{f}_{\lambda,x}$ is as in the proof of Proposition \ref{prop:conc-loc-law}. Thus, by \eqref{controlmean}, $\EE \widetilde{f}_{\lambda,x}^{\veps, r}(Y) \leq \EE \wt{f}_{\lambda,x}(Y) \le m(\lambda)\|x\|^2(1+o(1))$, for all large $n$. This completes the proof.
%
\end{proof}

\subsection{Proof of Theorem \ref{thm:loc-law-iso}}
The proof of Theorem \ref{thm:loc-law-iso} requires the following concentration bounds. Recall $\Upsilon_n$ from \eqref{error}. We recall also that the sub-Gaussian constant of a real random variable $Y$ is defined as $\|Y\|_{\psi_2}:= \inf\{ t>0 : \EE (e^{t Y^2})\leq 1\}$.

\begin{Lem}\label{lem:loc-law-con-bd}
Let $\{Z_i\}_{i \in [n]}$ be a sequence of centered i.i.d.~sub-Gaussian random variables and $\{\xi_i\}_{i \in [n]}$ be a sequence of i.i.d.~$\dBer(p)$ random variables. Set $Y_i:= Z_i \xi_i/\sqrt{np}$, $i \in [n]$. Let $\{\wt Y_i\}_{i \in [n]}$ be an independent copy of $\{Y_i\}_{i \in [n]}$. Fix $\vep \in (2\vep_0,1)$, $M >0$, and $C_0 < +\infty$. 

\begin{enumerate}

\item[(a)] For any $a_1, a_2, \ldots, a_n \in \CC$ deterministic with 
\[
\Big(\frac1n\sum_{i=1}^n |a_i|^2\Big)^{1/2} \le n^{-\vep} \qquad \text{ and } \qquad {\max_{i\in[n]} |a_i|} \le C_0,
\]
there exists some constant $C<+\infty$ depending only the sub-Gaussian norm of $Z_1$, $C_0$, $\vep_0$, and $M$ such that, for all large $n$,
\begin{equation}\label{eq:conc-lin-sum}
\PP\Big(\Big|\sum_{i=1}^n a_i Y_i\Big| \ge C \Upsilon_n\Big) \le n^{-M}.
\end{equation}
\item[(b)] Let $np \gtrsim \log n$. For any $a_1, a_2, \ldots, a_n \in \CC$ deterministic, bounded in moduli by $C_0$, there exists some constant $C<+\infty$ depending only the sub-Gaussian norm of $Z_1$, $C_0$, and $M$ such that, for all large $n$,
\[
\PP\Big(\Big|\sum_{i=1}^n a_i (|Y_i|^2 - \EE |Y_i|^2)\Big| \ge C \Upsilon_n\Big) \le n^{-M}.
\]
\item[(c)] For any deterministic $a_{i,j}, i, j \in [n]$ with
\[
\Big(\max_i\frac1n\sum_{j=1}^n |a_{i,j}|^2\Big)^{1/2} \bigvee \Big(\max_j\frac1n\sum_{i=1}^n |a_{i,j}|^2\Big)^{1/2} \le n^{-\vep} \qquad \text{ and } \qquad {\max_{i,j\in[n]} |a_{i,j}|} \le C_0,
\]
there exists some constant $C<+\infty$ depending only the sub-Gaussian norm of $Z_1$, $C_0$, $\vep$, and $M$ such that, for all large $n$,
\[
\PP\Big(\Big|\sum_{i,j=1}^n a_{i,j} Y_i\wt Y_j \Big| \ge C \Upsilon_n^2\Big) \le n^{-M},\quad 
\PP\Big(\Big|\sum_{i \ne j \in [n]} a_{i,j} Y_i Y_j \Big| \ge C \Upsilon_n^2\Big) \le n^{-M}.
\]
\end{enumerate}
\end{Lem}

\begin{proof}
We consider only the case $a_i \in \RR, i \in [n]$. The proof for the complex case does not require any new ideas except for the fact that it is notationally heavy compared to its real counterpart. So we omit it. 
Set $\gamma:= n^{-\vep}$ and $\psi:= C_0 /\sqrt{np}$. 
We will prove that for any $r \in 2\NN$
\begin{equation}\label{eq:mom-lin-sum}
\Big\| \sum_{i=1}^n a_i Y_i\Big\|_r \lesssim \bigg(\frac{r}{1+2 \log_+(\psi/\gamma)} \bigvee 1\bigg) \cdot (\gamma \vee \psi) \cdot \sqrt{r}.
\end{equation}
Observe that, for $r \asymp \log n$, the first term in the RHS of \eqref{eq:mom-lin-sum} can be trivially bounded by a $O(\log n)$ term, while for $np \le n^{2\vep_0}$ it is $O(1)$. Therefore, for $r \asymp \log n$, we deduce that $\|\sum_i a_i Y_i\|_r \lesssim \Upsilon_n$. Using this together with Markov's inequality, applied with $r \asymp \log n$, we obtain \eqref{eq:conc-lin-sum}. 

Turning to prove \eqref{eq:mom-lin-sum} we let $\mathfrak{P}(r)$ to be the set of partitions of $[r]$ and $\mathfrak{P}_{\ge 2}(r) \subset \mathfrak{P}(r)$ to be the subset that consists of partitions with all blocks having size at least two. For any $\Pi \in \mathfrak{P}(r)$ we write $|\Pi|$ to denote the number of blocks in $\Pi$. For a block $\pi \in \Pi$ we use $|\pi|$ to denote the number of elements in that block. Using the fact that $\EE Y_i=0$ we have that 
\begin{equation*}
\Big\| \sum_{i=1}^n a_i Y_i\Big\|_r^r = \sum_{i_1, i_2, \ldots, i_r=1}^n \Big(\prod_{j=1}^r a_{i_j}\Big) \EE \Big(\prod_{j=1}^r Y_{i_j} \Big) \le \sum_{\Pi \in \mathfrak{P}_{\ge 2}(r)} \sum_{s \in [n]^{|\Pi|}} \prod_{\pi \in \Pi} |a_{s_\pi}|^{|\pi|} \EE|Y_{s_\pi}|^{|\pi|}.
\end{equation*}
Using the fact that $Y_i=Z_i \xi_i/\sqrt{np}$, where $Z_i$ is a sub-Gaussian random variable and $\xi_i$ is a $\dBer(p)$ random variable, for any $\Pi \in \gP_{\ge 2}(r)$ we find that
\begin{multline}\label{eq:mom-lin-sum1}
\sum_{s \in [n]^{|\Pi|}} \prod_{\pi \in \Pi} |a_{s_\pi}|^{|\pi|} \EE|Y_{s_\pi}|^{|\pi|} \le \EE|Y_1|^r \prod_{\pi \in \Pi} \sum_{s \in [n]}  |a_{s}|^{|\pi|} \\
\le \|Y_1\|_{\psi_2}^{r} r^{r/2}  \prod_{\pi \in \Pi} \Big(\sum_{s \in [n]}  |a_{s}|^2\Big)\big(\max_{s \in [n]} |a_s|\big)^{|\pi|-2} \frac{1}{n (np)^{\frac12|\pi|-1}} \le  \|Y_1\|_{\psi_2}^{r} r^{r/2} \gamma^{2|\Pi|} \psi^{r - 2 |\Pi|},
\end{multline}
where the first step uses H\"older's inequality. We point out to the reader that in the case when $\sqrt{np} Y_i$ is a centred $\dBer(p)$ random variable a bound analogous to \eqref{eq:mom-lin-sum1}, without the factor $r^{r/2}$ was obtained in the proof of \cite[Proposition 3.1]{HKM19}. Therefore repeating the rest of the arguments of that proof one obtains \eqref{eq:mom-lin-sum}. 

To prove part (c) one is required to do similar adaptations of those of \cite[Proposition 3.3]{HKM19} and \cite[Proposition 3.4]{HKM19} to obtain
\begin{equation}\label{eq:mom-prod-sum1}
\Big\| \sum_{i,j=1}^n a_{i,j} Y_i\wt Y_j\Big\|_r \lesssim \bigg(\frac{r}{1+2 \log_+(\wt \psi/\gamma)} \bigvee 1\bigg)^2 \cdot (\gamma \vee \wt\psi) \cdot {r}.
\end{equation}
and
\begin{equation}\label{eq:mom-prod-sum2}
\Big\| \sum_{i\ne j \in [n]} a_{i,j} Y_iY_j\Big\|_r \lesssim \bigg(\frac{r}{1+2 \log_+(\wt \psi/\gamma)} \bigvee 1\bigg)^2 \cdot (\gamma \vee \wt \psi) \cdot {r},
\end{equation}
for any $r \in 2\NN$, where $\wt \psi:= C_0/np$. These bounds together with Markov's inequality yield the desired results. Further details are omitted. Finally, part (b) follows from \cite[Theorem 3.1]{zhou-spHW} and the fact that $np \gtrsim \log n$.
\end{proof}


\begin{proof}[Proof of Theorem \ref{thm:loc-law-iso}]
First we prove the required result for $X$.
To ease the notation, we drop the dependency of $\mathcal{R}(z,X)$ in $X$, and write $\mathcal{R}(z)$ instead, and similarly for $\mathcal{R}^{\mathbb{T}}(z)$, where $\mathbb{T} \subset [n]$.
We begin by noting that
\begin{equation}\label{eq:loc-iso-decomp}
\EE \langle u, \cR(z) u \rangle = \EE \Big[ \sum_{i \in [n]} u_i^2 \cR_{i,i}(z) \Big] + \EE\Big[\sum_{i \ne j \in [n]} u_i u_j \cR_{i,j}(z)\Big].
\end{equation}
Since $z \in \wt{\mathbb{H}}_\kappa$ implies that $\upeta:= \Im z \ge n^{-1}$, using the trivial bound $\|\cR(z)\| \le \upeta^{-1}$ and applying Theorem \ref{thm:loc-law}, as $u \in \mathbb{S}^{n-1}$, we find that
\begin{equation}\label{eq:loc-iso-diag}
 \EE \Big[ \sum_{i \in [n]} u_i^2 \cR_{i,i}(z) \Big] \to m(z), \qquad \text{ as } n \to +\infty,
\end{equation}
uniformly for $u \in \mathcal{U}_n$. Next for $i_1, i_2, i_3, i_4 \in [n]$ we let $\I:= \{i_1, i_2, i_3, i_4\}$ and set 
\[
\widecheck{\Omega}_{\I}:= \bigg\{ \max_{i,j \in [n]\setminus \I} |\cR^{(\I)}_{i,j}(z) - m(z) \updelta_{i,j}| \le C \Upsilon_n \bigg\},
\]
where $C$ is as in Theorem \ref{thm:loc-law} with $M=4\gf$, and $\gf$ some large integer to be determined below. By Theorem \ref{thm:loc-law}, as $\|u\|_1 \le n^{1/2}$ and $\|\cR(z)\| \le \upeta^{-1}$ we observe that
\[
\sup_{u \in \mathcal{U}_n} \bigg|\EE\Big[\sum_{i_1 \ne i_2 \in [n]}\sum_{i_3 \ne i_4 \in [n]}  u_{i_1} u_{i_2} u_{i_3} u_{i_4} \cR_{i_1,i_2}(z) \cR_{i_3,i_4}(z){\bf 1}_{\widecheck \Omega_{\I}^c}\Big]\bigg| \le n^4 \max_{\I} \PP(\widecheck \Omega_{\I}^c) \to 0
\]
as $n \to +\infty$. 
Hence, by \eqref{eq:loc-iso-decomp}, \eqref{eq:loc-iso-diag}, and Cauchy-Schwarz inequality it suffices to show that
\begin{equation}\label{eq:cR-off-diag}
\sup_{u \in \mathcal{U}_n}  \EE\Big[\sum_{i_1 \ne i_2 \in [n]}\sum_{i_3 \ne i_4 \in [n]}  u_{i_1} u_{i_2} u_{i_3} u_{i_4} \cR_{i_1,i_2}(z) \cR_{i_3,i_4}(z){\bf 1}_{\widecheck \Omega_{\I}}\Big]\to 0,
\end{equation}
as $n \to +\infty$. 
To compute the sum in the RHS of \eqref{eq:cR-off-diag} we split the sum according to $|\I|$. Observe that, when the sum is restricted over all those indices for which $|\I| =2$ the claim \eqref{eq:cR-off-diag} is immediate due to Theorem \ref{thm:loc-law}. So for the rest of the proof we will focus only on summing those indices for which $|\I| >2$. 

Fix $i_1\ne i_2, i_3 \ne i_4 \in [n]$. Using the Schur complement formula, and the fact that $m(z)+m(z)^{-1} = z$ we obtain
\[
\cR_\I=  (\sD - \sE)^{-1},
\]
where $\sD:= \sD(z):= m(z)^{-1} {\rm Id} - \sE_1$, $\sE:= \sE_2+\sE_3$, 
\[
\sE_1:= (m_n^{(\I)}(z) - m(z)){\rm Id}, \quad \sE_2:= X_\I, \quad \sE_3:= \widecheck{X}_\I^{\sf T} \cR^{(\I)} \widecheck{X}_\I - m_n^{(\I)} {\rm Id},
\]
and $m_n^{(\I)}:=m_n^{(\I)}(z):=n^{-1} {\rm Tr}\, \cR^{(\I)}(z)$. 
Note that, as $np \gg \log n$ we have $\Upsilon_n \ll 1$, and $|m(z)| \asymp 1$ for any $z \in \wt{\mathbb{H}}_{\kappa}$ (see \cite[Lemma 3.2]{EKHY13}) we obtain that $\sD$ is invertible on the event $\widecheck{\Omega}_\I$ for $n$ large enough. Therefore, by the resolvent expansion, on the same event, upon writing $\widehat \sE:= \sD^{-1} \sE$, we obtain that 
\begin{equation}\label{eq:cRdef}
\cR=\sD^{-1} + \cR_1+\cR_2,
\end{equation}
where
\begin{equation}\label{eq:cR1def}
\cR_1:=\cR_1(z):=\sum_{a=1}^{\mathfrak{f}} \widehat\sE^a \sD^{-1} \qquad \text{ and } \qquad \cR_2:=\cR_2(z):= \widehat \sE^{\gf+1} (\sD - \sE)^{-1},
\end{equation}
for some $\gf \in \NN$ to be determined below. Notice that $\sD^{-1} = \wh m(z) {\rm Id}$, where $\wh m(z) := m(z)/ (1- (m_n^{(\I)}(z) - m(z)) m(z))$. On the event $\widecheck \Omega_\I$ we find that $|\wh m(z)| \asymp 1$ (use that $|m(z)| \asymp 1$) for $z \in \wt{\mathbb{H}}_\kappa$. Now fix any $a \in [\gf]$ and observe that
\begin{equation}\label{eq:cR1cR2-pre1}
(\wh \sE^a \sD^{-1})_{i_1, i_2} \cdot (\cR_2)_{i_3,i_4} = \wh m(z)^{a+\gf +2} \sum_{i_0 \in \I} \Big[\sum_{\wt \uppi, \uppi_{i_0}} \prod_{i \le j \in \I} \sE_{i,j}^{h_{i,j}}\Big] \cdot (\sD - \sE)^{-1}_{i_0, i_4},
\end{equation}
where the second sum in the RHS is over all possible paths $\wt\uppi$ of  length $a$ in the complete graph over $\I$ with starting and ending points $i_1$ and $i_2$, respectively, and paths $\uppi_{i_0}$ of length $\gf +1$ in the complete graph over $\I$ with starting and ending points $i_3$ and $i_0$, respectively. Here $h_{i,j}:=h_{i,j}(\wt \uppi, \uppi_{i_0})$ denotes the number of times the edge between $i$ and $j$ is traversed by paths $\wt \uppi$ and $\uppi_{i_0}$. Notice that for any $\wt \uppi$ and $\uppi_{i_0}$ as above
\begin{equation}\label{eq:sumhij}
\sum_{i \le j \in \I} h_{i,j} = a+\gf+1.
\end{equation}
Next observe that, on the event $\widecheck \Omega_\I$
using that $|m(z)| \asymp 1$ we obtain that
\[
\max_{i,j\in [n] \setminus \I} |\cR_{i,j}^{(\I)}(z)| \lesssim 1,
\]
and hence by Ward identity (see \cite[Eqn.~(4.4)]{HKM19} we further derive that on $\widecheck \Omega_\I$, as $z \in \wt{\mathbb{H}}_\kappa$,   
\[
\bigg(\max_i \frac1n\sum_{j \notin \I}|\cR_{i,j}^{(\I)}(z)|^2\bigg)^{1/2} \bigvee \bigg(\max_j \frac1n\sum_{i \notin \I}|\cR_{i,j}^{(\I)}(z)|^2\bigg)^{1/2} = \sqrt{\frac{\max_{i \notin \I} \Im \cR_{i,i}^{(\I)}(z)}{n\upeta}} \le n^{-\vep},
\]
for any $\vep >0$ sufficiently small. Therefore, the bounds \eqref{eq:mom-prod-sum1}-\eqref{eq:mom-prod-sum2}, and an argument analogous to that in \cite[Proposition 3.2]{HKM19} yield the bound  
\begin{equation}\label{eq:E3mom2}
 \max_{i,j \in \I}\EE\Big[((\sE_{3})_{i,j})^{2h} \Big| X^{(\I)}\Big] \lesssim ((np) \wedge n^{\vep})^{-h},
\end{equation}
for any $h \in \NN$ with $h \asymp 1$ on the event $\widecheck{\Omega}_\I$. Furthermore, due to independence we have that
\begin{equation}\label{eq:E2mom}
 \max_{i,j \in \I}\EE\Big[((\sE_{2})_{i,j})^{2h}\Big| X^{(\I)}\Big] \lesssim n^{-1} \cdot (np)^{1-h} \ll (np)^{-h},
 \end{equation}
 as $p \ll 1$, again for any $h \in \NN$ with $h \asymp 1$. Thus by \eqref{eq:sumhij}, \eqref{eq:E3mom2}-\eqref{eq:E2mom}, H\"older's inequality, 
 and Theorem \ref{thm:loc-law} we deduce that 
\begin{equation}
\EE\Big[\Big(\prod_{i \le j \in \I} \sE_{i,j}^{2h_{i,j}}\Big){\bf 1}_{\widecheck \Omega_\I} \Big] = \EE\bigg[\EE\Big[\prod_{i \le j \in \I} \sE_{i,j}^{2h_{i,j}}\Big| X^{(\I)}\Big] {\bf 1}_{\widecheck \Omega_\I}\bigg] 
\lesssim ((np) \wedge n^{\vep})^{-\gf}. 
\end{equation}
Hence, by Cauchy-Schwarz inequality, \eqref{eq:cR1cR2-pre1}, and the fact that $|\wh m(z)| \lesssim 1$ on $\widecheck \Omega_\I$ and $\|\mathcal{R}(z)\|\leq \upeta$, and the lower bound $\upeta$ we further see that 
\begin{multline}\label{eq:cR1cR2-pre}
\Big|\EE\Big[ (\wh \sE^a \sD^{-1})_{i_1, i_2} \cdot (\cR_2)_{i_3,i_4} {\bf 1}_{\widecheck\Omega_{\I}}\Big]\Big| \\
\lesssim \max_{i_0 \in \I} \max_{\wt\uppi, \uppi_{i_0}} \sqrt{\EE\Big[\Big(\prod_{i \le j \in \I} \sE_{i,j}^{2h_{i,j}}\Big){\bf 1}_{\widecheck \Omega_{\I}} \Big]} \cdot\sqrt{\EE \Big[|\wh m(z)|^{2(a+\gf +2)}\|\cR(z)\| {\bf 1}_{\widecheck \Omega_{\I}}\Big]} \\\lesssim  ((np) \wedge n^{\vep})^{-\gf/2} \cdot \upeta^{-1/2} \ll (np)^{-4},
\end{multline}
for $\gf \in \NN$ sufficiently large, any $a \in [\gf]$. 
Fix this choice of $\gf$ for the rest of the proof. Since $\|u\|_1 \ll np$, the bound \eqref{eq:cR1cR2-pre} in turn implies that
\begin{multline}\label{eq:cR1cR2}
 \bigg|\EE\Big[\sum_{i_1 \ne i_2 \in [n]}\sum_{i_3 \ne i_4 \in [n]}  u_{i_1} u_{i_2} u_{i_3} u_{i_4} (\cR_{1})_{i_1,i_2}(\cR_{2})_{i_3,i_4}{\bf 1}_{\widecheck \Omega_{\I}}\Big]\bigg| \\
 \lesssim \|u\|_1^4 \max_{a \in [\gf]}\max_{\I} \Big|\EE\Big[ (\wh \sE^a \sD^{-1})_{i_1, i_2} \cdot (\cR_2)_{i_3,i_4} {\bf 1}_{\widecheck \Omega_{\I}} \Big]\Big| \to 0,
\end{multline}
as $n \to +\infty$. Note that a similar argument further yields that 
\begin{equation}\label{eq:cR1cR2-new}
\EE\Big[\sum_{i_1 \ne i_2 \in [n]}\sum_{i_3 \ne i_4 \in [n]}  u_{i_1} u_{i_2} u_{i_3} u_{i_4} (\cR_{\ell})_{i_1,i_2}(\cR_{\ell'})_{i_3,i_4} {\bf 1}_{\widecheck \Omega_{\I}} \Big] \to 0,
\end{equation}
as $n \to +\infty$, for $\ell=2$ and $\ell'=1,2$. Since $i_{2\ell-1}\ne  i_{2\ell}$ for $\ell=1,2$ and $\sD$ is a diagonal matrix we also have that $\cR_{i_{2\ell-1}, i_{2\ell}} = (\cR_1)_{i_{2\ell-1}, i_{2\ell}} + (\cR_2)_{i_{2\ell-1}, i_{2\ell}}$ (see \eqref{eq:cRdef}). Therefore, by \eqref{eq:cR1cR2}-\eqref{eq:cR1cR2-new} and recalling \eqref{eq:cR1def} we deduce that it is enough to prove the following for any $a_1, a_2 \in [\gf]$ and $k=3,4$:
\begin{equation}\label{eq:cR1cR1}
 \lim_{n \to +\infty}\EE\bigg[\sum_{i_1 \ne i_2 \in [n]}\sum_{i_3 \ne i_4 \in [n]}  u_{i_1} u_{i_2} u_{i_3} u_{i_4} \wh m(z)^{a_1+a_2+2} (\sE^{a_1})_{i_1,i_2}(\sE^{a_2})_{i_3,i_4} {\bf 1}_{\widecheck \Omega_{\I}} {\bf 1}_{|\I|=k} \bigg] =0.
\end{equation}
We will prove that 
\begin{equation}\label{eq:cR1cR1-new}
\Big|\EE\Big[ (\sE^{a_1})_{i_1,i_2}(\sE^{a_2})_{i_3,i_4} \Big| X^{(\I)}\Big] \Big| {\bf 1}_{\widecheck \Omega_{\I}} {\bf 1}_{|\I|=k} \lesssim \corEpty{(np)^{-\lfloor \frac{k}{2}\rfloor}  \cdot (n\upeta)^{-\frac12\lceil \frac{k}{2}\rceil}}\qquad \text{ for } k=3,4. 
\end{equation}
Notice that $u \in \mathcal{U}_n \subset \mathbb{S}^{n-1}$ implies that $\|u\|_1 \le n^{1/2} \wedge (s_n np)$ and hence $\|u\|_1^4 \le n \cdot (s_n np)^2$. The latter bound on $\|u\|_1$ together with \eqref{eq:cR1cR1-new}, the bound 
on $|\wh m(z)|$ on the event $\widecheck\Omega_\I$, and the lower bound on $\upeta$ for $z \in \wt{\mathbb{H}}_\kappa$ yield \eqref{eq:cR1cR1}. 

We now move on to the proof of \eqref{eq:cR1cR1-new}. This requires some careful counting arguments. We will represent the LHS of \eqref{eq:cR1cR1-new} as a sum of `weights' of a collection of graphs. It needs some notation. 

To this end, we let $\mathfrak{P}$ to be the set of all finite sequence of paths on $\I$ (self loops and backtracking allowed). For a sequence of paths $\gG \in \gP$ and $i \le j \in \I$ we set $e(i,j)$ to be the number of times the edge $(i,j)$ appears in $\gG$. By a slight abuse of notation we write $(i,j) \in \gG$ to denote that the edge $(i,j)$ belongs to the sequence of paths $\gG$. Further let ${\bm n}(\gG)$ to be the number of distinct edges in $\gG$ and write $|\gG|$ to denote the total length of the paths in $\gG$. 
Let
\[
\varpi_\ell(\gG):= \prod_{i \le j \in \I} (\sE_\ell)_{i,j}^{e(i,j)}, \qquad \ell=2,3 \text{ and } \gG \in \gP.
\]
Since $\sE=\sE_2+\sE_3$, expanding the LHS of \eqref{eq:cR1cR1-new} and using the joint independence of $\sE_2$ and $\sE_3$, conditioned on $X^{(\I)}$, we see that
\begin{equation}\label{eq:cR1cR1-new1}
\EE\Big[ (\sE^{a_1})_{i_1,i_2}(\sE^{a_2})_{i_3,i_4} \Big| X^{(\I)}\Big] = \sum_{\gG, \wt \gG} \EE\Big[\varpi_2(\wt \gG)\big|X^{(\I)}\Big] \cdot \EE\Big[\varpi_3( \gG)\big| X^{(\I)}\Big],
\end{equation}
where the sum is over those $\gG, \wt \gG \in \gP$ such that the alternating concatenation of paths in $\gG$ and $\wt \gG$ is the union of a path from $i_1$ to $i_2$ and a path from  $i_3$ to $i_4$, and $|\gG| + |\wt \gG| =a_1+a_2$. To bound the LHS of \eqref{eq:cR1cR1-new} it suffices to bound each of the summand appearing in the RHS of \eqref{eq:cR1cR1-new1}. 

The bound on the first term in the summand is easy. Indeed, by the definition of $X$, as its entries are centred, we have that 
\begin{equation}\label{eq:varpi2}
\Big|\EE\Big[ \varpi_2(\wt \gG) \Big| X^{(\I)}\Big]\Big| \lesssim \Big(\frac{1}{n}\Big)^{{\bm n}(\wt \gG)} \cdot \Big(\frac{1}{np}\Big)^{|\wt \gG|/2 -  {\bm n}(\wt \gG)} \cdot \prod_{\substack{i \le j \in \I\\ (i,j) \in \wt \gG}} {\bf 1}(e(i,j) \ge 2). 
\end{equation}

Obtaining the bound on the second term in the summand needs additional work. Turning to get such a bound, for any $\gG \in \gP$ and ${\bm \upbeta}=(\upbeta_1,\upbeta_2, \ldots, \upbeta_{2|\gG|}) \in ([n]\setminus \I)^{2|\gG|}$ we define $\gG({\bm \upbeta})$ to be a bipartite multigraph with partitions $\I$ and $[n] \setminus \I$ as follows: 
For every $k \in [|\gG|]$ replace the $k$-th edge $(i,j)$ in $\gG$ with the pair of edges $(i,\upbeta_{2k-1})$ and $(j, \upbeta_{2k})$. The resulting graph is $\gG({\bm \upbeta})$. Next for any edge $(i,\upbeta) \in \gG({\bm \upbeta})$ we further let $e(i, \upbeta)$ to be the number of times that edge appears in $\gG({\bm \upbeta})$, 
\[
e_d(i, \upbeta):= \Big| \Big\{k \in [|\gG|]: (i,i) \text{ is the } k\text{-th edge in } \gG \text{ and } \upbeta_{2k-1}=\upbeta_{2k}=\beta\Big\}\Big|, 
\]
$e_o(i, \upbeta):= e(i, \upbeta) -  2e_d(i, \upbeta)$, and $\Delta(i, \upbeta):= e(i, \upbeta)/2 -1$. Define ${\bm n}(\gG({\bm \upbeta}))$ to be the number distinct edges in $\gG({\bm \upbeta})$. Recalling the definition of $X$ and the fact that it is centred we find that
\begin{equation}\label{eq:varpi3}
\Big|\EE\Big[ \prod_{(i, \upbeta) \in \gG({\bm \upbeta})} X_{i, \upbeta}^{e_o(i, \upbeta)} \big(X_{i,\upbeta}^2 -\frac1n\big)^{e_d(i,\upbeta)}\Big]\Big| \lesssim \Big(\frac{1}{n}\Big)^{{\bm n}(\gG({\bm \upbeta}))}\cdot \Big(\frac{1}{np}\Big)^{\Delta(\gG({\bm \upbeta}))} \cdot \gI({\bm \upbeta}),
\end{equation}
where 
\begin{equation}\label{eq:gIdef}
\gI({\bm \upbeta}):= \prod_{(i,\upbeta) \in \gG({\bm \upbeta})} {\bf 1}(e_o(i,\upbeta) + e_d(i, \upbeta) \ge 2) \qquad \text{ and } \qquad \Delta(\gG({\bm \upbeta})):= \sum_{(i, \upbeta) \in \gG({\bm \upbeta)}}\Delta(i, \upbeta).
\end{equation} 
%
On the other hand, as $\widecheck{X}_\I$ is independent of $X^{(\I)}$, upon recalling the definition of $\sE_3$ we find that
\begin{equation}\label{eq:varpi3-1}
\Big|\EE\Big[ \varpi_3( \gG) \Big| X^{(\I)}\Big] \Big|\le \sum_{{\bm \upbeta}} \varpi(\gG({\bm \upbeta})) \Big|\EE\Big[ \prod_{(i, \upbeta) \in \gG({\bm \upbeta})} X_{i, \upbeta}^{e_o(i, \upbeta)} \big(X_{i,\upbeta}^2 -\frac1n\big)^{e_d(i,\upbeta)}\Big] \Big|, 
\end{equation}
where 
\[
\varpi( \gG({\bm \upbeta})):= \prod_{k=1}^{|\gG|} |\cR^{(\I)}_{\upbeta_{2k-1}, \upbeta_{2k}}|.  
\]
To obtain the necessary bound on the RHS of \eqref{eq:varpi3-1} we need to find the same when $\varpi(\gG({\bm \upbeta}))$ is summed over various subsets of ${\bm \upbeta}$'s. We will iteratively sum over different coordinates of ${\bm \upbeta}$ that are determined by a sequence of graphs described by `partitions' induced by ${\bm \upbeta}$ which we define below. For any ${\bm \upbeta}$ as above we set $\wh\gG_{0}({\bm \upbeta})$ to be the graph with vertex set ${\bm \upbeta}$ and with edge set $\{(\upbeta_{2k-1}, \upbeta_{2k}), k \in [|\gG|]\}$. Obtain $\wh\gG_{1}({\bm \upbeta})$ from $\wh\gG_{0}({\bm \upbeta})$ by deleting all self loops, removing the possible resulting isolated vertices, and keeping one edge per each set of parallel edges. 
Having defined $\wh\gG_{\ell}({\bm \upbeta})$ for some $\ell \ge 1$ we proceed as follows: If the graph is empty or all vertices are of degree one we terminate the procedure. Otherwise, consider the vertex with the largest degree (choose any if there is more than one). Remove that vertex, all edges incident to it, and all degree one neighbours of it (if any). The resulting subgraph is set to be $\wh\gG_{\ell+1}({\bm \upbeta})$. Use ${\bm \upbeta}^{(\ell)}$ to denote the vertex set of $\wh \gG_\ell({\bm \upbeta})$. Given ${\bm s}:=(s_0, s_1, \ldots, s_L)$ we let $\gJ_{{\bm s}}({\bm \upbeta})$ to be the indicator function of the subset ${\bm \upbeta}$ for which the sequence of graphs (described above) is terminated for $\ell=L$ and $ |{\bm \upbeta}^{(\ell)}|=s_\ell$ for $\ell =0,1,\ldots, L$. For $\ell \ge 0$, by an abuse of notation, we let $\gJ_{{\bm s}}({\bm \upbeta}^{(\ell)}) = \gJ_{{\bm s}}({\bm \upbeta})$.  

We claim that, on the event $\widecheck \Omega_\I$, for any indicator function $\gI({\bm \upbeta})$ we have
\begin{equation}\label{eq:varpi-bd}
\sum_{{\bm \upbeta}} \varpi (\gG({\bm \upbeta})) \gJ_{{\bm s}}({\bm \upbeta}) \gI({\bm \upbeta})\le\sum_{{\bm \upbeta}} \varpi (\gG({\bm \upbeta})) \gJ_{{\bm s}}({\bm \upbeta}) \lesssim n^{s_0} \cdot \Big(\frac{1}{n \upeta}\Big)^{s_1/4}.
\end{equation}
Since $\varpi(\cdot)$ is nonnegative  the first inequality in \eqref{eq:varpi-bd} is obvious. To prove the second inequality define the `weights' of the graphs $\{\wh \gG_\ell({\bm \upbeta})\}_{\ell =0}^L$ as follows:
\[
\wh \varpi_\ell({\bm \upbeta}^{(\ell)}):= \wh\varpi_\ell (\wh \gG_\ell({\bm \upbeta})):= \prod_{(\upbeta, \upbeta') \in \wh\gG_\ell({\bm \upbeta})} |\cR^{(\I)}_{\upbeta, \upbeta'}|. 
\]
In words, for every edge $(\upbeta, \upbeta') \in \wh \gG_0({\bm \upbeta})$ we set its weight to be $|\cR^{(\I)}_{\upbeta, \upbeta'}|$. The weight of the graph $\wh \gG_\ell({\bm \upbeta})$, $\ell \ge 0$, is set to be the product of its edge weights (note $\wh \gG_\ell({\bm \upbeta}) \subset \wh \gG_0({\bm \upbeta})$). 

Observe that for any ${\bm \upbeta}$ such that $\gJ_{{\bm s}}({\bm \upbeta})=1$, when moving from $\wh \gG_0({\bm \upbeta})$ to $\wh \gG_1({\bm \upbeta})$, the number of vertices lost is $s_0-s_1$. These are the vertices that participate only in the self loops in $\wh \gG_0({\bm \upbeta})$, and each such vertex has at most $n$ choices. Therefore, recalling that on the event $\widecheck \Omega_\I$ one has $\max_{i,j \in [n]\setminus \I} |\cR^{(\I)}_{i,j}| \lesssim 1$ we obtain that 
\begin{equation}\label{eq:varpi-bd1}
\sum_{{\bm \upbeta}^{(0)}} \wh \varpi_0({\bm \upbeta}^{(0)}) \gJ_{{\bm s}}({\bm \upbeta}^{(0)}) \lesssim n^{s_0-s_1} \sum_{{\bm \upbeta}^{(1)}} \wh \varpi_1({\bm \upbeta}^{(1)}) \gJ_{{\bm s}}({\bm \upbeta}^{(1)}).
\end{equation}
Next we aim to argue that for any $\ell=1,2\ldots, L-1$ we have that 
\begin{equation}\label{eq:varpi-bd2}
\sum_{{\bm \upbeta}^{(\ell)}} \wh \varpi_\ell({\bm \upbeta}^{(\ell)}) \gJ_{{\bm s}}({\bm \upbeta}^{(\ell)}) \lesssim \bigg(\frac{n}{(n \upeta)^{1/4}}\bigg)^{s_\ell - s_{\ell+1}} \sum_{{\bm \upbeta}^{(\ell+1)}} \wh \varpi_\ell({\bm \upbeta}^{(\ell+1)}) \gJ_{{\bm s}}({\bm \upbeta}^{(\ell+1)}).
\end{equation}
To prove \eqref{eq:varpi-bd2} we need to consider two cases. Without loss of generality let us assume that $\upbeta_1$ is the vertex with the maximum degree in $\wh \gG_\ell({\bm \upbeta})$. By definition, the maximal degree is at least two. Further assume that all its neighbours have degree at least two, and thus $s_\ell - s_{\ell+1}=1$. Pick any two such neighbours, say $\upbeta$ and $\upbeta'$. By Ward identity (see \cite[Eqn.~(4.4)]{HKM19}) and Cauchy-Schwarz inequality, on the event $\widecheck \Omega_\I$, we find that 
\[
\sum_{\upbeta_1} \prod_{(\wh \upbeta, \upbeta_1) \in \wh \gG_\ell({\bm \upbeta})} |\cR^{(\I)}_{\wh \upbeta, \upbeta_1}| \lesssim \sum_{\upbeta_1} |\cR^{(\I)}_{\upbeta, \upbeta_1}| |\cR^{(\I)}_{\upbeta', \upbeta_1}| \le \sqrt{\frac{\Im \cR^{(\I)}_{\upbeta, \upbeta}}{\upeta}} \sqrt{\frac{\Im \cR^{(\I)}_{\upbeta', \upbeta'}}{\upeta}}\lesssim \frac1\upeta \ll \bigg(\frac{n}{(n \upeta)^{1/4}}\bigg)^{s_\ell - s_{\ell+1}}, 
\]
yielding \eqref{eq:varpi-bd2}. Next consider the case when $\upbeta_1$ has $k$ neighbours of degree one, to be denoted by $\wt \upbeta_1, \wt \upbeta_2, \ldots, \wt \upbeta_k$, for some $k \ge 1$. Using Ward identity and Cauchy-Schwarz inequality again, on the event $\widecheck \Omega_\I$, we obtain that 
\begin{multline*}
\sum_{\wt\upbeta_1, \wt \upbeta_2, \ldots, \wt \upbeta_k} \sum_{\upbeta_1} \prod_{(\wh \upbeta, \upbeta_1) \in \wh \gG_\ell({\bm \upbeta})} |\cR^{(\I)}_{\wh \upbeta, \upbeta_1}| \lesssim \sum_{\upbeta_1} \prod_{i=1}^k \bigg(\sum_{\wt \upbeta_i} |\cR^{(\I)}_{\upbeta_1, \wt \upbeta_i}|\bigg) \le \sum_{\upbeta_1}  \bigg(\frac{n \Im \cR^{(\I)}_{\upbeta_1, \upbeta_1}}{\upeta}\bigg)^{k/2} \lesssim \frac{n^{k+1}}{(n \upeta)^{k/2}} \\
\le \bigg(\frac{n}{(n \upeta)^{1/4}}\bigg)^{s_\ell - s_{\ell+1}},
\end{multline*}
where in the last step we used that $s_\ell - s _{\ell +1} =k+1$ and $k \ge 1$. Thus we have \eqref{eq:varpi-bd2} also in this case. 

Finally, recalling that $\wh \gG_L({\bm \upbeta})$ is a collection of disjoint edges, denoting $\{(\wh \upbeta_i, \wt \upbeta_i), i \in [s_L/2]\}$ to be its set of edges, and applying Ward identity and Cauchy-Schwarz inequality yet again we derive that, on the event $\widecheck \Omega_\I$, 
\begin{equation}\label{eq:varpi-bd3}
\sum_{{\bm \upbeta}^{(L)}} \wh \varpi_L({\bm \upbeta}^{(L)}) \leq  \Big(\sum_{\wh \upbeta, \wt \upbeta} |\cR^{(\I)}_{\wh \upbeta, \wt \upbeta}|\Big)^{s_L/2} \lesssim \bigg(n\sqrt{\frac{n}{\upeta}}\bigg)^{s_L/2} = \bigg(\frac{n}{(n \upeta)^{1/4}}\bigg)^{s_L}.
\end{equation}
Combining the bounds \eqref{eq:varpi-bd1}, \eqref{eq:varpi-bd2}, and \eqref{eq:varpi-bd3} we obtain \eqref{eq:varpi-bd}. 

Equipped with the necessary bounds we now proceed to combine these bounds and complete the proof of this theorem. By \eqref{eq:varpi2}, \eqref{eq:varpi3}, and \eqref{eq:varpi3-1} we have
\begin{equation}\label{eq:varpi3-2}
 \Big|\EE\Big[ \varpi_2(\wt \gG) \Big| X^{(\I)}\Big]\Big| \cdot \Big|\EE\Big[ \varpi_3(\gG) \Big| X^{(\I)}\Big]\Big| \lesssim \sum_{\upgamma_0, \upgamma_1} \sum_{{\bm s}} T(\upgamma_0, \upgamma_1, {\bm s}) \cdot \wt T,
\end{equation}
where
\begin{equation}\label{eq:varpi3-3}
T(\upgamma_0, \upgamma_1, {\bm s}):= T(\upgamma_0, \upgamma_1, {\bm s}, \gG):=\Big(\frac{1}{n}\Big)^{\upgamma_0}\cdot \Big(\frac{1}{np}\Big)^{\upgamma_1} \cdot \sum_{{\bm \upbeta}} \varpi (\gG({\bm \upbeta})) \gJ_{{\bm s}}({\bm \upbeta})  \wh \gI({\bm \upbeta}),
\end{equation}
\begin{equation}\label{eq:wtT}
\wt T:= \wt T(\wt \gG):= \Big(\frac{1}{n}\Big)^{{\bm n}(\wt \gG)} \cdot \Big(\frac{1}{np}\Big)^{|\wt \gG|/2 -  {\bm n}(\wt \gG)} \cdot \prod_{\substack{i \le j \in \I\\ (i,j) \in \wt \gG}} {\bf 1}(e(i,j) \ge 2),
\end{equation}
and $\wh \gI({\bm \upbeta}):= {\bf 1}({\bm n}(\gG({\bm \upbeta}))=\upgamma_0, \Delta(\gG({\bm \upbeta}))=\upgamma_1) \cdot \gI({\bm \upbeta})$ (recall \eqref{eq:gIdef}). We consider two cases: $k=4$ and $k=3$, separately. First we consider the case $|\I|=4$. 

Recall that the concatenation of  paths alternating in $\wt \gG$ and $\gG$ yield two paths with end points $i_1$ and $i_2$, and $i_3$ and $i_4$, hereafter to be denoted by $\cP_1$ and $\cP_2$, respectively. Since $\cP_1$ and $\cP_2$ are paths and $|\I|=4$ we observe that $\deg_\cP(i)$ is odd for all $i \in \I$, where $\cP:=\cP_1 \cup \cP_2$ and $\deg_{{\cP}}(i)$ denotes the degree of $i$ in the path ${\cP}$, meant as the number of edges in $\cP$ with one endpoint being $i$. Furthermore, as $\deg_\cP(i)=\deg_{\wt \gG}(i)+\deg_\gG(i)$ we see that either $\deg_{\wt \gG}(i)$ or $\deg_\gG(i)$ is odd for all $i \in \I$. Let $\J:=\J(\wt \gG, \gG):= \{ i \in \I: \deg_{\wt \gG}(i) \text { is odd}\}$. Hence, by the pigeonhole principle there must be at least $\lceil |\J|/2\rceil$ distinct edges in $\wt \gG$ which appear an {\em odd} number of times. 
By \eqref{eq:wtT} this means that
\begin{equation}\label{eq:wtT-bd}
\wt T(\wt \gG) \le n^{-\big\lceil \frac{|\J|}{2}\big\rceil} \cdot (np)^{-\frac12\big\lceil\frac{|\J|}{2}\big\rceil}. 
\end{equation}
On the other hand, recalling the definition of $\gG({\bm \upbeta})$ we note that $\deg_{\gG}(i) = \sum_{\upbeta \in {\bm \upbeta}} e(i, \upbeta)$. This in turn yields that for any $i \in \I\setminus \J$ there must exist some $\wt \upbeta_i \in {\bm \upbeta}$ such that $e(i, \wt \upbeta_i)$ is odd and so is $e_o(i, \wt \upbeta_i)$. For ${\bm \upbeta}$ such that $\gI({\bm \upbeta})=1$ (recall \eqref{eq:gIdef}) we therefore have that $\Delta(i, \wt {\upbeta}_i) \ge 1/2$ for any $i \in \I \setminus \J$, and thus $\Delta(\gG({\bm \upbeta})) \ge \frac12 |(\I \setminus \J)|$. Further let $\wh \J:=\{\wt \upbeta_i: i \in \I \setminus \J\}$. Observe that, by definition, for any ${\bm \upbeta}$ such that $\gJ_{{\bm s}}({\bm \upbeta})=1$ we have that $s_1 \ge |\wh \J|$. 

We also claim that for any ${\bm \upbeta}$ such that $\wh \gI({\bm \upbeta})=\gJ_{{\bm s}}({\bm \upbeta})=1$ we must have that 
\begin{equation}\label{eq:gamma-s}
\upgamma_0 -s_0 \ge |(\I\setminus \J)| - |\wh \J| \ge 0. 
\end{equation} 
The second inequality is obvious by the definition of $\wt{\mathbb{J}}$.
To obtain the other inequality, recalling the definition of $\upgamma_0$ and $s_0$ for any ${\bm \upbeta}$ such that $\wh \gI({\bm \upbeta})=1$ we note that
\begin{multline*}
\upgamma_0 -s _0 = \Big(\sum_{(i, \upbeta): (i, \upbeta) \in \gG({\bm \upbeta})} 1\Big) - \Big(\sum_{\upbeta \in {\bm \upbeta}} 1\Big) = \sum_{\upbeta \in {\bm \upbeta}} \Big[ \Big(\sum_{i \in \I: (i, \upbeta) \in \gG({\bm \upbeta})} 1\Big) -1 \Big] \ge \sum_{\upbeta \in \wh \J} \Big[ \Big(\sum_{i \in \I: (i, \upbeta) \in \gG({\bm \upbeta})} 1\Big) -1 \Big]\\
\ge  \sum_{\upbeta \in \wh \J} \Big(\sum_{i \in \I\setminus \J: (i, \upbeta) \in \gG({\bm \upbeta})} 1\Big) - |\wh \J| \ge |(\I \setminus \J)| - |\wh \J|,
\end{multline*}
where the third last step follows from the fact that the summands are nonnegative. 

Using the bounds \eqref{eq:varpi-bd}, $\Delta(\gG({\bm \upbeta}) ) \ge |(\I\setminus \J)|/2$, $s_1 \ge |\wh \J|$, and \eqref{eq:gamma-s}, denoting $|\J|=\upgamma_2$, and splitting the sum in the RHS of \eqref{eq:varpi3-3} according to the value of $|\wh \J| =\wh \upgamma_2$ we deduce that
\begin{equation}\label{eq:wtTTprod}
T(\upgamma_0, \upgamma_1, {\bm s}) \cdot \wt T(\wt \gG) \lesssim n^{-|\I|} (np)^{-|\I|/2} \max_{\upgamma_2, \wh \upgamma_2: \upgamma_2 +\wh \upgamma_2 \le |\I|} \exp(F(\upgamma_2, \wh \upgamma_2)),
\end{equation}
where
\[
F(\upgamma_2, \wh \upgamma_2):= \frac{\upgamma_2}{4} \log (np) - \frac{\wh \upgamma_2}{4} \log (n \upeta) + (\wh \upgamma_2 + \upgamma_2/2) \log n
\]
Since $\wh \upgamma_2 \mapsto F(\cdot, \wh \upgamma_2)$ is increasing, for $z \in \wt{\mathbb{H}}_\kappa$, using that $|\I|=4$, we obtain from above that
\begin{multline}\label{eq:wtTTprod1}
T(\upgamma_0, \upgamma_1, {\bm s}) \cdot \wt T(\wt \gG) \lesssim n^{-4} (np)^{-2} \max_{\upgamma_2 =0}^4 \exp(F(\upgamma_2, 4 - \upgamma_2)) = (np)^{-2} \cdot (n\upeta)^{-1} \max_{\upgamma_2=0}^4 \bigg(\frac{np \cdot n \upeta}{n^2}\bigg)^{\upgamma_2/4}\\
= (np)^{-2} \cdot (n\upeta)^{-1},
\end{multline}
for all large $n$, where in the last step we used that $p \upeta \ll 1$. Since the choices of $\upgamma_0, \upgamma_1, {\bm s}, \wt \gG$, and $\gG$ are $O(1)$ the bound together with \eqref{eq:cR1cR1-new1} and \eqref{eq:varpi3-2} yield \eqref{eq:cR1cR1-new} for $k=4$.  

It now remains to consider the case $k=3$. he proof splits into a few sub cases. 

Since the concatenation of  paths alternating in $\wt \gG$ and $\gG$ yield two paths with end points $i_1$ and $i_2$, and $i_3$ and $i_4$, we see that it must contain two distinct edges, to be denoted by $e_1$ and $e_2$, that are not self loops. 

Therefore, if $e_1, e_2 \in \wt \gG$ then $\wt T \lesssim n^{-2}$.
As $T(\upgamma_0, \upgamma_1, {\bm s}) \lesssim 1$ (see \eqref{eq:varpi3-3}, use \eqref{eq:varpi-bd} and the fact that $\upgamma_0 \ge s_0$) we get the desired bound.

Next we assume $e_1=(j_1, j_2) \in \gG$ and $e_2  \in \wt \gG$. By definition there must exist $\upbeta_1, \upbeta_2$ such that $(j_1, \upbeta_1), (j_2, \upbeta_2) \in \gG({\bm \upbeta})$. 
%
%
If $\upbeta_1\ne \upbeta_2$ then $s_1\ge 2$. Observe that either there is an edge  that appears in $\wt\gG$ an odd number of times, or else repeating the proof for the case $k=4$ it follows that $\upgamma_1 \ge 1$. By \eqref{eq:wtT}, in the first sub case we have $\wt T \lesssim n^{-1} (np)^{-1/2}$, while in the second sub case $\wt T \lesssim n^{-1}$. Hence, from \eqref{eq:varpi3-3}, \eqref{eq:varpi-bd}, and \eqref{eq:gamma-s} the desired bound follows. 

If $\upbeta=\upbeta_2$, as $j_1 \ne j_2$, from the definition of $\gG({\bm \upbeta})$ and $\wh\gG_0({\bm \upbeta})$ it follows that $\upgamma_0 \ge s_0+1$ In this case, using the bound $\wt T \lesssim n^{-1}$, and the bounds \eqref{eq:varpi3-3} and \eqref{eq:varpi-bd} we obtain the required bound. 


Now we assume $e_1, e_2 \in \gG$. 
Similar to $e_1$, as above, we can associate some $\upbeta_3$ and $\upbeta_4$ to $e_2$. If $\upbeta_1 \ne \upbeta_2$ and $\upbeta_3 \ne \upbeta_4$ then $s_1 \ge 4$. If $\upbeta_\ell, \ell=1,\ldots, 4$, are all same then, as three distinct vertices participate in $e_1$ and $e_2$, it follows that $\upgamma_0 -s_0 \ge 2$. Moreover, for $\upbeta_1 \ne \upbeta_2$ and $\upbeta_3 = \upbeta_4$ we have $s_1 \ge 2$ and $\upgamma_0 -s_0 \ge 1$. Furthermore, $\upgamma_1 \ge 1$. Therefore, in all these cases, from \eqref{eq:varpi3-3} and \eqref{eq:varpi-bd}, we find that $T(\upgamma_0, \upgamma_1, {\bm s}) \lesssim (np)^{-1} (n \upeta)^{-1}$.


Hence combining all sub cases we have the required bound for $k=3$ and thus the proof of the theorem for $X$ is finally complete. 

The reader can note that the above proof does not use the product structure of the entries of $X$. Therefore, repeating the exact same arguments one derives the required result for ${\rm Adj}_o/\sqrt{np}$. We omit further details.
\end{proof}

\subsection{Proof of Theorem \ref{thm:loc-law}} The required result for ${\rm Adj}_o/\sqrt{np}$ follows from \cite[Theorem 2.2]{HKM19}. The proof for $X$ follows a similar strategy.
The difference is that we need to use the concentration bounds proved in Lemma \ref{lem:loc-law-con-bd} instead of the concentration bounds in \cite{HKM19}. We provide a short proof for the convenience of the reader.

Set $\wt M:= \max\{k \in \NN: 1- kn^{-3} \ge n^{-1+\vep}/2\}$. Fix $E_0 \in \RR$ such that $|E_0\pm 2| \ge \kappa$ and $|E_0| \le \kappa^{-1}$. Let $\upeta_k:=1-kn^{-3}$ and define $z_k:= E_0+{\rm i} \upeta_k$ for $k=0,1,\ldots, \wt M$. Since the maps $z \mapsto \cR^{(\T)}_{i,j}(z, X)$ and $z \mapsto m(z)$ are $n^{2(1-\vep)}$- Lipschitz functions on the set $\mathbb{H}_{\kappa, \vep}$ we observe that it suffices to prove that for any $M>0$ there exists $C<+\infty$ such that
\begin{equation}\label{eq:loc-law}
\PP\bigg(\max_{i, j \in [n]\setminus \T} |\cR^{(\T)}_{i,j}(z_k, X) - m(z_k) \updelta_{i,j}| \ge C \Upsilon_n \bigg) \le n^{-M},
\end{equation}
for all large $n$ and $k =0,1,\ldots, \wt M$, uniformly for all $\T \subset [n]$ such that $|\T| \lesssim 1$ and $E_0 \in \RR$ such that $|E_0\pm 2| \ge \kappa$ and $|E_0| \le \kappa^{-1}$. Since, for  $i,j, k \notin \T$ and $i,j \ne k$
\begin{equation}\label{eq:schurT}
\cR_{i,j}^{(\T \cup\{k\})}= \cR_{i,j}^{(\T)} - \frac{\cR_{i,k}^{(\T)} \cR_{k,j}^{(\T)}}{\cR_{k,k}^{(\T)}}, 
\end{equation}
(see \cite[Lemma 3.5]{BGK17}) and $|m(z)| \asymp 1$ on $\mathbb{H}_{\kappa, \vep}$ (see \cite[Lemma 3.2]{EKHY13}) we deduce that it is enough to prove \eqref{eq:loc-law} for $\T=\emptyset$. 

Turning to prove \eqref{eq:loc-law} when $\mathbb{T} = \emptyset$, using the Schur complement formula (see \cite[Eqn.~(4.1)]{BGK17}) it follows that 
\begin{multline}\label{eq:loc-law-schur}
\frac{1}{\cR_{i,i}} = z - X_{i,i} -\sum_{k, \ell \ne i} X_{i,k} \cR_{k, \ell}^{(i)} X_{\ell, i}\\
= z - m_n(z) + (m_n(z) - m_n^{(i)}(z)) - X_{i,i} - \sum_{k \ne \ell \in [n]\setminus \{i\}} X_{i,k} \cR_{k, \ell}^{(i)} X_{\ell, i} - \sum_{k \ne i} (X_{i,k}^2-n^{-1}) \cR_{k,k}^{(i)}, 
\end{multline}
where we recall that $m_n^{(i)}(z)= n^{-1} {\rm Tr} \, \cR^{(i)}(z)$. Multiplying both sides of \eqref{eq:loc-law-schur} by $\cR_{i,i}$ and taking an average over $i \in [n]$ we obtain from above 
\begin{equation}\label{eq:apprx-fxd-pt}
\mathfrak{p}_z(m_n(z)) = \frac1n\sum_{i=1}^n \wh Y_i(z) \cR_{i,i}(z),
\end{equation}
where 
\[
\wh Y_i:=\wh Y_i(z):= (m_n(z) - m_n^{(i)}(z)) - X_{i,i} - \sum_{k \ne \ell \in [n]\setminus \{i\}} X_{i,k} \cR_{k, \ell}^{(i)} X_{\ell, i} - \sum_{k \ne i} (X_{i,k}^2-n^{-1}) \cR_{k,k}^{(i)}
\]
and $\mathfrak{p}_z(w):=1 -zw +w^2$ for $w \in \CC$. For convenience further let, for $k=0,1,\ldots, \wt M$,
\[
\Upomega_k:=\Big\{|m_n(z_k) - m(z_k)| \le \frac{C}{4}\Upsilon_n\Big\}, \quad \wh \Upomega_k:=\bigcap_{i=1}^n \Upomega_{i,k}, \quad \text{ and } \quad \wh \Upomega_{i,k}:=\big\{\Upgamma_i(z_k) \le \frac32\big\}, 
\]
where $\Upgamma_i(z):= \max_{\ell, \ell' \ne i} |\cR^{(i)}_{\ell, \ell'}(z)|$. We also set $\Upgamma'(z):= \max_{i \in [n]} |\cR_{i,i}(z)|$ and $\Upomega_k':= \{\Upgamma'(z_k) \le 3/2\}$. 
Note that by Assumption \ref{hypo:subg-entry}, for any $M>0$ there exists some $C<+\infty$ such hat
\[
\max_{i \in [n]} |X_{i,i}| \le \frac{\sqrt{C}}{8} \sqrt{\frac{\log n}{np}},
\]
with probability at least $1- n^{-4M}$. Hence on the event $\wh\Upomega_k$, by Lemma \ref{lem:loc-law-con-bd}(b) and (c), the Ward identity, and the fact that $|m_n^{(i)}(z_k)-m_n(z_k)| \le (n \Im z_k)^{-1}$ we obtain that
\begin{equation}\label{eq:whYi-bd}
\max_{i \in [n]}|\wh Y_i(z_k)| \le \frac{\sqrt{C}}{4} \Upsilon_n,
\end{equation}
 for some sufficiently large constant $C$, with probability at least $1- n^{-2M}$, i.e.~the probability of the intersection of $\wh \Upomega_k$ and the event such that \eqref{eq:whYi-bd} does not hold is at most $n^{-2M}$. Thus, by \eqref{eq:apprx-fxd-pt} on the event $\wh \Upomega_k \cap \Upomega_k'$ the event $\wt \Upomega_k$ holds with probability at least $1- n^{-2M}$, where
 \[
 \wt \Upomega_k:= \big\{|\mathfrak{p}_{z_k}(m_n(z_k))| \le \frac{\sqrt{C}}{2} \Upsilon_n \big\}, \qquad k=0,1,2,\ldots, \wt M.
 \]
 Observe that $\mathfrak{p}_{z}(m_n(z)) = (w_+(z) - m_n(z))(w_-(z) - m_n(z))$, where $w_\pm(z):= \frac12(z \pm \sqrt{z^2-4})$ are the roots of quadratic polynomial $\mathfrak{p}_z$. As $|w_+(z) - w_-(z)| \ge \sqrt{\kappa}$ for any $z \in \mathbb{H}_\kappa$ we find that on the event $\wt \Upomega_k$ one has that 
 \begin{equation}\label{eq:mn-mind}
 |m_n(z_k) - w_-(z_k)| \wedge  |m_n(z_k) - w_+(z_k)| \le \frac{C}{2}\Upsilon_n,
 \end{equation}
 where we enlarge $C$ if necessary. Notice that, by the trivial bound $\|\cR(z_0)\| \le \upeta_0^{-1}=1$ we have $\PP(\widehat \Upomega_0 \cap \Upomega_0')=1$. As $\Im m_n(z_0) \le 0$, $\Im w_+(z_0) \ge \frac12 \Im z_0 \ge \frac12$, and $w_-(z)=m(z)$ we deduce from above that $\wt \Upomega_0 \subset \Upomega_0$. Thus we have shown that $\PP(\Upomega_0^c) \le n^{-2M}$. 

We next proceed to argue that on the event $\Upomega_k$ the required bounds on the individual entries of $\cR(z_k)$ (recall \eqref{eq:loc-law}) hold. To this end, by \eqref{eq:loc-law-schur} and the fact that $m(z_k)+m(z_k)^{-1}=z$ we see that
\[
1 - m(z_k)^{-1} \cR_{i,i}(z_k)= 1 - (z_k- m(z_k))\cR_{i,i}(z_k) = (m(z_k)- m_n(z_k))\cR_{i,i}(z_k) + \wh Y_i(z_k) \cR_{i,i}(z_k).
\]
Therefore, using $|m(z_k)| \le 1$ and \eqref{eq:whYi-bd} we obtain that 
\[
\max_{i \in [n]} |\cR_{i,i}(z_k) - m(z_k)| \le \frac{3}{2} \big(|m(z_k) - m_n(z_k)| + \max_{i \in [n]}  |\wh Y_i(z_k)|\big) \le C \Upsilon_n,
\]
on the event $\Upomega_k \cap \wh\Upomega_k \cap \Upomega_k'$ with probability at least $1- n^{-2M}$. To treat the off-diagonal entries we use the identity 
\[
\cR_{i,j}(z) = - \cR_{j,j}(z) \sum_{k \ne j} \cR_{i,k}^{(j)}X_{k,j}, \quad i \ne j.
\]
This identity together with Lemma \ref{lem:loc-law-con-bd}(a) and the Ward identity imply that the desired bound on the off-diagonal entries hold with probability at least $1- n^{-2M}$ on the event $\wh \Upomega_k \cap \Upomega_k'$. Now applying the union bound we derive \eqref{eq:loc-law} for $k=0$.

To complete the proof we do an induction on $k$. So, assume that the bound \eqref{eq:loc-law} holds for $k=0,1,2,\ldots, k_0$ for some $k_0 < \wt M$ and let $\Upomega_k^*$ be the complement of the event on the LHS of \eqref{eq:loc-law}. Since the maps $z \mapsto \cR_{i,j}(z)$ and $z \mapsto m(z)$ are $n^{2(1-\vep)}$- Lipschitz functions on $\mathbb{H}_{\kappa, \vep}$, using $|m(z)| \le 1$ and \eqref{eq:schurT} we find that $\Upomega_{k_0}^* \subset  \wh \Upomega_{k_0+1} \cap \Upomega_{k_0+1}'$. Hence,  we find that on $\Omega_{k_0}^*$ the event $\wt \Omega_{k_0+1}$, and thus \eqref{eq:mn-mind} for $k=k_0+1$, hold with probability at least $1 - n^{-2M}$. The Lipschitz continuity of $m_n$ and $m$ further implies  on the event $\Upomega_{k_0}^*$, that $|m_n(z_{k_0+1}) - m(z_{k_0+1})| = o(1)$, which in turn implies that $|m_n(z_{k_0+1}) - w_+(z_{k_0+1})| \gtrsim  1$. As we saw, this entails that on the same event $\Upomega_{k_0}^*$ the event $\Omega_{k_0+1}$ holds with probability at least $1-n^{-2M}$. As a consequence  the desired bounds on the entries of $\cR(z_{k_0+1})$ hold on $\Omega_{k_0}^*$ with high probability. 
This completes the induction and the proof of the theorem. 
\qed



\subsection{Isotropic local law for the centred adjacency matrix} In this short section we prove Proposition \ref{prop:conc-loc-law-o}. We need the following isotropic law. 


\begin{The}\label{thm:loc-law-iso-adj}
Assume $\log(np) \gtrsim \log n$. Let $\mathcal{U}_n$ be as in Theorem \ref{thm:loc-law-iso}. Fix $\kappa, \vep \in (0,1)$. Then 
%

\[
\limsup_{n \to +\infty} \sup_{z \in \wh{\mathbb{H}}_\kappa} \sup_{v \in \mathbb{S}^{n-1}} \sup_{u \in {\mathcal{U}}_{n}}\big| \EE \langle u, \cR(z,{\rm Adj}_o/\sqrt{np}) v \rangle - m(z) \langle u, v\rangle\big|=0,
\]
where $\wh{\mathbb{H}}_\kappa:= \{z \in \mathbb{H}_\kappa: s_n \vee (np)^{-1+\vep} \le \Im z \le \kappa^{-1}\}$.
\end{The}


Notice the differences between Theorems \ref{thm:loc-law-iso} and \ref{thm:loc-law-iso-adj}. Theorem \ref{thm:loc-law-iso-adj} allows us consider $\langle u, \cR(z) v \rangle$ without putting any restriction on $\|v\|_1$. However, it imposes a stronger lower bound on $p$.

\begin{proof}[Proof of Proposition \ref{prop:conc-loc-law-o}]
For $u \in\mathbb{R}^n$, let $f_{\lambda,u}$ defined on $\mathcal{H}_n^{\lambda+\delta}$ be as in the proof of Proposition \ref{prop:conc-loc-law} and $\wt{f}_{\lambda,u}$ its convex and $\delta^{-2}\|u\|^2$-Lipschitz extension to $\mathcal{H}_n$. By \cite[Theorem 8.6]{BLM} for any $t>0$,
\begin{equation}\label{eq:conc-loc-law-o-1}
\PP\Big(\big| \widetilde{f}_{\lambda, w}({\rm Adj}_o/\sqrt{np}) - \EE [\widetilde{f}_{\lambda, w}({\rm Adj}_o/\sqrt{np})]  \big| \ge  t \Big) \leq \exp \big(-{c\delta^4 t^2 np}\big),
\end{equation}
uniformly for any $w \in 2 \mathbb{B}^n$, where $c>0$ is some numerical constant. For $u,v\in \mathbb{B}^n$ and $K\in\mathcal{H}_n$, let $\wt{g}_{\lambda,u,v}(K) = \wt{f}_{\lambda,u+v}(K) - \wt{f}_{\lambda, u-v}(K)$. By the polarization identity, $\wt{g}_{\lambda, u,v}(K)= 4\langle u, (\lambda-K)^{-1}v\rangle$ for any $K\in \mathcal{H}_n^{\lambda+\delta}$.  Arguing as in the proof of \eqref{controlmean} we find that
\begin{equation}\label{eq:conc-loc-law-o-2} 
\sup_{v \in \mathbb{S}^{n-1}} \sup_{u\in \mathcal{U}_n} |\EE \wt{g}_{\lambda, u,v}({\rm Adj}_o/\sqrt{np}) - 4m(\lambda) \langle u,v\rangle|=o(1).
\end{equation}
By a union bound, \eqref{eq:conc-loc-law-o-1} entails a similar concentration inequality for $\wt{g}_{\lambda,u,v}$ instead of $\wt{f}_{\lambda,w}$. Using the same net argument as in the proof of proof of Proposition \ref{prop:conc-loc-law}, we finally get the claim.
%
\end{proof}

\begin{proof}[Proof of Theorem \ref{thm:loc-law-iso-adj}]
The proof is a minor modification of that of Theorem \ref{thm:loc-law-iso}. We reuse the bounds obtained during the proof of Theorem \ref{thm:loc-law-iso}. Proceeding as in the steps leading to \eqref{eq:cR-off-diag} we find that
\[
\sup_{v \in \mathbb{S}^{n-1}} \sup_{u \in \mathcal{U}_n}  \EE\Big[\sum_{i_1 \ne i_2 \in [n]}\sum_{i_3 \ne i_4 \in [n]}  u_{i_1} v_{i_2} u_{i_3} v_{i_4} \cR_{i_1,i_2}(z) \cR_{i_3,i_4}(z){\bf 1}_{\widecheck \Omega_{\I}}\Big]\to 0, \quad \text{ as } n \to +\infty,
\]
where $\cR(z)= \cR(z, {\rm Adj}_o/\sqrt{np})$. Since $\log(np) \gtrsim \log n$, one can choose $\gf$ large enough so that \eqref{eq:cR1cR2-pre} holds with $(np)^{-4}$ replaced by $n^{-3}$. Therefore, arguing as in \eqref{eq:cR1cR2}, as $\|u\|_1 \vee \|v\|_1 \le n^{1/2}$, we find that 
\[
\sup_{v \in \mathbb{S}^{n-1}} \sup_{u \in \mathcal{U}_n} \bigg|\EE\Big[\sum_{i_1 \ne i_2 \in [n]}\sum_{i_3 \ne i_4 \in [n]}  u_{i_1} v_{i_2} u_{i_3} v_{i_4} (\cR_{1})_{i_1,i_2}(\cR_{2})_{i_3,i_4}{\bf 1}_{\widecheck \Omega_{\I}}\Big]\bigg| \to 0, \quad \text{ as } n \to +\infty. 
\]
Hence, similar to \eqref{eq:cR1cR1}, we deduce that it is enough to prove that for any $a_1, a_2 \in [\gf]$ and $k=3,4$:
\[
 \lim_{n \to +\infty}\EE\bigg[\sum_{i_1 \ne i_2 \in [n]}\sum_{i_3 \ne i_4 \in [n]}  u_{i_1} v_{i_2} u_{i_3} v_{i_4} \wh m(z)^{a_1+a_2+2} (\sE^{a_1})_{i_1,i_2}(\sE^{a_2})_{i_3,i_4} {\bf 1}_{\widecheck \Omega_{\I}} {\bf 1}_{|\I|=k} \bigg] =0.
\]
Since $\|u\|_1 \le s_n np$ and $\|v\|_1 \le n^{1/2}$, for $z \in \wh{\mathbb{H}}_\kappa$ the above follows from the bound \eqref{eq:cR1cR1-new}. We omit the details. This completes the proof. 
\end{proof}

\appendix

\section{Proofs of Lemmas \ref{lem:hL-prop}, \ref{enhancedconcentration}, and \ref{lem:Lam-bd}}\label{app:aux-res}

\begin{proof}[Proof of Lemma \ref{lem:hL-prop}]
It is trivial to note that $(-\infty, 0] \subset \mathcal{D}_L$. Fix any $\theta \in (0, 1/(2\beta))$ and $\wh R >0$. Let $\Gamma$ be an independent standard Gaussian random variable. Observe that, by Assumption \ref{hypo:subg-entry}, 
\begin{multline}\label{eq:DL1}
L(\theta) = \EE \big[\exp({\sqrt{2\theta} G_{1,2} \Gamma})\big] = \EE\big[ \exp\big({\Lambda_{1,2}(\sqrt{2\theta} \Gamma)}\big)\big] \\
\le \exp(\upsigma \wh R^2) + \EE \big(\Car_{|\Gamma|\geq \wh R/\sqrt{2\theta}} e^{\beta(1+o_{\wh R}(1)) \theta \Gamma^2}\big) < + \infty,
\end{multline}
where the last step follows from the fact that, as $ \theta < 1/(2\beta)$, one can choose $\wh R $ sufficiently large so that $\beta \theta (1+o_{\wh R}(1)) <1 /2$. On the other hand, fixing any $\theta > 1/(2\beta)$ (assume $\beta >0$) and arguing as in \eqref{eq:DL1} we obtain that
\begin{equation}\label{eq:DL2}
L(\theta) \ge \EE \big(\Car_{|\Gamma|\geq \wh R/\sqrt{2\theta}} e^{\beta(1-o_{\wh R}(1)) \theta \Gamma^2}\big) =\frac{2}{\sqrt{2\pi}} \int_{\wh R/\sqrt{2\theta}}^{+\infty} \exp\Big(\Big\{\beta(1-o_{\wh R}(1)) \theta -\frac12\Big\}x^2 \Big) dx= + \infty, 
\end{equation}
where the last step follows upon choosing $\wh R$ large so that $\beta(1-o_{\wh R}(1)) \theta >1/2$. Combining \eqref{eq:DL1}-\eqref{eq:DL2} we have part (a). The proof of part (b) being standard is omitted. 

Since $L$ is convex and  infinitely differentiable on ${\rm Int}(\mathcal{D}_L)$, fixing $\theta, \theta_0 < 1/(2\beta)$ we find that 
\[
L(\theta) \le L(\theta_0) + (\theta - \theta_0) L'(\theta).
\]
Now, letting $\theta \uparrow 1/(2\beta)$ above part (c) follows. Turning to prove (d), provided $x_\star < + \infty$, by parts (a) and (c) we find that
\begin{equation}\label{eq:hL1}
h_L(x) = \sup_{\theta \in(-\infty, 1/(2\beta)]} \{ \theta x - L(\theta)+1\}, \qquad x \in \RR. 
\end{equation}
By parts (b) and (c), for any $x \ge x_\star$, the map $\theta \mapsto \theta x - L(\theta)$ is strictly increasing on $(-\infty, 1/(2\beta))$.  Therefore, by \eqref{eq:hL1} and part (c) we have part (d).

It remains to prove part (e). 
Upon denoting $\partial(L-1)(\cdot)$ to be the sub differential of $L(\cdot)-1$ and using the fact that $\lim_{\theta \uparrow 1/(2\beta)}L'(\theta)=x_\star$ we note that 
\[ \partial (L-1)(\theta) = \begin{cases}
\{L'(\theta)\} & \text{ if } \theta <1/(2\beta), \\
[x_\star,+\infty) & \text{ if } \theta = 1/(2\beta), \\
\emptyset & \text{ if } \theta >1/(2\beta).
\end{cases}\]
From \cite[Corollary 23.5.1]{Rockafeller} we know that the sub differentials of $L-1$ and $h_L$ are inverse from each other in the sense of multivalued functions. 
As $L'$ is strictly increasing and strictly convex on ${\rm Int}(\mathcal{D}_L)$ we therefore have from above that the function $h_L'$ exists on $(0, x_\star)$. Moreover, on the same interval $h_L'$ is the inverse of the map $L'$ restricted to ${\rm Int}(\mathcal{D}_L)$, and therefore is strictly increasing, strictly concave, and differentiable on $(0, x_\star)$. By part (d) this implies that $h_L'$ is differentiable except possibly at $x_\star$. 

Furthermore, $h_L'(x)=\theta(x)$ for $x \in (0, x_\star)$ where $\theta(x) \in (-\infty, 1/(2\beta))$ is the unique solution of the equation $L'(\theta(x))=x$. Using that $L'$ is increasing on ${\rm Int}(\mathcal{D}_L)$ it can be argued that $\theta(x) \uparrow 1/(2\beta)$ as $x \uparrow x_\star$ which together with the fact that $h_L$ is convex imply that the left derivative of $h_L$ at $x=x_\star$ is bounded below by $1/(2\beta)$. On the other hand, by part (d) $h_L'(x)=1/(2\beta)$ for $x > x_\star$ and so the right derivative at $x=x_\star$ is bounded above by $1/(2\beta)$. Thus $h_L$ is differentiable on $(0, +\infty)$.  As $h_L'$ is strictly increasing and concave on $(0, x_\star)$, $h_L'(x)=1/(2\beta)$, and $h_L'$ is continuous on $(0, +\infty)$ it is straightforward to see that it is concave and non decreasing on entire $(0, +\infty)$.

To prove that $h_L$ is  increasing on $(1, +\infty)$ and decreasing on $(0,1)$ we recall that $h_L'(x)=\theta(x)$ for $x \in (0, x_\star)$ and $h_L'(x)=1/(2\beta)$ for $x \ge x_\star$. The requirement $L'(\theta(x))=x$ and the fact that $\EE G_{1,2}^2 =1$ implies that $\theta(x)$ is either positive or negative depending on whether $x  \in(1,x_\star)$ or $x <1$ proving the desired property for $h_L$. Finally, as $L'(0)=1$, we have $h_L'(1)=0$ which completes the proof of the lemma.
\end{proof}

\begin{proof}[Proof of Lemma \ref{enhancedconcentration}]
By \cite[Theorem 15.5]{BLM}, we know that for any $q\geq 2$, $\| (Z- \EE Z)_+\|_q \leq \sqrt{2\kappa q} \| \sqrt{V^+}\|_q$,
where $\kappa$ is some positive numerical constant.  
Using the fact that $V^+\leq \veps^2$ on $E_\veps$, and that $V^+\leq 1$ on $E_\veps^c$, we get
\[ \| \sqrt{V^+} \|_q \leq \| \Car_{E_\veps} \sqrt{V^+} \|_q+\| \Car_{E_\veps^c} \sqrt{V^+} \|_q \leq \big( \veps +\PP(E_\veps^c)^{1/q}\big).\]
For any $q\leq q_0$, $\PP(E_\veps^c)^{1/q} \leq \veps$, and therefore $\| \big(Z - \EE Z\big)_+ \|_q \leq  2\sqrt{2\kappa q} \veps$. 
By Markov inequality, we have for any $t>0$ and $2\leq q \leq q_0$,
\[  \PP\big( Z - \EE Z >t \big) \leq  (c\veps^2q )^{q/2} t^{-q},\]
where $c$ is some positive numerical constant. If $2ce\veps^2 \leq t^2 \leq c e\veps^2 q_0$ then one applies the above inequality for the optimal moment order $q = (ce\veps^2)^{-1}t^2$ which indeed gives \eqref{eq:enhancedconcentration}. 
This concludes the proof.
\end{proof}

\begin{proof}[Proof of Lemma \ref{lem:Lam-bd}]
The first claim can be read off \cite[Lemma 24]{NRS} using the Legendre duality. 
By definition, for any $x\in (-p,1-p)$, 
\[ \Lambda_p^*(x) =(x+p) \log \frac{x+p}{p} + (1-x-p) \log \frac{1-x-p}{1-p}.\] 
Thus, for any $x\in (-p,1-p)$, 
\[ (\Lambda^*_p)''(x) =  \frac{1}{x+p} + \frac{1}{1-x-p}, \ ( \Lambda_p^*)'''(x) = \frac{1}{(1-x-p)^2} - \frac{1}{(x+p)^2}.\]
This yields that $(\Lambda_p^*)''$ is non increasing on $[-p,1/2-p]$. 
Therefore noting that $\Lambda_p^*(0)=(\Lambda_p^*)'(0)=0$, we get  for any $-p \leq x \leq Cp$, 
\[ 
\Lambda_p^*(x) \geq (\Lambda^*_p)''(Cp) \frac{x^2}{2} \ge \frac{x^2}{2(C+1)p}.
\]
For the third claim, the concavity of the $\log$ yields for any $x\in (-p,1-p)$,
\[ \Lambda_p^*(x) \geq x \log \frac{x}{p}  -(1-x-p)\frac{x}{1-p} \geq x \log \frac{x}{p} - x \geq \Big( 1 - \frac{1}{\log C} \Big) x \log \frac{x}{p}.\]  
where the last step holds for $x\geq Cp$, with $C\geq e$. 
\end{proof}

\section{Measurable selection of sub differentials}
\begin{Lem}\label{selection} 
Let $f : \RR^{N} \mapsto \RR$ be a convex function and $F\subset \RR^N$ a closed set such that for any $x\in \RR^N$, $F\cap \partial f(x) \neq \emptyset$. Then, there exists $\zeta : \RR^N \mapsto \RR^N$ a Borel function such that for any $x\in \RR^N$, $\zeta(x)\in \partial f(x) \cap F$.
\end{Lem}

\begin{proof}
As $\partial f(x)$ is closed for any $x\in \RR^{N}$ (see \cite[Corollary 4.7]{Clarke}), it follows that $x \mapsto \partial f(x) \cap F$ is a mapping with values in the set of nonempty closed subsets of $\RR^N$.  We will show that for any open set $U$ of $\RR^N$, $\{ x\in \RR^N : \partial f(x)\cap F \cap  U \neq \emptyset\}$ is a Borel subset. Using \cite[Theorem 6.9.3]{Bogachev} this will give the claim. Since every open set of $\RR^N$ can be written as a countable union of compact subsets, it is sufficient to prove that for any compact subset $K\subset \RR^N$, $g(K) := \{ x\in \RR^N : \partial f(x)\cap K \neq \emptyset\}$ is a Borel set. Let $\phi(x,\zeta) := \inf_{v\in \mathbb{S}^{N-1}} \big( f'(x;v) - \langle \zeta,v\rangle\big)\in\RR\cup\{-\infty\}$ for any $\zeta,x\in \RR^N$, where $f'(x;v)$ denotes the directional derivative of $f$ at $x$ in the direction $v$. As $f$ is convex and finite valued on $\RR^N$ it follows from \cite[Corollary 2.35]{Clarke} that $f$ is locally Lipschitz from which we get that $v \mapsto f'(x;v)$ is continuous on $\RR^N$.
Therefore so is the map
$v\mapsto f'(x;v)-\langle \zeta, v\rangle$ 
for any $x\in \RR^N$. Thus the infimum defining $\phi(x,\zeta)$ can be taken in a dense countable subset of $\RR^N$. Since $x\mapsto f'(x;v)$ is measurable for any $v\in \RR^N$, it yields that $x\mapsto \phi(x,\zeta)$ is measurable for any $\zeta$. Moreover, note that $\zeta \mapsto \phi(x,\zeta)$ is continuous for any $x\in\RR^N$. Now, using that $f'(x;.)$ is homogenous and \cite[Proposition 4.3]{Clarke}, we can write $\partial f(x) = \{\zeta \in \RR^N : \phi(x,\zeta) \geq 0\}$ for any $x\in\RR^N$. Taking $D$ to be a countable dense subset of $K$, it follows from the stated properties of $\phi$ that 
\[ g(K) = \bigcap_{k\geq 1}\bigcup_{\zeta \in D} \{x : \phi(x,\zeta) >-1/k\},\]
which shows that $g(K)$ is a Borel set and therefore the proof ends. 
\end{proof}

\section{Upper tail large deviations of sum of independent variables}
The following proposition is well known in the literature. For the sake of completeness, we state it in the generality that we need in this article and provide a short proof.

\begin{Pro}\label{UTindep}
Fix $L \in \NN$. For any $\delta>0$ let $(X_{n,\delta}^{(k)})$, $k \in [L]$ be sequences of nonnegative random variables. Assume that there exist $I_k$, $k \in [L]$ non decreasing good rate functions on $\RR_+$ and an increasing positive sequence $(v_n)_{n\in \NN}$, diverging to infinity, such that for any $k \in [L]$ and $t>0$, 
\[ \limsup_{\delta \to +\infty} \limsup_{n\to +\infty }\frac{1}{v_n} \log \PP\big( X_{n,\delta}^{(k)} >t \big) \leq -I_k(t).\]
Then, for any deterministic sequence $(\kappa_n)_{n\in \NN}$ in a compact metric space $(\mathcal{K},d)$, any continuous functions $f_k : \mathcal{K} \to [0,+\infty)$, $k \in [L]$, and $t>0$, 
\[\limsup_{\delta \to +\infty} \limsup_{n\to +\infty} \frac{1}{n} \log \PP\big(\sum_{k=1}^L f_k(\kappa_n) X_{n,\delta}^{(k)}
>t\big) \leq - \Phi(t),\]
where $J$ is the lower semicontinuous function defined by
\[ \Phi(t) := 
\inf\big\{\sum_{k=1}^L I_k(x_k)
: \sum_{k=1}^L f_k(\kappa) x_k
\ge t, x_k\geq 0, k \in [L], \kappa \in \mathcal{K} \big\}.\]
\end{Pro}

\begin{proof}
For convenience we only prove the case $L=2$. The general case is a straightforward extension. Let $0<t<M$. For any $\veps>0$, let $\veps=t_{1,\veps}<t_{2,\veps}<\cdots <t_{m,\veps} = M$ be a sequence of real number such that $\max_i (t_{i+1,\veps}-t_{i,\veps}) \leq \veps$. One can find such a mesh with $m\leq M/\veps+1$. As $f_1$ and $f_2$ are uniformly continuous, we can find a $\delta>0$ such that, for any  $\kappa,\kappa' \in \mathcal{K}$, 
\begin{equation} \label{unifcont} d(\kappa,\kappa') <\delta \Longrightarrow |f_1(\kappa)-f_1(\kappa')|<\veps,  \ |f_2(\kappa)-f_2(\kappa')|<\veps, \end{equation}
Let then $s_1,\ldots,s_q \in \mathcal{K}$, $q \in \NN$, such that $\mathcal{K}$ is covered by the open balls $B_d(\kappa_i,\delta)$ of center $\kappa_i$ and radius $\delta$ for the metric $d$ on $\mathcal{K}$.  Let $L>M \vee \|f_1\|_\infty \vee  \|f_2\|_\infty$. Set $Z_{n,\delta} := f_1(\kappa_n)X_{n,\delta}^{(1)}+f_2(\kappa_n)X_{n,\delta}^{(2)}$. Using \eqref{unifcont}, 
\[ \{ Z_{n,\delta} > t, X_{n,\delta}^{(1)} \vee X_{n,\delta}^{(2)} \leq M \} \subset  \bigcup_{(i,j) \in T} \{X_{n,\delta}^{(1)}>t_{i,\veps}, X_{n,\delta}^{(2)}>t_{j,\veps}\},\]
where $T :=  \bigcup_{k=1}^{q} \{(i,j) \in[m]^2 : f_1(s_k) t_{i,\veps}+f_2(s_k)t_{j,\veps} >t-3\veps L\}$. 
Using a union bound and the independence of $X_{n,R}^{(1)}$ and $X_{n,R}^{(2)}$, 
\begin{align} \label{eq:rate-fn-J}
 \limsup_{\delta \to 0} \limsup_{n\to+\infty} \frac{1}{v_n} \log \PP\big( Z_{n,\delta} > t, X_{n,\delta}^{(1)} \vee X_{n,\delta}^{(2)} \leq M \big) & \leq - \liminf_{\vep \to 0}  \min_{(i,j) \in T} \big( I_1(t_{i,\veps}) +I_2(t_{j,\veps})\big) \notag\\
& \leq - \liminf_{\vep \to 0}  \Phi(t-3\veps L) \le -\Phi(t),
\end{align}
where 
the last step follows from the fact that $\Phi$ is a good rate function which is a consequence of the facts that $I_1$ and $I_2$ are good rate functions, $f_1$ and $f_2$ are continuous, and $\mathcal{K}$ is compact. 


On the other hand as $I_1$ and $I_2$ are good rate functions we deduce that 
\[ \limsup_{M\to +\infty} \limsup_{\delta\to +\infty} \limsup_{n\to +\infty } \frac{1}{v_n} \log \PP\big( X_{n,\delta}^{(1)}\vee X_{n,\delta}^{(2)} >M\big)= -\infty.\]
This together with \eqref{eq:rate-fn-J} end the proof. 
\end{proof}



\bibliographystyle{plain}
\bibliography{main.bib}{}

\begin{thebibliography}{10}

\bibitem{ADK21}
J.~Alt, R.~Ducatez, and A.~Knowles.
\newblock Extremal eigenvalues of critical {E}rd{\H{o}}s-{R}\'{e}nyi graphs.
\newblock {\em Ann. Probab.}, 49(3):1347--1401, 2021.

\bibitem{AGZ}
G.~W. Anderson, A.~Guionnet, and O.~Zeitouni.
\newblock {\em {An introduction to random matrices}}, volume 118 of {\em
  {Cambridge Studies in Advanced Mathematics}}.
\newblock Cambridge University Press, Cambridge, 2010.

\bibitem{AGM}
S.~Artstein-Avidan, A.~Giannopoulos, and V.~D. Milman.
\newblock {\em Asymptotic geometric analysis. {P}art {I}}, volume 202 of {\em
  Mathematical Surveys and Monographs}.
\newblock American Mathematical Society, Providence, RI, 2015.

\bibitem{LDPei}
F.~Augeri.
\newblock Large deviations principle for the largest eigenvalue of {W}igner
  matrices without {G}aussian tails.
\newblock {\em Electron. J. Probab.}, 21:Paper No. 32, 49, 2016.

\bibitem{LDPtr}
F.~Augeri.
\newblock On the large deviations of traces of random matrices.
\newblock {\em Ann.~Inst.~Henri Poincar\'{e} Probab.~Stat.}, 54(4):2239--2285,
  2018.

\bibitem{NLAu}
F.~Augeri.
\newblock Nonlinear large deviation bounds with applications to {W}igner
  matrices and sparse {E}rd{\H o}s-{R}\'enyi graphs.
\newblock {\em Ann. Probab.}, 48(5):2404--2448, 2020.

\bibitem{AGH21}
F.~Augeri, A.~Guionnet, and J.~Husson.
\newblock Large deviations for the largest eigenvalue of sub-{G}aussian
  matrices.
\newblock {\em Comm. Math. Phys.}, 383(2):997--1050, 2021.

\bibitem{B21}
A.~Basak.
\newblock Upper tail of the spectral radius of sparse
  {E}rd{\H{o}}s-{R}{\'{e}}nyi graphs.
\newblock {\em arXiv preprint arXiv:2109.06242}, 2021.

\bibitem{BB23}
A.~Basak and R.~Basu.
\newblock Upper tail large deviations of regular subgraph counts in
  {E}rdős-{R}ényi graphs in the full localized regime.
\newblock {\em Comm.~Pure Appl.~Math.}, 76(1):3--72, 2023.

\bibitem{BR17}
A.~Basak and M.~Rudelson.
\newblock Invertibility of sparse non-hermitian matrices.
\newblock {\em Adv.~Math.}, 310:426--483, 2017.

\bibitem{BR21}
A.~Basak and M.~Rudelson.
\newblock Sharp transition of the invertibility of the adjacency matrices of
  sparse random graphs.
\newblock {\em Probab.~Th.~Rel.~Flds.}, 180(1):233--308, 2021.

\bibitem{BKY17}
R.~Bauerschmidt, A.~Knowles, and H.-T. Yau.
\newblock Local semicircle law for random regular graphs.
\newblock {\em Comm. Pure Appl. Math.}, 70(10):1898--1960, 2017.

\bibitem{Dembo}
G.~{Ben Arous}, A.~Dembo, and A.~Guionnet.
\newblock {Aging of spherical spin glasses}.
\newblock {\em Probab. Theory Related Fields}, 120(1):1--67, 2001.

\bibitem{BenArous}
G.~Ben~Arous and A.~Guionnet.
\newblock {Large deviations for Wigner's law and Voiculescu's non-commutative
  entropy}.
\newblock {\em Probab.~Th.~Rel.~Flds.}, 108:517--542, 1997.

\bibitem{BeBoKn}
F.~Benaych-Georges, C.~Bordenave, and A.~Knowles.
\newblock Spectral radii of sparse random matrices.
\newblock {\em Ann.~Inst.~Henri Poincar\'{e} Probab.~Stat.}, 56(3):2141--2161,
  2020.

\bibitem{BGK17}
F.~Benaych-Georges and A.~Knowles.
\newblock Local semicircle law for {W}igner matrices.
\newblock In {\em Advanced topics in random matrices}, volume~53 of {\em Panor.
  Synth\`eses}, pages 1--90. Soc. Math. France, Paris, 2017.

\bibitem{Bhatia}
R.~Bhatia.
\newblock {\em {Matrix analysis}}, volume 169 of {\em {Graduate Texts in
  Mathematics}}.
\newblock Springer-Verlag, New York, 1997.

\bibitem{BBG21}
B.~Bhattacharya, S.~Bhattacharya, and S.~Ganguly.
\newblock Spectral edge in sparse random graphs: upper and lower tail large
  deviations.
\newblock {\em Ann. Probab.}, 49(4):1847--1885, 2021.

\bibitem{BG20}
B.~Bhattacharya and S.~Ganguly.
\newblock Upper tails for edge eigenvalues of random graphs.
\newblock {\em SIAM J. Discrete Math.}, 34(2):1069--1083, 2020.

\bibitem{BoLe}
S.~G. Bobkov and M.~Ledoux.
\newblock From {B}runn-{M}inkowski to {B}rascamp-{L}ieb and to logarithmic
  {S}obolev inequalities.
\newblock {\em Geom. Funct. Anal.}, 10(5):1028--1052, 2000.

\bibitem{Bogachev}
V.~I. Bogachev.
\newblock {\em Measure theory. {V}ol. {I}, {II}}.
\newblock Springer-Verlag, Berlin, 2007.

\bibitem{Bordenave}
C.~Bordenave and P.~Caputo.
\newblock {A large deviation principle for {W}igner matrices without {G}aussian
  tails}.
\newblock {\em Ann. Probab.}, 42(6):2454--2496, 2014.

\bibitem{BBLM}
S.~Boucheron, O.~Bousquet, G.~Lugosi, and P.~Massart.
\newblock Moment inequalities for functions of independent random variables.
\newblock {\em Ann. Probab.}, 33(2):514--560, 2005.

\bibitem{BLM}
S.~Boucheron, G.~Lugosi, and P.~Massart.
\newblock {\em {Concentration Inequalities: A Nonasymptotic Theory of
  Independence}}.
\newblock Oxford University Press, 2013.

\bibitem{BHY17}
P.~Bourgade, J.~Huang, and H.-T. Yau.
\newblock Eigenvector statistics of sparse random matrices.
\newblock {\em Electron.~J.~Probab.}, 22:1--38, 2017.

\bibitem{CD16}
S.~Chatterjee and A.~Dembo.
\newblock Nonlinear large deviations.
\newblock {\em Adv. Math.}, 299:396--450, 2016.

\bibitem{Clarke}
F.~Clarke.
\newblock {\em {Functional analysis, calculus of variations and optimal
  control}}, volume 264 of {\em {Graduate Texts in Mathematics}}.
\newblock Springer, London, 2013.

\bibitem{CD20}
N.~Cook and A.~Dembo.
\newblock Large deviations of subgraph counts for sparse
  {E}rd{\H{o}}s-{R}{\'{e}}nyi graphs.
\newblock {\em Adv. Math.}, 373:107289, 53, 2020.

\bibitem{CDG23}
N.~Cook, R.~Ducatez, and A.~Guionnet.
\newblock Full large deviation principles for the largest eigenvalue of
  sub-{G}aussian wigner matrices.
\newblock {\em arXiv preprint arXiv:2302.14823}, 2023.

\bibitem{DZ}
A.~Dembo and O.~Zeitouni.
\newblock {\em {Large deviations techniques and applications}}, volume~38 of
  {\em {Stochastic Modelling and Applied Probability}}.
\newblock Springer-Verlag, Berlin, 2010.
\newblock Corrected reprint of the second (1998) edition.

\bibitem{Eldan}
R.~Eldan.
\newblock Gaussian-width gradient complexity, reverse log-{S}obolev
  inequalities and nonlinear large deviations.
\newblock {\em Geom. Funct. Anal.}, 28(6):1548--1596, 2018.

\bibitem{EKHY13}
L.~Erd{\H{o}}s, A.~Knowles, H.-T. Yau, and J.~Yin.
\newblock Spectral statistics of {E}rd{\H{o}}s--{R}{\'e}nyi graphs {I}: local
  semicircle law.
\newblock {\em Ann.~Probab.}, 41(3B):2279--2375, 2013.

\bibitem{GHN22}
S.~Ganguly, E.~Hiesmayr, and K.~Nam.
\newblock Spectral large deviations of sparse random matrices.
\newblock {\em arXiv preprint arXiv:2206.06954}, 2022.

\bibitem{GN22}
S.~Ganguly and K.~Nam.
\newblock Large deviations for the largest eigenvalue of gaussian networks with
  constant average degree.
\newblock {\em Probab.~Th.~Rel.~Flds.}, pages 1--67, 2022.

\bibitem{GH20}
A.~Guionnet and J.~Husson.
\newblock Large deviations for the largest eigenvalue of rademacher matrices.
\newblock {\em Ann. Probab.}, 48(3):1436--1465, 2020.

\bibitem{HWX16}
B.~Hajek, Y.~Wu, and J.~Xu.
\newblock Achieving exact cluster recovery threshold via semidefinite
  programming.
\newblock {\em IEEE Trans.~Inf.~Theor.}, 62(5):2788--2797, 2016.

\bibitem{HMS22}
M.~Harel, F.~Mousset, and W.~Samotij.
\newblock Upper tails via high moments and entropic stability.
\newblock {\em Duke Math. J.}, 171(10):2089--2192, 2022.

\bibitem{HKM19}
Y.~He, A.~Knowles, and M.~Marcozzi.
\newblock {Local law and complete eigenvector delocalization for supercritical
  {E}rd{\H{o}}s--{R}{\'e}nyi graphs}.
\newblock {\em Ann. Probab.}, 47(5):3278--3302, 2019.

\bibitem{HT21}
H.~Huang and K.~Tikhomirov.
\newblock On dimension-dependent concentration for convex {L}ipschitz functions
  in product spaces.
\newblock {\em arXiv preprint arXiv:2106.06121}, 2021.

\bibitem{KhKhPS}
A.~Khorunzhy, B.~Khoruzhenko, L.~Pastur, and M.~Shcherbina.
\newblock The large-n limit in statistical mechanics and the spectral theory of
  disordered systems.
\newblock In {\em Domb, C., Lebowitz, J. (eds.): Phase transitions and critical
  phenomena}, page 73–239. New York: Academic Press, 1992.

\bibitem{LLV}
C.~M. Le, E.~Levina, and R.~Vershynin.
\newblock Concentration of random graphs and application to community
  detection.
\newblock In {\em Proceedings of the {I}nternational {C}ongress of
  {M}athematicians---{R}io de {J}aneiro 2018. {V}ol. {IV}. {I}nvited lectures},
  pages 2925--2943. World Sci. Publ., Hackensack, NJ, 2018.

\bibitem{Ledouxmono}
M.~Ledoux.
\newblock {\em {The concentration of measure phenomenon}}, volume~89 of {\em
  {Mathematical Surveys and Monographs}}.
\newblock American Mathematical Society, Providence, RI, 2001.

\bibitem{LMZ}
G.~Lugosi, S.~Mendelson, and N.~Zhivotovskiy.
\newblock Concentration of the spectral norm of {E}rd{\H{o}}s--{R}{\'e}nyi
  random graphs.
\newblock {\em Bernoulli}, 26(3):2253--2274, 2020.

\bibitem{NRS}
J.~Neeman, C.~Radin, and L.~Sadun.
\newblock {Moderate Deviations in Cycle Count}.
\newblock {\em arXiv preprint arXiv:2101.08249}, 2021.

\bibitem{P14}
S.~P\'{e}ch\'{e}.
\newblock Deformed ensembles of random matrices.
\newblock In {\em Proceedings of the {I}nternational {C}ongress of
  {M}athematicians---{S}eoul 2014. {V}ol. {III}}, pages 1159--1174. Kyung Moon
  Sa, Seoul, 2014.

\bibitem{Rockafeller}
R.~T. Rockafellar.
\newblock {\em Convex analysis}.
\newblock Princeton Landmarks in Mathematics. Princeton University Press,
  Princeton, NJ, 1997.

\bibitem{Seginer}
Y.~Seginer.
\newblock The expected norm of random matrices.
\newblock {\em Combin.~Probab.~Comput.}, 9(2):149--166, 2000.

\bibitem{TY21}
K.~Tikhomirov and P.~Youssef.
\newblock Outliers in spectrum of sparse {W}igner matrices.
\newblock {\em Random Struct.~Alg.}, 58(3):517--605, 2021.

\bibitem{HDP}
R.~Vershynin.
\newblock {\em High-dimensional probability}, volume~47 of {\em Cambridge
  Series in Statistical and Probabilistic Mathematics}.
\newblock Cambridge University Press, Cambridge, 2018.
\newblock An introduction with applications in data science, With a foreword by
  Sara van de Geer.

\bibitem{WW}
F.~Wang and J.~Wang.
\newblock Functional inequalities for convolution probability measures.
\newblock {\em Ann. Inst. Henri Poincar\'{e} Probab. Stat.}, 52(2):898--914,
  2016.

\bibitem{zhou-spHW}
S.~Zhou.
\newblock {Sparse Hanson--Wright inequalities for subgaussian quadratic forms}.
\newblock {\em Bernoulli}, 25(3):1603--1639, 2019.

\end{thebibliography}

\end{document}